\newtheorem{theorem}{Theorem}[section]
\newtheorem{lemma}[theorem]{Lemma}
\newtheorem{corollary}[theorem]{Corollary}
\newtheorem{conjecture}[theorem]{Conjecture}
\newtheorem{proposition}[theorem]{Proposition}
\newtheorem{remark}[theorem]{Remark}
\newtheorem{remarks}[theorem]{Remarks}
\newtheorem{examples}[theorem]{Examples}
\newtheorem{definition}[theorem]{Definition}
\newtheorem{definitions}[theorem]{Definitions}
\newtheorem{notations}[theorem]{Notations}
\numberwithin{equation}{section}
\newcommand{\Aa}{{\mathbb{A}}}
\newcommand{\Q}{{\mathbb{Q}}}
\newcommand{\Z}{{\mathbb{Z}}}
\newcommand{\F}{{\mathbb{F}}}
\newcommand{\C}{{\mathbb{C}}}
\newcommand{\R}{{\mathbb{R}}}
\newcommand{\ds}{\displaystyle}
\newcommand{\sst}{\scriptstyle}
\newcommand{\ov}{\overline}
\newcommand{\wt}{\widetilde}
\newcommand{\wh}{\widehat}
\newcommand{\ft}{\footnotesize}
\newcommand{\ns}{\normalsize}
\newcommand{\cl}{c\hskip-1pt{{\ell}}}
\newcommand{\BI}{\hbox{\bf$\textbf{I}$}}
\newcommand{\BH}{\hbox{\bf$\textbf{H}$}}
\newcommand{\CH}{{\mathcal H}}
\newcommand{\BE}{\hbox{\bf$\textbf{E}$}}
\newcommand{\CE}{{\mathcal E}}
\newcommand{\BF}{\hbox{\bf$\textbf{F}$}}
\newcommand{\CF}{{\mathcal F}}
\newcommand{\BC}{\hbox{\bf$\textbf{C}$}}
\newcommand{\BL}{\hbox{\bf$\textbf{L}$}}
\newcommand{\BM}{\hbox{\bf$\textbf{M}$}}
\newcommand{\CM}{{\mathcal M}}
\newcommand{\BA}{\hbox{\bf$\textbf{A}$}}
\newcommand{\CA}{{\mathcal A}}
\newcommand{\CR}{{\mathcal R}}
\newcommand{\CW}{{\mathcal W}}
\newcommand{\CU}{{\mathcal U}}
\newcommand{\CK}{{\mathcal K}}
\newcommand{\CT}{{\mathcal T}}
\newcommand{\CX}{{\mathcal X}}
\newcommand{\G}{{\mathcal G}}
\newcommand{\BB}{\hbox{\bf$\textbf{B}$}}
\newcommand{\BS}{\hbox{\bf$\textbf{S}$}}
\newcommand{\BZ}{\hbox{\bf$\textbf{Z}$}}
\newcommand{\order}{\raise0.8pt \hbox{${\scriptstyle \#}$}}
\newcommand{\oorder}{\raise0.8pt \hbox{\tiny${\scriptstyle \#}$}}
\newcommand{\lien}{\mathrel{\mkern-4mu}}
\newcommand{\too}{\relbar\lien\rightarrow}
\newcommand{\tooo}{\relbar\lien\relbar\lien\too}
\newcommand{\plus}{\ds\mathop{\raise 0.5pt \hbox{$\bigoplus$}}\limits}
\newcommand{\prd}{\ds\mathop{\raise 1.0pt \hbox{$\prod$}}\limits}
\newcommand{\sm}{\ds\mathop{\raise 1.0pt \hbox{$\sum$}}\limits}
\newcommand{\ffrac}[2]{\hbox{\ft $\displaystyle\frac{#1}{#2}$}}
\newcommand{\Gal}{{\rm Gal}}
\newcommand{\id}{{\rm id}}
\newcommand{\Norm}{\hbox{\bf$\textbf{N}$}}
\newcommand{\J}{\hbox{\bf$\textbf{J}$}}
\newcommand{\g}{\hbox{\bf$\textbf{g}$}}
\newcommand{\Nu}{\hbox{\Large $\nu\!$}}
\newcommand{\Hom}{{\rm H}}
\newcommand{\tor}{{\rm tor}}
\newcommand{\Ker}{{\rm Ker}}
\newcommand{\pr}{{\rm pr}}
\newcommand{\ta}{{\rm ta}}
\newcommand{\ram}{{\rm ram}}
\newcommand{\ab}{{\rm ab}}
\newcommand{\alg}{{\rm alg}}
\newcommand{\ar}{{\rm ar}}
\newcommand{\nr}{{\rm nr}}
\newcommand{\an}{{\rm an}}
\newcommand{\val}{{\rm val}}
\newcommand{\bp}{{\rm bp}}
\newcommand{\cyc}{{\rm cyc}}
\newcommand{\Kappa}{{\hbox{\large $\kappa$}}}
\newcommand{\BKappa}{\hbox{\Large $\kappa$}}
\author[Georges Gras]{Georges Gras}
\address{Villa la Gardette, 4, chemin Ch\^ateau Gagni\`ere, 
38520, Le Bourg d'Oisans \rm {\url{http://orcid.org/0000-0002-1318-4414} } }
\email{g.mn.gras@wanadoo.fr}
\keywords{abelian fields; $p$-adic characters; class groups and units; $p$-adic L-functions;
cyclotomic polynomials; class field theory}
\subjclass{Primary 11R18, 11R29, 11R27 ; Secondary 11R37, 12Y05, 08-04}
\begin{document}

\title[Notion of abelian Arithmetic $\varphi$-object]
{Notion of abelian Arithmetic $\varphi$-object \\
for the study of $p$-class groups \\
and $p$-ramified torsion groups}

\date{July 3, 2023}

\begin{abstract} 
We revisit, in an elementary way, the \textit {classical statement} of various ``Main Conjectures'' 
for $p$-class groups $\CH_K$ and $p$-ramified torsion groups $\CT_K$ of abelian 
fields $K$, in the non semi-simple case $p \mid [K : \Q]$. The classical ``algebraic'' 
definition of the $p$-adic isotopic components, $\CH^\alg_{K,\varphi}$, used in the literature, 
is inappropriate with respect to analytical formulas. For that reason we have introduced, 
in the 1970's, an ``arithmetic'' definition, $\CH^\ar_{K,\varphi}$, in perfect correspondence 
with all analytical formulas and giving a natural ``Main Conjecture'', still unproved for real 
fields in the non semi-simple case. The two notions coincide for relative 
class groups $\CH_K^-$ and groups $\CT_K$ since, in $p$-extensions, 
transfer maps are injective for these groups but not necessarily for real class groups. 
Numerical evidence of the gap between 
the two notions is given (Examples \ref{Ex1}, \ref{Ex2}) and PARI calculations
corroborate that the true Real Main Conjecture for $K$ writes on the form
$\order \CH^\ar_{K,\varphi} = \order (\CE_K / \wh \CE_K \, \CF_{\!K})_{\varphi}$, 
in terms of units $\CE_K$, $\wh \CE_K$ (units of the strict subfields) and $\CF_K$ 
(Leopoldt's cyclotomic units). A recent approach, conjecturing the capitulation of $\CH_K$ 
in some auxiliary cyclotomic extensions $K(\mu_\ell^{})$, proves the difficult real case.
\end{abstract}

\maketitle

\vspace{-0.6cm}
\tableofcontents

\section*{Foreword and preliminary remarks} 
This survey provides improvements, new results, numerical illustrations (with PARI
programs) and some history, regarding our original articles \cite{Gra, Gra0}. 
These two papers were written, in French, with illegible fonts due to the use 
of "typits" on typewriters and hand written characters, for mathematical symbols~! 
So they were hardly accessible and only \cite{Gra1} is cited in replacement of them.
This survey also mention, in Subsection \ref{reminders}, 
pioneering references, as well as some significant Leopldt's papers on cyclotomy 
\cite{15Leo, 16Leo}, written in german in the 1950/1960's. 

\smallskip
As the Referee pointed us, one must avoid any confusion with the \textit {Iwasawa Main Conjecture},
dealing, for instance, with cyclotomic $\Z_p$-extensions; so, the Conjectures for the case of finite 
abelian extensions (giving the most precise relations with analytic information) will be called ``Finite 
Abelian Main Conjectures'' in this paper (Finite AMC for short). This may be legitimate since 
beyond the Iwasawa Main Conjecture (or Mazur--Wiles' Main Theorem and generalizations) 
our purposes and conjectures deal always with \textbf{finite abelian extensions}, a context which, 
of course, must apply to the finite layers of the cyclotomic $\Z_p$-extension. Moreover, Thaine's 
technique and our new philosophy, using capitulation of classes in auxiliary cyclotomic 
extensions $K(\mu_\ell^{})$, strengthen the interest of the finite cases.
 
\smallskip
The Finite AMC (giving analytic expressions 
of annihilators and orders of $p$-adic isotopic components of class groups) 
that we revisit here, were first stated (\textit {especially in the non semi-simple case}) 
in our papers mentioned above (but not in \cite{Gra1}, as erroneously stated by 
some authors), and were given at the meeting ``Journ\'ees arithm\'etiques de Caen'' (1976) 
as it is correctly recalled for instance in \cite{Sol, Rib1}. This gives the occasion to mention 
that \cite{Gra1} (only recalling the statements of the conjectues in the semi-simple case) is 
especially devoted to a method using formal series, giving non trivial congruences when 
$p$-adic $\BL$-functions have a trivial zero; for instance we proved the following 
complement of Ankeny--Artin--Chowla--Kudo congruences (cf. \cite{AAC,Kud} and
\cite[Theorem 5.37]{Was}):

\begin{proposition}\label{AACK}
Let $f \equiv 0 \pmod 3$ be the conductor of a real quadratic field $K$; 
we consider the case $f/3 \equiv -1 \pmod 3$ (``special case'' when $3$ splits in the
mirror field $K' := \Q(\sqrt{-f/3})$). Let $\varepsilon = t + u \sqrt f$, $t, u >0$, be the 
fundamental unit of $K$ and let $h$ and $h'$ be the class numbers of $K$ and $K'$, 
respectively. Then $h \!\cdot\! t \!\cdot\! u +h' \equiv 0 \pmod 3$.
\end{proposition}

A program (Appendix \ref{P1}) only checks the congruence. 
But this ana\-lytic result, which seems unknown, is perhaps off topic for our purpose. 

\smallskip
The Finite AMC has been proven in the semi-simple case, then in the non semi-simple 
one for \textit {imaginary relative class groups} and mainly in the framework of Iwasawa's theory 
(a large overview on the precise proofs and classical references are given in Washington's 
book \cite[Chapters 6, 8, 13, 15]{Was}).

\smallskip
The \textit {non semi-simple case of even $p$-adic characters $\varphi$}
(real case), was less 
understood because of a problematic definition of $p$-adic isotopic components and
cyclotomic units; but at the time, we proposed another more natural conjectural 
context, still unproved, for which the definition of ``Arithmetic $\varphi$-objects'' has become 
essential since the distinction between ``Algebraic'' definitions (classical framework) and 
``Arithmetic'' definitions is crucial regarding analytic formulas (we shall give more comments 
in Remarks \ref{e0}).

\smallskip
Let $\G := \Gal(\Q^\ab/\Q)$ be the Galois group of the maximal abelian extension 
$\Q^\ab$ of $\Q$ and denote by $K$ a subfield of finite degree of $\Q^\ab$.
In fact, since abelian arithmetic deals with cyclic fields ``$K=K_\chi$'' indexed by 
rational characters, there is no restriction to take cyclic $K$'s in any result or comment.
The present article is divided into the following three parts, after an Introduction giving 
a brief description about the story (rather prehistory) that led to the numerous approaches 
giving, under some assumptions, proofs of a ``Main Theorem'':

\smallskip
(i) An algebraic part giving a systematic study of families $(\BM_K)_K$
of $\Z[\G]$-modules and of the $\Z_p[\G]$-modules 
$\CM_K := \BM_K \otimes \Z_p$, including the non semi-simple case 
$p \mid [K : \Q]$. This study leads to the definition of sub-modules 
$\CM^\alg_\varphi$ (algebraic) and $\CM^\ar_\varphi$ (arithmetic),  
indexed by the set of irreducible $p$-adic characters $\varphi$ of $\G$. 
The difference between $\CM^\alg_\varphi$ (used in all the literature) and 
$\CM^\ar_\varphi$ is that the first one relates to algebraic norms 
$\Nu_{k/k'} \in \Z[\Gal(k/k')]$ for their properties in relative sub-extensions 
of $K/\Q$, while the second one uses arithmetic norms $\Norm_{k/k'}$, 
the gap being given by the relation:
\[\Nu_{k/k'} = \J_{k/k'} \circ \Norm_{k/k'},\] 
where the transfer maps $\J_{k/k'}$ are often non injective in 
$p$-extensions (see \S\,\ref{tensor} for examples justifying 
Definition \ref{defI4e} for the statement of the Finite AMC
and \S\,\ref{mainresults} for the main properties). Moreover, 
the ``arithmetic'' point of view is naturally related to the formula:
\[\order \CM_K = \prd_{\varphi \in \Phi_K} \order \CM^\ar_\varphi \ \,
\hbox{(Theorems \ref{theoI5} and \ref{theoI2bis})}, \]
where the $\order \CM^\ar_\varphi$'s have (conjecturally) analytic 
expressions, contrary to the $\order \CM^\alg_\varphi$'s which do not 
always fulfill this relation.

\smallskip
(ii) An arithmetic part where we apply the above results to   
$p$-class groups $\CH_K$, $K$ real or imaginary, then to torsion groups 
$\CT_K$ of the Galois group of the maximal $p$-ramified abelian pro-$p$-extension 
of $K$ real. 

\smallskip
For rational characters $\chi$ and $p$-adic characters
$\varphi \mid \chi$, we define the ``Class Invariants'' $m^\alg_\varphi {\sst(\CH)}$ 
(algebraic), $m^\ar_\varphi {\sst(\CH)}$, $m^\ar_\varphi {\sst(\CT)}$ (arithmetic) 
then, in \S\,\ref{analytics}, the corresponding ``Analytic Invariants'' 
$m^\an_\varphi {\sst(\CH)}$, $m^\an_\varphi {\sst(\CT)}$ suggested by 
the analytic formulas of the arithmetic $\chi$-components deduced from Leopoldt's
Theorem \ref{chiformula} (cf. Theorems \ref{theoII2}, \ref{theotorsion2}, \ref{theoIII2}) 
and we develop the problem of their comparison. 
We conjecture a new annihilation theorem for $\CH^\ar_\varphi$
in the real non semi-simple case (Conjecture~\ref{annihilationthm}).

\smallskip
In \S\,\ref{mysterious}, we shed new light on the proof of the Finite AMC
in the real semi-simple case for $K$, in the spirit of Thaine's theorem described in
Washington's book, and we give numerical illustrations. It becomes clear that 
\textit{the knowledge of the sole cyclotomic unit $\eta_K^{}$ of $K$} 
contains, by means of very elementary arithmetic, all the information on annihilation 
and orders of the $\varphi$-components of its $p$-class group.
A new observation is that Thaine's method uses auxiliary cyclotomic extensions
$K(\mu_\ell^{})$ with \textit {$\ell$ totally split in $K$}, while our approach in 
\cite{Gra13,Gra14,Gra15} uses the same auxiliary extensions, but with \textit {$\ell$ totally 
inert in $K$}.

\smallskip
(iii) An illustration, in the semi-simple case, is given with cyclic cubic fields for $p\equiv 1 \pmod 3$,
as well as a PARI program computing the above invariants, which was not possible in the 1970's. 
Since the submission of this paper, more computations have been done and confirm the theoretical 
claims. Since numerical experiments have some importance and take much place, we 
report in the Appendix, PARI programs, tables and explanations for their use; the
programs may be copied and pasted from any pdf-file (e.g., \url{https://arxiv.org/pdf/2112.02865.pdf}).

\section{Introduction and brief historical survey}

\subsection{Main bibliographic reminders}\label{reminders}
It is difficult to give here the full story of such a subject, from 
Bernoulli, Kummer, Herbrand classical context, the initiating work of 
Iwasawa, Leopoldt, Greenberg, on the conjecture, then the deep results obtained by 
Ribet, Mazur, Wiles, Thaine, Rubin, Kolyvagin, Solomon, Greither, Coates, Sinnott, 
and others, on cyclotomy and $p$-adic $\BL$-functions. 
Several papers also give the Iwasawa formulation of the Main Theorem 
(see e.g., \cite{Gree1, Gree2}), in terms of $p$-adic $\BL$-functions, 
a generalizable feature to many fields. The fundamental difference, regarding 
finite $p$-extensions, is that, in Iwasawa's theory, capitulation 
kernels are hidden in statements using pseudo-isomorphisms, whence only giving 
results for the projective limit of the $p$-class groups in the $\Z_p$-extensions and, in 
general, no precise information is available in the finite layers (it's quite clear
in a numerical setting that any possible structure occurs in the first layers, up
to the algebraic regularity predicted by Iwasawa's theory; see for instance the
numerical computations given in Kraft--Schoof--Pagani
\cite{KS1995,Paga2022}). A clear result
about capitulation kernels is given in Grandet--Jaulent \cite[Th\'eor\`eme, p. 214]{GJ}

\smallskip
Let's give less known contributions of the beginnings:

\smallskip
We refer, for a very nice story of pioneering works, to Ribet \cite{Rib1, Rib2}, for 
detailed proofs of Iwasawa Main Conjecture to Washington \cite[Chap. 15]{Was}
following techniques initiated by Thaine then Kolyvagin, Ribet (exposed by Lang \cite{Lg}).
A Bourbaki Seminar, by Perrin-Riou \cite{PR}, gives a significant lecture (with
an impressive bibliography) on the works of Kolyvagin, Rubin and others about the 
Main Conjectures for number fields and elliptic curves.

\smallskip
The story is also given in the famous Mazur--Wiles paper, where the attribution 
of the various statements of the conjecture (in the semi-simple case) is  
accurately discussed (see \cite[\S\,1 and \S\,10\,(i, ii)]{MW} for more
comments on the works of Iwasawa, Leopoldt, Greenberg and us), even
if some references are missing.

\smallskip
Finally, proofs of our conjecture for the relative $p$-class groups $\CH^-$ and the
real torsion groups $\CT$ of the Galois groups of the maximal abelian $p$-ramified 
pro-$p$-extensions were given (Solomon for $\CH^-$ and $p \ne 2$ 
\cite[Theorem II.1]{Sol}, Greither for $\CH^-$, $\CT$ with $p \geq 2$ 
and $\CH^+$, but in a semi-simple context \cite[Theorems A, B, C, 4.14, 
Corollary 4.15]{Grei}). Let's mention the proof by Rubin \cite{Rub}, 
from the Kolyvagin Euler systems \cite{Kol} used in above proofs. 

\smallskip
Many complementary works about the order or the annihilation of 
the $\CH_\varphi$'s, for irreducible $p$-adic characters $\varphi$, 
were published before or after the decisive proofs (e.g., 
\cite{Gra1, Gil1, Gra4, Or1, Or2, GK0, BN, All1, BM, GK1, All2, Gra8, GK2,
Jau6, Jau7, Jau8}).
Mention a result of Oriat using reflection theorem \cite[Th\'eor\`eme, p.\,333]{Or2}. 

\smallskip
In the same way, it is hopeless to outline all generalizations 
giving ``Main Conjectures'' in other contexts than the absolute 
abelian case (e.g., \cite{Dar,MR, CoLi1, DK, CoLi2, BBDS, BDSS,Vig}), 
using essentially the technique of Kolyvagin's Euler systems; 
an expository book may be \cite{CS} for recent works, but excluding the story of the 
origins of the Main Conjecture as explained in Solomon--Greither papers \cite{Sol, Grei}, 
Washington's book \cite{Was} and Ribet's Lectures \cite{Rib1, Rib2}.

\smallskip
In another direction, we refer to enlargements of the algebraic/arith\-metic aspects of
$p$-adic characters in the area of metabelian Galois groups by Jaulent, with applications 
to class groups and units (see for instance \cite[Théor\`eme 1 and consequences]{Jau1}, 
\cite{Jau2, Jau3} in a class field theory context, then \cite{Lec, SchStu} in a 
geometric or Galois cohomology context). 

\smallskip
Due to the huge number of articles dealing with the concept of ``Main Conjecture'', 
many recent (or not) articles may have escaped our notice.

\subsection{Introduction of Arithmetic \texorpdfstring{$\varphi$}{Lg}-objects}
Nevertheless, all these works deal with an \textit {algebraic definition of the
$\varphi$-class groups} $\CH^\alg_\varphi$, from the $p$-class group $\CH_K$
(for irreducible $p$-adic characters $\varphi$); 
that is to say, when $G_K := \Gal(K/\Q)$ is cyclic, of order $g$ (i.e., $K=K_\chi$ 
is the fixed field by the kernel of a rational character $\chi$ as we have explained):
\[\CH^\alg_\varphi := \CH_K \otimes^{}_{\Z_p[G_K]} \Z_p[\mu_g^{}],\ \,
\hbox{for all $\varphi \mid \chi$}, \]
with the $\Z_p[\mu_g^{}]$-action $\sigma \in G_K \mapsto \psi (\sigma)$
($\psi \mid \varphi$ of degree $1$ and order $g$). 

\smallskip
Put $K=K' K_0$, where $[K_0 : \Q]$ is prime to $p$ and $[K' : \Q]$ a $p$-power.

We then prove (Theorem \ref{theoI2}\,(ii)) that, from the expression: 
\[\CH^\alg_\chi = \big\{ x \in \CH_K, \  \Nu_{K/k}(x) = 1,
\  \forall \, k \varsubsetneqq K \}\]
(Theorem \ref{theoI1}, where $\Nu_{K/k}$ is the algebraic norm), one gets:
\[\CH^\alg_\varphi = \big( \{x \in \CH_K, \ \Nu_{K/k}(x) = 1,\, 
\forall \, k \varsubsetneqq K \} \big)_{\varphi_0^{}}, \]
(where $\varphi = \varphi_0^{} \varphi_p$, $\varphi_0^{}$ of prime to $p$ order,
$\varphi_p$ of $p$-power order and where $(\ \ )_{\varphi_0^{}}$ denotes
a ${\varphi_0^{}}$-component obtained with the corresponding semi-simple 
idempotent), contrary to our definition:
\[\CH^\ar_\varphi := \big( \{x \in \CH_K, \ \Norm_{K/k}(x) = 1,\, \forall \, 
k \varsubsetneqq K \} \big)_{\varphi_0^{}}, \]
where $\Norm_{K/k}$ is the arithmetic norm. 

\smallskip
See \S\,\ref{maindef} for equivalent characterizations of $\CH^\alg_\varphi $ and 
$\CH^\ar_\varphi$ using local cyclotomic polynomials $P_\varphi$,
then for a summary of the main properties and results of the paper.

\smallskip
In the non semi-simple case $p \mid g$, the distinction between algebraic 
and arithmetic $\varphi$-components is not done in the literature. 
This does not matter for relative $p$-class groups $\CH_K^-$ and torsion 
$p$-groups $\CT_K$ since we will prove that the two notions coincide 
(Theorems \ref{theoII1}, \ref{theotorsion}); so the case of these invariants 
is definitely solved, contrary to that of $\varphi$-components of
$p$-class groups of real fields $K$ in the non semi-simple 
case deduced from the ``$\chi$-formulas'' given in Theorem \ref{theoIII2}
and the important relation that we talked about: 
\[\order \CH_K = \prd_{\varphi \in \Phi_K} \order \CH^\ar_\varphi
\ \, \hbox{(Theorems \ref{theoI5}, \ref{theoI2bis})}.\]
We compare the two definitions $\CH^\alg$, $\CH^\ar$ in \S\,\ref{tensor} 
and Appendix \ref{ex12}, with numerical illustrations showing the gap 
between them and involving capitulation phenomenon of $p$-classes
in $p$-extensions (see the detailed Examples \ref{Ex1}, \ref{Ex2}).

\subsection{Relation between the modules \texorpdfstring{$\CH$}{Lg} 
and \texorpdfstring{$\CT$}{Lg}}
If one considers, in the real case, the $\Z_p[\G]$-modules $\CT_K$, one gets,
for them, an easier annihilation theorem from the $p$-adic Mellin transform 
of Stickelberger elements (see \S\,\ref{annTphi}). Moreover, the norm maps 
$\Norm_{k/k'}$ are surjective and the transfer maps $\J_{k/k'}$ are injective 
under Leopoldt's conjecture \cite[Th\'eor\`eme I.1]{Gra41}, \cite{Jau3, Ngtor, Jau4} 
(collected in \cite[Theorem IV.2.1]{Gra6}); so this family behaves as that of relative 
class groups, which allows an obvious statement of the Finite AMC 
and then its proof with similar techniques, as done for instance in \cite{Grei}.

\smallskip
The order of the $p$-group $\CT_K$ is closely related to the $p$-adic 
$\BL$-functions ``at $s=1$'' \cite{2Coa} and a particularity of $\CT_K$ is its 
interpretation by means of the three $\Z_p[\G]$-modules $\CH^{\,\cyc}_K$, 
${\mathcal R}_K$ and ${\mathcal W}_K$; see \cite[Lemma III.4.2.4]{Gra6}
leading to the exact sequence \eqref{hrw} and the formula
$\order\CT_K = \order \CH^{\,\cyc} _K \!\! \cdot \order{\mathcal R}_K 
\cdot \order{\mathcal W}_K$, where ${\mathcal W}_K$ 
is an easy canonical invariant depending on local $p$-roots of unity, 
${\mathcal R}_K$ is the normalized $p$-adic regulator \cite[Lemma 3.1]{Gra7}
and $\CH^{\,\cyc}_K$ a subgroup of $\CH_K$ (equal to $\CH_K$, except ``the part''
corresponding to the maximal unramified extension contained in the cyclotomic 
$\Z_p$-extension of $K$, which simply depends on ramification of $p$ in $K$).

\smallskip
The order of the group ${\mathcal R}_K$ is (up to an obvious factor) the classical 
$p$-adic regulator which intervenes in the $p$-adic analytic formulas due to
the pioneering works of Kubota--Leopoldt on $p$-adic 
$\BL$-functions, then that of Amice--Fresnel--Barsky (e.g., \cite{Fre}), Coates,
Ribet and many other; see a survey in \cite{Gra3} and a lecture in \cite{Rib0}
where is used the beginnings of the concept of $p$-adic pseudo-measures 
of Mazur, developed by Serre \cite{CRAS}).
See in \cite{Gra66,Gra9} more complete studies and conjectures about 
$\CR_K$ and $\CT_K$.

\smallskip
At this time was stated the Iwasawa formalism of the Main Conjecture by
Greenberg \cite{Gree1, Gree2} after Iwasawa \cite{Iwa}.

\subsection{Main unsolved problem today}\label{remconcl}
Let $K/\Q$ be a real cyclic extension with a non trivial maximal $p$-sub-extension 
(non semi-simple case). Let $\BE_K$ (resp. $\BF_K$) be the group of units 
(resp. of Leopoldt's cyclotomic units) then $\CE_K = \BE_K \otimes \Z_p$ 
and $\CF_K = \BF_K \otimes \Z_p$; let $\wh \CE_K$ be the subgroup 
of $\CE_K$ generated by the $\CE_k$'s for all $k \varsubsetneqq K$.

\smallskip
It would remain to prove our conjecture \cite[\S\,III]{Gra0} for the $p$-adic characters 
$\varphi$ of $K$ saying that (see also Remarks \ref{e0} and \ref{remgreither}): 
\[\order \CH^\ar_\varphi = w_\varphi \cdot \order (\CE_K/\wh \CE_K \cdot \CF_K)_\varphi,
\ \ \hbox{$w_\varphi \in \{1, p\}$}, \]
where:
\[\CH^\ar_\varphi := \big\{ x \in \CH_{K},\ \, x^{P_\varphi (\sigma)} = 1 
\ \ \&\ \  \Norm_{K/k}(x) = 1,\ \forall \,k \varsubsetneqq K \big\} \]
and:
\[(\CE_K/\wh \CE_K \cdot \CF_K)_\varphi := \{\wt \varepsilon \in \CE_K/\wh \CE_K \cdot \CF_K,
\ \,   \wt \varepsilon^{P_\varphi (\sigma)} = 1 \}, \] 
where $P_\varphi$ is the local cyclotomic polynomial attached to $\varphi$
and $\sigma$ a generator of $\Gal(K/\Q)$.
For the $\varphi$-component $(\CE_K / \wh \CE_K \cdot \CF_K)_\varphi$, the two notions 
(arithmetic and algebraic) coincide, but the $\varphi$-class group must be 
defined in the arithmetic sense. One proves, Theorem \ref{theoI2}, that 
$(\CE_K/\wh \CE_K \cdot \CF_K)_\varphi = (\CE_K/\wh \CE_K \cdot \CF_K)_{\varphi_0^{}}$, 
$\varphi = \varphi_0^{} \varphi_p$; indeed, $(\CE_K/\wh \CE_K \cdot \CF_K)$ is a $\chi$-object 
for $\chi$ above $\varphi$ since it is annihilated by all the relative norms.

\section{Abelian extensions}
The idea of definition of the $\varphi$-objects owes a lot to the work of Leopoldt
\cite{15Leo, 16Leo} and their writing, in french, by Oriat in \cite{17Or, 18Or}.
Some outdated notations in these papers and ours are modified,
after changing $\ell$ into $p$ (e.g., $\Omega_p \mapsto \ov \Q_p$,
$\wh \Omega_p \mapsto \C_p$, $\Gamma \mapsto \Z_p$).

\subsection{Characters}
Let $\Q^\ab$ be the maximal abelian extension of $\Q$ contained in an 
algebraic closure $\ov \Q$ of $\Q$; let $\Q_p$ be the $p$-adic field and 
$\ov \Q_p$ an algebraic closure of $\Q_p$ containing $\ov \Q$. We put 
$\G := \Gal(\Q^\ab/\Q)$):

\begin{notations}\label{notations}
Let $\Psi$ be the set of irreducible characters of $\G$, of degree~$1$ and finite order, 
with values in $\ov \Q_p$. We define the sets of irreducible $p$-adic characters 
$\Phi$, for a prime $p \geq 2$, the set $\CX$ of irreducible rational characters
and the sets of irreducible characters $\Psi_K$, $\Phi_K$, $\CX_K$, 
of $K \subset \Q^\ab$.

\smallskip
The notation $\psi \mid \varphi \mid \chi$ (for $\psi \in \Psi$, $\varphi \in \Phi$,
$\chi \in \CX$) means that $\varphi$ is a term of $\chi$ and $\psi$ a term of $\varphi$.

\smallskip
Let $s_\infty \in \G$ be the complex conjugation and $\psi \in \Psi_K$;
if $\psi(s_\infty) = 1$ (resp. $\psi(s_\infty) = -1$), we say that $\psi$ is even (resp. odd)
and we denote by $\Psi_K^+$ (resp. $\Psi_K^-$) the corresponding subsets 
of characters. Since $\Psi_K^{\pm}$ is stable by any conjugation, this
defines $\Phi_K^{\pm}$, $\CX_K^{\pm}$.

\smallskip
Let $\chi \in \CX$; we denote by
$g^{}_\chi$, $K_\chi$, $G_\chi =: \langle \sigma_\chi \rangle$, $f_\chi$, 
$\Q(\mu_{g^{}_\chi})$, the order of any $\psi \mid \chi$, the subfield of $K$ 
fixed by $\Ker(\chi) := \Ker(\psi)$, $\Gal(K_\chi/\Q)$, the conductor of 
$K_\chi$, the field of values of the characters, respectively.
\end{notations}

\smallskip
The set $\CX$ has the following easy property considered as the ``Main theorem'' for
rational components (e.g., \cite[Chap. I, \S\,1, 1]{15Leo}):

\begin{theorem} \label{chiformula}
Let $K/\Q$ be a finite abelian extension and let 
$(A_\chi)_{\chi \in \CX_K}$, $(A'_\chi)_{\chi \in \CX_K}$
be two families of positive numbers, indexed by the set $\CX_K$ 
of irreducible rational characters of $K$. If for all 
subfields $k$ of $K$, one has
$\prod_{\chi \in \CX_k} A'_\chi = \prod_{\chi \in \CX_k} A_\chi$,
then $A'_\chi = A_\chi$ for all $\chi \in \CX_K$.
\end{theorem}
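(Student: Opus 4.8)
The plan is to read the hypothesis as a triangular (Möbius-type) system of multiplicative relations and to invert it by induction on the degree $[K_\chi:\Q]$. First I would recall the standard structure of rational characters of an abelian group: the assignment $\chi \mapsto K_\chi$ is a bijection from $\X_K$ onto the set of cyclic subfields of $K$, and for every subfield $k$ of $K$ one has $\X_k = \{\chi \in \X_K : K_\chi \subseteq k\}$ (a rational character of $\Gal(K/\Q)$ is a character of $\Gal(k/\Q)$ precisely when $\Ker(\chi) \supseteq \Gal(K/k)$). With this dictionary, the hypothesis becomes
\[
\prod_{\chi \in \X_K,\ K_\chi \subseteq k} A'_\chi \;=\; \prod_{\chi \in \X_K,\ K_\chi \subseteq k} A_\chi ,
\]
valid for every subfield $k \subseteq K$, and the goal is to deduce $A'_\chi = A_\chi$ for each individual $\chi \in \X_K$.

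Next I would run the induction on $n_\chi := [K_\chi:\Q]$. For $n_\chi = 1$ the only character is the unit character $\chi_0$ (with $K_{\chi_0} = \Q$), and applying the hypothesis to $k = \Q$, where $\X_\Q = \{\chi_0\}$, gives $A'_{\chi_0} = A_{\chi_0}$ directly. For the inductive step, fix $\chi$ and apply the hypothesis to $k = K_\chi$. Any $\chi' \in \X_{K_\chi}$ with $\chi' \ne \chi$ has $K_{\chi'} \subsetneq K_\chi$ — strict, because $\chi' \mapsto K_{\chi'}$ is injective — hence $[K_{\chi'}:\Q] < n_\chi$, so the induction hypothesis already yields $A'_{\chi'} = A_{\chi'}$ for every such $\chi'$. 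Cancelling these common factors from the two equal products indexed by $\X_{K_\chi}$ leaves $A'_\chi = A_\chi$, which closes the induction.

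The one place that calls for a word of justification — and where the assumption that the $A_\chi$ are ``numbers'' is actually used — is the cancellation step: it is legitimate because in all the intended applications the $A_\chi$, $A'_\chi$ are orders of finite groups or indices, hence nonzero, so the whole argument takes place in the multiplicative group $\Q^\times$ (or in $\R_{>0}$). Apart from that I do not expect any genuine obstacle: the statement is precisely the assertion that the down-sets $\X_k$ of the lattice of subfields of $K$ supply, via products, enough relations to pin down each coordinate, and one could equivalently present the same computation as a single Möbius inversion over that (distributive) lattice, or over the divisor lattice of $[K_\chi:\Q]$ since each $K_\chi$ is cyclic. So the write-up should be short, the only thing to get exactly right being the correspondence $\X_k = \{\chi : K_\chi \subseteq k\}$ together with the injectivity of $\chi \mapsto K_\chi$.
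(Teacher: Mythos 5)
Your proof is correct, and it is the standard argument: the hypothesis is a triangular system over the lattice of cyclic subfields, inverted by induction on $[K_\chi:\Q]$ (or, equivalently, by Möbius inversion). The paper itself does not prove Theorem~\ref{chiformula} but defers to Leopoldt, so there is no in-paper argument to compare against; your write-up supplies exactly the expected proof, including the two points that genuinely need checking, namely that $\chi\mapsto K_\chi$ is a bijection from $\X_K$ onto the cyclic subfields of $K$ (so that $\X_k=\{\chi\in\X_K:\ K_\chi\subseteq k\}$ and $\chi'\neq\chi$ in $\X_{K_\chi}$ forces $K_{\chi'}\subsetneq K_\chi$), and that the $A_\chi$ are invertible so the cancellation step is legitimate. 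One cosmetic remark: in the paper the $A_\chi$ in the intended applications are not always rational (they are class numbers, unit indices, Bernoulli values, $p$-adic $\BL$-values), so rather than pinning the argument to $\Q^\times$ or $\R_{>0}$ it is cleaner to say the $A_\chi$ lie in some fixed abelian group in which the products are taken; the proof is unchanged.
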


The interest of this property is that analytic formulas (giving for instance orders $A_K$ 
of some finite $p$-adic invariants $\CA_K$ of abelian fields $K$) may be \textit{canonically} 
decomposed under identities $A_K = \prod_{\chi \in \CX_K} A_\chi$, to be compared 
with algebraic relations $\order \CA_K = \prod_{\chi \in \CX_K} \order \CA_\chi$ for 
suitable $\Z_p[\G]$-modules $\CA_\chi$, so that $\order \CA_\chi = A_\chi$ 
for all $\chi$; the corresponding Finite AMC being the same statement, replacing 
rational characters $\chi$ by $p$-adic ones $\varphi$, under the existence of natural 
relations $\order \CA_\chi = \prod_{\varphi \mid \chi} \order \CA_\varphi$ and 
$A_\chi = \prod_{\varphi \mid \chi} A_\varphi$ for suitable $\Z_p[\G]$-modules
$\CA_\varphi$; the main problem being precisely what definition for the isotopic
components $\CA_\chi$ and $\CA_\varphi$.

\subsection{Main results of the article} \label{maindef} 
Let $\BM = (\BM_K)_{K\in \CK}$ be a family of $\Z[\G]$-modules, indexed with 
the set $\CK$ of finite abelian extensions and provided with the arithmetic norms 
$\Norm_{K/k}$ and transfer maps $\J_{K/k}$, for any $k \subseteq K$, where 
$\J_{K/k} \circ \Norm_{K/k} = \Nu_{K/k} \in \Z[\Gal(K/k)]$ (algebraic norm).
We associate with $\BM$ the family of $\Z_p[\G]$-modules $\CM := \BM \otimes \Z_p$.

\smallskip
We will give more definitions and details in Section \ref{subI3},
but we take note of the fact that, in the class field theory framework about
$p$-class groups and generalizations, the following remarks are of great
specific significance:

\begin{remarks}\label{nonramified}
(i) Let $H_k^\nr$ and $H_K^\nr$ be the $p$-Hilbert class fields of $k$ and
$K$, respectively; then the map $\Gal(H_K^\nr/K) \to \Gal(H_k^\nr/k)$,
given by the restriction of the Artin automorphisms, corresponds, by class
field theory, to the map $\Norm_{K/k} : \CH_K \to \CH_k$ (from norms of ideals) 
which is surjective as soon as the $p$-sub-extension of $K/k$ is totally ramified, 
which is almost always the case in the present abelian theory; more
precisely, \textit {this is always the case} when $K = K_\chi$, since then $K$ is the 
compositum of $K_0$, of prime-to-$p$ degree, with $K'$ cyclic of $p$-power
degree over $\Q$, thus totally ramified.

\smallskip
(ii) On the contrary, the transfer map $\J_{K/k}$, corresponding to extension
of classes (from that of ideals), is not necessarily injective in $p$-extensions; 
if this fact is well known precisely in $H_k^\nr/k$ (but $H_k^\nr$ is not abelian 
over $\Q$), it is very frequent in totally ramified abelian $p$-extensions as
$K/K_0$, described above; a fact less 
known which has interesting consequences (see, e.g., \cite{Gra13,Gra14,Gra15} 
for an extensive study of capitulation phenomena, where numerical experiments
show that capitulation is a common occurrence contrary to what one might think).
\end{remarks}

We define various $\chi$-components $\BM^\alg_\chi$, $\BM^\ar_\chi$,
$\CM^\alg_\chi$, $\CM^\ar_\chi$ (for $\chi \in \CX$) and the associated
$\varphi$-components $\CM^\alg_\varphi$, $\CM^\ar_\varphi$ 
(for $\varphi \in \Phi$), as follows:

\medskip
Let $P_\chi$ be the global $g_\chi$th cyclotomic polynomial, let $P_\varphi$ 
be the local cyclotomic polynomial associated with $\varphi \mid \chi$ 
(so that $P_\chi = \prod_{\varphi \mid \chi} P_\varphi$ in $\Z_p[X]$). We define:
\begin{equation*}
\left\{\begin{aligned}
\BM^\alg_\chi &  := \big\{ x \in \BM_{K_\chi}, \, x^{P_\chi(\sigma_\chi)} = 1\}, \ \
\CM^\alg_\chi := \BM^\alg_\chi \otimes \Z_p, \\
\CM^\alg_\varphi & := \big\{ x \in \CM^\alg_\chi,\,  x^{P_\varphi (\sigma_\chi)} = 1 \big\}, \\
\BM^\ar_\chi & := \{x \in \BM_{K_\chi}, \Norm_{K_\chi/k}(x) = 1,\ \forall \,
k \varsubsetneqq K_\chi \}, \ \ \CM^\ar_\chi  := \BM^\ar_\chi \otimes \Z_p, \\
\CM^\ar_\varphi & := \{x \in \CM^\alg_\varphi,  \, \Norm_{K_\chi/k}(x) = 1,\ \forall \,
k \varsubsetneqq K_\chi \}.
\end{aligned}\right.
\end{equation*}

\noindent
Then $\CM^\ar_\varphi = \big\{ x \in \CM_{K_\chi},\ 
x^{P_\varphi (\sigma_\chi)} = 1 \ \, \&\ \,  \Norm_{K_\chi/k}(x) = 1, \,
\forall \,k \varsubsetneqq K_\chi\big\}$, also equal to the $\varphi_0^{}$-component
of $\CM^\ar_\chi$.

\medskip
Being annihilated by $P_\chi(\sigma_\chi)$ (resp. $P_\varphi (\sigma_\chi)$)
$\BM^\alg_\chi$ and $\CM^\alg_\chi$ (resp. $\BM^\alg_\varphi$ and 
$\CM^\alg_\varphi$) are $\Z[\mu_{g_\chi^{}}]$-modules (resp. 
$\Z_p[\mu_{g_\chi^{}}]$-modules), for the law defined via 
$\sigma \in \G \mapsto \psi(\sigma) \in \mu_{g_\chi^{}}$, 
for $\psi \mid \chi$ (resp. $\psi \mid \varphi$).

\medskip
(i) Then we have the following results: 

\medskip
$\bullet$ $\BM^\alg_\chi = \big\{ x \in \BM_{K_\chi}, \  \Nu_{K_\chi/k}(x) = 1,
\  \forall \, k \varsubsetneqq K_\chi \}$ (Theorem \ref{theoI1}),

\smallskip
$\bullet$ $\CM^\alg_\chi = \plus_{\varphi \mid \chi} \CM^\alg_\varphi$,\ \,
$\CM^\ar_\chi \, = \plus_{\varphi \mid \chi} \CM^\ar_\varphi$ 
(Theorems \ref{theoI2}, \ref{theoI2bis}).

\smallskip
(ii) Assume that $K/\Q$ is cyclic and $\BM_K$ finite:

\medskip
\quad (ii\,$'$) If, for all sub-extensions $k/k'$ of $K/\Q$, the norm maps 
$\Norm_{k/k'}$ are surjective, then:

\smallskip
$\bullet$ $\order \BM_K = \prd_{\chi \in \CX_K} \order \BM^\ar_\chi$ (Theorem \ref{theoI5}),

\quad (ii\,$''$) Let $K/K_0$ be the maximal $p$-sub-extension of $K$;
if, for all sub-extensions $k/k'$ of $K/K_0$, the norm maps 
$\Norm_{k/k'}$ are surjective, then:

\smallskip
$\bullet$ $\order \CM_\chi^\ar =  \prd_{\varphi \mid \chi} \order \CM^\ar_\varphi$ 
(Theorem \ref{theoI2bis}).

The above conditions of surjectivity of the norms are automatically 
fulfilled for the families $\BH$ (class groups), $\CH = \BH \otimes \Z_p$
($p$-class groups) and $\CT$ (torsion groups of abelian $p$-ramification).
 
\medskip
(iii) Applying this to $\BH$ and $\CT$, we obtain:

\medskip
\quad (iii\,$'$) For all characters $\chi \in \CX^-$, we have:

\medskip
$\bullet$ $\ \ \BH^\ar_\chi = \BH^\alg_\chi$ and 
$\CH^\ar_\varphi = \CH^\alg_\varphi$, $\  \forall \,\varphi \mid \chi$ 
(Theorem \ref{theoII1});

\medskip
$\bullet$ $\order \BH^\ar_\chi = \order \BH^\alg_\chi = 2^{\alpha_\chi} \cdot w_\chi \cdot 
\prd_{\psi \mid \chi}\big(- \hbox{$\frac{1}{2}$}\, \BB_1(\psi^{-1}) \big)$ 
(Theorem \ref{theoII2}), in terms of generalized Bernoulli numbers.

\smallskip
\quad (iii\,$''$) For all characters $\chi \in \CX^+$, we have: 

\medskip
$\bullet$ $\ \BH^\ar_\chi \subseteq \BH^\alg_\chi$ and 
$\CH^\ar_\varphi \subseteq \CH^\alg_\varphi$, $\  \forall \,\varphi \mid \chi$
(see Examples \ref{Ex1}, \ref{Ex2} for strict inclusions);

\medskip
$\bullet$ $\order \BH^\ar_\chi = w_\chi \cdot 
\big (\BE_{K_\chi} : \wh \BE_{K_\chi} \!\!\cdot \BF_{K_\chi} \big)$
(Theorem \ref{theoIII2}), in terms of cyclotomic units, where
$\wh \BE_{K_\chi} := \langle \, \BE_k \, \rangle_{k \varsubsetneqq K_\chi}^{}$.

\quad (iii\,$'''$) For all even characters $\chi$, we have: 

\medskip
$\bullet$ $\ \CT^\ar_\chi = \CT^\alg_\chi$ and 
$\CT^\ar_\varphi = \CT^\alg_\varphi$, $\ \forall \,\varphi \mid \chi$ 
(Theorem \ref{theotorsion});

\smallskip
$\bullet$ $\order \CT^\ar_\chi =  w_\chi^{\,\cyc} \cdot 
\prd_{\psi \mid \chi} \hbox{$\frac{1}{2}$} \,\BL_p (1,\psi)$ 
(Theorem \ref{theotorsion2}), in terms of $p$-adic $\BL$-functions.

\medskip
(iv) The Arithmetic Invariants of finite $\Z_p[\G]$ modules $\CM_K$ are defined 
by means of the obvious algebraic writing of $\Z_p[\mu_{g^{}_\chi}]$-modules:
\begin{equation*}
\CM^\ar_\varphi \simeq \prd_{i \geq 1} \Big[\Z_p[\mu_{g^{}_\chi}] \big / 
{\mathfrak p}_\varphi^{\,n^\ar_{\varphi, i}{\sst(\CM)}} \Big], \ \ \ \ 
m^\ar_\varphi{\sst(\CM)} := \sm_i n^\ar_{\varphi, i}{\sst(\CM)},
\end{equation*}
where ${\mathfrak p}_\varphi$ is the maximal ideal of $\Z_p[\mu_{g^{}_\chi}]$;
the definition of the Analytic Invariants $m^\an_\varphi{\sst(\CM)}$ 
comes directly from the formulas of $\order \CM^\ar_\chi$ given above in (iii), 
taking into account the decompositions $\CM^\ar_\chi = 
\oplus_{\varphi \mid \chi} \CM^\ar_\varphi$, whence the statement of the 
Finite AMC  ``$m^\ar_\varphi{\sst(\CM)} = m^\an_\varphi{\sst(\CM)}$, 
for all $\varphi \in \Phi$'' (Section \ref{MainConj}, Conjecture \ref{mainconj}).

\section{Definition and study of the \texorpdfstring{$\varphi$}{Lg}-objects}

We shall give, in this section, the general definition of $\theta$-objects, $\theta$ 
being an irreducible character (rational or $p$-adic), the Galois modules 
which intervene in the definition of the $\theta$-objects being not necessarily 
finite, as it is the case for unit groups; finally, the prime $p$ is arbitrary and we 
shall emphasize on the non semi-simple framework.

\subsection{The Algebraic and Arithmetic \texorpdfstring{$\G$}{Lg}-families}\label{subI3}
Let $\CK$ be the family of finite extensions $K$ of $\Q$, contained
in $\Q^\ab$, of Galois group $G_K$.
We assume to have a family $\BM$ of (multiplicative) $\Z[\G]$-modules, 
indexed by $\CK$ (called a $\G$-family), $\BM=(\BM_K)_{K \in \CK}$. 

\smallskip
In general there exist two families of $\G$-homomorphisms, indexed by the set of 
sub-extensions $K/k$, $\Norm_{K/k} : \BM_K \to \BM_k$ (arithmetic norms), 
$\J_{K/k} : \BM_k \to \BM_K$ (arithmetic transfers). For all sub-extensions 
$K/k$, we put $\Nu_{K/k}\! :=\! \sm_{\sigma \in \Gal(K/k)} \! \sigma \in \Z[\Gal(K/k)]$
(algebraic norm).

\smallskip
We consider the three following conditions:

\smallskip
\quad (a) For all $K \in  \CK$, $\BM_K^{\Gal(\Q^\ab/K)} = \BM_K$
(so, for $x \in \BM_K$ and $\sigma \in \G$, $x^\sigma = x^{\sigma_K^{}}$, where
$\sigma_K^{} \in G_K$ is the restriction of $\sigma$ to $K$).

\smallskip
\quad (b) For all sub-extension $K/k$, the arithmetic maps $\Norm_{K/k}$
and $\J_{K/k}$ are $\G$-module homomorphisms fulfilling the transitivity formulas:
\[\hbox{$\Norm_{K/k} \circ \Norm_{L/K} = \Norm_{L/k}$ and 
$\J_{L/K} \circ \J_{K/k} = \J_{L/k}$,} \]
for all $k, K, L \in \CK$, $k \subseteq K \subseteq L$.

\smallskip
\quad (c) For all sub-extension $K/k$, $\J_{K/k} \circ \Norm_{K/k} = \Nu_{K/k}$
on $\BM_K$.

\begin{definitions}
(i) If $\BM = (\BM_K)_{K \in \CK}$ only fulfills condition (a), 
we shall say that the family $(\BM, \Nu\ )$ is an algebraic $\G$-family;
one may only use Galois theory in $K/k$ and the algebraic norms 
$\Nu_{K/k} \in \Z[\Gal(K/k)]$. 

\smallskip
(ii) If moreover, there exist two families $(\Norm_{K/k})$ and $(\J_{K/k})$
(canonically associated with $\BM$) fulfilling conditions (b) and (c), we shall say 
that  the family $(\BM, \Norm, \J)$ is an arithmetic $\G$-family.
\end{definitions}

The following properties of $\BM_K$ and $\CM_K := \BM_K \otimes \Z_p$ are elementary:

\begin{proposition}\label{propI2} 
(i) For all $K \in \CK$, $\Nu_{K/K}$,
$\Norm_{K/K}$, $\J_{K/K}$ are the identity, $\id$, on $\BM_K$.

\smallskip
(ii) If the map $\Norm_{K/k}$ is surjective or if the map $\J_{K/k}$ is injective,
then $\Norm_{K/k} \circ \J_{K/k} = [K : k]$.
\end{proposition}

\begin{remark}
Note that cohomology is only of algebraic nature since, for instance
in the case of a cyclic extension $K/k$ of Galois group $G =: \langle \sigma \rangle$, 
using the class group $\BH_K$, we have:
\[\Hom^1(G,\BH_K) \simeq \Ker(\Nu_{K/k}) \big / \BH_K^{1-\sigma}, \ \ \ \ 
\Hom^2(G,\BH_K) \simeq \BH_K^G \big / \Nu_{K/k}(\BH_K); \]
in general $\Nu_{K/k}(\BH_K)$ is not isomorphic to $\Norm_{K/k}(\BH_K) 
\subseteq \BH_k$, even if the arithmetic norm is surjective. 
\end{remark}

\begin{examples}\label{ssI3c}
The most straightforward examples of such arithmetic $\G$-families $\BM_K$
are the following ones:

\smallskip
 (i) the group $\BE_K$ of units of $K$ (for which maps $\J_{K/k}$
are injective);

\smallskip
 (ii)  the class group $\BH_K$ of $K$, or the $p$-class group $\CH_K$.

\smallskip
 (iii)  the torsion group ${\mathcal T}_K$
of the Galois group of the maximal $p$-ramified abelian pro-$p$-extension of $K$.

\smallskip
 (iv) the group-algebra $\Aa[G_K]$, where $\Aa$ is a commutative ring;
then $\Aa[G_K]$ is a $\Aa[\G]$-module if one puts $\sigma \cdot \Omega =
\sigma_K^{} \Omega$ (product in $\Aa[G_K]$), for all $\Omega \in \Aa[G_K]$
and $\sigma \in \G$. 
The maps $\Norm_{K/k}$ and $\J_{K/k}$ are defined by $\Aa$-linearity by
$\Norm_{K/k}(\sigma_K^{}) := \sigma_k^{}$ and, for $\sigma_k^{} \in G_k$, by
$\J_{K/k}(\sigma_k^{}) := \sum_{\tau \in \Gal(K/k)} \tau \cdot \sigma'_k 
= \Nu_{K/k}\cdot \sigma'_k = \Nu_{K/k} \sigma'_k$, 
where $\sigma'_k$ is any extension of $\sigma_k^{}$ in $G_K$. 
So, for $\sigma_K^{} \in G_K$, 
$\Nu_{K/k}(\sigma_K^{}) = 
\big(\sum_{\tau \in \Gal(K/k)} \tau \big) \cdot  \sigma_K^{} = \Nu_{K/k} \sigma_K^{}$.
\end{examples}

\subsection{Definition of the \texorpdfstring{$\G$}{Lg}-modules 
\texorpdfstring{$\BM^\alg_\chi$}{Lg}, 
\texorpdfstring{$\BM^\ar_\chi$, $\CM^\alg_\varphi$, $\CM^\ar_\varphi$}{Lg}}\label{subI4}
We shall assume in the sequel that $\Aa \in \{\Z,\ \Z_p\}$.

\subsubsection{The \texorpdfstring{$\Gamma_{\BKappa}$}{Lg}-conjugation}\label{ssI4a}
Let $\chi \in \CX$. Let $P_\chi(X) \in \Z[X]$ be the $g^{}_\chi$th global cyclotomic 
polynomial. Let $\BKappa^{}\!_\Aa$ be the field of quotients of $\Aa$ and let 
$\BKappa^{}_\Aa(\mu_{g^{}_\chi})/\BKappa^{}_\Aa$ be 
the extension by the $g^{}_\chi$th roots of unity; so,
$\Gamma_{\BKappa^{}_\Aa,\chi} := \Gal(\BKappa^{}_\Aa(\mu_{g^{}_\chi})/\BKappa^{}_\Aa)$ is 
isomorphic to a subgroup of $(\Z/g^{}_\chi \Z)^\times$.

\smallskip
One defines, following \cite{19Ser}, the $\Gamma_{\BKappa^{}_\Aa}$-conjugation on 
$\Psi$ by putting, for all $\tau \in \Gamma_{\BKappa^{}_\Aa,\chi}$
and $\psi \in \Psi$, $\psi \mid \chi$, $\psi^\tau := \psi^a$, where $a \in \Z$
is a representative of $\tau$ in $(\Z/g^{}_\chi \Z)^\times$. Then the $\psi^\tau(\sigma_\chi)$ are the
conjugates of $\psi(\sigma_\chi)$ in ${\BKappa^{}_\Aa}(\mu_{g^{}_\chi})/{\BKappa^{}_\Aa}$.
This defines the irreducible characters over $\BKappa_\Aa^{}$ (with values in $\Aa$),
$\theta = \sum_{\tau \in \Gamma_{{\Kappa^{}_\Aa},\,\chi}} \psi^\tau$.

\subsubsection{Correspondence between characters and 
cyclotomic polynomials}\label{ssI4b}
Let $\chi \in \CX$. In ${\BKappa^{}_\Aa}[X]$, $P_\chi$ 
splits into a product of irreducible distinct polynomials~$P_{\chi, i}$; each $P_{\chi, i}$ 
splits into degree $1$ polynomials over ${\BKappa^{}_\Aa}(\mu_{g^{}_\chi})$ and
is of degree $[{\BKappa^{}_\Aa}(\mu_{g^{}_\chi}) : {\BKappa^{}_\Aa}]$. 

\smallskip
If $\zeta_i \in \mu_{g^{}_\chi}$ is a root of $P_{\chi, i}$, the other roots are the 
$\zeta_i^\tau$ for $\tau \in \Gamma_{{\BKappa^{}_\Aa},\,\chi}$; thus, these sets 
of roots are in one by one correspondence with the sets of the form 
$(\psi^\tau(\sigma_\chi))_{\tau \in \Gamma_{\BKappa^{}_\Aa,\,\chi}}$,
$\psi^\tau \mid \chi$, $\psi^\tau \in \Psi$ of order $g^{}_\chi$, describing a
representative set of characters for the $\Gamma_{\BKappa^{}_\Aa}$-conjugation. 
One may index, \textit {non-canonically}, the irreducible divisors of $P_\chi$ in 
${\BKappa^{}_\Aa}[X]$ by means of the characters $\theta$ obtained from the 
characters $\psi \in \Psi$ of orders $g^{}_\chi$ and by choosing a generator 
$\sigma_\chi$ of $G_\chi$. Put:
\begin{equation}\label{deftheta}
P_\theta := \prd_{\psi \mid \theta} (X - \psi(\sigma_\chi)) \in \Aa[X]. 
\end{equation}
Thus $P_\chi = \prd_{\theta \mid \chi} P_\theta$;  
for $\Aa=\Z_p$ we get $P_\chi = \prd_{\varphi \in \Phi,\,\varphi \mid \chi} P_\varphi$, 
for $\Aa=\Z$, $P_\chi$ is irreducible. So,
$\Aa[G_\chi]/(P_\theta(\sigma_\chi)) \simeq 
\Aa[X]/(X^{g^{}_\chi}-1,P_\theta(X)) \simeq \Aa[\mu_{g^{}_\chi}]$; 
then any module annihilated by $P_\theta(\sigma_\chi)$ is a
$\Aa[\mu_{g^{}_\chi}]$-module; the law is realized, for $\psi \mid \theta$, 
via $\sigma \in G_\chi \mapsto \psi(\sigma) \in \mu_{g^{}_\chi}$.

\subsubsection{The \texorpdfstring{$\Z[\mu_{g^{}_\chi}]$}{Lg}-modules 
\texorpdfstring{$\BM^\alg_\chi$}{Lg} and the 
\texorpdfstring{$\Z_p[\mu_{g^{}_\chi}]$}{Lg}-modules 
\texorpdfstring{$\CM^\alg_\varphi$}{Lg}}\label{ssI4c}
We fix a prime $p$ and consider the set  $\Phi$ of
irreducible $p$-adic characters of $\G$.
\begin{definition}\label{defI2}
Let $\BM = (\BM_K)_{K\in \CK}$ be a family of $\Z[\G]$-modules
and let $\CM := \BM  \otimes \Z_p = (\CM_K)_{K\in \CK}$. 
Put, for $\chi \in \CX$ and $\varphi \mid \chi$, $\varphi \in \Phi$:
\begin{equation*}
\left\{\begin{aligned}
\BM^\alg_\chi &  := \big\{ x \in \BM_{K_\chi},\,  x^{P_\chi(\sigma_\chi)} = 1 \big\}, \\
\CM^\alg_\chi & := \BM^\alg_\chi  \otimes \Z_p =
\big\{ x \in \CM_{K_\chi},\  x^{P_\chi (\sigma_\chi)} = 1 \big\}, \\
\CM^\alg_\varphi & := \big\{ x \in \CM_{K_\chi},\  x^{P_\varphi (\sigma_\chi)} = 1 \big\}
= \big\{ x \in \CM^\alg_\chi,\  x^{P_\varphi (\sigma_\chi)} = 1 \big\}.
\end{aligned}\right.
\end{equation*}

\noindent
So, $\CM^\alg_\varphi$ is a sub-$\Z_p[\mu_{g^{}_\chi}]$-module of 
$\CM_{K_\chi}$ (or of $\CM^\alg_\chi$), for the law $\sigma \in 
G_\chi \mapsto \psi(\sigma)$, $\psi \mid \varphi$, and the elements 
of $\CM^\alg_\varphi$ are called algebraic $\varphi$-objects.
\end{definition}

From relation \eqref{deftheta}, the polynomials $P_\varphi$ depend on 
the choice of the generator $\sigma_\chi$ of $G_\chi$, but we have the 
following property:

\begin{lemma} \label{propI3}
The Definitions \ref{defI2}, of the $\Z[\mu_{g^{}_\chi}]$-modules $\BM^\alg_\chi$
and the $\Z_p[\mu_{g^{}_\chi}]$-modules $\CM^\alg_\varphi$, do not depend on 
the choice of $\sigma_\chi$.
\end{lemma}

\begin{proof} 
Let $\varphi \mid \chi$.
We have $P_\varphi (\sigma_\chi) = \prd_{\psi \mid \varphi} (\sigma_\chi - \psi(\sigma_\chi))$
and, for $a>0$ with $\gcd (a, g^{}_\chi) = 1$, let $\sigma'_\chi = :\sigma_\chi^a$ another 
generator of $G_\chi$ giving the relation $P'_\varphi (\sigma'_\chi) = 
\prd_{\psi \mid \varphi} (\sigma'_\chi - \psi(\sigma'_\chi))$; 
one must compare $P_\varphi (\sigma_\chi)$ and $P'_\varphi (\sigma'_\chi)$. Then: 
\[P'_\varphi (\sigma_\chi^a) = \prd_{\psi \mid \varphi} 
(\sigma_\chi^a - \psi(\sigma_\chi^a)) = \!
\prd_{\psi \mid \varphi} \big[ (\sigma_\chi - \psi(\sigma_\chi))
\times (\sigma_\chi^{a-1} + \cdots + \psi^{a-1}(\sigma_\chi)) \big],\]
and similarly, writing $1 \equiv a \, a^* \pmod {g^{}_\chi}$, where $a^* > 0$ 
represents an inverse of $a$ modulo $g^{}_\chi$, we have, from 
$\sigma_\chi = (\sigma_\chi^a)^{ a^*}$:
\[P_\varphi (\sigma_\chi) = \prd_{\psi \mid \varphi}\big[ (\sigma_\chi^a - \psi(\sigma_\chi^a))
\times (\sigma_\chi^{a(a^*-1)}+ \cdots + \psi^{a(a^*-1)}(\sigma_\chi))\big]. \]

Since $P'_\varphi (\sigma'_\chi) \in P_\varphi (\sigma_\chi) \Z_p[G_\chi]$ and
$P_\varphi (\sigma_\chi) \in P'_\varphi (\sigma'_\chi) \Z_p[G_\chi]$ the invariance 
of the definition of the $\varphi$-objects follows, as well as that of $\chi$-objects
since $P_\chi = \prd_{\varphi \mid \chi} P_\varphi$. 
\end{proof}

\subsubsection{Characterization of \texorpdfstring{$\BM^\alg_\chi$, $\CM^\alg_\chi$}{Lg}, 
with algebraic norms}\label{ssI4d}
For any $\chi \in \CX$, we have defined $\BM^\alg_\chi$ and $\CM^\alg_\chi$.
We then have the following characterization, only valid for rational characters, but
which will allow another definition of $\chi$ and $\varphi$-objects (that 
of ``Arithmetic'' objects):

\begin{theorem}\label{theoI1}
Let $\BM$ be a $\G$-family of $\Z[\G]$-modules and for $\chi \in \CX$, let 
$\BM^\alg_\chi := \big\{ x \in \BM_{K_\chi},\  x^{P_\chi (\sigma_\chi)} = 1 \big\}$. Then:
\begin{equation*}
\left\{\begin{aligned}
\BM^\alg_\chi & = \big\{ x \in \BM_{K_\chi}, \ \, \Nu_{K_\chi/k}(x) = 1,\ 
\hbox{for all $k \varsubsetneqq K_\chi$}\}, \\
\CM^\alg_\chi & = \big\{ x \in \CM_{K_\chi}, \ \,  \Nu_{K_\chi/k}(x) = 1,\
\hbox{for all $k \varsubsetneqq K_\chi$}\}
\end{aligned}\right.
\end{equation*}
(one may limit the norm conditions to $\Nu_{K_\chi/k_\ell^{}}(x) = 1$
for all prime divisors $\ell$ of $[K_\chi : \Q]$, where $k_\ell^{} \subset K_\chi$ is 
such that $[K_\chi : k_\ell^{}] = \ell$). 
\end{theorem}

\begin{proof}\textit {With a contribution of a personal communication 
from Jacques Martinet (October 1968).}
We need three preliminary lemmas:

\begin{lemma}\label{lemI1}
Let $n \geq 1$ and let $q$ be an arbitrary prime number. Denote by $P_n$ the $n$th 
cyclotomic polynomial in $\Z[X]$; then:

\smallskip
\quad (i) $P_n(X^q)=P_{nq}(X)$, if $q \mid n$;

\smallskip
\quad (ii) $P_n(X^q)=P_{nq}(X)\,P_n(X)$, if $q \nmid n$.
\end{lemma}

\begin{proof}
Obvious for (i), (ii) by means of comparison of the sets of roots of these polynomials.
\end{proof}

\begin{lemma}\label{lemI2}
Let $n = \ell_1 \cdots \ell_t$, $t \geq 2$,
the $\ell_i$'s being distinct prime numbers.
Then for all pair $(i, j)$, $i \ne j$, there exist $A_i^j$ and $A_j^i$ in $\Z[X]$,
such that $A_i^j P_{\frac{n}{\ell^{}_i}} + A_j^i P_{\frac{n}{\ell^{}_j}}=1$.
\end{lemma}

\begin{proof}
This can be proved by induction on $t \geq 2$.

\smallskip
If $t=2$, $n=\ell_1 \ell_2$ and:
\[\hbox{$P_{\frac{n}{\ell^{}_2}} = P_{\ell^{}_1}  = X^{\ell^{}_1-1}+ \cdots +X+1$, \ 
$P_{\frac{n}{\ell^{}_1}} = P_{\ell^{}_2} = X^{\ell^{}_2-1}+ \cdots +X+1$.} \]

Let's call ``geometric polynomial'' any polynomial in $\Z[X]$ of the form 
$X^d+X^{d-1}+ \cdots + X+1$, $d \geq 0$ (including the polynomial $0$). 

\smallskip
Then if $P$ and $Q \ne 0$ are geometric, the residue $R$ 
of $P$ modulo $Q$ is geometric with residue $(P-R) Q^{-1} \in \Z[X]$;
indeed, if $m \geq n$ and $m+1 = q (n+1) + r$, $0 \leq r < n$, we get:
\begin{equation*}
\begin{aligned}
& X^m+ \cdot\cdot + X + 1 = \\
& (X^n + \cdot\cdot + X+1)\! \times\! \big[ X^{m+1-(n+1)}\! + X^{m+1-2(n+1)} 
\!+ \cdot\cdot + X^{m+1-q (n+1)} \big] \\
& \hspace{7.5cm}  + 1+X+ \cdot\cdot + X^{r-1}
\end{aligned}
\end{equation*}

\noindent
(if $r \geq 1$, otherwise the residue $R$ is $0$).
In particular, the gcd algorithm gives geometric polynomials; as the unique non-zero
constant geometric polynomial is $1$, it follows that if $P$ and $Q$ are co-prime
polynomials in $\Q[X]$, $\gcd(P,Q)=1$ and the B\'ezout relation takes place in $\Z[X]$,
which is the case for the geometric polynomials $P_{\ell^{}_1}$ and $P_{\ell^{}_2}$.

\smallskip
Suppose $t \geq 3$. Let $\ell_i$, $\ell_j$, $q$, be three distinct primes 
dividing $n$; put $n':= \ffrac{n}{q}$; by induction, 
since $\ell_i$ and $\ell_j$ divide $n'$, there exist
polynomials $A'^j_i, A'^i_j$ in $\Z[X]$, such that
$A'^j_i(X) P_{\frac{n'}{\ell^{}_i}}(X) + A'^i_j(X) P_{\frac{n'}{\ell^{}_j}}(X) = 1$, 
thus,
$A'^j_i(X^q) P_{\frac{n'}{\ell^{}_i}}(X^q) + A'^i_j(X^q) P_{\frac{n'}{\ell^{}_j}}(X^q) = 1$.
But Lemma \ref{lemI1}\,(ii) gives:
\[P_{\frac{n'}{\ell^{}_i}}(X^q)=P_{\frac{n}{\ell^{}_i}}(X) P_{\frac{n'}{\ell^{}_i}}(X) \ \ \ \ \&
\ \ \ \ P_{\frac{n'}{\ell^{}_j}}(X^q)=P_{\frac{n}{\ell^{}_j}}(X) P_{\frac{n'}{\ell^{}_j}}(X), \] 
which yields 
$A'^j_i(X^q) P_{\frac{n}{\ell^{}_i}}(X) P_{\frac{n'}{\ell^{}_i}}(X) 
+ A'^i_j(X^q) P_{\frac{n}{\ell^{}_j}}(X) P_{\frac{n'}{\ell^{}_j}}(X) = 1$. 

We have proved the co-maximality, in $\Z[X]$, of any pair of ideals
($P_{\frac{n}{\ell^{}_i}}(X)$), ($P_{\frac{n}{\ell^{}_j}}(X)$), $i \ne j$ (the case $n=\ell$
giving the prime ideal ($P_\ell(X)\Z[X]$)).
\end{proof}

\begin{lemma}\label{lemI3}
Let $n = \prd_{i=1}^t \ell_i^{a_i} >1$, $a_i \geq 1$;
put $N_{n,\ell}(X) := \sm_{i=0}^{\ell-1} X^{\frac{n}{\ell}\, i}$ for any prime $\ell$
dividing $n$. Then there exist polynomials $A_\ell(X) \in \Z[X]$ such that
$P_n(X) \!=\! \sm_{\ell \mid n} A_\ell(X) N_{n,\ell}(X)$ and 
$\big\langle  N_{n,\,\ell}(X), \ \ell \mid n \big\rangle^{}_{\Z[X]} = P_n(X) \Z[X]$.
\end{lemma}

\begin{proof}
Assume by induction on $n$ that
$P_n(X) = \sm_{\ell \mid n} A_\ell(X) N_{n,\ell}(X)$ (with $t$ fixed),
and let $q \mid n$; we have, from Lemma \ref{lemI1}\,(i):
\[P_{nq}(X) = P_n(X^q) = \sm_{\ell \mid n} A_\ell(X^q) N_{n,\ell}(X^q). \]
Since we have
$N_{n,\ell}(X^q) = \sm_{i=0}^{\ell-1} X^{\frac{n}{\ell}q\, i} = N_{nq,\ell}(X)$,  
we obtain that if the lemma is true for $n$, it is true for $nq$ for all $q \mid n$. 
It follows that if the property is true for all square-free integers $n$, it is true for all $n>1$. 
So we may assume $n$ square-free to prove the lemma by induction on $t$.

\smallskip
If $n=\ell_1$, $P_{\ell^{}_1}(X) = X^{\ell^{}_1-1}+ \cdots + X + 1 = N_{\ell^{}_1,\ell^{}_1}(X)$
and the claim is obvious. 
If $n = \ell_1 \ell_2 \cdots \ell_t$, $t \geq 2$, with distinct primes, put $n_k = \frac{n}{\ell^{}_k}$ 
for all $k$; by assumption,
$P_{n^{}_k}(X) = \sm_{1\leq s \leq t, \ s \ne k} A_s^k(X) N_{n^{}_k,\ell^{}_s}(X)$, hence:
\begin{equation*}
\begin{aligned}
P_{n^{}_k}(X^{\ell^{}_k}) & = P_{n^{}_k \ell_k}(X) \cdot P_{n^{}_k}(X) \\
& = P_n(X) P_{n^{}_k}(X) = \sm_{1\leq s \leq t \ s \ne k} A_s^k(X^{\ell^{}_k}) N_{n,\ell^{}_s}(X),
\end{aligned}
\end{equation*} 
whence $P_n(X) P_{n^{}_k}(X) \in \big\langle  N_{n,\,\ell}(X), 
\ \ell \mid n  \big\rangle^{}_{\Z[X]}, \ \hbox{for all $k$}$; 
since $t \geq 2$, Lemma \ref{lemI2} applies; a B\'ezout relation in $\Z[X]$ 
between any two of the $ P_{n^{}_k}$ (say $P_{n^{}_i}$ and $P_{n^{}_j}$) yields
$P_n(X) \times 1 \in \langle  N_{n,\,\ell}(X), \, \ell \mid n \rangle^{}_{\Z[X]}$, 
giving the result.

\smallskip
We have proved that the ideal generated, in $\Z[X]$, by the $N_{n,\ell}(X)$, 
$\ell \mid n$, contains $P_n(X) \Z[X]$. Let's see that $P_n(X)$ contains that ideal; 
it is sufficient to see that for all $\ell \mid n$, $N_{n,\ell}(X) = P_\ell(X^{\frac{n}{\ell}})$;
any root of unity $\zeta_n$ of order $n$ (i.e., root of $P_n(X)$), is a root of $N_{n,\ell}(X)$
since $\zeta_n^{\frac{n}{\ell}} = \zeta_\ell \ne 1$ and $\sm_{i=0}^{\ell - 1}\zeta_\ell^i = 0$;
then $P_n(X) \mid N_{n,\ell}(X)$ in $\Z[X]$ (monic polynomials).
\end{proof}

We apply this to $P_\chi(\sigma_\chi) = P_{g^{}_\chi}(\sigma_\chi)$ and to 
$N_{g^{}_\chi,\ell}(\sigma_\chi) = \Nu_{K_\chi/k_\ell^{}}$, where $k_\ell^{}$ is, 
for all $\ell\mid g^{}_\chi$, the unique sub-extension of $K_\chi$ such that 
$[K_\chi : k_\ell^{}] = \ell$. 
The theorem immediately follows.
\end{proof}

\subsubsection{Application to the definition of \texorpdfstring{$\BM^\ar_\chi$}{Lg}}
Let  $\BM$ be an arithmetic $\G$-family, provided 
with norms $\Norm$ and transfer maps $\J$ with $\J \circ \Norm = \Nu$.

\begin{definition}\label{defI4e}
By analogy with Theorem \ref{theoI1} giving, for $\chi$-objects, the characterization 
$\BM^\alg_\chi := \big\{ x \in \BM_{K_\chi},  \, \Nu_{K_\chi/k}(x) = 1,\, 
\hbox{for all $k \varsubsetneqq K_\chi$}\}$ and $\CM^\alg_\chi = \BM^\alg_\chi \otimes \Z_p$,
we define the modules of arithmetic $\chi$-objects:
\begin{equation*}
\left\{\begin{aligned}
\BM^\ar_\chi & := \{x \in \BM_{K_\chi},\ \,\Norm_{K_\chi/k}(x) = 1,\ \hbox{for all 
$k \varsubsetneqq K_\chi$} \}  \subseteq \BM^\alg_\chi \\  
\CM^\ar_\chi & := \BM^\ar_\chi \otimes \Z_p.
\end{aligned}\right.
\end{equation*} 

\noindent
Then $\BM^\ar_\chi$  is a sub-$\Z[\mu_{g^{}_\chi}]$-module  of $\BM^\alg_\chi$
and $\CM^\ar_\chi$ is a sub-$\Z_p[\mu_{g^{}_\chi}]$-module of $\CM^\alg_\chi$,
with laws defined via the choice of $\psi \mid \chi$ (resp. $\psi \mid \varphi$).
\end{definition}

We have $\BM^\ar_\chi = \BM^\alg_\chi$ as soon as the $\J_{K_\chi/k}$'s 
are injective (for all $k \varsubsetneqq K_\chi$ or simply the $k_\ell^{}$'s).
One verifies easily that if the norms $\Norm_{K_\chi/k_\ell^{}}$ are surjective 
for all $\ell \mid g_\chi$, then $\BM^\alg_\chi/\BM^\ar_\chi$ has exponent a 
divisor of $\prod_{\ell \mid g^{}_\chi}\!\! \ell$, whence $\CM^\alg_\chi/\CM^\ar_\chi$
of exponent $1$ or $p$.

\subsection{Comparison with classical definitions of 
\texorpdfstring{$\theta$}{Lg}-components}\label{tensor}

In all classical papers, the $\theta$-components $\BM_\theta$ 
($\theta$ rational or $p$-adic, above $\psi \in \Psi$) is defined,
in an abelian field $K$ of Galois group $G_K$, by:
\[\BM_\theta := \BM \otimes_{\Aa[G_K]}^{} \Aa[\theta], \]
where $\Aa[\theta] := \Aa[\psi]$ is the ring of values of $\theta$ over $\Aa$;
the action being defined via $(\sigma, x) \in G_K \times \BM_\theta \mapsto 
x^{\psi(\sigma)} \in \BM_\theta$. 
We shall compare this definition with Definition \ref{defI4e} considering irreducible $p$-adic 
characters $\varphi$. We have the classical algebraic definition of $\varphi$-objects 
attached to $\CM$, that is to say, the largest quotient such that $G_\chi$ acts by 
$\psi$ (\cite[Definition, p. 451]{Grei}, \cite[\S\,1.3]{PR}, \cite{Maz}):
\[\wh \CM_\varphi := \CM \otimes^{}_{\Z_p[G_\chi]} \Z_p[\mu_{g^{}_\chi}]
\simeq \CM/P_\varphi(\sigma_\chi) \cdot \CM \]

Another viewpoint \cite[\S\,II.1, pp. 469--471]{Sol}, is to define 
$\wh \CM{}^\varphi$ as the largest sub-$\Z_p[G_\chi]$-module of $\CM$, 
such that $G_\chi$ acts by $\psi$. Whence:
\[\wh \CM{}^\varphi := \{x \in \CM,\ \, x^{P_\varphi(\sigma_\chi)} = 1 \} 
=  \CM^\alg_\varphi, \]

\noindent
with the exact sequence $1\to\wh \CM{}^\varphi = \CM_\varphi^\alg \too \CM \too 
P_\varphi(\sigma_\chi) \cdot \CM \to 1$ giving the equalities $\order \wh \CM_\varphi 
= \order \wh \CM^\varphi = \order \CM^\alg_\varphi$ for finite modules.

\smallskip
Moreover, our forthcoming Definition \ref{defI3} of $\CM^\ar_\varphi$:
\[\CM^\ar_\varphi := \CM^\ar_\chi \cap \CM^\alg_\varphi \ \hbox{
(with Definition \ref{defI4e} of $\CM^\ar_\chi$),} \]
introduces another kind of computations.
Indeed, the Main Theorem on abelian fields in the literature
is concerned by algebraic definitions similar to $\wh \CM_\varphi$
or $\wh \CM^\varphi$, but our conjecture given in the 1970's used  
$\CM^\ar_\varphi$ and new analytic expressions giving $\order \CM^\ar_\chi$, 
justifying the conjectural values of $\order \CM^\ar_\varphi$ for finite $\CM_K$'s.

\smallskip
It is immediate to verify that, in the non semi-simple case $p \mid g_\chi$,
$(\CM_\varphi^\alg : \CM_\varphi^\ar)$ is equal to the order
of the capitulation kernel of $\J_{K_\chi/k_p}$, 
where $k_p$ is the subfield of $K_\chi$ such that $[K_\chi : k_p] = p$.
In the semi-simple case $p \nmid \order G_\chi$, 
$\CM \simeq \CM_\varphi  \oplus \big[ P_\varphi(\sigma_\chi) \cdot \CM \big]$ 
whatever the definitions (see again Examples of Appendix \ref{ex12}).

\subsection{Arithmetic factorization of \texorpdfstring{$\order \BM_K$}{Lg} and 
\texorpdfstring{$\order \CM_K$}{Lg}}\label{subI6}
Let $\BM$ be an arithmetic $\G$-family where all the $\Z[\G]$-modules
$\BM_K$, $K \in \CK$, are finite; then we can state:

\begin{theorem} \label{theoI5}
Let $K/\Q$ be a cyclic extension and assume that for all sub-extension $k/k'$ of $K/\Q$, 
the maps $\Norm_{k/k'}$ are surjective. Then:
\[\order \BM_K = \prd_{\chi \in \CX_K} \order \BM^\ar_\chi, \]
where $\BM^\ar_\chi := \{x \in \BM_{K_\chi}, \ \Norm_{K_\chi/k}(x) = 1,\  \forall \,
k \varsubsetneqq K_\chi\}$ (Definition \ref{defI4e}).

\smallskip\noindent
Assuming only the cyclicity of the $p$-Sylow subgroup of $G_K$, one obtains, 
$\order \CM_K = \prd_{\chi \in \CX_K} \order \CM^\ar_\chi$. 
\end{theorem}

\begin{proof}
One may replace the $\BM_k$, $k \subseteq K$, by the finite $\Z_p[G_K]$-modules 
$\CM_k :=\BM_k \otimes \Z_p$, for all primes dividing $\order \BM_K$, using the previous 
results, then globalizing at the end. 
Two classical lemmas are necessary.

\begin{lemma} \label{lemI4}
Assume that $p \nmid [k : k']$. If $\Norm_{k/k'} : \CM_k \too \CM_{k'}$ is surjective (resp. 
if $\J_{k/k'} : \CM_{k'} \too \CM_k$ is injective), then $\J_{k/k'}$ is injective (resp. $\Norm_{k/k'}$ 
is surjective).
\end{lemma}

\begin{proof} From Proposition \ref{propI2}, we know that $\Norm_{k/k'} \circ \J_{k/k'}
= [k : k']$; whence the proofs since $[k : k']$ is invertible modulo $p$.
\end{proof}

Put $G_K = G_0 \oplus H$, where  $G_0$ is a subgroup of prime-to-$p$ order
and $H$ (cyclic of order $p^n$) is the $p$-Sylow subgroup of $G_K$.
Let $K_0$ (resp. $K'_n$) be the field fixed by $H$ (resp. $G_0$).

\smallskip
The set of subfields of $K$ is of the form $\{K_{\chi_i^{}},\ \chi^{}_i \in \CX_K, \,0 \leq i \leq n \}$,  
where $\chi^{}_i$ is the rational character above $\psi^{}_i := \psi^{}_{0}\, \psi_p^{p^{n-i}}$, where
$\psi_p^{} \in \Psi_{K'_n}$ is of order $p^n$ and $\psi^{}_{0} \in \Psi_{K_0}$; thus 
$K_{\chi_i^{}}$ is the compositum $K_{\chi_0^{}} K'_i$:

\subsubsection{Schema I} \label{figI}
\unitlength=0.75cm
\[\vbox{\hbox{\hspace{0.5cm}\vspace{0.2cm}
\begin{picture}(11.5,3.7)
\put(4.5,3.0){\line(1,0){2.6}}
\put(2.0,3.0){\line(1,0){1.5}}
\put(4.5,1.5){\line(1,0){2.6}}
\put(2.0,1.5){\line(1,0){1.6}}
\put(4.5,0.0){\line(1,0){2.6}}
\put(2.6,0.0){\line(1,0){1.2}}
\put(1.5,1.9){\line(0,1){0.75}}
\put(1.5,0.4){\line(0,1){0.75}}
\put(4.00,1.9){\line(0,1){0.75}}
\put(4.00,0.4){\line(0,1){0.75}}
\put(7.6,0.65){\ft$p^i$}
\put(7.5,1.9){\line(0,1){0.75}}
\put(7.5,0.4){\line(0,1){0.75}}

\put(7.2,2.9){\ft$K_n \!=\! K$}
\put(3.7,2.9){\ft$K_{\chi_n^{}}$}
\put(3.8,1.4){\ft$K_{\chi_i^{}}$}
\put(1.3,2.9){\ft$K'_n$}

\put(7.3,1.4){\ft$K_i$}
\put(1.3,1.4){\ft$K'_i$}

\put(3.8,-0.1){\ft$K_{\chi_0^{}}$}
\put(7.3,-0.1){\ft$K_0$}
\put(1.25,-0.1){\ft$K'_0 \!=\! \Q$}
\put(4.2,3.6){\ft$G_0$}
\put(9.1,1.4){\ft$H$}
{\color{red}
\bezier{400}(1.5,3.2)(4.4,3.8)(7.3,3.2)
\bezier{350}(8.5,0.1)(9.6,1.45)(8.5,2.8)
\bezier{250}(4.2,2.7)(5.7,2.35)(7.2,2.7)
\bezier{250}(1.65,2.7)(2.5,2.5)(3.8,2.7)}
\put(5.5,2.25){\ft$g^{}_0$}
\put(2.4,2.2){\ft $\ov G_0$}
\end{picture}}} \]
\unitlength=1.0cm

Let $\CM^*_{K_{\chi_i^{}}} := \Ker(\Norm_{K_{\chi_i^{}}/K_{\chi_{i-1}^{}}})$,
$1 \leq i \leq n$, then put $\CM^*_{K_{\chi_0^{}}} := \CM_{K_{\chi_0^{}}}$.
By assumption, we have the exact sequences of $\Z_p[G_K]$-modules:
\begin{equation}\label{exact}
1 \too \CM^*_{K_{\chi_i^{}}} \tooo \CM_{K_{\chi_i^{}}}  
\ds \mathop{\relbar\lien\relbar\lien\relbar\lien\tooo}_{}^{\!\!\Norm_{K_{\chi_i}\!/\!K_{\chi_{i-1}}}} 
\CM_{K_{\chi_{i-1}^{}}} \too 1,\ \, 1 \leq i \leq n.
\end{equation}

One considers them as exact sequences of $\Z_p[G_0]$-modules. 
The idempotents of this algebra are, for all $\chi_0^{} \in \CX_{K_0}$, of the form:
\[e_{\chi_0^{}} = \ffrac{1}{\order G_0} \sm_{\sigma \in G_0} \chi_0^{} (\sigma^{-1}) \,\sigma 
\in \Z_p[G_0]. \]

From Leopoldt \cite{15Leo}, \cite[Chap. V, \S\,2]{16Leo}, as the norm maps are
surjective and the transfer maps injective, regarding the sub-extensions $k/k'$ of prime-to-$p$
degrees in $K/\Q$, we get the following canonical identifications:

\begin{lemma}
Let $\CM$ be an arithmetic $\G$-family whose elements $\CM_K$ are 
$\Z_p[G_0 \oplus H]$-modules in the above sense. 
Then $\CM_{K_i}^{e_{\chi_0^{}}} \simeq \CM_{K_{\chi_i^{}}}^{e_{\chi_0^{}}}$ and 
$(\CM^*_{K_i})^{e_{\chi_0^{}}} \simeq (\CM^*_{K_{\chi_i^{}}})^{e_{\chi_0^{}}}$.
\end{lemma}

\begin{proof}
For all $i$, we identifie $\Gal(K_i / K'_i)$ with $G_0$ acting by restriction
and put $\ov G_0 := G_0/g^{}_0$, where $g^{}_0 := \Gal(K_n/K_{\chi_n^{}})$.
Thus, by abuse of notation, we identify $\Nu_{K_i/K_{\chi_i^{}}}$ with
$\Nu_{K_n/K_{\chi_n^{}}} =: \Nu_{g_0^{}}$; moreover, since the degrees
of these extensions are prime to $p$, we may identify $\Norm_{K_i/K_{\chi_i^{}}}$ 
with $\Norm_{K_n/K_{\chi_n^{}}} =: \Norm_{g_0^{}}$ and
$\J_{K_i/K_{\chi_i^{}}}$ with $\J_{K_n/K_{\chi_n^{}}} =: \J_{g_0^{}}$.
Thus $\Norm_{g_0^{}}$ is surjective and $\J_{g_0^{}}$ injective.
One computes that $\ds e_{\chi_0^{}} = \ffrac{\nu_{g_0^{}}}
{\order g^{}_0}\,\ov e_{\chi_0^{}}$, where $\ds \ov e_{\chi_0^{}} := 
\ffrac{1}{\order \ov G_0} \sm_{\ov \sigma \in \ov G_0} \chi_0^{}
(\ov \sigma^{-1}) \, \sigma \in \Z_p[G_0]$; but we have:
\begin{equation}\label{iso}
\Nu_{g_0^{}} (\CM_ {K_i}) =\J_{g_0^{}} \circ 
\Norm_{g_0^{}} (\CM_ {K_i}) \simeq \Norm_{g_0}^{}(\CM_ {K_i})
\simeq \CM_{K_{\chi_i^{}}};
\end{equation}
whence $\CM_ {K_i}^{e_{\chi_0^{}}} \simeq \CM_{K_{\chi_i^{}}}^{\ov e_{\chi_0^{}}}$. 
To get $(\CM^*_{K_i})^{e_{\chi_0^{}}} \simeq 
\Norm_{g_0^{}} (\CM^*_{K_i})^{\ov e_{\chi_0^{}}} \simeq 
(\CM^*_{K_{\chi_i^{}}})^{\ov e_{\chi_0^{}}}$, it suffices to verify that, 
for all $i \geq 1$, $\Norm_{g_0^{}}(\CM^*_ {K_i}) = \CM^*_{K_{\chi_i^{}}}$. 
The inclusion $\Norm_{g_0^{}}(\CM^*_ {K_i}) \subseteq \CM^*_{K_{\chi_i^{}}}$
being obvious, let $x \in \CM^*_{K_{\chi_i^{}}}$; we have
$x=\Norm_{g_0^{}}(y)$, $y \in \CM_ {K_i}$, then
$1 = \Norm_{K_{\chi^{}_i}/K_{\chi^{}_{i-1}}}\circ \Norm_{g_0^{}}(y) = 
\Norm_{g_0^{}} \circ \Norm_{K_i/K_{i-1}} (y)$. 
Let $z := \Norm_{K_i/K_{i-1}}(y)$, we have $\Norm_{g_0^{}}(z)=1$; 
applying $\J_{K_{i-1}/K_{\chi_{i-1}^{}}}$, one gets 
$\Nu_{g_0^{}} (z) = 1$; but we have, as for \eqref{iso},
$\Nu_{g_0^{}} (\CM_ {K_{i-1}}) \simeq \CM_{K_{\chi_{i-1}^{}}}$;
whence $z=1$, $y \in \CM^*_{K_i}$ and $x \in \Norm_{g_0^{}}(\CM^*_{K_i})$.
\end{proof}

From \cite[Chap.\,I, \S\,1, 2; formula (6), p. 21]{15Leo}
or our previous norm computations since $p \nmid \order G_0$,
we have the relations (surjectivity of the norms and Lemma~\ref{lemI4}):
\begin{equation*}
\left\{\begin{aligned}
\CM_{K_{\chi_i^{}}}^{\ov e_{\chi_0^{}}} & = \{ x \in \CM_{K_{\chi_i^{}}}, 
\ \, \Norm_{K_{\chi_i^{}}/k}(x)=1
\ \hbox{for all $k$, \, $K'_i \subseteq k  \varsubsetneqq K_{\chi_i^{}}$} \}, \\
\CM^{*\,\ov e_{\chi_0^{}}}_{K_{\chi_i^{}}} & =  \{ x \in \CM^*_{K_{\chi_i^{}}}, 
\ \, \Norm_{K_{\chi_i^{}}/k}(x)=1
\ \hbox{for all $k$, \, $K'_i \subseteq k  \varsubsetneqq K_{\chi_i^{}}$} \} .
\end{aligned}\right.
\end{equation*}

From the norm definitions of $(\CM^\ar_{K_{\chi_i^{}}})_{\chi_0^{}}$ and from:
\[\CM^*_{K_{\chi_i^{}}} 
:= \{x \in \CM_{K_{\chi_i^{}}},\ \, \Norm_{K_{\chi_i^{}}/K_{\chi_{i-1}^{}}}(x) = 1\}, \] 
it follows that $\CM_{K_{\chi_i^{}}}^{*\,\ov e_{\chi_0^{}}} = \CM^\ar_{\chi_i^{}}$,
for all $i \geq 1$.
In the finite case, this yields, using the above, the exact sequence \eqref{exact}
and $\CM^*_{K_0} := \CM_{K_0}$:
\begin{equation}\label{semicontext}
\left\{\begin{aligned}
&\ \  \prd_{i = 0}^n \  \order \CM_{K_{\chi_i^{}}}^{*\,\ov e_{\chi_0^{}}}  =
\order \CM_{K_0}^{*\,\ov e_{\chi_0^{}}} \prd_{i =1}^n
 \frac{\order \CM_{K_i}^{\ov e_{\chi_0^{}}}}{\order \CM_{K_{i-1}}^{\ov e_{\chi_0^{}}}} 
 = \order \CM_K^{\ov e_{\chi_0^{}}}, \\
& \prd_{\chi \in \CX_K} \order \CM^\ar_\chi  = 
\prd_{\,\chi_0^{}} \order \CM_K^{\ov e_{\chi_0^{}}} =  \order \CM_K.
\end{aligned}\right.
\end{equation}
Which ends the proof of the theorem and gives useful relations.
\end{proof}

The assumption on the surjectivity of the norms is fulfilled for class groups $\BH$
(resp. $p$-class groups $\CH$ and $p$-torsion groups $\CT$), as soon as $K/\Q$ 
(resp. the maximal $p$-sub-extension of $K/\Q$) is cyclic, whence totally ramified,
class field theory implying the claim (see Remark \ref{nonramified}\,(i)).

\section{Semi-simple decomposition of \texorpdfstring{$\CA_\chi := 
\Z_p[G_\chi]/(P_\chi(\sigma_\chi))$}{Lg}}\label{algebra}

Let $\CM$ be a $\G$-family of $\Z_p[\G]$-modules provided with norms 
and transfer maps as usual. From $\psi \in \Psi$ given, 
there exist unique $\psi^{}_0$, $\psi_p^{} \in \Psi$ such that $\psi = \psi^{}_0 \,\psi_p^{}$,
$\psi^{}_0$ of prime-to-$p$ order and $\psi_p^{}$ of $p$-power order.
We restrict the study to $K := K_\chi$ for the rational character $\chi$ above $\psi$, so that,
from the previous \S\,\ref{subI6}, $G_K$ becomes $G_\chi = G_0 \oplus H$ of order
$g^{}_\chi = g^{}_{\chi_0^{}}\! \cdot p^n$. 

\smallskip
We shall use what we call the ``semi-simple idempotents'' of $\Z_p[G_\chi]$:
\begin{equation}\label{simpleidemp}
e^{\varphi_0^{}} := \ffrac{1}{g^{}_{\chi_0^{}}} \sm_{\sigma \in G_0}
\varphi_0^{} (\sigma^{-1})\, \sigma \in \Z_p[G_0],   
\end{equation}
where $\varphi_0^{}$ is the $p$-adic character over $\psi^{}_0$.

\subsection{Semi-simple decomposition of the 
\texorpdfstring{$\CA_\chi$}{Lg}-modules \texorpdfstring{$\CM^\alg_\chi$}{Lg}}
The algebra $\CA_\chi$ occurs naturally because the $\CM^\alg_\chi$ are, by definition,
$\Z_p[G_\chi]$-modules annihilated by $P_\chi(\sigma_\chi)$, then modules
over $\CA_\chi$; this algebra is an integral domain if and only if $p$ does not split 
in $\Q(\mu_{g^{}_\chi})/\Q$. We shall see that it is semi-simple even when 
$G_\chi$ is not of prime-to-$p$ order.

\begin{theorem} \label{theoI2}
Let $\CM$ be a $\G$-family of $\Z_p[\G]$-modules. 

\smallskip
(i) For all $\chi \in \CX$ we get, by means of the irreducible $p$-adic 
characters $\varphi \in \Phi$, the decompositions
$\CM^\alg_\chi = \plus_{\varphi \mid \chi} \CM^\alg_\varphi \ \, 
\hbox{(cf. Definition \ref{defI2})}$. 

\noindent
More generally, if $\CM'_\chi$ is a sub-$\CA_\chi$-module of $\CM^\alg_\chi$, then
$\CM'_\chi \!= \oplus_{\varphi \mid \chi} \CM'_\varphi$, where $\CM'_\varphi 
= \{x' \in \CM'_\chi,\ x'^{P_\varphi (\sigma_\chi)} = 1 \} \subseteq \CM^\alg_\varphi$.

\smallskip
(ii) The sub-$\CA_\chi$-modules $\CM^\alg_\varphi$, $\varphi \mid \chi$, 
coincide with the $(\CM^\alg_\chi{})^{e^{\varphi_0^{}}}$'s, 
where $e^{\varphi_0^{}}$ is the semi-simple idempotent \eqref{simpleidemp} 
associated to $\varphi_0^{}$ above the component $\psi^{}_0$ of prime-to-$p$ order
of $\psi \mid \varphi \mid \chi$. 

\smallskip
(iii) These modules $\CM^\alg_\varphi$, $\CM'_\varphi$ are canonically 
$\Z_p[\mu_{g^{}_\chi}]$-modules by means of the choice of $\psi \mid \varphi$
and the action $\sigma \in G_\chi \mapsto \psi(\sigma) \in \mu_{g_\chi^{}}$.
\end{theorem}

\begin{proof}
One may suppose that $g^{}_\chi \equiv 0 \pmod p$, otherwise we are in the 
semi-simple case and the proof is obvious \cite[Part II]{17Or}.

\smallskip
Let $\varphi^{}_1$ and $\varphi^{}_2$ be two distinct $p$-adic characters dividing $\chi$
(if $\chi = \varphi$ is $p$-adic irreducible, the result is trivial).
Put $P_{\varphi^{}_1} =: Q_1$, $P_{\varphi^{}_2}(X) =: Q_2$ (cf. \S\,\ref{ssI4b}
for the definition of $P_\varphi$). The following lemma is probably clear for
cyclotomic polynomials, but it is not general (e.g., for $p=5$, take 
$P=x^4 - 2 x^3 + 55 x^2 - 54 x + 379$, irreducible in $\Z[X]$, 
giving, in $\Z_5[X]$, $P \equiv (x^2 + 24 x + 12) \cdot (x^2 + 24 x + 17) \pmod {5^2}$
and the PARI relation ${\sf bezout(x^2+24*x+12,\, x^2+24*x+17) = [-1/5,1/5,1]}$).

\begin{lemma}\label{lemI5}
There exist $U_1, U_2 \in \Z_p[X]$ such that $U_1Q_1 + U_2Q_2=1$.
\end{lemma}

\begin{proof}
We assume that such a relation does not exist and we shall find a contradiction. 
Since the distinct polynomials $Q_1$ and $Q_2$ are irreducible in $\Q_p[X]$,
one may write a B\'ezout relation in $\Z_p[X]$ of the form
(with $U_1$, $U_2$ not both in $p\Z_p[X]$):
\[\hbox{$U_1Q_1 + U_2Q_2=p^k$, $\ k \geq 1$,}\]
choosing $U_1$ (resp. $U_2$) of degree less than the degree of $Q_2$
(resp. of $Q_1$); moreover, since $Q_1$ and $Q_2$ are monic,
one may suppose that (for instance):
\[U_2 \notin p\Z_p[X], \] 
otherwise, since $k \geq 1$, necessarily $U_1 \in p\Z_p[X]$, which is excluded.

\smallskip
Let $D_\chi$ be the decomposition group of $p$ in $\Q(\mu_{g^{}_\chi})/\Q$ and let
$\zeta \in \mu_{g^{}_\chi}$ be a root of $Q_1$
($\zeta$ is of order $g^{}_\chi$ and the other roots are the $\zeta^a$
for Artin symbols $\sigma_a \in D_\chi$); we then have:
\begin{equation}\label{defk}
\hbox{$U_2(\zeta)\, Q_2(\zeta) = p^k$ in $\Z_p[\mu_{g^{}_\chi}]$;} 
\end{equation}
but $Q_2(X) = \prod_{\sigma_a \in D_\chi} (X - \zeta_1^a)$,
where $\zeta_1 =: \zeta^c$, for some $\sigma_c \notin D_\chi$; thus:
\[Q_2(\zeta) = \prd_{\sigma_a \in D_\chi} (\zeta - \zeta_1^a) =
\prd_{\sigma_a \in D_\chi} (\zeta - \zeta^{ac}) = 
\prd_{\sigma_a \in D_\chi}\big [\, \zeta (1 - \zeta^{ac-1})\, \big]. \]

Recall that $g^{}_\chi =  g^{}_{\chi_0^{}} p^n$, $n \geq 1$. Then
$1 - \zeta^{ac-1}$ is non invertible in $\Z_p[\mu_{g^{}_\chi}]$ if and only if
$ac -1 \equiv 0 \pmod {g^{}_{\chi_0^{}}}$, which implies $\sigma_a\sigma_c \in D_\chi$
since $\Gal(\Q(\mu_{g^{}_\chi})/\Q(\mu_{g^{}_{\chi_0^{}}}))\subseteq D_\chi$ because of the 
total ramification of $p$ in the $p$-extension, but $\sigma_a \in D_\chi$ implies $\sigma_c \in D_\chi$
(absurd). So $Q_2(\zeta)$ is a $p$-adic unit, whence, from \eqref{defk},
$U_2(\zeta) \equiv 0 \pmod {p^k}, \ k \geq 1$.

\smallskip
Denote by ${\mathfrak p}$ the maximal ideal of $\Z_p[\mu_{g^{}_\chi}]$
and let $\ov F_p := \Z_p[\mu_{g^{}_\chi}]/ {\mathfrak p}$ be the residue field; 
for any $P \in \Z_p[X]$, let $\ov P$ be its image in $\F_p[X]$ and let $\ov \zeta$ 
be the image of $\zeta$ in $\ov F_p$. We have, in $\F_p[X]$:
\begin{equation}\label{q1}
\ov Q_1 = (\ov Q_0)^e, 
\end{equation}
where $e = p^{n-1} (p-1)$ (ramification
index of $p$ in $\Q(\mu_{g^{}_\chi})/\Q$) and where $\ov Q_0$ is irreducible in 
$\F_p[X]$ (i.e., the irreducible polynomial of $\ov \zeta$, in fact
that of the image of a generator of $\mu_{g^{}_{\chi_0^{}}}$).

\smallskip
With these notations, any polynomial $P \in \Z_p[X]$ such that 
$P(\zeta) \equiv 0 \pmod{{\mathfrak p}}$ is such that 
$\ov P \in \ov Q_0\, \F_p[X]$; in particular, it is the case of $\ov U_2$,
so we will have, in $\F_p[X]$ (since $\ov U_2 \ne 0$ in $\F_p[X]$ by assumption),
$\ov U_2 = \ov A\, (\ov Q_0)^\alpha$, $\alpha \geq 1$, $\ov A \ne 0$, 
$\ov Q_0 \nmid \ov A$. We may assume that $A$, $Q_0 \in \Z_p[X]$ 
have same degrees as their images in $\F_p[X]$. This yields:
\[\hbox{$U_2 =  A \, Q_0^\alpha + p B$, $B \in \Z_p[X]$,} \] 
thus $U_2 (\zeta)=  A (\zeta) \, Q_0^\alpha (\zeta) + p \,B (\zeta) \equiv 0 \pmod {p^k}$,
whence $A (\zeta) \, Q_0^\alpha (\zeta) \equiv 0 \pmod p$. But 
$A (\zeta)$ is a $p$-adic unit (since $\ov Q_0 \nmid \ov A$), which gives:
\begin{equation}\label{qzero}
Q_0^\alpha (\zeta) \equiv 0 \pmod p. 
\end{equation}

Let's show that $\alpha \geq e$; the unique case where, possibly, $p \mid g^{}_\chi$
and $e=1$ is the case $p=2$, $n=1$; this case trivially gives $\alpha \geq e$.
Consider the $g^{}_{\chi_0^{}}\!$th cyclotomic polynomial.
Assuming $e > 1$, we have:
\[P_{g^{}_{\chi_0^{}}}(\zeta) = \prd_{a \in (\Z/g^{}_{\chi_0^{}} \Z)^*}
(\zeta - \zeta^{p^n a}) = \prd_ {a} [ \,\zeta (1 - \zeta^{p^n a-1}) \,] ; \] 
 $\zeta^{p^n a-1}$ is of $p$-power order if and 
only if $p^n a \equiv 1 \pmod {g^{}_{\chi_0^{}}}$;
taking into account the domain of $a$, this defines 
$a_0$ such that $p^n a_0  \equiv 1 \pmod {g^{}_{\chi_0^{}}}$, 
whence $p^n a_0 \not\equiv 1 \pmod {p g^{}_{\chi_0^{}}}$
and $1 - \zeta^{p^n a_0  - 1} \in {\mathfrak p} \setminus {\mathfrak p}^2$, thus the fact
that $P_{g^{}_{\chi_0^{}}}(\zeta) \in {\mathfrak p} \setminus {\mathfrak p}^2$; it follows,
from $P_{g^{}_{\chi_0^{}}} = C\,Q_0^\beta + p D$, $\beta \geq 1$,
$C, D \in \Z_p[X]$, $C(\zeta)
\not\equiv 0 \pmod {{\mathfrak p}}$, that $P_{g^{}_{\chi_0^{}}}(\zeta) \equiv  
C(\zeta)\,Q_0^\beta(\zeta)  \pmod {{\mathfrak p}^e}$, thus $Q_0^\beta(\zeta) 
\in {\mathfrak p} \setminus {\mathfrak p}^2$ since $e > 1$.
This implies $\beta=1$ and $Q_0(\zeta) \in {\mathfrak p} \setminus {\mathfrak p}^2$.

\smallskip
The congruence \eqref{qzero}, written
$Q_0^\alpha(\zeta) \equiv 0 \pmod {{\mathfrak p}^e}$, implies $\alpha \geq e$ and
$U_2 = A' \,Q_0^e + p \,B$, where $A' := A \,Q_0^{\alpha - e}$; but  we also have from \eqref{q1}:
\[\hbox{$Q_1 = Q_0^e + p \,T$, $T \in \Z_p[X]$,} \] 
hence $U_2 = A' \,(Q_1 - p \,T) + p \,B = A' \, Q_1 + p \,S$, $S \in \Z_p[X]$. 
Since $A \ne 0$ may be chosen monic by assumption, $A' \ne 0$ is monic, 
$U_2$ is of degree larger or equal to that of $Q_1$ (absurd), whence $A'=0$
and $\ov U_2 = 0$, contrary to the assumption $U_2 \notin p\Z_p[X]$.
\end{proof}

Give now some properties of the system of idempotents of $\CA_\chi =
 \Z_p [G_\chi]/ (P_\chi(\sigma_\chi))$.

\smallskip
Let $\{\varphi^{}_1, \ldots , \varphi_{g^{}_p}^{}\}$ be the set of distinct $p$-adic 
characters dividing $\chi$ (thus, $g^{}_p \mid \phi(g^{}_{\chi_0^{}})$ is the number of 
prime ideals dividing $p$ in $\Q(\mu_{g^{}_{\chi_0^{}}})/\Q$, so that, only the case
$g^{}_p=1$ is trivial for the Finite AMC); from the 
property of co-maximality, given by Lemma \ref{lemI5}, one may write:
\begin{equation}\label{e(X)}
\Z_p[X]\big / (P_\chi(X) ) 
\simeq \prd_{u = 1}^{g^{}_p} \Z_p[X]/ \big(Q_u(X)\big) \simeq( \Z_p[\mu_{g^{}_{\chi}}])^{g_p}.
\end{equation}

There exist elements $e_{\varphi_u^{}}(X) \in \Z_p[X]$, whose images 
modulo $P_\chi(X)$ constitute an exact system of orthogonal idempotents 
of $\Z_p [X]/ (P_\chi(X))$. Whence the system of orthogonal idempotents 
$e_{\varphi_u^{}}(\sigma_\chi)$ of $\Z_p[G_\chi]$.

\smallskip
Since $(\CM^\alg_\chi)^{P_\chi(\sigma_\chi)} = 1$, we obtain 
(in the algebraic meaning):
\begin{equation}\label{cmchi}
\CM^\alg_\chi = \plus_{u=1}^{g^{}_p} (\CM^\alg_\chi)^{e_{\varphi_u^{}}(\sigma_\chi)}. 
\end{equation}

It remains to verify that:
\[(\CM^\alg_\chi)^{e_{\varphi_u^{}}(\sigma_\chi)} = \CM^\alg_{\varphi_u^{}}
= \{x \in \CM^\alg_\chi, \  x^{P_{\varphi_u^{}}(\sigma_\chi)} = 1\}. \]
If $x \in (\CM^\alg_\chi)^{e_{\varphi_u^{}}(\sigma_\chi)}$, $x = y^{e_{\varphi_u^{}}(\sigma_\chi)}$
with $y \in \CM^\alg_\chi$; then we have $x^{P_{\varphi_u^{}}(\sigma_\chi)} = 
y^{e_{\varphi_u^{}}(\sigma_\chi) P_{\varphi_u^{}}(\sigma_\chi)}$, but
$e_{\varphi_u^{}}(\sigma_\chi) \, P_{\varphi_u^{}}(\sigma_\chi)) 
\equiv  0 \! \pmod{P_\chi(\sigma_\chi)}$, whence 
$ y^{e_{\varphi_u^{}}(\sigma_\chi)\, P_{\varphi_u^{}}(\sigma_\chi)} = 1$ 
since $y \in \CM^\alg_\chi$ and $x \in \CM^\alg_{\varphi_u^{}}$.

\smallskip\noindent
If $x \in \CM^\alg_{\varphi_u^{}}$, then $x^{P_{\varphi_u^{}}(\sigma_\chi)} = 1$;
writing $x = \prod_{j=1}^{g^{}_p} x^{e_{\varphi^{}_v}(\sigma_\chi)}$, we get
$e_{\varphi^{}_v}(\sigma_\chi) \equiv \delta_{u, v}\!\! \pmod{P_{\varphi_u^{}}(\sigma_\chi)}$, thus 
$e_{\varphi^{}_v}(\sigma_\chi) \equiv 0 \pmod {P_{\varphi_u^{}}(\sigma_\chi)}$ for $v \ne u$
and $x^{e_{\varphi^{}_v}(\sigma_\chi)} = 1$, for  $v \ne u$. 
Whence $x = x^{e_{\varphi_u^{}}(\sigma_\chi)}$.

\medskip
In the algebra $\CA_\chi = \Z_p [G_\chi]/ (P_\chi(\sigma_\chi))$, we obtain two systems of 
idempotents, that is to say, the images in $\CA_\chi$ of the 
$e_{\varphi^{}_{u,0}} \in \Z_p[G_0]$, where
$\varphi^{}_{u,0}$ is above the component $\psi^{}_{u,0}$, of prime-to-$p$ order, of $\psi^{}_u$,
and that of the $e_{\varphi_u^{}}(\sigma_\chi)$ corresponding to $\varphi_u^{}$.
Fixing the character $\varphi_u^{} =: \varphi$ above $\psi =: \psi^{}_0\,\psi_p^{}$
and its non $p$-part $\varphi_0^{}$  above $\psi^{}_0$, we consider both:
\begin{equation}\label{idempotent0}
e^{\varphi_0^{}} := \ffrac{1}{g^{}_{\chi_0^{}}} \sm_{\sigma \in G_0} \varphi_0^{} (\sigma^{-1})\, \sigma
\end{equation} 
and $e_{\varphi_0^{}}(\sigma_\chi)$ defined as follows by means 
of polynomial relations in $\Z[X]$ deduced from \eqref{e(X)}:
\begin{equation}\label{idempotent1}
\left\{\begin{aligned}
e_{\varphi_0^{}}(\sigma_\chi) = & \Lambda_\varphi(\sigma_\chi) \cdot \!\!
\prd_{\varphi' \ne \varphi}  P_{\varphi'}(\sigma_\chi), \ \hbox{such that:}\ \  \\ 
& \Lambda_\varphi(X) \cdot \prd_{\varphi' \ne \varphi} P_{\varphi'}(X) 
\equiv 1 \!\!\pmod  {P_\varphi(X)};
\end{aligned}\right.
\end{equation} 

\noindent
we denote $e_{\varphi_0^{}}(\sigma_\chi)$ simply by $e_{\varphi_0^{}}$, which is 
legitimate by Lemma \ref{propI3}.

\smallskip
To verify that $(\CM^\alg_\chi)^{e^{\varphi_0^{}}} = (\CM^\alg_\chi)^{e_{\varphi_0^{}}}$, it suffices 
to show that $e^{\varphi_0^{}}$ and $e_{\varphi_0^{}}$ correspond to the same simple 
factor of the algebra $\CA_\chi$. 
For this, we remark that the homomorphism defined, for 
the fixed character $\varphi$, by $\sigma_\chi \mapsto \psi (\sigma_\chi)$, 
$\psi \mid \varphi$, induces a surjective homomorphism
$\CA_\chi \too \Z_p[\mu_{g^{}_\chi}]$ whose kernel 
is equal to $\plus_{\varphi  \ne \varphi} \CA_\chi \, e_{\varphi'_0}$.

\noindent
Thus, to show that $\CA_\chi e^{\varphi_0^{}} = \CA_\chi e_{\varphi_0^{}}$,
it suffices to show that $\psi(e^{\varphi_0^{}}) \ne 0$; but, from \eqref{idempotent0}, 
$e^{\varphi_0^{}}$ is a sum of the idempotents $e_{\psi'_0} = \frac{1}{g^{}_{\chi_0^{}}}
\!\!\sm_{\sigma^{} \in  G_0} \psi'_0(\sigma) \sigma^{-1}$  
where $\psi'_0 \mid \varphi_0^{}$. It follows, since $\psi = \psi^{}_0 \,\psi_p^{}$,
that $\psi (\sigma) = \psi^{}_0 (\sigma)$ and then:
\[\psi(e_{\psi'_0}) = \ffrac{1}{g^{}_{\chi_0^{}}}
\sm_{\sigma^{} \in G_0} \psi'_0(\sigma)\psi (\sigma)^{-1}
=\ffrac{1}{g^{}_{\chi_0^{}}}\sm_{\sigma^{} \in G_0} 
\psi'_0(\sigma)\psi^{}_0 (\sigma)^{-1}, \]
which is zero for all $\psi'_0$ except $\psi'_0=\psi_0$ where $\psi(e_{\psi_0^{}})=1$.
Whence $\psi(e^{\varphi_0^{}}) \ne 0$. 
Let $\CM_\chi^\alg$ as $\CA_\chi$-module; on may write 
$\CM^\alg_\chi = \plus_{\varphi \mid \chi} (\CM^\alg_\chi)^{e_{\varphi_0^{}}}$ 
(from \eqref{cmchi}) but 
$(\CM^\alg_\chi)^{e_{\varphi_0^{}}}$ coincides with 
$(\CM^\alg_\chi)^{e^{\varphi_0^{}}} = \CM^\alg_\varphi$ (Definition \eqref{idempotent0}); 
then, due to the properties of the $e_{\varphi_0^{}}$ (defined by \eqref{idempotent1}):
\[(\CM^\alg_\chi)^{e_{\varphi_0^{}}} = \{x \in \CM^\alg_\chi,\ \, x^{P_\varphi(\sigma_\chi)} = 1\}
= \CM^\alg_\varphi .\] 

Denote by $e_{\varphi_0^{}}$ any of these two 
semi-simple idempotents $e^{\varphi_0^{}}$ or $e_{\varphi_0^{}}$.

\smallskip
If $\CM'_\chi$ is a sub-$\CA_\chi$-module of $\CM^\alg_\chi$, then:
\[\CM'_\varphi := (\CM'_\chi)^{e_{\varphi_0^{}}} =  
\{x' \in \CM'_\chi,\, x'^{P_\varphi(\sigma_\chi)} = 1\}. \]
Since $\CA_\chi\,e_{\varphi_0^{}} \simeq \Z_p[\mu_{g^{}_\chi}]$, $\CM^\alg_\varphi$
and $\CM'_\varphi$ are canonically $\Z_p[\mu_{g^{}_\chi}]$-modules.

\smallskip
This finishes the proof of Theorem \ref{theoI2}.
\end{proof}

\subsection{Semi-simple decomposition of the \texorpdfstring{$\CA_\chi$}{Lg}-modules 
\texorpdfstring{$\CM^\ar_\chi$}{Lg}}
From Definition \ref{defI4e}, 
$\CM^\ar_\chi := \{x \in \CM_{K_\chi},\ \Norm_{K_\chi/k}(x) = 1,\ 
\hbox{for all $k \varsubsetneqq K_\chi$} \}$.
This invites to give the following arithmetic definition: 

\begin{definition} \label{defI3}
Let $\CM$ be an arithmetic family of $\Z_p[\G]$-modules. For any $\varphi \mid \chi$,
$\chi \in \CX$, $\varphi \in \Phi$, we define the arithmetic $\Z_p[\mu_{g^{}_\chi}]$-module:
\[\CM^\ar_\varphi := \CM^\alg_\varphi \cap \CM^\ar_\chi 
 = \{x \in  \CM^\alg_\varphi ,\ \,\Norm_{K_\chi/k}(x) = 1,\ \hbox{for all 
$k \varsubsetneqq K_\chi$} \}.\]
\end{definition}

Note that if $p \mid g_\chi$, then the norm conditions may be limited to 
$\Norm_{K_\chi/k_p}(x) = 1$, with $[K_\chi : k_p] = p$.

\begin{remark}\label{idempotents}
So, $\CM^\ar_\varphi = (\CM^\ar_\chi)^{e_{\varphi_0^{}}}$,
$e_{\varphi_0^{}}$ being defined by \eqref{idempotent0} or \eqref{idempotent1}, and
$\CM^\ar_\varphi$ is a sub-$\Z_p[\mu_{g^{}_\chi}]$-module of $\CM^\alg_\varphi$.
In the sequel, we use both the notations $\CM^\ar_\varphi =
 \{x \in \CM^\ar_\chi,\, x^{P_\varphi(\sigma_\chi)} = 1\}$ and
$(\CM^\ar_\chi)^{e_{\varphi_0^{}}}$. In some recent papers
we privilege the notations $\CM^\ar_\varphi = (\CM^\ar_\chi)^{e_{\varphi_0^{}}}
=: (\CM^\ar_\chi)_{\varphi_0^{}}$, giving, for instance, the $\varphi$-component 
$(\CE_{K_\chi}/\wh \CE_{K_\chi} \!\cdot \CF_{K_\chi})_{\varphi_0^{}}$
of $\CE_{K_\chi}/\wh \CE_{K_\chi}\! \cdot \CF_{K_\chi}$, since this module is 
a $\chi$-object for trivial reasons.
\end{remark}

So, we have the arithmetic version of Theorem \ref{theoI2}:

\begin{theorem}\label{theoI2bis}
Let $\CM$ be a $\G$-family of $\Z_p[\G]$-modules. Then we get,
for all $\chi \in \CX$, the decomposition
$\CM^\ar_\chi = \plus_{\varphi \mid \chi} \CM^\ar_\varphi$.
\end{theorem}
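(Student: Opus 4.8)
The plan is to deduce the arithmetic decomposition directly from the algebraic one established in Theorem \ref{theoI2}, by checking that $\CM^\ar_\chi$ is stable under the idempotents $e_\varphi$ constructed there and then transporting the direct-sum splitting from $\CM^\alg_\chi$ to $\CM^\ar_\chi$.

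First I would observe that $\CM^\ar_\chi$ is not merely a subgroup but a $\Z_p[G_\chi]$-submodule of $\CM^\alg_\chi$. Indeed, $\CM^\ar_\chi \subseteq \CM^\alg_\chi$ by Definition \ref{defI4e}, so $\CM^\ar_\chi$ is a module over ${\mathcal A} := \Z_p[G_\chi]/(P_\chi(\sigma_\chi))$ once we know it is $\sigma_\chi$-stable; and it is, because by assumption (b) each arithmetic norm $\Norm_{K_\chi/k}\colon \CM_{K_\chi} \to \CM_k$ is a $\G$-module homomorphism. Hence for every $Q \in \Z_p[X]$ and every $x \in \CM_{K_\chi}$ one has $\Norm_{K_\chi/k}\big(Q(\sigma_\chi)\cdot x\big) = Q(\sigma_\chi)\cdot \Norm_{K_\chi/k}(x)$, the action on the right being through $\Gal(k/\Q)$; so $\Norm_{K_\chi/k}(x)=1$ for all $k \varsubsetneqq K_\chi$ forces $\Norm_{K_\chi/k}\big(Q(\sigma_\chi)\cdot x\big)=1$ as well. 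In particular $\CM^\ar_\chi$ is stable under each of the orthogonal idempotents $e_\varphi \in {\mathcal A}$ ($\varphi \mid \chi$) produced in the proof of Theorem \ref{theoI2}.

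Next I would apply the general fact that an $e_\varphi$-stable submodule of a module split by a complete orthogonal system of idempotents inherits the splitting. From Theorem \ref{theoI2} we have $\CM^\alg_\chi = \plus_{\varphi \mid \chi}(\CM^\alg_\chi)^{e_\varphi}$ with $(\CM^\alg_\chi)^{e_\varphi} = \CM^\alg_\varphi$. For $x \in \CM^\ar_\chi$ the decomposition $x = \prd_{\varphi \mid \chi} x^{e_\varphi}$ has each factor $x^{e_\varphi} \in \CM^\ar_\chi$ by the stability just noted, and the sum remains direct because it is direct inside $\CM^\alg_\chi$; hence $\CM^\ar_\chi = \plus_{\varphi \mid \chi}(\CM^\ar_\chi)^{e_\varphi}$. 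Finally, again using $e_\varphi$-stability, $(\CM^\ar_\chi)^{e_\varphi} = \CM^\ar_\chi \cap (\CM^\alg_\chi)^{e_\varphi} = \CM^\ar_\chi \cap \CM^\alg_\varphi = \CM^\ar_\varphi$ by Definition \ref{defI3}, which gives the asserted equality $\CM^\ar_\chi = \plus_{\varphi \mid \chi} \CM^\ar_\varphi$.

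The only step that is not purely formal is the $e_\varphi$-stability of $\CM^\ar_\chi$, i.e., the compatibility of the arithmetic norms $\Norm_{K_\chi/k}$ with the Galois action; everything after that is a direct consequence of Theorem \ref{theoI2} and the definitions. (In the semi-simple case $p \nmid g_\chi$ the statement is immediate, since then $\CM_{K_\chi}$ itself splits under the tame idempotents and $\CM^\ar_\chi$, being a submodule, splits accordingly.)
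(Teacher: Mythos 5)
Your proof is correct and follows exactly the route the paper intends but leaves implicit: the paper smuggles the key equality $\CM^\ar_\varphi = (\CM^\ar_\chi)^{e_\varphi}$ into Definition \ref{defI3} and then states Theorem \ref{theoI2bis} as an immediate consequence, while you supply the justification — namely the $\Z_p[G_\chi]$-stability of $\CM^\ar_\chi$ via $\G$-equivariance of the arithmetic norms, from which the splitting under the complete orthogonal idempotent system of Theorem \ref{theoI2} transfers formally. Your spelled-out version is a welcome expansion of the paper's terse presentation.
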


\subsection{Summary of the properties of the \texorpdfstring{$\G$}{Lg}-families 
\texorpdfstring{$\CM^\alg$, $\CM^\ar$}{Lg}}\label{mainresults}
From Notations \ref{notations}, Theorems \ref{theoI5}, \ref{theoI2}, 
\ref{theoI2bis}, Definitions \ref{defI2}, \ref{defI4e}, \ref{defI3}:

\medskip
(i) Recall that $P_\chi$ (resp. $P_\varphi \mid P_\chi$)
is the $g^{}_\chi$th global cyclotomic poly\-nomial (resp. the local $\varphi$-cyclotomic 
polynomial); let's define:
\begin{equation*}
\left\{\begin{aligned}
\CM^\alg_\chi & := \big\{ x \in \CM_{K_\chi},\  x^{P_\chi(\sigma_\chi) } = 1 \big\}, \\
\CM^\alg_\varphi & := \big\{ x \in \CM_{K_\chi},\  x^{P_\varphi (\sigma_\chi) } = 1 \big\}
=: (\CM^\alg_\chi)_{\varphi_0^{}} ,\\
\CM^\ar_\chi & := \{x \in \CM^\alg_\chi,\ \Norm_{K_\chi/k}(x) = 1,\ \forall\, 
k \varsubsetneqq K_\chi \}, \\
\CM^\ar_\varphi & :=  \big\{ x \in \CM_\varphi^\alg,\  \Norm_{K_\chi/k}(x) = 1,\ \forall\, 
k \varsubsetneqq K_\chi \} =: (\CM^\ar_\chi)_{\varphi_0^{}} .
\end{aligned}\right.
\end{equation*} 

Then $\CM^\alg_\chi = \plus_{\varphi \mid \chi} \CM^\alg_\varphi$ and 
$\CM^\ar_\chi = \plus_{\varphi \mid \chi} \CM^\ar_\varphi$.
All these components are $\Z_p[\mu_{g_\chi^{}}]$-modules
via $\sigma \in G_\chi \mapsto \psi(\sigma)$, for
$\psi \mid \chi$, $\psi \mid \varphi$, respectively.

\medskip
(ii) Assume that the maximal $p$-sub-extension of $K/\Q$
is cyclic and such that for all its sub-extensions $k/k'$, 
the norms $\Norm_{k/k'}$ are surjective. Then, if $\CM_K$ is finite,
$\order \CM_K = \prod_{\chi \in \CX_K} \order \CM^\ar_\chi =
\prod_{\varphi \in \Phi_K} \order \CM^\ar_\varphi$.

\section{Application to relative class groups}\label{secII}

\subsection{Arithmetic definition of relative class groups}\label{subII1}

We will apply the previous results using first odd characters $\chi$ giving 
$\BH^\alg_\chi$ and $\BH^\ar_\chi$.  The case of even characters requires some 
deepening of Leopoldt's results \cite{15Leo}; it will be considered in the next section.

\smallskip
For $K \in \CK$, we denote by $\BH_K$ the class group of $K$
in the ordinary sense. If $K$ is imaginary, with maximal real subfield $K^+$, 
we define the relative class group of $K$:
\begin{equation}\label{classmoins}
(\BH^\ar_K)^- := \{h \in \BH_K,\ \, \Norm_{K/K^+}(h)=1\} 
\end{equation}

\noindent
(the notation $\BH^\ar$ recalls that the definition of the minus part uses 
the arithmetic norm and not the algebraic one $\Nu_{K/K^+}$).

\smallskip
It is classical to put $\BH_K^+ := \BH_{K^+}$; since $K/K^+$
is ramified for the real infinite places of $K^+$, class field theory 
implies that $\Norm_{K/K^+}$ is surjective for class groups in the 
ordinary sense, giving the exact sequence:
\[1\to (\BH^\ar_K)^- \too \BH_K\ds 
\mathop{\tooo}_{}^{{}_{\Norm_{K\!/\!K^+}}} \BH_{K^+} = \BH_K^+ \to 1\] 
and the formula:
\begin{equation}\label{order+-}
\order \BH_K = \order (\BH^\ar_K)^- \cdot \order \BH_K^+. 
\end{equation}

We denote by $\CH_K$ (resp. $(\CH^\ar_K)^-$ and $\CH_K^+ := \CH_{K^+}$), 
the $p$-Sylow subgroup of $\BH_K$ (resp. $(\BH^\ar_K)^-$ and $\BH_K^+$). 
For the $\Z_p[\G]$-modules $\CH_K$, we introduce the $\CA_\chi$-modules 
$\CH^\alg_\chi$ and $\CH^\ar_\chi$ for $\chi \in \CX$, then their $\varphi$-components 
(Definitions \ref{defI2}, \ref{defI4e}, \ref{defI3}) which are $\Z_p[\mu_{g^{}_\chi}]$-modules.

\subsection{Proof of the equality \texorpdfstring{$\BH^\ar_\chi = \BH^\alg_\chi$}{Lg}, for all
\texorpdfstring{$\chi \in \CX^-$}{Lg}}\label{subII2}

To prove this equality and then the equalities $\CH^\ar_\varphi = \CH^\alg_\varphi$, 
$\varphi \mid \chi$, it is sufficient to consider, for any $p \geq 2$, the $p$-Sylow 
subgroups $\CH_{K_\chi}$ and to prove the equality of the $\chi$-components 
$\CH_\chi^\alg$, $\CH_\chi^\ar$.

\begin{lemma}\label{lemII1}
Assume that $\CH^\ar_\chi \varsubsetneqq \CH^\alg_\chi$. Then there exists a unique
sub-extension $K_{\chi'}$ of $K_\chi$, such that 
$[K_\chi : K_{\chi'}] = p$ (i.e., if $\psi \mid \chi$ then $\chi'$ is above $\psi' = \psi^p$), 
and a class $h \in \CH_\chi^\alg$ such that 
$h' := \Norm_{K_\chi / K_{\chi'}}(h)$ fulfills the following properties:

\smallskip
(i) For all prime $\ell \ne p$ dividing $g^{}_\chi$, $\Nu_{K_{\chi'}/k'_\ell}(h')=1$,
where $k'_\ell$ is the unique sub-extension of $K_{\chi'}$ such that $[K_{\chi'} : k'_\ell] = \ell$;

(ii) $\J_{K_\chi / K_{\chi'}}(h') = 1$;

(iii) $h'$ is of order $p$ in $\CH_{K_{\chi'}}$.
\end{lemma}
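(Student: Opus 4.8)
The plan is to start from an arbitrary $h\in\CH^\alg_\chi\sauf\CH^\ar_\chi$ and locate the ``first'' subfield where an arithmetic norm of $h$ fails to vanish. First I would fix such an $h$: by Theorem~\ref{theoI1} it satisfies $\Nu_{K_\chi/k}(h)=1$ for every $k\varsubsetneqq K_\chi$, while by hypothesis $\Norm_{K_\chi/k}(h)\ne 1$ for at least one such $k$. The set of subfields $k\subseteq K_\chi$ with $\Norm_{K_\chi/k}(h)\ne 1$ is then nonempty and finite; let $K_{\chi'}$ be one that is maximal with respect to inclusion. Transitivity of the arithmetic norms forces $[K_\chi:K_{\chi'}]$ to be a prime $\ell$: if there were an intermediate field $k$ with $K_{\chi'}\varsubsetneqq k\varsubsetneqq K_\chi$, maximality would give $\Norm_{K_\chi/k}(h)=1$, whence $\Norm_{K_\chi/K_{\chi'}}(h)=\Norm_{k/K_{\chi'}}\big(\Norm_{K_\chi/k}(h)\big)=1$, a contradiction. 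Put $h':=\Norm_{K_\chi/K_{\chi'}}(h)\ne 1$.

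Next I would prove $\ell=p$, which simultaneously yields (ii) and (iii). Since $h\in\CH^\alg_\chi$, Theorem~\ref{theoI1} gives $\Nu_{K_\chi/K_{\chi'}}(h)=1$; by condition~(c) this equals $\J_{K_\chi/K_{\chi'}}\big(\Norm_{K_\chi/K_{\chi'}}(h)\big)=\J_{K_\chi/K_{\chi'}}(h')$, so $\J_{K_\chi/K_{\chi'}}(h')=1$, which is~(ii). Applying $\Norm_{K_\chi/K_{\chi'}}$ and using that for class groups $\Norm_{L/k}\circ\J_{L/k}$ is the $[L:k]$-th power map — the elementary ideal identity $\Norm_{L/k}(\mathfrak a\,\mathcal O_L)=\mathfrak a^{[L:k]}$, valid with no extra hypothesis, compare Proposition~\ref{propI2} — one gets $(h')^\ell=\Norm_{K_\chi/K_{\chi'}}(1)=1$. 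But $h'\ne 1$ lies in the $p$-group $\CH_{K_{\chi'}}$, so $\ell$ cannot be a prime different from $p$; hence $\ell=p$, and $K_{\chi'}$ is the unique index-$p$ subfield of the cyclic field $K_\chi$, so for $\psi\mid\chi$ the character $\chi'$ is the one above $\psi^p$. Moreover $(h')^p=1$ together with $h'\ne 1$ shows $h'$ has order exactly $p$, which is~(iii).

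Finally I would check (i). Fix a prime $\ell\ne p$ dividing $g^{}_\chi$; then $\ell\mid g^{}_\chi/p=g^{}_{\chi'}$, so the unique index-$\ell$ subfields $k_\ell\varsubsetneqq K_\chi$ and $k'_\ell\varsubsetneqq K_{\chi'}$ both exist. As $p$ and $\ell$ are coprime one has $K_{\chi'}k_\ell=K_\chi$ and $K_{\chi'}\cap k_\ell=k'_\ell$, so restriction induces an isomorphism $\Gal(K_\chi/k_\ell)\simeq\Gal(K_{\chi'}/k'_\ell)$. Because $\Norm_{K_\chi/K_{\chi'}}$ is $\G$-equivariant (condition~(b)), each $\Gal(K_{\chi'}/k'_\ell)$-conjugate of $h'$ is the $\Norm_{K_\chi/K_{\chi'}}$-image of the corresponding $\Gal(K_\chi/k_\ell)$-conjugate of $h$; multiplying these together gives $\Nu_{K_{\chi'}/k'_\ell}(h')=\Norm_{K_\chi/K_{\chi'}}\big(\Nu_{K_\chi/k_\ell}(h)\big)$. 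By Theorem~\ref{theoI1} with $k=k_\ell$ we have $\Nu_{K_\chi/k_\ell}(h)=1$, hence $\Nu_{K_{\chi'}/k'_\ell}(h')=1$, which is~(i). Uniqueness of $K_{\chi'}$ is just uniqueness of the index-$p$ subfield of the cyclic extension $K_\chi/\Q$.

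The only mildly delicate step is the commutation $\Nu_{K_{\chi'}/k'_\ell}\circ\Norm_{K_\chi/K_{\chi'}}=\Norm_{K_\chi/K_{\chi'}}\circ\Nu_{K_\chi/k_\ell}$ in this subfield diamond, which, however, reduces at once to the coprimality-based identification of the relevant Galois groups; I do not expect a genuine obstacle. It is worth noting that oddness of $\chi$ plays no role in this lemma: it will enter only later, when a contradiction is extracted from~(i)--(iii) to conclude that in fact $\CH^\ar_\chi=\CH^\alg_\chi$ (Theorem~\ref{theoII1}).
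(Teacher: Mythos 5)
Your proof is correct, and it tracks the same essential computations as the paper's (the relations $\Nu_{K/k}=\J_{K/k}\circ\Norm_{K/k}$ and $\Norm_{K/k}\circ\J_{K/k}=[K:k]$ on $p$-class groups, Theorem~\ref{theoI1}, the equivariance of $\Norm_{K_\chi/K_{\chi'}}$ for part~(i)), but the organization is genuinely different and worth pointing out. The paper begins by noting that if $p\nmid g_\chi$ then $\Z_p[G_\chi]$ is semi-simple and $\CH^\ar_\chi=\CH^\alg_\chi$; the strict inclusion therefore forces $p\mid g_\chi$, which hands the paper its unique index-$p$ subfield $K_{\chi'}$, and it must then run a reductio (``suppose $h'=1$'') — using injectivity of $\J_{K_\chi/k_\ell}$ for $\ell\ne p$ — to get $h'\ne 1$ for property~(iii). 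You instead choose $K_{\chi'}$ maximal among subfields at which the arithmetic norm of $h$ is nontrivial, so $h'\ne 1$ is built in; the degree $[K_\chi:K_{\chi'}]$ is then prime by the transitivity/maximality argument, and the cheap computation $(h')^\ell=\Norm_{K_\chi/K_{\chi'}}\big(\J_{K_\chi/K_{\chi'}}(h')\big)=1$ together with $h'\ne 1$ in a $p$-group forces $\ell=p$, delivering (ii), (iii) and even $p\mid g_\chi$ at once. Your route thus avoids both the appeal to semi-simplicity and the reductio, and is somewhat more self-contained; the trade-off is that the index-$p$ field is identified a posteriori rather than up front. One small caveat: you correctly note that $\Norm\circ\J$ is the $[K:k]$-power map \emph{unconditionally for class groups} (from the ideal identity), which is stronger than what Proposition~\ref{propI2} literally gives for an abstract arithmetic $\G$-family; the paper uses the same unconditional fact implicitly when it says that a capitulating class has order $1$ or $p$, so you are on common ground, but it is worth flagging that this is a class-group-specific input and not a formal consequence of axioms (a)--(c).
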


\begin{proof}
Indeed, if $[K_\chi : \Q]$ is prime to $p$, we are in the semi-simple case
and $\CH^\alg_\chi = \CH^\ar_\chi$. So we assume that $p \mid [K_\chi : \Q]$, 
whence the existence and unicity of $K_{\chi'}$.

\smallskip
Let $h \in \CH^\alg_\chi$, $h \notin \CH^\ar_\chi$, and let $h' := \Norm_{K_\chi / K_{\chi'}}(h)$.
Let $\ell \mid g^{}_\chi$, $\ell \ne p$.

\smallskip
(i) We have the following diagram where $k_\ell^{}$ is the unique sub-extension
of $K_\chi$ such that $[K_\chi : k_\ell^{}] = \ell$ and then $k'_\ell = k_\ell^{} \cap K_{\chi'}$:
\subsubsection{Schema II} \label{figII}
\unitlength=0.75cm
\[\vbox{\hbox{\hspace{-5.0cm} \vspace{-0.15cm}
\begin{picture}(11.5,2.9)
\put(6.1,2.50){\line(1,0){2.8}}
\put(6.1,0.50){\line(1,0){2.8}}
\put(5.50,0.9){\line(0,1){1.20}}
\put(9.40,0.9){\line(0,1){1.20}}
\put(9.1,2.4){\ft$K_\chi$}
\put(5.4,2.4){\ft$k_\ell^{}$}
\put(5.4,0.40){\ft$k'_\ell$}
\put(9.1,0.4){\ft$K_{\chi'}$}
{\color{red}\put(10.2,2.4){$h$}
\put(10.2,0.4){\ft$h' \!:=\! \Norm_{K_\chi / K_{\chi'}}(h)$}}
\put(7.5,2.6){\ft$\ell$}
\put(7.5,0.6){\ft$\ell$}
\put(9.5,1.4){\ft$p$}
\put(5.65,1.4){\ft$p$}
\end{picture}   }} \]
\unitlength=1.0cm

We have $\Nu_{K_\chi /k_\ell^{}}(h)=1$ since $h \in \CH^\alg_\chi$; applying 
$\Norm_{K_\chi / K_{\chi'}}$, we get $\Nu_{K_{\chi'} /k'_\ell}(h')=1$.

\smallskip
(ii) We have $\J_{K_\chi / K_{\chi'}}(h') = 
\J_{K_\chi / K_{\chi'}} \circ \Norm_{K_\chi / K_{\chi'}}(h) = \Nu_{K_\chi / K_{\chi'}}(h)=1$
since $h \in \CH^\alg_\chi$.

\smallskip
(iii) Since the class $h'$ capitulates in $K_\chi$, its order is $1$ or $p$.
Suppose that $h'=1$; 
for $\ell \ne p$, the maps $\J_{K_\chi/k_\ell^{}}$ and $\J_{K_{\chi'}/k'_\ell}$
are injective, so $\Norm_{K_\chi /k_\ell^{}}(h) = 1$,
for all $\ell \ne p$ dividing $g^{}_\chi$; since moreover $h'=\Norm_{K_\chi / K_{\chi'}}(h)=1$, 
this yields by definition $h \in \CH^\ar_\chi$ (absurd).
\end{proof}

\begin{lemma}\label{lemII2}
Let $K/k$ be a cyclic extension of degree $p$ 
and Galois group $G =: \langle \sigma \rangle$. Let $\BE_k$
and $\BE_K$ be the unit groups of $k$ and $K$, respectively. Consider
the transfer map 
$\J_{K/k} : \CH_k \to \CH_K$; then $\Ker (\J_{K/k})$ is isomorphic to a 
subgroup of $\Hom^1(G,\BE_K) \simeq \BE_K^*/\BE_K^{1-\sigma}$
(where $\BE_K^* = \Ker(\Nu_{K/k}))$.
The group $\BE_K^*/\BE_K^{1-\sigma}$ is of exponent $1$ or $p$.
\end{lemma}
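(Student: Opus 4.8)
The plan is to realise $\Ker(\J_{K/k})$ inside $\Hom^1(G,\BE_K)$ via the cocycle attached to capitulation, and then to read off the two facts special to cyclic groups (the explicit description of $\Hom^1$, and its annihilation by the group order). First I would take $c\in\Ker(\J_{K/k})$, choose an ideal $\mathfrak a$ of $k$ with $[\mathfrak a]=c$, and write $\mathfrak a\,\mathcal O_K=\alpha\,\mathcal O_K$ with $\alpha\in K^\times$ — possible precisely because $\J_{K/k}(c)=1$. Since $\mathfrak a\,\mathcal O_K$ is $G$-stable, for each $\tau\in G$ the element $\alpha^{\tau-1}$ generates $\mathcal O_K$, hence lies in $\BE_K$, and $\tau\mapsto\alpha^{\tau-1}$ is a $1$-cocycle (with $c_\sigma=\alpha^{\sigma-1}$ killed by $\Nu_{K/k}$). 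Replacing $\alpha$ by $\alpha v$ with $v\in\BE_K$, or replacing $\mathfrak a$ by a principal ideal $\beta\,\mathcal O_k$ of $k$ (which multiplies $\alpha$ by a unit of $K$), alters the cocycle only by a coboundary; so $c\mapsto[\tau\mapsto\alpha^{\tau-1}]$ is a well-defined homomorphism $\Ker(\J_{K/k})\to\Hom^1(G,\BE_K)$, visibly multiplicative in $c$.

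For injectivity I would argue: if the attached class is trivial, say $\alpha^{\sigma-1}=u^{\sigma-1}$ for some $u\in\BE_K$, then $\alpha u^{-1}$ is $\sigma$-fixed, hence lies in $k^\times$; calling it $\beta$ gives $\mathfrak a\,\mathcal O_K=\beta\,\mathcal O_K$, and since ideal extension from $\mathcal O_k$ to $\mathcal O_K$ is injective this forces $\mathfrak a=\beta\,\mathcal O_k$, i.e.\ $c=1$. Hence $\Ker(\J_{K/k})$ is isomorphic to a subgroup of $\Hom^1(G,\BE_K)$. It then remains to identify $\Hom^1(G,\BE_K)$: for the cyclic group $G=\langle\sigma\rangle$ of order $p$ one has the canonical isomorphism $\Hom^1(G,\BE_K)\simeq\BE_K^*/\BE_K^{1-\sigma}$ with $\BE_K^*=\Ker(\Nu_{K/k})$ on $\BE_K$, where I would note that the algebraic norm $\Nu_{K/k}$ and the arithmetic norm $\Norm_{K/k}$ have the same kernel on $\BE_K$ because $\J_{K/k}$ is injective there (Examples \ref{ssI3c}(i)). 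Finally $\Hom^n(G,-)$ is annihilated by $|G|=p$ for $n\ge1$, so $\BE_K^*/\BE_K^{1-\sigma}$ has exponent dividing $p$, hence equal to $1$ or $p$.

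The statement is a repackaging of the Hilbert $90$/$94$ circle of ideas, so there is no genuinely hard step; the only points requiring care are the well-definedness and injectivity bookkeeping of the cocycle map — in particular the injectivity of ideal extension $\mathfrak a\mapsto\mathfrak a\,\mathcal O_K$ — and the compatibility of $\Nu_{K/k}$ with $\Norm_{K/k}$ on units, which is exactly the injectivity of $\J_{K/k}$ recorded in Examples \ref{ssI3c}(i).
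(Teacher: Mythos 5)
Your proof is correct and follows essentially the same route as the paper: the map $\cl_k(\mathfrak a)\mapsto \alpha^{1-\sigma}\bmod\BE_K^{1-\sigma}$, well-definedness and injectivity via Hilbert~90/94-type bookkeeping (the paper simply declares the injectivity "obvious," while you spell it out), and then the exponent bound. The only cosmetic difference is that for the exponent you invoke the general annihilation of $\Hom^n(G,-)$ by $|G|$, whereas the paper writes out the underlying identity $1+\sigma+\cdots+\sigma^{p-1}=p+(\sigma-1)\Omega$ directly — the same fact, just cited vs.\ computed.
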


\begin{proof}
Let $\BZ_k$ and $\BZ_K$ be the rings of integers of $k$ and $K$, respectively;
let $\cl_k({\mathfrak a}) \in \CH_k$, with ${\mathfrak a} \BZ_K =: (\alpha)\BZ_K$,
$\alpha \in K^\times$. We then have $\alpha^{1-\sigma} =: \varepsilon \in 
\BE_K^*$. The map, which associates with 
$\cl_k({\mathfrak a}) \in \Ker (\J_{K/k})$ the class of $\varepsilon$ modulo 
$\BE_K^{1-\sigma}$, is obviously injective.

\smallskip
If $\varepsilon \in \BE_K^*$, then $1 = \varepsilon^{1+ \sigma + \cdots + \sigma^{p - 1}} 
= \varepsilon^{p + (\sigma - 1) \Omega}$, $\Omega \in \Z[G]$; whence
$\varepsilon^p \in \BE_K^{1-\sigma}$.
\end{proof}

\subsubsection{Study of the case \texorpdfstring{$p \ne 2$}{Lg}}\label{ssII2b}
We are in the context of Lemma \ref{lemII1}. Put $K:=K_\chi$ and $k:=K_{\chi'}$; 
then $K/k$ is of degree $p$ and the class $h' = \Norm_{K/k}(h) \in \CH_k$ is of
order $p$ and capitulates in $K$.

\smallskip
Assume that $K$ is imaginary (i.e., $\chi$ is odd, thus $h \in (\CH^\ar_K)^-$); 
since $K/k$ is of degree $p \ne 2$, $k$ is also imaginary and $h' \in (\CH^\ar_k)^-$. 

\smallskip
We introduce the maximal real subfields, giving the diagram:
\subsubsection{Schema III} \label{figIII}
\unitlength=0.75cm
\[\vbox{\hbox{\hspace{-3.7cm} \vspace{-0.2cm}
\begin{picture}(11.5,3.0)
\put(6.1,2.50){\line(1,0){1.8}}
\put(6.1,0.50){\line(1,0){1.8}}
\put(5.50,0.9){\line(0,1){1.20}}
\put(8.3,0.9){\line(0,1){1.20}}
\put(8.1,2.4){\ft$K$}
\put(5.3,2.4){\ft$K^+$}
\put(5.3,0.40){\ft$k^+$}
\put(8.2,0.4){\ft$k$}
{\color{red}\put(8.95,2.4){\ft$h$}
\put(8.95,0.4){\ft$h' \!:=\! \Norm_{K / k}(h)$}}
\put(7.0,2.65){\ft$2$}
\put(7.0,0.65){\ft$2$}
\put(7.9,1.4){\ft$p$}
\put(5.12,1.4){\ft$p$}
\put(9.0,1.35){\ft$G\!=\!\langle \sigma \rangle$}
{\color{red}\bezier{250}(8.6,0.6)(9.2,1.5)(8.6,2.4)}
\end{picture}   }} \]
\unitlength=1.0cm

\begin{lemma}\label{lemII3}
Let $\mu_K^*$ be the $p$-torsion sub-group of $\BE_K^*$, that is to say the 
set of $p$-roots of unity $\zeta$ of $K$ such that $\Norm_{K/k}(\zeta)=1$. Then 
the image of $(\CH^\ar_k)^- \cap \Ker(\J_{K/k})$, by the map $\Ker(\J_{K/k}) \to 
\BE_K^*/\BE_K^{1-\sigma}$ of Lemma \ref{lemII2}, is contained in the image 
of $\mu_K^*$ modulo $\BE_K^{1-\sigma}$.
\end{lemma}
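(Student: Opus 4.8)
The plan is to compute the image of a minus class under the map $\Ker(\J_{K/k})\to\BE_K^{*}/\BE_K^{1-\sigma}$ of Lemma~\ref{lemII2}, and to show that the relative‑class hypothesis forces the resulting class to be represented, modulo $\BE_K^{1-\sigma}$, by a $p$-power root of unity of trivial $\Norm_{K/k}$.

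First I would fix $h=\cl_k(\mathfrak a)\in(\CH^{\ar}_k)^{-}\cap\Ker(\J_{K/k})$. Since $h\in\Ker(\J_{K/k})$ we may write $\mathfrak a\,\z_K=(\alpha)\,\z_K$ with $\alpha\in K^{\times}$, and $\varepsilon:=\alpha^{1-\sigma}\in\BE_K^{*}$ represents the image of $h$ under the map of Lemma~\ref{lemII2}. Next I would bring in the minus condition $\Norm_{k/k^{+}}(h)=1$: it says $\mathfrak a\,\mathfrak a^{s}=(\beta)\,\z_k$ for some $\beta\in(k^{+})^{\times}$, whence $\alpha\,\alpha^{s}=u\,\beta$ with $u\in\BE_K$, and therefore, using that $\sigma$ and $s$ commute ($K$ being abelian over $\Q$) and that $\beta\in k$ is fixed by $\sigma$,
$$\varepsilon^{\,1+s}=(\alpha^{1+s})^{1-\sigma}=(u\beta)^{1-\sigma}=u^{1-\sigma}\in\BE_K^{1-\sigma}.$$
So, in $V:=\BE_K^{*}/\BE_K^{1-\sigma}$, the class $\bar\varepsilon$ of $h$ satisfies $\bar\varepsilon^{\,s}=\bar\varepsilon^{-1}$.

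Now by Lemma~\ref{lemII2} the group $V$ has exponent dividing $p$, and $p\neq2$; from $\bar\varepsilon^{\,s}=\bar\varepsilon^{-1}$ one gets $\bar\varepsilon^{\,2}=\bar\varepsilon^{\,1-s}=\overline{\varepsilon^{1-s}}$. The element $\varepsilon^{1-s}=\varepsilon/\varepsilon^{s}$ has absolute value $1$ at every archimedean place of the CM field $K$ (because $s$ realises complex conjugation), hence is a root of unity by Kronecker's theorem; and $\Norm_{K/k}(\varepsilon^{1-s})=\Norm_{K/k}(\varepsilon)^{1-s}=1$, so $\varepsilon^{1-s}\in\mu_K\cap\BE_K^{*}$. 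Since $V$ is killed by the odd prime $p$, the image in $V$ of any element of $\mu_K\cap\BE_K^{*}$ coincides with that of its $p$-primary component, which is a $p$-power root of unity with trivial $\Norm_{K/k}$, i.e.\ an element of $\mu_K^{*}$. Finally $2$ is invertible modulo $p$, so with $2m\equiv1\pmod p$ one has $\bar\varepsilon=(\overline{\varepsilon^{1-s}})^{m}$, and hence $\bar\varepsilon$ lies in the image of $\mu_K^{*}$ in $V$, which is the assertion.

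The main obstacle is the middle step: converting $\Norm_{k/k^{+}}(h)=1$ into the identity $\varepsilon^{1+s}\in\BE_K^{1-\sigma}$, where one must be careful that $\beta$ can be taken in $k^{+}$ (not merely in $k$) and that the commuting $\sigma$- and $s$-actions make the factor $\beta^{1-\sigma}$ disappear; once this identity is secured, the remainder is the classical ``$\varepsilon^{1-s}$ is a root of unity'' argument together with routine exponent‑$p$ bookkeeping (the passage from $\mu_K\cap\BE_K^{*}$ to its $p$-primary subgroup $\mu_K^{*}$ being harmless because $V$ is annihilated by the odd prime $p$).
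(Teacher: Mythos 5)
Your proof is correct, and it reaches the lemma by a route that is genuinely different from the paper's. Both arguments begin identically: writing $\mathfrak a\z_K=(\alpha)\z_K$, $\varepsilon:=\alpha^{1-\sigma}\in\BE_K^{*}$, and using the minus condition to produce the identity $\varepsilon\varepsilon^{s}\in\BE_K^{1-\sigma}$ (your $\beta\in(k^{+})^{\times}$ is more than is needed here: $\beta\in k^{\times}$ suffices, since $\sigma$ already fixes all of $k$). The divergence comes at the next step. The paper invokes Hasse's Satz~24, $Q_K^{-}=1$ for imaginary cyclic $K/\Q$, to write $\varepsilon=\varepsilon^{+}\zeta$ with $\varepsilon^{+}\in\BE_{K^{+}}$ and $\zeta\in\mu_K$; it then deduces $q(\varepsilon^{+2})=1$, hence $q(\varepsilon^{+})=1$ (oddness of $p$), and finally $q(\varepsilon)=q(\zeta)\in q(\mu_K^{*})$. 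You avoid Hasse's unit-index theorem entirely: from $\bar\varepsilon^{\,s}=\bar\varepsilon^{-1}$ you get $\bar\varepsilon^{\,2}=\overline{\varepsilon^{1-s}}$, identify $\varepsilon^{1-s}$ as a root of unity by the Kronecker argument on archimedean absolute values in a CM field, pass to its $p$-primary part using that $V$ has exponent $p$, and then divide by $2$ modulo $p$. Both proofs lean on the same two structural facts — $V$ has exponent dividing $p$, and $p$ is odd — to eliminate the stray factor of $2$; your argument is a bit more self-contained in that it replaces a citation of $Q_K^{-}=1$ by the elementary observation that $\varepsilon/\varepsilon^{s}$ is a root of unity, while the paper's is slightly more direct in producing $\zeta$ rather than $\zeta^{2}$.
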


\begin{proof}
Let $q$ be the map $\BE_K^* \to \BE_K^*/\BE_K^{1-\sigma}$. 
Denote by $x \mapsto \ov x$ the complex conjugation in $K$. 
If $h' \in (\CH^\ar_k)^- \cap \Ker(\J_{K/k})$, then $\Norm_{k/k^+}(h') = 1$ and
$\Nu_{k/k^+}(h') = h' \ov {h'} =1$; if $h' = \cl_k({\mathfrak a})$ we then 
have ${\mathfrak a} \ov {\mathfrak a}=a \BZ_k$, $a \in k^\times$, and
${\mathfrak a}\BZ_K \ov {\mathfrak a}\BZ_K=a \BZ_K$, with 
${\mathfrak a}\BZ_K=(\alpha) \BZ_K$ and $\ov {\mathfrak a}\BZ_K
=(\ov \alpha) \BZ_K$, $\alpha \in K^\times$
(since ${\mathfrak a}$ and $\ov {\mathfrak a}$ become 
principal in $K$), which yields relations of the form 
$\alpha^{1-\sigma} = \varepsilon$, $\ov \alpha^{1-\sigma} 
= \ov \varepsilon$, $\varepsilon, \ov \varepsilon \in \BE_K^*$.
From the relation ${\mathfrak a} \ov {\mathfrak a}=a \BZ_k$, 
one obtains, in $K$, $\alpha \ov \alpha = \eta a$, $\eta \in \BE_K$, then
$\alpha^{1-\sigma} \ov \alpha^{1-\sigma} = \eta^{1-\sigma}$, giving
$\varepsilon \ov \varepsilon = \eta^{1-\sigma}$.

\smallskip
From \cite[Satz 24]{10Has}, $\varepsilon = \varepsilon^+\, \zeta$, 
$\varepsilon^+ \in \BE_{K^+}$,
$\zeta \in \mu_K^{}$. So $q(\varepsilon \ov \varepsilon) = q(\varepsilon^{+2})=1$.
Since $p$ is odd and $\BE_K^*/\BE_K^{1-\sigma}$ of exponent divisor of $p$,
$\varepsilon^+ \in \BE_K^{1-\sigma}$; since $\varepsilon \in \BE_K^*$, we have
$\zeta \in \BE_K^*$, whence:
\[q(\varepsilon) = q(\zeta) \in q(\mu_K^*) = \mu_K^*/(\BE_K^{1-\sigma} \cap \mu_K^*),\]
and the lemma.
\end{proof}

\begin{lemma}\label{lemII4}
The group $q(\mu_K^*)$ (of order $1$ or $p$) is of order $p$ if and
only if $\mu_K^* = \langle \zeta_1 \rangle$ and
$\BE_K^{1-\sigma} \cap \langle \zeta_1 \rangle = 1$,
where $\zeta_1$ is of order $p$.
\end{lemma}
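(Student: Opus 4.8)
The implication from right to left is immediate: if $\mu_K^*=\langle\zeta_1\rangle$ with $\zeta_1$ of order $p$ and $\langle\zeta_1\rangle\cap\BE_K^{1-\sigma}=1$, then the map $q$ is injective on $\langle\zeta_1\rangle$, so $q(\mu_K^*)\simeq\langle\zeta_1\rangle$ has order $p$. I will therefore concentrate on the converse, where the real content is to show that, once $q(\mu_K^*)$ has order $p$, the group $\mu_K^*$ must itself have order exactly $p$; indeed, granting that, $\langle\zeta_1\rangle\cap\BE_K^{1-\sigma}=1$ is forced by comparing orders in $q(\mu_K^*)\simeq\mu_K^*/(\mu_K^*\cap\BE_K^{1-\sigma})$.

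So assume $q(\mu_K^*)$ has order $p$. Then $\mu_K^*\neq 1$, hence $K$ contains a primitive $p$-th root of unity $\zeta_p$, and $\mu_K^*$, being a finite group of $p$-power roots of unity, is cyclic. The plan is to determine precisely how the $p$-power roots of unity sit in the degree-$p$ extension $K/k$. Let $\mu_{p^c}$ (resp. $\mu_{p^{c'}}$) denote the group of $p$-power roots of unity in $K$ (resp. $k$), so $1\leq c$ and $0\leq c'\leq c$. First, $c'\geq 1$: the subextension $k(\zeta_p)/k$ of $K/k$ has degree dividing both $[K:k]=p$ and $[\Q(\zeta_p):\Q]=p-1$, hence degree $1$, so $\zeta_p\in k$. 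Next, $c-c'\leq 1$: since $p\neq 2$, the group $\Gal(\Q(\mu_{p^c})/\Q(\mu_{p^{c'}}))$ is cyclic, so each of its subextensions is of the cyclotomic form $\Q(\mu_{p^{c'+j}})$; hence $\Q(\mu_{p^c})\cap k=\Q(\mu_{p^{c'}})$ and $[k(\mu_{p^c}):k]=p^{c-c'}$, and since $k(\mu_{p^c})\subseteq K$ with $[K:k]=p$ this forces $c\in\{c',c'+1\}$.

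It then remains to compute $\mu_K^*$ in each of the two cases, which is where the (short) arithmetic sits. If $c=c'$, then $\mu_{p^c}\subseteq k$, so $\sigma$ fixes $\mu_{p^c}$ pointwise, $\Norm_{K/k}(\zeta)=\zeta^p$ for all $\zeta\in\mu_{p^c}$, and thus $\mu_K^*=\{\zeta\in\mu_{p^c}:\zeta^p=1\}=\langle\zeta_p\rangle$ has order $p$. If $c=c'+1$ (so $c\geq 2$), then $K=k(\mu_{p^c})$ and $\Gal(K/k)=\langle\sigma\rangle$ acts on $\mu_{p^c}$ as the unique order-$p$ subgroup of $(\Z/p^c\Z)^\times$ fixing $\mu_{p^{c-1}}$, so $\sigma(\zeta_{p^c})=\zeta_{p^c}^{\,1+up^{c-1}}$ with $p\nmid u$; since $p$ is odd one has $(1+up^{c-1})^i\equiv 1+iup^{c-1}\pmod{p^c}$ and therefore $\sum_{i=0}^{p-1}(1+up^{c-1})^i\equiv p\pmod{p^c}$, so $\Norm_{K/k}(\zeta_{p^c}^{\,j})=\zeta_{p^c}^{\,jp}$ and $\mu_K^*=\langle\zeta_{p^c}^{\,p^{c-1}}\rangle$ again has order $p$. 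In both cases $\mu_K^*=\langle\zeta_1\rangle$ with $\zeta_1$ of order $p$, and comparing with $|q(\mu_K^*)|=p$ gives $\langle\zeta_1\rangle\cap\BE_K^{1-\sigma}=1$.

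The point I would check most carefully — and the main obstacle in the sense that it is the only place the structure of cyclotomic fields is genuinely used, and the only place the hypothesis $p\neq 2$ enters — is the step $c-c'\leq 1$, i.e. the claim that every subextension of $\Q(\mu_{p^c})/\Q(\mu_{p^{c'}})$ is cyclotomic (which can fail for $p=2$), together with the elementary congruence $\sum_{i=0}^{p-1}(1+up^{c-1})^i\equiv p\pmod{p^c}$ used in the case $c=c'+1$ (which again uses $p$ odd). Everything else is bookkeeping about the two families of roots of unity $\mu_{p^c}$ and $\mu_{p^{c'}}$.
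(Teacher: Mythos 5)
Your proof is correct, and it establishes the same key fact as the paper --- that $\mu_K^*$ is exactly $\langle \zeta_1\rangle$ of order $p$ --- but it organizes the argument differently. The paper fixes a generator $\zeta$ of $\mu_K^*$ and splits on whether $\zeta\in k$: if $\zeta\notin k$ it writes $K=k(\sqrt[p]{\zeta^p})$, uses the relation $\zeta^{1-\sigma}=\zeta_1$ to get $\Norm_{K/k}(\zeta)=\zeta^p$ (the stray factor $\zeta_1^{p(p-1)/2}$ vanishing because $p$ is odd), and concludes by contradiction with $\Norm_{K/k}(\zeta)=1$, so that the generator must lie in $k$ and have order $p$. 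You instead classify the two possible configurations of the $p$-power roots of unity (same group in $K$ and $k$, or index $p$ with $K=k(\mu_{p^c})$, after proving $\zeta_p\in k$ and $c-c'\le 1$) and compute the norm on all of $\mu_{p^c}$ through the cyclotomic character, finding $\Norm_{K/k}(\zeta)=\zeta^p$ in both cases; the prime $p\ne 2$ enters at the same spot in both arguments (for you, $\sum_{i=0}^{p-1} i\equiv 0 \pmod p$ in the exponent sum; for the paper, the analogous cancellation behind $\Norm_{K/k}(\zeta)=\zeta^p$, and the cyclicity of $\Gal(\Q(\mu_{p^c})/\Q(\mu_{p^{c'}}))$). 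What your route buys is that the facts the paper asserts in passing ($\zeta_1\in k$, and that $K$ can exceed $k$ by at most one layer of $p$-power roots of unity) are proved explicitly, and the case of extra roots of unity in $K$ is handled directly as a genuine possibility rather than excluded by contradiction; the cost is a somewhat longer cyclotomic setup. The final step, comparing the order $p$ of $q(\mu_K^*)$ with the order $p$ of $\mu_K^*$ to force $\BE_K^{1-\sigma}\cap\langle\zeta_1\rangle=1$, is identical to the paper's.
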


\begin{proof}
A direction being obvious, assume that $q(\mu_K^*) = 
\mu_K^*/ (\BE_K^{1-\sigma} \cap \mu_K^*)$ is of order $p$
and let $\zeta$ be a generator of $\mu_K^*$ (necessarily, $\zeta \ne 1$). 
If $\zeta \in k$, then $\Norm_{K/k}(\zeta) = \zeta^p$, so $\zeta^p=1$ and
$\zeta = \zeta_1 \in k$. 

\smallskip
If $\zeta \notin k$, $K=k(\zeta)$; it follows that 
$\zeta_1 \in k$ and that $\zeta^p \in k$  (since $[K : k]=
[\Q(\zeta) : k \cap \Q(\zeta)]=p$), thus $K/k$ is a Kummer extension 
of the form $K = k(\sqrt[p]{\zeta_r})$, $\zeta_r$ of order $p^r$,
$r \geq 1$, $\zeta = \zeta_{r+1}$,
and $\zeta^{1-\sigma}=\zeta_1$, giving $\Norm_{K/k}(\zeta) = \zeta^p = 1$, 
hence $\zeta = \zeta_1 \in k$ (absurd).
So we have $\zeta = \zeta_1 \in k$ and
$\BE_K^{1-\sigma} \cap \mu_K^* \subseteq \langle \zeta_1 \rangle$. 
Thus, $q(\mu_K^*)$ being of order $p$, necessarily $\BE_K^{1-\sigma} \cap \mu_K^*=1$.
\end{proof}

\begin{lemma}\label{lemII5}
If $(\CH^\ar_k)^- \cap \Ker(\J_{K/k}) \ne 1$, this group is of order $p$ and $K/k$ is a 
Kummer extension of the form $K=k(\sqrt[p]{a})$, $a \in k^\times$, 
$a\BZ_k={\mathfrak a}^p$, the ideal ${\mathfrak a}$ of $k$ being non-principal
(such a Kummer extension is said to be ``of class type'').
\end{lemma}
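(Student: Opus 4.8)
The plan is to leverage the chain of Lemmas \ref{lemII2}--\ref{lemII4} already established in this subsection, which together pin down the structure of $\Ker(\J_{K/k})$ restricted to the relative $p$-class group. First I would assume $(\CH^\ar_k)^- \cap \Ker(\J_{K/k}) \neq 1$ and take a nontrivial class $h' = \cl_k({\mathfrak a})$ in it; then ${\mathfrak a}$ is automatically non-principal, and this is the ideal I will eventually exhibit. Since $h' \in \Ker(\J_{K/k})$, I write ${\mathfrak a}\z_K = (\alpha)\z_K$; as ${\mathfrak a}$ is $\sigma$-stable, $\varepsilon := \alpha^{1-\sigma}$ is a unit, and $\Norm_{K/k}(\varepsilon)=1$ puts it in $\BE_K^* = \Ker(\Norm_{K/k})$, so Lemma \ref{lemII2} applies.

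Next I would run the injection of Lemma \ref{lemII2}: the map $h' \mapsto q(\varepsilon) \in \BE_K^*/\BE_K^{1-\sigma}$ is injective, hence $q(\varepsilon)\neq 1$, and Lemma \ref{lemII3} forces $q(\varepsilon) \in q(\mu_K^*)$. This already shows $q(\mu_K^*)\neq 1$, so by Lemma \ref{lemII4} it has order $p$ and $\mu_K^* = \langle\zeta_1\rangle$ with $\zeta_1$ of order $p$ and $\BE_K^{1-\sigma}\cap\langle\zeta_1\rangle = 1$; since $k(\zeta_1)\subseteq K$, the degree $[k(\zeta_1):k]$ divides both $p=[K:k]$ and $p-1=[\Q(\mu_p):\Q]$, hence equals $1$, so $\zeta_1\in k$ and $K/k$ is honestly a degree-$p$ Kummer extension. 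Moreover $(\CH^\ar_k)^- \cap \Ker(\J_{K/k})$ injects into a group of order $p$ and is nontrivial, so it has order exactly $p$ --- the first assertion.

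For the explicit Kummer description I would normalise $\alpha$. Write $\varepsilon = \zeta_1^{\,j}\,\eta^{1-\sigma}$ with $\eta\in\BE_K$ and $p\nmid j$ (possible since $q(\varepsilon)$ is a nontrivial element of $q(\langle\zeta_1\rangle)$), replace the generator $\zeta_1$ by $\zeta_1^{\,j}$ and $\alpha$ by $\alpha\,\eta^{-1}$ --- which leaves $(\alpha)\z_K = {\mathfrak a}\z_K$ unchanged --- so that $\alpha^{1-\sigma}=\zeta_1$, i.e. $\alpha^\sigma=\alpha\,\zeta_1^{-1}$. Then $\alpha^p$ is $\sigma$-fixed, so $a:=\alpha^p\in k^\times$, and $K = k(\alpha) = k(\sqrt[p]{a})$ because $[K:k]=p$ is prime and $\alpha\notin k$ (here $\zeta_1\neq 1$ is exactly what is used). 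Raising ${\mathfrak a}\z_K=(\alpha)\z_K$ to the $p$-th power gives $a\z_K = {\mathfrak a}^p\z_K$, and injectivity of the extension map on fractional ideals $k\to K$ descends this to $a\z_k = {\mathfrak a}^p$ with ${\mathfrak a}$ non-principal, i.e. $K/k$ is of class type. I would close by remarking that $a = \Norm_{K/k}(\alpha)$, since $\Norm_{K/k}(\alpha)=\alpha^p\,\zeta_1^{-p(p-1)/2}=\alpha^p$ as $p$ is odd.

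The routine parts (unit bookkeeping, descending the principal-ideal relation from $K$ to $k$) are straightforward, and the genuine content has essentially been front-loaded into Lemmas \ref{lemII2}--\ref{lemII4}. The only place demanding care --- the "main obstacle" such as it is --- is the normalisation step: simultaneously adjusting $\alpha$ by a unit and $\zeta_1$ by an exponent to reach exactly $\alpha^{1-\sigma}=\zeta_1$ without disturbing the ideal $(\alpha)\z_K$, and then checking $\alpha\notin k$ so that $k(\alpha)$ really is all of $K$.
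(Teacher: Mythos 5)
Your proof is correct and follows essentially the same route as the paper's: the nontrivial class $h'$ is fed through Lemmas \ref{lemII2}--\ref{lemII4}, $\alpha$ is renormalised modulo $\BE_K$ so that $\alpha^{1-\sigma}$ becomes a root of unity of order $p$, and the relation $a\z_k={\mathfrak a}^p$ descends from $K$ to $k$ by injectivity of ideal extension. The only differences are cosmetic: you spell out the order-$p$ claim (left implicit in the paper) and you obtain $p\nmid j$ directly from the injectivity in Lemma \ref{lemII2}, where the paper instead argues that $\lambda\equiv 0 \pmod p$ would force $\alpha\in k^\times$ and ${\mathfrak a}$ principal.
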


\begin{proof}
If $h' \in (\CH^\ar_k)^- \cap \Ker(\J_{K/k})$, $h' := \cl_k({\mathfrak a}) \ne 1$, this means that 
${\mathfrak a} \BZ_K = \alpha \BZ_K$, $\alpha \in K^\times$; so $\alpha^{1-\sigma} = \varepsilon$,
$\varepsilon \in \BE_K^*$; from Lemma \ref{lemII4}, $q(\varepsilon) = q(\zeta_1)^\lambda$,
hence $\varepsilon = \zeta_1^\lambda \eta^{1-\sigma}$, $\eta \in \BE_K$, whence
$\alpha^{1-\sigma} = \zeta_1^\lambda  \eta^{1-\sigma}$ and in the equality
${\mathfrak a}\BZ_K=\alpha \BZ_K$ one may suppose $\alpha$ chosen 
modulo $\BE_K$ such that
$\alpha^{1-\sigma} = \zeta_1^\lambda$; moreover we have $\lambda \not\equiv 0 \pmod p$,
otherwise $\alpha$ should be in $k$ and ${\mathfrak a}$ should be principal.
Thus $\alpha^{1-\sigma} = \zeta'_1$ of order $p$ and $\alpha^p = a \in k^\times$,
whence $K=k(\alpha)$ is the Kummer extension $k(\sqrt[p]{a})$;
we have $a\BZ_K={\mathfrak a}^p \BZ_K$, hence $a \BZ_k = {\mathfrak a}^p$, 
since extension of ideals is injective.
\end{proof}

We shall show now that the context of Lemma \ref{lemII5} is not possible for
a cyclic extension $K/\Q$, which will apply to $K_\chi/\Q$:

\subsubsection{Schema IV} \label{figIV}
\unitlength=0.6cm
\[\vbox{\hbox{\hspace{-4.2cm} \vspace{-0.2cm}
\begin{picture}(11.5,4.2)
\put(6.6,4.0){\line(1,0){3}}
\put(6.5,2.50){\line(1,0){3}}
\put(6.5,0.50){\line(1,0){3}}
\put(6.00,2.9){\line(0,1){0.8}}
\put(6.00,0.9){\line(0,1){1.3}}
\put(10.00,2.9){\line(0,1){0.8}}
\put(10.00,0.9){\line(0,1){1.3}}
\put(9.8,3.9){\ft$K \!=\! k(\sqrt[p]{a})$}
\put(5.75,3.9){\ft$K'$}
\put(5.75,2.4){\ft$k'$}
\put(9.8,2.4){\ft$k$}
\put(9.8,0.4){\ft$K_0$}
\put(5.8,0.4){\ft$\Q$}
\put(10.15,3.15){\ft$p$}
\put(10.15,1.5){\ft$p^{n-1}$}
\end{picture}   }} \]
\unitlength=1.0cm

Since $K=k(\sqrt[p]{a})$, with $a \BZ_k = {\mathfrak a}^p$,
only the prime ideals dividing $p$ can ramify in $K/k$.
Consider the above decomposition of the extension $K/\Q$ for $p \ne 2$,
with $K/K_0$ and $K'/\Q$ cyclic of $p$-power degree $p^n$,
$K/K'$ and $K_0/\Q$ of prime-to-$p$ degree, and let $\ell$ be a prime 
number totally ramified in $K'/\Q$ (such a prime does 
exist since $G_{K'} \simeq \Z/p^n\Z$); 
this prime is then totally ramified in $K/K_0$, hence in $K/k$, which implies 
$\ell=p$ and $p$ is the unique ramified prime in $K'/\Q$.

\smallskip
This identifies the extension $K'/\Q$. Its conductor is $p^{n+1}$, $n \geq 1$, 
since $p\ne 2$; thus $K'$ is the unique sub-extension of 
degree $p^n$  of $\Q(\mu_{p^{n+1}})$ and $k'$ is the unique sub-extension of 
degree $p^{n-1}$ of $\Q(\mu_{p^n})$ (in other words, $K'$ is contained in the
cyclotomic $\Z_p$-extension). Since $\zeta_1 \in k$, one has
$\mu_{p^n} \subset k$, $\mu_{p^{n+1}} \subset K$ and
$\mu_{p^{n+1}} \not\subset k$, so $K=k(\zeta) = k(\sqrt[p]{\zeta^p})$, with
$\zeta$ of order $p^{n+1}$.

\smallskip
It suffices to apply Kummer theory which shows that $k(\sqrt[p]{a}) = 
k(\sqrt[p]{\zeta^p})$ implies $a = \zeta^{\lambda p}b^p$, with
$p \nmid \lambda$ and $b \in k^\times$; so $a\BZ_k=b^p \BZ_k={\mathfrak a}^p$,
whence ${\mathfrak a} = b \BZ_k$ principal (absurd).

\smallskip
So in the case $p \ne 2$, for $K/\Q$ imaginary cyclic and $K/k$ cyclic of degree
$p$, we have the relation $(\CH^\ar_k)^- \cap \Ker(\J_{K/k})=1$ (injectivity of $\J_{K/k}$
on the relative $p$-class group).

\subsubsection{Case \texorpdfstring{$p=2$}{Lg}}\label{ssII2c}
The extension $K/\Q$ is still imaginary cyclic, $k$ is necessarily 
equal to $K^+$ and $\sigma$ is the complex conjugation $s_\infty$. 

\smallskip
From  \cite[Satz 24]{10Has} the ``index of units'' $Q_K^-$
is trivial in the cyclic case; thus for all $\varepsilon \in \BE_K^*$, $\varepsilon = 
\varepsilon^+ \zeta$, $\varepsilon^+ \in k$, $\zeta$ root of unity of $2$-power 
order; then $\Norm_{K/k}(\varepsilon) = 1$ yields $\varepsilon^{+ 2}=1$, thus 
$\varepsilon^+ = \pm 1$ and $\varepsilon = \zeta' = \pm \zeta$; 
since $K/\Q$ is cyclic (whence $\Q(\zeta)/\Q$ cyclic), we shall have 
$\varepsilon \in \{1, -1, i, -i \}$.
Recall that $h' = \Norm_{K/k}(h) \in \Ker(\J_{K/k})$,  $h'=\cl_k({\mathfrak a}) \ne 1$,
with ${\mathfrak a}\BZ_K = \alpha \BZ_K$ and $\alpha^{1-\sigma} =\varepsilon
\in \BE_K^*$. One may assume
$\varepsilon \in \{ -1, i, -i \}$ ($\varepsilon \ne 1$ since $\alpha \notin k^\times$):

\smallskip
(i) Case $\varepsilon = -1$. Then $\alpha^{1-\sigma}=-1$, $\alpha^2 =: a \in k^\times$,
$\alpha \notin k^\times$,
and we get the Kummer extension $K = k(\sqrt a)$ with
$a\BZ_k= {\mathfrak a}^2$, ${\mathfrak a}$ non-principal (Kummer extension of class type).

\smallskip
(ii) Case $\varepsilon = \pm i$. Then $\alpha^{1-\sigma}=\pm i$ with
$-1 = (\pm i)^{1-\sigma}$; one may assume 
$\alpha^{1-\sigma}= i$. This yields $\alpha^2 i^{-1} \in k^\times$. Put 
$\alpha^2 = i c$, $c \in k^\times$; it follows 
${\mathfrak a}^2\BZ_K = \alpha^2 \BZ_K = c \BZ_K$, hence 
${\mathfrak a}^2 = c \BZ_k$. 

\smallskip
Let $\tau$ be a generator of
$G_K$; one has $\alpha^{2 \tau}=i^\tau c^\tau
=-i c^\tau =-c^{\tau-1}\alpha^2$, hence $\alpha^{2 \tau}=\alpha^2 d$,
$d := -c^{\tau-1} \in k^\times$; we obtain $(\alpha \BZ_K)^{2 \tau} = (\alpha \BZ_K)^2 d\BZ_K$,
thus ${\mathfrak a}^{2 \tau}\BZ_K = {\mathfrak a}^2\BZ_K  d\BZ_K$
giving ${\mathfrak a}^{2 \tau} = {\mathfrak a}^2 d\BZ_k$. 

\smallskip
If $d \in k^{\times 2}$, $d = e^2$, $e \in k^\times$, and 
${\mathfrak a}^\tau \sim {\mathfrak a}$
saying that $h'$ is an invariant class in $k/\Q$.

\smallskip
If $d \notin k^{\times 2}$, the relation $\alpha^{2 \tau} = \alpha^2 d$
shows that $d =( \alpha^{\tau-1})^2 \in K^{\times 2}$; from Kummer theory,
since $K=k(\sqrt d) = k(i)$, one obtains $d =-\delta^2$, $\delta \in k^\times$,
and ${\mathfrak a}^{2\tau} = {\mathfrak a}^2 \delta^2 \BZ_K$, still giving
${\mathfrak a}^{\tau} = {\mathfrak a} \cdot \delta \BZ_k$ and an invariant class in $k/\Q$. 

\smallskip
But $K$ is the direct compositum over $\Q$
of $k = K^+$ and $\Q(i)$ and must be cyclic, so $[k : \Q]$ is necessarily odd 
and an invariant class in $k/\Q$ is of odd order giving the principality
of ${\mathfrak a}$ in $k$ (absurd). 

\smallskip
So, only case (i) is a priori possible. 

\smallskip
Consider the following diagram, with $K/K_0$ and $K'/\Q$ cyclic of 
$2$-power order, then $K/K'$ and $K_0/\Q$ of odd degree,
where we recall that $a \BZ_k = {\mathfrak a}^2$ with 
${\mathfrak a}$ non-principal and ${\mathfrak a} \BZ_K=
\alpha \BZ_K$, $\alpha \in K^\times$.
Similarly, since $K/k$ is only ramified at $2$,
then $K/K_0$ and $K'/\Q$ are totally ramified at $2$, the conductor of $K'$
is a power of $2$, say $2^{r+1}$, $r \geq 1$ 
($K'$ is an imaginary cyclic subfield of $\Q(\mu_{2^{r+1}})$):

\subsubsection{Schema V} \label{figV}
\unitlength=0.55cm
\[\vbox{\hbox{\hspace{-4.2cm} \vspace{-0.1cm}
\begin{picture}(11.5,4.3)
\put(6.6,4.0){\line(1,0){3}}
\put(6.5,2.50){\line(1,0){3}}
\put(6.5,0.50){\line(1,0){3}}
\put(6.00,2.9){\line(0,1){0.70}}
\put(6.00,0.9){\line(0,1){1.20}}
\put(10.00,2.9){\line(0,1){0.70}}
\put(10.00,0.9){\line(0,1){1.20}}
\put(9.8,3.9){\ft$K \!=\! k(\sqrt a)$}
\put(5.75,3.9){\ft$K'$}
\put(5.75,2.4){\ft$k'$}
\put(9.8,2.4){\ft$k=K^+$}
\put(9.8,0.4){\ft$K_0$}
\put(5.8,0.4){\ft$\Q$}
\put(10.2,3.2){\ft$2$}
\put(12.9,3.15){\ft$\langle \, s_\infty \,\rangle$}%
{\color{red}\bezier{250}(12.5,2.5)(13.1,3.2)(12.5,3.9)}
\end{picture}   }} \]
\unitlength=1.0cm

The Kummer extension
$K'/k'$ is $2$-ramified of the form $K' = k'(\sqrt {a'})$, $a' \in k'^\times$. 
So we have $a' \BZ_{k'}= {\mathfrak a}'^2$ or $a' \BZ_{k'}= {\mathfrak a}'^2 {\mathfrak p}'$,
where ${\mathfrak p}' \mid 2$ in $k'$. But all the subfields of 
$\Q(\mu_{2^\infty})$ have a trivial $2$-class group; thus, one may suppose that $a'$ 
is, up to $k'^{\times 2}$, a unit or an uniformizing parameter of $k'$.
Then $K = k(\sqrt{a'})$ is not of class type (absurd); so $h'=1$. Whence:

\begin{proposition}\label{propII1}
For any imaginary cyclic extension $K/\Q$ and any relative extension $K/k$
of prime degree, $(\CH^\ar_k)^- \cap \Ker(\J_{K/k}) = 1$ if $p \ne 2$
(the relative classes of $k$ do not capitulate in $K$), then
$\Ker(\J_{K/K^+}) = 1$ if $p = 2$ (the real $2$-classes of $k=K^+$ do not capitulate 
in $K$).  
\end{proposition}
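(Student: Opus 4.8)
The plan is to assemble the case analysis already carried out in \S\S\ref{ssII2b} and \ref{ssII2c}: the whole point of Lemmas \ref{lemII1}--\ref{lemII5} and the cyclotomic arguments that follow them is to rule out capitulation of the relevant classes in a relative extension $K/k$ of prime degree inside a cyclic imaginary $K/\Q$, so the proposition is the summary of that work. I would simply organize the argument according to $p\ne2$ and $p=2$.

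For $p\ne2$ the plan is as follows. Suppose $(\CH^\ar_k)^- \cap \Ker(\J_{K/k})\ne1$. By Lemmas \ref{lemII2}--\ref{lemII5} this forces $K/k$ to be a Kummer extension $K=k(\sqrt[p]{a})$ of class type, i.e.\ $a\z_k={\mathfrak a}^p$ with ${\mathfrak a}$ non-principal, in particular $K/k$ is ramified only above $p$. Then I would use the cyclicity of $K/\Q$: decompose $K/\Q$ through its $p$-Sylow tower, pick a prime $\ell$ totally ramified in $K'/\Q$; total ramification propagates into $K/K_0$ and hence into $K/k$, which by the Kummer relation forces $\ell=p$. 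This identifies $K'$ as the degree-$p^n$ subfield of $\Q(\mu_{p^{n+1}})$ (conductor $p^{n+1}$ since $p\ne2$), sitting inside the cyclotomic $\Z_p$-extension; since $\zeta_1\in k$ one gets $\mu_{p^n}\subset k$, $\mu_{p^{n+1}}\subset K\setminus k$, so $K=k(\sqrt[p]{\zeta^p})$. Kummer theory then yields $a=\zeta^{\lambda p}b^p$ with $b\in k^\times$, whence ${\mathfrak a}=b\z_k$ is principal, a contradiction; therefore $(\CH^\ar_k)^-\cap\Ker(\J_{K/k})=1$.

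For $p=2$ one has $k=K^+$ and $\sigma=s$. Using the triviality of $Q_K^-$ in the cyclic case (Hasse, \cite{10Has}), every $\varepsilon\in\BE_K^*$ lies in $\{1,-1,i,-i\}$. Writing $\alpha^{1-\sigma}=\varepsilon$ for a capitulating class $h'=\cl_k({\mathfrak a})\ne1$: the case $\varepsilon=-1$ again produces a class-type Kummer extension $K=k(\sqrt a)$, while the cases $\varepsilon=\pm i$ give $d:=-c^{\tau-1}$ with ${\mathfrak a}^\tau\sim{\mathfrak a}$, i.e.\ an invariant class of $k/\Q$; cyclicity of $K$ forces $[k:\Q]$ odd, so such a class is principal. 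Finally, in the remaining case one notes that $K/k$ is $2$-ramified only, so $K'/\Q$ is totally ramified at $2$ with conductor $2^{r+1}$, $K'$ an imaginary cyclic subfield of $\Q(\mu_{2^\infty})$; since every subfield of $\Q(\mu_{2^\infty})$ has trivial $2$-class group, the Kummer generator $a'$ of $K'/k'$ is, modulo squares, a unit or a uniformizer, so $K=k(\sqrt{a'})$ is not of class type, a contradiction. Hence $\Ker(\J_{K/K^+})=1$.

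The main obstacle is exactly the $p=2$ complication: the extra $2$-power roots of unity give rise to the $\varepsilon=\pm i$ sub-case, which has no analogue for odd $p$ and must be eliminated by the invariant-class argument together with the oddness of $[k:\Q]$ forced by cyclicity of $K$. Everything else is bookkeeping on the cyclotomic $\Z_p$-extension and a Kummer-theory computation, both already prepared by the preceding lemmas.
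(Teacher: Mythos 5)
Your proposal is correct and follows essentially the same line as the paper: invoke Lemma \ref{lemII5} (via Lemmas \ref{lemII2}--\ref{lemII4}) to reduce to a class-type Kummer extension $K=k(\sqrt[p]{a})$, use cyclicity of $K/\Q$ to force the ramification to sit above $p$ and pin $K'$ inside the cyclotomic $\Z_p$-extension, and then get a contradiction from Kummer theory; for $p=2$ the same scheme plus $Q_K^-=1$ and the invariant-class argument for the $\varepsilon=\pm i$ sub-case. This is exactly the paper's proof organized as a two-case summary.
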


Using the order formula \eqref{order+-} yields:

\begin{corollary}\label{coroII1}
We get $\J_{K/K^+}(\CH_{K^+}) \simeq \CH_K^+ := \CH_{K^+}=\Norm_{K/K^+}(\CH_K)$ 
and the direct sum $\CH_K = (\CH^\ar_K)^- \oplus \J_{K/K^+}(\CH_{K^+})$.
\end{corollary}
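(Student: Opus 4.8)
The plan is to deduce the corollary formally from Proposition~\ref{propII1} and the order formula~\eqref{order+-}; relative to Proposition~\ref{propII1}, which absorbs all the arithmetic difficulty, the corollary is essentially a bookkeeping exercise with norms and transfers. Throughout, $\CH_K$, $\CH_K^+=\CH_{K^+}$ and $(\CH^\ar_K)^-$ denote $p$-Sylow subgroups, and $(\CH^\ar_K)^-$ is just $\Ker(\Norm_{K/K^+})$ on $\CH_K$.

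First I would record that $\J_{K/K^+}:\CH_{K^+}\to\CH_K$ is injective in all cases. For $p$ odd this is automatic: $K/K^+$ is ramified at the archimedean places, so $\Norm_{K/K^+}$ is surjective on class groups in the wide sense, hence on their $p$-Sylow subgroups, and since $p\nmid[K:K^+]=2$, Lemma~\ref{lemI4} gives injectivity of $\J_{K/K^+}$. For $p=2$, injectivity is exactly the second half of Proposition~\ref{propII1} (no real $2$-class of $K^+$ capitulates in $K$). In either case $\J_{K/K^+}$ embeds $\CH_{K^+}$ into $\CH_K$, which is the first assertion $\J_{K/K^+}(\CH_{K^+})\simeq\CH_{K^+}=:\CH_K^+$; and $\CH_K^+=\Norm_{K/K^+}(\CH_K)$ is just the surjectivity used above.

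Next I would use the short exact sequence $1\to(\CH^\ar_K)^-\to\CH_K\to\CH_K^+\to1$ induced by $\Norm_{K/K^+}$ (the $p$-part of the sequence displayed just before~\eqref{order+-}), which gives $\order\CH_K=\order(\CH^\ar_K)^-\cdot\order\CH_K^+$. Since $\J_{K/K^+}$ is injective, $\order\J_{K/K^+}(\CH_{K^+})=\order\CH_K^+$, so the two subgroups $(\CH^\ar_K)^-$ and $\J_{K/K^+}(\CH_{K^+})$ of $\CH_K$ have product order equal to $\order\CH_K$. Hence it suffices to show their intersection is trivial; the sum is then automatically direct and fills up $\CH_K$.

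For the intersection, let $x=\J_{K/K^+}(y)$ with $y\in\CH_{K^+}$ and $\Norm_{K/K^+}(x)=1$. Applying $\Norm_{K/K^+}$ and using $\Norm_{K/K^+}\circ\J_{K/K^+}=[K:K^+]=2$ (Proposition~\ref{propI2}, valid since $\J_{K/K^+}$ is injective) gives $y^2=1$; for $p$ odd, $\CH_{K^+}$ is a $p$-group, so $y=1$ and $x=1$. For $p=2$, where $K/K^+$ is cut out by complex conjugation $s$, one has $\J_{K/K^+}(\CH_{K^+})=\Nu_{K/K^+}(\CH_K)=\CH_K^{1+s}$ and $(\CH^\ar_K)^-=\Ker(1+s)$ on $\CH_K$, so the triviality of $\CH_K^{1+s}\cap\Ker(1+s)$ is read off the Kummer-theoretic analysis of~\S\,\ref{ssII2c} underlying Proposition~\ref{propII1}. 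The one place to stay careful is precisely the identification $(\CH^\ar_K)^-=\Ker(\Norm_{K/K^+})=\Ker(1+s)$ (which again rests on injectivity of $\J_{K/K^+}$); and the genuine difficulty, already settled above, is Proposition~\ref{propII1} in the non-semisimple case $p=2$.
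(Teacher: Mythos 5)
Your argument is correct and complete for $p$ odd, and it reconstructs the route the paper implicitly intends when it says ``using the order formula, we get'': injectivity of $\J_{K/K^+}$ (automatic here since $2$ is invertible modulo $p$), the order formula from the norm exact sequence, and then $\Norm_{K/K^+}\circ\J_{K/K^+}=2$ to kill the intersection.

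The $p=2$ case, which you rightly flag as the place to be careful, is where the proposal has a genuine gap. The assertion that ``the triviality of $\CH_K^{1+s}\cap\Ker(1+s)$ is read off the Kummer-theoretic analysis of \S\,\ref{ssII2c}'' is not justified: that analysis establishes $\Ker(\J_{K/K^+})=1$, which is a different statement. In fact, running your own $p$-odd computation for $p=2$ exposes the obstruction. If $x=\J_{K/K^+}(y)$ lies in $(\CH^\ar_K)^-$, then $y^2=\Norm_{K/K^+}(\J_{K/K^+}(y))=\Norm_{K/K^+}(x)=1$; since $\CH_{K^+}$ is a $2$-group this does \emph{not} force $y=1$. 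Conversely, every $y\in\CH_{K^+}$ with $y^2=1$ produces $\J_{K/K^+}(y)$ in the intersection, so $(\CH^\ar_K)^-\cap\J_{K/K^+}(\CH_{K^+})=\J_{K/K^+}(\CH_{K^+}[2])$, which is nontrivial whenever $\CH_{K^+}\ne 1$. The order count then makes the sum $(\CH^\ar_K)^-\cdot\J_{K/K^+}(\CH_{K^+})$ a proper subgroup of $\CH_K$ (of index $\order\CH_{K^+}[2]$), so the argument cannot close for $p=2$; only the exact sequence $1\to(\CH^\ar_K)^-\to\CH_K\to\CH_K^+\to 1$ and the resulting order formula survive without further hypotheses, not the asserted internal direct sum.
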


We have obtained the following result about relative class groups:

\begin{theorem}\label{theoII1}
Let $K$ be an imaginary cyclic field of maximal real subfield $K^+$. Let
$p$ be any prime number and set $\CH = \BH \otimes \Z_p$. Define:
\begin{equation} 
\left\{\begin{aligned}
(\CH^\ar_K)^- & := \{h \in \CH_K,\, \Norm_{K/K^+}(h)=1\} \\
(\CH^\alg_K)^-& := \{h \in \CH_K,\,  \Nu_{K/K^+}(h)=1\}. 
\end{aligned}\right.
\end{equation} 

\noindent
Then $\CH^\ar_K = \CH^\alg_K$, $\CH^\ar_\varphi = \CH^\alg_\varphi$ 
for all $\varphi \in \Phi_K^-$, $(\BH^\ar_K)^- = (\BH^\alg_K)^-$.
\end{theorem}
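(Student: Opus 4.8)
The strategy is to reduce the whole statement to the single assertion that $\CH^\ar_\chi=\CH^\alg_\chi$ (as $\Z_p$-modules) for every odd rational character $\chi$ and every prime $p$, and then to read off the $\varphi$-components and the global relative class group from the decomposition theorems and the order formula \eqref{order+-}. Since the inclusion $\CH^\ar_\chi\subseteq\CH^\alg_\chi$ is automatic, I would argue by contradiction, assuming $\CH^\ar_\chi\varsubsetneqq\CH^\alg_\chi$ for some odd $\chi$. The field $K_\chi/\Q$ is cyclic and, $\chi$ being odd, $K_\chi$ is imaginary, so $g^{}_\chi$ is even; if $p\nmid g^{}_\chi$ the algebra $\Z_p[G_\chi]$ is semi-simple and $\CH^\ar_\chi=\CH^\alg_\chi$ (surjective norms, injective transfers), so we may assume $p\mid g^{}_\chi$. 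Then Lemma~\ref{lemII1} supplies the unique sub-extension $K_{\chi'}\subset K_\chi$ with $[K_\chi:K_{\chi'}]=p$ (so $\chi'$ sits above $\psi^p$) and a class $h\in\CH^\alg_\chi$ whose norm $h':=\Norm_{K_\chi/K_{\chi'}}(h)$ is of order exactly $p$ in $\CH_{K_{\chi'}}$, lies in $\Ker(\J_{K_\chi/K_{\chi'}})$, and kills the algebraic norms $\Nu_{K_{\chi'}/k'_\ell}$ for every prime $\ell\ne p$ dividing $g^{}_\chi$.

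The key step is to feed this $h'$ into Proposition~\ref{propII1}, which forbids exactly such a capitulating class; the point is to check that $h'$ lies in the relative class group. If $p\ne 2$, then $\psi^p$ is again odd, so $K_{\chi'}$ is imaginary, and $\ell=2$ is a prime $\ne p$ dividing $g^{}_\chi$ with $k'_2=K_{\chi'}^+$; hence $\Nu_{K_{\chi'}/K_{\chi'}^+}(h')=(h')^{1+s}=1$. As $[K_{\chi'}:K_{\chi'}^+]=2$ is prime to $p$ and $\Norm_{K_{\chi'}/K_{\chi'}^+}$ is surjective on class groups, Proposition~\ref{propI2} gives $\Norm_{K_{\chi'}/K_{\chi'}^+}\circ\J_{K_{\chi'}/K_{\chi'}^+}=2$, whence $\J_{K_{\chi'}/K_{\chi'}^+}$ is injective on the $p$-group $\CH_{K_{\chi'}^+}$ and $\Ker(\Nu_{K_{\chi'}/K_{\chi'}^+})=\Ker(\Norm_{K_{\chi'}/K_{\chi'}^+})$; therefore $h'\in(\CH^\ar_{K_{\chi'}})^-$. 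Now $h'\in(\CH^\ar_{K_{\chi'}})^-\cap\Ker(\J_{K_\chi/K_{\chi'}})$, which is trivial by Proposition~\ref{propII1} (applied to the relative degree-$p$ extension $K_\chi/K_{\chi'}$ of the imaginary cyclic field $K_\chi$), contradicting that $h'$ has order $p$. If $p=2$, then $\psi^2$ is even, $K_{\chi'}$ is real and, being of index $2$ in the cyclic field $K_\chi$, equals $K_\chi^+$; so $h'\in\Ker(\J_{K_\chi/K_\chi^+})=1$ by Proposition~\ref{propII1}, again a contradiction. Hence $\CH^\ar_\chi=\CH^\alg_\chi$ for every odd $\chi$.

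From here everything is formal. Fixing an odd $\chi$ and combining $\CH^\alg_\chi=\plus_{\varphi\mid\chi}\CH^\alg_\varphi$ (Theorem~\ref{theoI2}) with $\CH^\ar_\chi=\plus_{\varphi\mid\chi}\CH^\ar_\varphi$ (Theorem~\ref{theoI2bis}) and the inclusions $\CH^\ar_\varphi\subseteq\CH^\alg_\varphi$ forces $\CH^\ar_\varphi=\CH^\alg_\varphi$ for all $\varphi\in\Phi_K^-$. For the global relative class group, observe that $(\BH^\alg_K)^-=\Ker(\Nu_{K/K^+})=\Ker(1+s)$ contains $(\BH^\ar_K)^-=\Ker(\Norm_{K/K^+})$, and that equality amounts to the injectivity of $\J_{K/K^+}$ on $\BH_{K^+}=\Norm_{K/K^+}(\BH_K)$; the prime-to-$2$ part is automatic since $\Norm_{K/K^+}\circ\J_{K/K^+}=2$ (Proposition~\ref{propI2}), and the $2$-primary part is precisely $\Ker(\J_{K/K^+})=1$ from Proposition~\ref{propII1}. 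This yields $(\BH^\ar_K)^-=(\BH^\alg_K)^-$, and $p$-locally $(\CH^\ar_K)^-=(\CH^\alg_K)^-$. The only genuinely delicate point is the one flagged above: recognising that the norm relations of Lemma~\ref{lemII1}(i) for $\ell=2$, together with the tame injectivity of the transfer in the quadratic step, place $h'$ inside the relative class group, so that the capitulation obstruction of Proposition~\ref{propII1} — itself resting on the Kummer-theoretic analysis showing that no ``class-type'' subextension can occur inside the relevant cyclotomic $\Z_p$-tower — becomes applicable; the remainder is bookkeeping with idempotents and norm maps.
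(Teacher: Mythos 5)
Your proof is correct and follows essentially the same route as the paper: assume $\CH^\ar_\chi\subsetneqq\CH^\alg_\chi$ for odd $\chi$, extract the capitulating class via Lemma~\ref{lemII1}, verify (using the tame quadratic step $K_{\chi'}/K_{\chi'}^+$, resp.\ $K_{\chi'}=K_\chi^+$ for $p=2$) that it lies in the relevant relative class group, and contradict Proposition~\ref{propII1}; the $\varphi$-components and the global statement then fall out of the decompositions and the injectivity of $\J_{K/K^+}$. The paper's own proof of the theorem is just a terse citation of Proposition~\ref{propII1}; your version makes explicit the reduction to odd $\chi$ and the key membership check $h'\in(\CH^\ar_{K_{\chi'}})^-$ that the paper passes over quickly in \S\ref{ssII2b}.
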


\begin{proof}
For all subfield $k$ of $K$ with $[K : k]=p$, 
$\J_{K/k}$ is injective on  $(\CH^\ar_k)^-$ if $p \ne 2$ and $\J_{K/K^+}$ 
is injective on $\CH_{K^+}$ for $p=2$; so $\Nu_{K/k} = \J_{K/k} \circ  \Norm_{K/k}$
yields $(\CH^\ar_K)^- = (\CH^\alg_K)^-$ from Definition \ref{defI4e},
then $(\BH^\ar_K)^- = (\BH^\alg_K)^-$ by globalization.
\end{proof}

We shall write simply $\BH_K^-$ for the two notions ``$\alg$'' and ``$\ar$''
in the cyclic case.
Using Theorem \ref{theoI2} we may write, for all $\chi \in \CX^-$,
$\order \CH^\alg_\chi=\order \CH^\ar_\chi = 
\prd_{\varphi \mid \chi} \order \CH^\ar_\varphi$.

\begin{corollary}\label{theoII2+}
Let $K/\Q$ be an imaginary cyclic extension. Then:
\[\hbox{$\order \BH_K^+ = \prod_{\chi \in \CX_K^+} \order \BH^\ar_\chi$
\ \ \ $\&$ \ \ \ $\order \BH_K^- = \prod_{\chi \in \CX_K^-} \order \BH^\ar_\chi$.}\]
\end{corollary}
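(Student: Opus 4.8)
The plan is to apply Theorem~\ref{theoI5} to the arithmetic $\G$-family of ideal class groups (Examples~\ref{ssI3c}\,(ii)) in two instances --- for the cyclic field $K$ and for its maximal real subfield $K^+$, which is again cyclic over $\Q$ --- and then to separate the even and odd parts by means of the order formula~\eqref{order+-}.

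First one checks the standing hypothesis of Theorem~\ref{theoI5}: the $\BH_k$ are finite, and for every sub-extension $k/k'$ of $K/\Q$ the arithmetic norm $\Norm_{k/k'}$ is surjective. This is the point already recorded at the end of the proof of Theorem~\ref{theoI5}: since $\Q$ has trivial class group, the inertia subgroups $I_p \subseteq \Gal(K/\Q)$ generate $\Gal(K/\Q)$, and, $\Gal(K/\Q)$ being cyclic, this forces that no proper sub-extension $k/k'$ of $K/\Q$ (nor of $K^+/\Q$) is unramified at all places; class field theory then gives $\Norm_{k/k'}(\BH_k) = \BH_{k'}$. Granting this, Theorem~\ref{theoI5} applied to $K^+$ gives
\[
\order \BH_K^+ = \order \BH_{K^+} = \prod_{\chi \in \X_{K^+}} \order \BH^\ar_\chi = \prod_{\chi \in \X_K^+} \order \BH^\ar_\chi ,
\]
using $\BH_K^+ := \BH_{K^+}$, the identification $\X_{K^+} = \X_K^+$ (a rational character $\chi \in \X_K$ is even precisely when $K_\chi$ is real, i.e.\ $K_\chi \subseteq K^+$), and the observation that $\BH^\ar_\chi$ is defined from $K_\chi$ and its subfields alone, hence is insensitive to the ambient field. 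Applying Theorem~\ref{theoI5} to $K$ and splitting $\X_K = \X_K^+ \sqcup \X_K^-$,
\[
\order \BH_K = \prod_{\chi \in \X_K} \order \BH^\ar_\chi = \order \BH_K^+ \cdot \prod_{\chi \in \X_K^-} \order \BH^\ar_\chi .
\]
Comparing with~\eqref{order+-}, $\order \BH_K = \order (\BH^\ar_K)^- \cdot \order \BH_K^+$, and cancelling the finite factor $\order \BH_K^+$ yields $\order (\BH^\ar_K)^- = \prod_{\chi \in \X_K^-} \order \BH^\ar_\chi$; since $(\BH^\ar_K)^- = (\BH^\alg_K)^- =: \BH_K^-$ by Theorem~\ref{theoII1}, this is the second asserted equality.

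I expect the only step that is not pure bookkeeping to be the verification that every intermediate norm map $\Norm_{k/k'}$ is surjective, i.e.\ that the hypothesis of Theorem~\ref{theoI5} is met; but this is exactly the remark made right after that theorem, so the corollary reduces to combining the two instances of Theorem~\ref{theoI5} with the splitting $\order \BH_K = \order \BH_K^- \cdot \order \BH_K^+$. As an alternative one could instead invoke Theorem~\ref{chiformula} with the families $(\order \BH^\ar_\chi)_\chi$ and the plus/minus orders, but the route above is the most direct.
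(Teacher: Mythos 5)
Your proof is correct and follows essentially the same route as the paper: apply Theorem~\ref{theoI5} to $K$ and to $K^+$, combine with the order formula~\eqref{order+-}, and invoke Theorem~\ref{theoII1} to identify $(\BH^\ar_K)^- $ with $\BH_K^-$. The only stylistic difference lies in how the surjectivity hypothesis of Theorem~\ref{theoI5} is verified: you argue abstractly that, since $h_\Q=1$ the inertia subgroups generate the cyclic group $\Gal(K/\Q)$, which forces every proper sub-extension $k/k'$ to be ramified; the paper instead localizes at a prime $p$, decomposes $K=K'K_0$ into its $p$-part and prime-to-$p$ part, exhibits a single prime $\ell$ totally ramified in $K'/\Q$, and observes that $\ell$ remains totally ramified in every $p$-power-degree sub-extension $k/k'$. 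Both arguments are valid and very close in spirit.
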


\begin{proof}
To apply Theorem  \ref{theoI5},
we shall prove that all the arithmetic norms are surjective in any
sub-extension $k/k'$ of $K/\Q$; we do this for each $p$-class group; so 
the proof of the surjectivity is only necessary in the sub-extensions $k/k'$ of
$p$-power degree; then we use the fact that this property holds as soon as 
$k/k'$ is totally ramified at some place. This comes from Remark \ref{nonramified}
about cyclic extensions. So Theorem \ref{theoI5} implies
$\order \BH_K = \prd_{\chi \in \CX_K} \order \BH^\ar_\chi$. 

From \eqref{order+-}, $\order \BH_K = \order \BH_K^- \cdot \order \BH_K^+$
and we can also apply Theorem \ref{theoI5} to the maximal real subfield $K^+$
of $K$, giving
$\order \BH_K^+ = \prd_{\chi \in \CX_K^+} \order \BH^\ar_\chi$, 
whence the formulas taking into account the relation $\BH^\ar_\chi = \BH^\alg_\chi$
for odd characters (Theorem \ref{theoII1}).
\end{proof}

\subsection{Computation of \texorpdfstring{$\order \BH^\ar_\chi$}{Lg} for 
\texorpdfstring{$\chi \in \CX^-$}{Lg}}\label{subII3}

For an arbitrary imaginary extension $K/\Q$, we have (e.g., from \cite[p. 12]{10Has} 
or \cite[Theorem 4.17]{Was}) the formula:
\[\order \BH_K^- = Q_K^- w_K^- \prd_{\psi \in \Psi_K^-}
\big(- \hbox{$\frac{1}{2}$}\, \BB_1(\psi^{-1}) \big), \ \, 
\BB_1(\psi^{-1}) := \ffrac{1}{f_\chi} \sm_{a \in [1, f_\chi[} \psi^{-1} (\sigma_a)\, a ,\]

\noindent
where $w_K^-$ is the order of the group
of roots of unity of $K$ and $Q_K^-$ the index of units; from \cite[Satz 24]{10Has},
$Q_K^-=1$ when $K/\Q$ is cyclic. 
Recall that $\BH^\ar_\chi := \{h \in \BH_{K_\chi},\ \Norm_{K_\chi/k}(x) = 1$,
for all $k \varsubsetneqq K_\chi \}$; then:

\begin{theorem} \label{theoII2}
Let $\chi \in \CX^-$, let $g^{}_\chi$ be the order of $\chi$, $f_\chi$ its conductor; 
then $\order \BH^\ar_\chi = \order \BH^\alg_\chi = 2^{\alpha_\chi} \cdot w_\chi \cdot 
\prd_{\psi \mid \chi}\big(- \hbox{$\frac{1}{2}$}\, \BB_1(\psi^{-1}) \big)$, 
where $\alpha_\chi = 1$ (resp. $\alpha_\chi = 0$) if $g^{}_\chi$ is a 
$2$-power (resp. if not) and:

\smallskip
\quad (i) $w_\chi = 1$ if $K_\chi$ is not an imaginary cyclotomic field;

\smallskip
\quad (ii) $w_\chi = p$ if $K_\chi = \Q(\mu_{p^n})$, $p \ne 2$ prime, $n \geq 1$;

\smallskip
\quad (iii) $w_\chi = 2$ if $K_\chi = \Q(\mu_4^{})$ for $p=2$.
\end{theorem}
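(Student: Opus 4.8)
The equality $\order \BH^\ar_\chi = \order \BH^\alg_\chi$ for $\chi \in \X^-$ is already contained in Theorem \ref{theoII1} and the remark following it, since $K_\chi/\Q$ is cyclic; so the real task is the closed formula, and I plan to obtain it by combining three facts: the classical analytic minus class number formula recalled just above (with $Q^-_{K_\chi} = 1$ in the cyclic case), the product decomposition $\order \BH^-_{K_\chi} = \prd_{\chi' \in \X^-_{K_\chi}} \order \BH^\ar_{\chi'}$ of Corollary \ref{theoII2+}, and the multiplicative inversion of Theorem \ref{chiformula}.

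First I would record that for every imaginary cyclic field $L$, writing the product over $\Psi^-_L = \bigsqcup_{\chi' \in \X^-_L}\{\psi : \psi \mid \chi'\}$ in the analytic formula and using Corollary \ref{theoII2+},
\[ \prd_{\chi' \in \X^-_L} \order \BH^\ar_{\chi'} \;=\; \order \BH^-_L \;=\; w^-_L \cdot \prd_{\chi' \in \X^-_L} \Big(\prd_{\psi \mid \chi'} \big(-\tfrac{1}{2}\,\B_1(\psi^{-1})\big)\Big). \]
Granting the purely elementary identity
\[ w^-_L \;=\; \prd_{\chi' \in \X^-_L} 2^{\alpha_{\chi'}} w_{\chi'} \qquad (\star) \]
for all imaginary cyclic $L$, I would then set $A_{\chi'} := \order \BH^\ar_{\chi'}$ and $A'_{\chi'} := 2^{\alpha_{\chi'}} w_{\chi'}\prd_{\psi\mid\chi'}\big(-\tfrac{1}{2}\,\B_1(\psi^{-1})\big)$ for odd $\chi'$, and $A_{\chi'} = A'_{\chi'} := 1$ for even $\chi'$. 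For any subfield $k$ of $K_\chi$ (automatically cyclic) the hypothesis $\prd_{\chi'\in\X_k}A'_{\chi'} = \prd_{\chi'\in\X_k}A_{\chi'}$ of Theorem \ref{chiformula} then holds: trivially when $k$ is real ($\X^-_k = \es$, both products empty), and by the displayed identity together with $(\star)$ for $L = k$ when $k$ is imaginary (hence imaginary cyclic). Theorem \ref{chiformula} applied to $K = K_\chi$ yields $A_\chi = A'_\chi$, which, with the first equality, is the assertion.

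So everything reduces to $(\star)$, a statement about the cyclic group $\mu_L$ and $\Gal(L/\Q)$. The plan there: since $\mu_L$ is cyclic of order $w^-_L$ and $\Q(\mu_{w^-_L}) \subseteq L$ is again cyclic, the classification of cyclic cyclotomic fields forces $w^-_L$ to be $2$, $4$, or $2p^k$ with $p$ an odd prime and $k \ge 1$; the imaginary cyclotomic subfields of $L$ are then exactly $\Q(\mu_4)$ (if $w^-_L = 4$), or the $\Q(\mu_{p^j})$ for $1 \le j \le k$ (if $w^-_L = 2p^k$), or none. As the only odd $\chi'$ with $w_{\chi'} \ne 1$ are the faithful characters of these fields, with $w_{\chi'} = p$ in the first subcase and $w_{\chi'} = 2$ in the second, this gives $\prd_{\chi' \in \X^-_L} w_{\chi'} = w^-_L/2$ in every case. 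Independently, writing $m = [L:\Q]$ and $s \in \Gal(L/\Q)$ for the complex conjugation (the unique element of order $2$): an odd rational character of $2$-power order must have kernel missing $s$, which pins its order to be exactly $2^{v_2(m)}$, and there is exactly one rational character of each order dividing $m$; hence $L$ has exactly one odd character of $2$-power order, so $\prd_{\chi'\in\X^-_L} 2^{\alpha_{\chi'}} = 2$. Multiplying, $\prd_{\chi'\in\X^-_L} 2^{\alpha_{\chi'}} w_{\chi'} = 2 \cdot w^-_L/2 = w^-_L$, which is $(\star)$. (In the subcase $i \in L$ the same ``$s \notin \Gal(L/\Q(i))$'' remark forces $v_2(m) = 1$, so that single $2$-power character is just the quadratic character of $\Q(i) = \Q(\mu_4)$, consistently.)

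The one genuine obstacle is $(\star)$: it is elementary but hinges on the fact that a cyclic field over $\Q$ contains no $\mu_8$ (so $v_2(w^-_L) \le 2$) and on the uniqueness of the odd character of $2$-power order; the precise shape of the answer, namely the factor $2^{\alpha_\chi}$ and the two values of $w_\chi$ in cases (i) and (ii), is exactly what makes these constraints balance the count. A minor point to be careful about is that the analytic formula must be used with the conjugate and sign conventions under which each $\prd_{\psi\mid\chi}\big(-\tfrac{1}{2}\,\B_1(\psi^{-1})\big)$ is literally the (positive rational) $\chi$-contribution, so that all the equalities above hold on the nose and not merely up to sign.
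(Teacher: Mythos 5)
Your proposal is correct and follows essentially the same route as the paper: reduce via the analytic minus class number formula, Corollary \ref{theoII2+}, and the multiplicative inversion of Theorem \ref{chiformula} to the identity $w^-_L = \prod_{\chi' \in \X^-_L} 2^{\alpha_{\chi'}} w_{\chi'}$ for all imaginary cyclic $L$, then verify that identity by counting. The only presentational difference is in the verification of this identity: you classify $w^-_L$ directly as $2$, $4$, or $2p^k$ from the fact that $\Q(\mu_{w^-_L})$ must itself be cyclic, whereas the paper writes $L$ as a direct compositum $K'K_0$ (with $K'/\Q$ of $2$-power degree, $K_0/\Q$ of odd degree) and does a case analysis on the largest cyclotomic subfield $\Q(\mu_{p^n}) \subseteq L$; both arguments amount to the same elementary computation, namely $\prod 2^{\alpha_{\chi'}} = 2$ (one odd rational character of $2$-power order, pinned to order $2^{v_2([L:\Q])}$) and $\prod w_{\chi'} = w^-_L/2$.
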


\begin{proof}
We use \cite[Proposition III\,(g)]{18Or} or \cite[Chap. I,\,\S\,1\,(4)]{15Leo}
recalled in Theorem \ref{chiformula};
it is sufficient to prove that for any imaginary cyclic extension $K/\Q$,
$\order \BH_K^- = \prd_{\chi \in  \CX_K^-}
\big( 2^{\alpha_\chi} \cdot w_\chi \cdot 
\prd_{\psi \mid \chi}\big(- \hbox{$\frac{1}{2}$}\, \BB_1(\psi^{-1}) \big) \big)$, 
the expected equality will come from Theorem \ref{theoII1} and the relation:
\[\order \BH_K^- = \prd_{\chi \in  \CX_K^-} \order \BH^\ar_\chi. \] 
So, it remains to prove that
$\!\prod_{\chi \in  \CX_K^-} \big( 2^{\alpha_\chi} \cdot w_\chi \big)=w_K^-$. 

\smallskip
Consider the following diagram, where $K/K_0$ and $K'/\Q$ are cyclic 
of $2$-power degree and where $K/K'$ and $K_0/\Q$ are of odd degree. :

\subsubsection{Schema VI} \label{figVI}
\unitlength=0.55cm
\[\vbox{\hbox{\hspace{-3.0cm} \vspace{-0.3cm}
\begin{picture}(11.5,4.3)
\put(6.6,4.0){\line(1,0){3}}
\put(6.5,2.50){\line(1,0){3}}
\put(6.5,0.50){\line(1,0){3}}
\put(6.00,2.9){\line(0,1){0.70}}
\put(6.00,0.9){\line(0,1){1.20}}
\put(10.00,2.9){\line(0,1){0.70}}
\put(10.00,0.9){\line(0,1){1.20}}
\put(9.8,3.9){\ft$K$}
\put(5.75,3.9){\ft$K'$}
\put(5.75,2.4){\ft$K'^+$}
\put(9.8,2.4){\ft$K^+$}
\put(9.8,0.4){\ft$K_0$}
\put(5.8,0.4){\ft$\Q$}
\put(10.2,3.15){\ft$2$}
\put(5.6,3.15){\ft$2$}
\end{picture}   }} \]
\unitlength=1.0cm

\noindent
As $K^+$ and $K'^+$ are real, $\alpha_\chi = 0$, except 
when $g_\chi$ is a $2$-power, hence for the unique $\chi_0^{}$
defining $K'$ for which $\alpha_\chi = 1$; whence 
$\prod_{\chi \in  \CX_K^-} 2^{\alpha_\chi} = 2$.

\smallskip
If $K$ does not contain any cyclotomic field (different from $\Q$), then
$w_K^- = 2$, moreover, all the $w_\chi$ are trivial and the required
equality holds in that case. 
So, let $\Q(\mu_{p^n})$, $n \geq 1$, be
the largest cyclotomic field contained in $K$; this yields two possibilities: 

\subsubsection{Schema VII} \label{figVII}
\unitlength=0.6cm
\[\vbox{\hbox{\hspace{-5.4cm}\vspace{0.2cm}
\begin{picture}(11.5,4.6)
\put(6.6,4.0){\line(1,0){3}}
\put(6.5,2.50){\line(1,0){3}}
\put(6.5,0.50){\line(1,0){3}}
\put(3.6,0.50){\line(1,0){1.2}}
\put(6.00,2.9){\line(0,1){0.70}}
\put(6.00,0.9){\line(0,1){1.20}}
\put(10.00,2.9){\line(0,1){0.70}}
\put(10.00,0.9){\line(0,1){1.20}}
\put(9.8,3.9){\ft$K$}
\put(5.75,3.9){\ft$K^+$}
\put(4.75,2.4){\ft$\Q(\mu_{p^n})^+$}
\put(9.8,2.4){\ft$\Q(\mu_{p^n})$}
\put(3.0,0.4){\ft$\Q$}
\put(9.8,0.4){\ft$\Q(\mu_{p})$}
\put(4.9,0.4){\ft$\Q(\mu_{p})^+$}
\put(13.5,2.50){\line(1,0){3}}
\put(13.5,0.50){\line(1,0){3}}
\put(13.00,0.9){\line(0,1){1.20}}
\put(17.00,0.9){\line(0,1){1.20}}
\put(12.75,2.4){\ft$K^+$}
\put(16.8,2.4){\ft$K$}
\put(16.8,0.4){\ft$\Q(\mu_4)$}
\put(12.8,0.4){\ft$\Q$} 
\put(6.3,-0.4){\ft$p \ne 2$}
\put(14.6,-0.4){\ft$p=2$}
\end{picture}   }} \]
\unitlength=1.0cm

If $p \ne 2$, $\prod_{\chi \in  \CX_K^-} w_\chi = p^n$ 
(due to the $n$ odd characters defined by the $\Q(\mu_{p^i})$,
$1 \leq i \leq n$) and, for $p=2$, this gives 
$\prod_{\chi \in  \CX_K^-} w_\chi = 2$; whence the result 
(cf. \cite[Chap. III, \S\,33, Theorem 34 and others]{10Has}).
\end{proof}

\begin{remark}\label{remII2}
We have $\order \BH_K^- = \ffrac{Q_K^- w_K^-}{2^{n^-_K}}
\prd_{\chi \in  \CX_K^-} \order \BH^\alg_\chi$, 
for any imaginary extension $K$, where $n_K^-$ is the number
of imaginary cyclic sub-extensions of $K$ of $2$-power degree and $w_K^-$ 
is the $2$-part of $w_K$ (resp. $\frac{1}{2} w_K$) if $\Q(\mu_4^{}) \not\subset K$
(resp. $\Q(\mu_4^{}) \subset K$). See \cite[Remarque II\,2, p. 32]{Gra}.
\end{remark}

\subsection{Annihilation theorem for \texorpdfstring{$\CH_K^-$}{Lg}}
Before significant improvements by means of Stickelberger's elements
(leading to the construction of $p$-adic measures, to index formulas and 
annihilators of various invariants), Iwasawa \cite{11Iwa} proves the following 
formula for the cyclotomic fields $K=\Q(\mu_{p^n})$, $p \ne 2$, $n \geq 1$, 
of Galois group $G_K$:
\[\order \BH_K^- = \big ( \Z[G_K]^- : \BB_K\Z[G_K] \cap \Z[G_K]^- \big ), \] 
where $\Z[G_K]^- := \{\Omega \in  \Z[G_K], \, (1+s_\infty) \cdot \Omega = 0\}$, 
$s_\infty$ being the complex conjugation, and
$\BB_K := \ffrac{1}{p^n} \sm_{a \in [1,\, p^n[,\, p\, \nmid \, a} a \, \sigma_a^{-1}$
where $\sigma_a \in G_K$ denotes the corresponding Artin automorphism.

\smallskip
This formula does not generalize for arbitrary
imaginary extension $K/\Q$ (see the counterexample
given in \cite[p. 33]{Gra}).
Many contributions have appeared
(e.g., \cite{16Leo, 4Gil, 2Coa, Gra2, All1, All2}; for more precise formulas, 
see \cite{Sin}, \cite[\S\,6.2, \S\,15.1]{Was}, among many other).
Nevertheless, we gave in \cite{Gra} another definition in the spirit of the $\varphi$-objects
which succeeded to give a correct formula.

\subsubsection{General definition of Stickelberger's elements}\label{ssII4b}
Let $K \in \CK \setminus \{\Q\}$. Let $f_K=: f  > 1$ be the conductor of $K$ 
and let $\Q(\mu_f^{})$ be the corresponding cyclotomic field.
Define the more suitable writing of the Stickelberger element defined
in \cite[Chap.IV,\,\S\,1]{Gra2} or \cite[Chap.I,\ \S\,1]{Gra3},
from the study of partial z\^eta-functions in \cite[\S\S\,2.1,\,3.2]{2Coa},
and that leads to a new normalized definition of Gauss sums (in the summation, 
integers $a$ are prime to $f$ and Artin symbols are taken over $\Q$):
\[\BB_{\Q(\mu_f^{})} :=-\sm_{a=1}^{f} 
\Big(\ffrac{a}{f} - \ffrac{1}{2} \Big) \cdot \Big(\ffrac{\Q(\mu_f^{})}{a} \Big)^{-1}. \]
 Note that the part $\sum_{a=1}^{f} \big(\frac{\Q(\mu_f^{})}{a} \big)^{-1}$ 
is the algebraic norm $\Nu_{\Q(\mu_f^{})/\Q}$ which does not modify the image 
of $\BB_{\Q(\mu_f^{})}$ by $\psi$, for $\psi \in \Psi$, $\psi \ne 1$.

\smallskip
We shall use two arithmetic $\G$-families: the $\G$-family $\BM$, for which 
$\BM_K=\Z[G_K]$ and the $\G$-family $\BS$ defined by:
\begin{equation}\label{resK}
\left\{\begin{aligned}
\BS_K& := \BB_K \Z[G_K] \cap \Z[G_K], \, \hbox{where} \\
\BB_K& := \Norm_{\Q(\mu_f^{})/K} (\BB_{\Q(\mu_f^{})}) \! =\! -\sm_{a=1}^{f} 
\Big(\ffrac{a}{f} - \ffrac{1}{2} \Big) \Big(\ffrac{K}{a} \Big)^{-1}.
\end{aligned}\right.
\end{equation}

\begin{lemma}\label{lemII6}
For any $c$, prime to $2 f$, let
$\BB_{K}^c :=\ds \Big(1 - c \,\Big(\ffrac{K}{c} \Big)^{\!-1} \Big) \! \cdot \! 
\BB_{K}$; then $\BB_{K}^c \in \Z[{G_K}]$. 
\end{lemma}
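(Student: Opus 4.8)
The plan is to split $\B_K$ into a ``linear part'' and a ``constant part'' and to check integrality of $\big(1-c\,\sigma_c^{-1}\big)$ applied to each, where I abbreviate $\sigma_a := \big(\frac{K}{a}\big)\in G_K$ and all sums below run over the integers $a\in[1,f]$ with $(a,f)=1$ (recall $f>1$ since $K\neq\Q$). Expanding $-\big(\frac{a}{f}-\frac12\big)=-\frac{a}{f}+\frac12$ in the defining formula \eqref{resK} of $\B_K$ gives
$$\B_K \;=\; \Theta \;+\; \frac12\sum_{a}\sigma_a^{-1}, \qquad \Theta := -\sum_{a}\frac{a}{f}\,\sigma_a^{-1},$$
and the second summand already lies in $\Z[G_K]$: it equals $[\Q(\mu_f):K]\cdot\Nu_{K/\Q}$, i.e.\ has all its coefficients equal to $[\Q(\mu_f):K]\in\Z$. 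So it suffices to treat $\Theta$ and $\frac12\sum_a\sigma_a^{-1}$ separately.

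For $\Theta$ I would run the classical Stickelberger reindexing, which works for \emph{any} $c$ prime to $f$. For each $a$ perform the Euclidean division $ca = q_a f + b_a$ with $q_a\in\Z$ and $0\le b_a<f$; here $b_a\neq 0$ because $f\nmid ca$, and $(b_a,f)=1$. Then $\sigma_c^{-1}\sigma_a^{-1}=\sigma_{ca}^{-1}=\sigma_{b_a}^{-1}$, one has $c\cdot\frac{a}{f}=q_a+\frac{b_a}{f}$, and $a\mapsto b_a$ permutes the reduced residues in $[1,f)$. Hence
$$c\,\sigma_c^{-1}\,\Theta \;=\; -\sum_{a}\Big(q_a+\frac{b_a}{f}\Big)\sigma_{b_a}^{-1} \;=\; -\sum_{a} q_a\,\sigma_{b_a}^{-1} \;+\; \Theta,$$
the last step after relabelling $b_a$ as $a$ in the fractional-part sum. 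Therefore $\big(1-c\,\sigma_c^{-1}\big)\,\Theta = \sum_a q_a\,\sigma_{b_a}^{-1}\in\Z[G_K]$, since every $q_a$ is an integer.

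For the constant part, left multiplication by $\sigma_c^{-1}$ merely permutes $G_K$ and so fixes $\sum_a\sigma_a^{-1}$, whence
$$\big(1-c\,\sigma_c^{-1}\big)\cdot\frac12\sum_{a}\sigma_a^{-1} \;=\; \frac{1-c}{2}\sum_{a}\sigma_a^{-1},$$
which lies in $\Z[G_K]$ exactly because $c$ is odd, so that $(1-c)/2\in\Z$. Adding the two contributions yields $\B_K^c=\big(1-c\,\sigma_c^{-1}\big)\B_K\in\Z[G_K]$, as claimed.

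There is no real obstacle here: this is the standard integrality property of Stickelberger elements, and the only points needing care are bookkeeping ones — that $a$ and $b_a$ traverse the same reduced residue system with $b_a\neq f$, that $\sum_a\sigma_a^{-1}$ is integral (all coefficients equal to $[\Q(\mu_f):K]$), and the observation that the ``$-\frac12$'' in this normalization of $\B_K$ is precisely what replaces the usual factor $c-\sigma_c$ by $1-c\,\sigma_c^{-1}$ (the two differ by the unit $-\sigma_c$ of $\Z[G_K]$), at the price of requiring $c$ odd.
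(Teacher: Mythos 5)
Your argument is essentially the paper's own proof: both rest on the classical Stickelberger reindexing, and both exploit that $c$ odd makes $(1-c)/2$ an integer. You split $\B_K$ into the fractional piece $\Theta$ and the constant piece and handle them separately, whereas the paper carries the two along together and lands directly on $\B_K^c=\sum_a\big[\lambda_a(c)+\tfrac{1-c}{2}\big]\big(\tfrac{K}{a}\big)^{-1}$; moreover the paper uses the inverse bijection $a\mapsto a'_c$ (with $a'_c\,c\equiv a\pmod f$) and the integers $\lambda_a(c)=(a'_c c-a)/f$, which under relabelling are exactly your $q_a$. One aside in your writeup is false, though harmless: $\tfrac12\sum_a\sigma_a^{-1}$ equals $\tfrac12\,[\Q(\mu_f):K]\,\Nu_{K/\Q}$, not $[\Q(\mu_f):K]\,\Nu_{K/\Q}$, so that summand is \emph{not} automatically in $\Z[G_K]$ (take $K=\Q(\mu_f)$ with $f$ an odd prime, where $[\Q(\mu_f):K]=1$). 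You never actually use this claim --- the integrality of the constant contribution comes, as you correctly note at the end, from $(1-c)/2\in\Z$ alone --- but the erroneous sentence should be deleted or corrected.
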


\begin{proof}
We have:
\begin{equation*} 
\BB_{K}^c=\ffrac{-1}{f} \sm_a 
\Big[a\, \Big(\ffrac{K}{a} \Big)^{-1} - a c \,\Big(\ffrac{K}{a} \Big)^{-1}
\Big(\ffrac{K}{c} \Big)^{-1}\Big] + \ffrac{1-c}{2} \sm_a \Big(\ffrac{K}{a} \Big)^{-1}.
\end{equation*}

Let $a'_{c} \in [1, f]$ be the unique integer such that $a'_{c} \cdot c \equiv a \pmod {f}$;
put:
\[\hbox{$a'_{c} \cdot  c = a + \lambda_a(c) f$, \ $\lambda_a(c) \in \Z$;} \]
using the bijection $a \mapsto a'_{c}$ 
in the summation of the second term in between $\big [\ \ \big]$ and the relation
$\ds\Big(\ffrac{K}{a'_{c}} \Big) \Big(\ffrac{K}{c} \Big) = \Big(\ffrac{K}{a} \Big)$,
this yields: 
\begin{equation*} 
\begin{aligned}
\BB_{K}^c  
&=\ffrac{-1}{f}  \Big[ \sm_a a \, \Big(\ffrac{K}{a} \Big)^{-1}\!\! - 
\sm_a a'_{c} \cdot c \,\Big(\ffrac{K}{a'_{c}} \Big)^{-1} \! \Big(\ffrac{K}{c} \Big)^{-1}\Big] 
 + \ffrac{1-c}{2} \sm_a \Big(\ffrac{K}{a} \Big)^{-1} \\
& = \ffrac{-1}{f} \sm_a \Big [a - a'_{c} \cdot  c \Big] \Big(\ffrac{K}{a} \Big)^{-1}
+ \ffrac{1-c}{2}\sm_a \Big(\ffrac{K}{a} \Big)^{-1} \\
& =  \sm_a \Big[\lambda_a(c)+ \ffrac{1-c}{2} \Big]
 \Big(\ffrac{K}{a} \Big)^{-1} \in \Z[G_K].
 \end{aligned}
 \end{equation*}

We have $\lambda_{f-a}(c) + \frac{1-c}{2} = 
-\big(\lambda_{a}(c) + \frac{1-c}{2} \big)$, which proves that:
\begin{equation}\label{demiS}
\BB_{K}^c = \BB'^c_{K} \cdot (1-s_\infty), \ \, \BB'^c_{K}\in \Z[G_K],
\end{equation}
useful in the case $p=2$ and giving $\Norm_{K/K^+}(\BB_{K}^c)=0$. 
\end{proof}

\begin{definition}\label{defII1}
Let $K$ be an imaginary abelian field. Put:
\[{\mathfrak A}_K := \{\Omega \in \Z[G_K], \  \Omega\, \BB_K \in \Z[G_K]\}\]
(${\mathfrak A}_K$ is an ideal of $\Z[G_K]$ and  
$\BS_K := \BB_K \cdot {\mathfrak A}_K$ (cf. \eqref{resK}).
Denote by $\Lambda_K \in {\mathfrak A}_K$ the least rational integer 
such that $\Lambda_K \BB_K \in \Z[G_K]$ (thus $\Lambda_K \mid 2 f$, 
where $f$ is the conductor of $K$).

\smallskip\noindent
For $K=K_\chi$, $\chi \in \CX^-$, we put ${\mathfrak A}_{K_\chi} 
=: {\mathfrak A}_\chi$ and $\Lambda_{K_\chi} =: \Lambda_\chi$.
\end{definition}

Since we will only use images by $\psi \in \Psi^-$ of elements of 
$\Q[G_K]$, we can neglect, by abuse, the term 
$\sum_{a=1}^{f} \frac{1}{2} \, \big(\frac{K}{a} \big)^{-1}$ in some reasonings and 
computations, using $\frac{1}{f} \sum_{a=1}^{f} a\, \big(\frac{K}{a} \big)^{-1}$ 
instead of $\BB_K$. 

Note that for any odd $c$ prime to $f$, 
$\Big(1 - c \,\Big(\ffrac{K}{c} \Big)^{-1} \Big)\cdot 
\sm_{a=1}^{f} \ffrac{1}{2} \Big(\ffrac{K}{a} \Big)^{-1}$ is in $\Z[G_K]$ 
and that such considerations only concerns the case $p=2$ when $f$ is an odd 
prime power with $[\Q(\mu_f^{}) : K]$ odd (see Example \ref{ex47} with $K=\Q(\mu_{47})$).

\begin{lemma}\label{lemII7}
Let $\alpha_\sigma$ be the coefficient of $\sigma \in G_K$ in the writing of
$\sm_{a=1}^{f} a\, \Big(\ffrac{K}{a} \Big)^{-1}$ on the canonical basis $G_K$ 
of $\Z[G_K]$; in particular, we have
$\alpha_1 = \sm_{a,\ \sigma_a{}_{\vert_K}=1} \, a$. 
Then $\alpha_\sigma \equiv c\,\alpha_1 \pmod{f}$, where $c$
is a representative modulo $f$ such that $\sigma_c = \sigma^{-1}$.
Thus, we have $\Lambda_K = \ffrac{f}{\gcd(f, \alpha_1)}$.
\end{lemma}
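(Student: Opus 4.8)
The plan is to compute the coefficients of $\B_K$ by sorting the defining sum according to the Artin symbol, and then to read everything off from an elementary coset-sum identity. Put $H:=\Gal(\Q(\mu_f)/K)\subseteq(\Z/f\Z)^\times$, so that $G_K\simeq(\Z/f\Z)^\times/H$ and $N:=\order H=[\Q(\mu_f):K]$ is independent of the chosen coset. For a fixed $\sigma\in G_K$, the integers $a\in[1,f]$ prime to $f$ with $(\sigma_a|_K)^{-1}=\sigma$ are exactly those lying in the coset $cH$, where $c$ is any representative with $\sigma_c|_K=\sigma^{-1}$ (and, exactly as in Lemma \ref{lemII6}, I will take $c$ odd). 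Collecting terms, $\B_K=-\tfrac1f\sm_{\sigma\in G_K}\alpha_\sigma\,\sigma+\tfrac N2\,\Nu_{K/\Q}$, where $\alpha_\sigma:=\sm_{a\in cH,\,1\le a\le f}a$ is the sum of the coset representatives in $[1,f]$; for $\sigma=1$ this is $\alpha_1=\sm_{h\in H,\,1\le h\le f}h=\sm_{\sigma_a|_K=1}a$, as in the statement, and the term $\tfrac N2\,\Nu_{K/\Q}$ is harmless since the algebraic norm does not change $\psi$-images for $\psi\ne1$.

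The congruence is then one line. Each representative in $[1,f]$ of an element of $cH$ differs from the corresponding product $ch$ ($h\in H$) by an integer multiple of $f$, so $\alpha_\sigma\equiv\sm_{h\in H}ch=c\sm_{h\in H}h=c\,\alpha_1\pmod f$. Since $\gcd(c,f)=1$, this forces $\gcd(f,\alpha_\sigma)=\gcd(f,\alpha_1)$ for every $\sigma\in G_K$; taking $c$ odd makes the residual contribution of the $\tfrac12$-part (namely $\tfrac N2(1-c)$) integral, so nothing is lost in passing from $\B_K$ to its ``numerator vector'' $(\alpha_\sigma)_\sigma$, nor in forming $\B_K^c$ (cf.\ \eqref{demiS}).

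For the last assertion I would unwind Definition \ref{defII1}: $\Lambda_K$ is the least positive integer clearing the denominators of $\B_K$ up to $\Z[G_K]$-multiples of $\Nu_{K/\Q}$, i.e.\ the least $\Lambda\ge1$ with $\Lambda\alpha_\sigma\equiv0\pmod f$ for all $\sigma$. This is equivalent to $\tfrac f{\gcd(f,\alpha_\sigma)}\mid\Lambda$ for every $\sigma$, hence to $\Lambda_K=\operatorname{lcm}_{\sigma\in G_K}\tfrac f{\gcd(f,\alpha_\sigma)}=\tfrac f{\gcd_{\sigma}\gcd(f,\alpha_\sigma)}$; and by the previous paragraph all these gcds equal $\gcd(f,\alpha_1)$, so $\Lambda_K=\tfrac f{\gcd(f,\alpha_1)}$, which in particular divides $f$.

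The only delicate point is the bookkeeping around the two pieces of the Stickelberger coefficient: one has to check that the $\tfrac12$-part assembles exactly into $\tfrac N2\,\Nu_{K/\Q}$ (hence is invisible to every $\psi\ne1$), and that choosing $c$ odd — already needed in Lemma \ref{lemII6} and in \eqref{demiS} — prevents this half-integer part from interfering either with the mod-$f$ congruence or with the denominator count for $\Lambda_K$. Once that normalization is pinned down, the proof is just the coset identity $\sm_{a\in cH}a\equiv c\sm_{h\in H}h\pmod f$ together with a routine $\gcd/\operatorname{lcm}$ manipulation.
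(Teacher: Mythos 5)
Your proof follows the same route as the paper's (which compresses the first claim to "obvious" and the second to one line): sort the sum defining $\B_K$ by Artin symbol to get the coset sums $\alpha_\sigma = \sum_{a\in cH}a$, obtain $\alpha_\sigma \equiv c\,\alpha_1 \pmod f$ from the elementary coset identity, and then use $\gcd(c,f)=1$ to reduce the divisibility condition $f\mid\Lambda\alpha_\sigma$ (for all $\sigma$) to the single case $\sigma=1$, giving $\Lambda_K=f/\gcd(f,\alpha_1)$. So the structure matches; you are simply supplying the details.

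The one place where your bookkeeping is not quite right is precisely the "delicate point" you flag, and it is worth being precise because the paper is equally loose there. Collecting the $\tfrac12$-terms gives $\B_K=-\tfrac1f\sum_\sigma\alpha_\sigma\sigma+\tfrac N2\Nu_{K/\Q}$, and the \emph{literal} condition $\Lambda\B_K\in\Z[G_K]$ is not equivalent to $f\mid\Lambda\alpha_\sigma$ for all $\sigma$: one also needs $\tfrac{\Lambda N}{2}\in\Z$. This is a real obstruction when $Nf$ is odd; for instance $K=\Q(\mu_3)$ gives $N=1$, $\B_K=\tfrac16(1-s)$, hence the smallest $\Lambda$ with $\Lambda\B_K\in\Z[G_K]$ is $6$, while the lemma's formula gives $f/\gcd(f,\alpha_1)=3$ — so $\Lambda_K\mid f$ (asserted in Definition~\ref{defII1}) and Proposition~\ref{propII3} only hold under the implicit convention that $\Lambda_K$ clears the denominators of the $\Nu_{K/\Q}$-free part $-\tfrac1f\sum_\sigma\alpha_\sigma\sigma$. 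Your phrase ``up to $\Z[G_K]$-multiples of $\Nu_{K/\Q}$'' does not effect this discard (those multiples already lie in $\Z[G_K]$), and choosing $c$ odd is beside the point: the odd auxiliary in Lemma~\ref{lemII6} serves to make $\B_K^c$ integral, and the representative $c$ with $\sigma_c=\sigma^{-1}$ can be anything prime to $f$ — neither affects whether $\tfrac{\Lambda N}{2}$ is an integer. The honest fix is to state that one works with $\B_K-\tfrac N2\Nu_{K/\Q}$ (equivalently, that $\Lambda_K$ is the least $\Lambda$ with $f\mid\Lambda\alpha_\sigma$), noting that the dropped term vanishes under every $\psi\neq 1$ and so is irrelevant in Theorem~\ref{theoII5}; the paper's one-line proof makes the same silent elision, so this is a wrinkle in the source rather than a defect of your argument specifically.
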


\begin{proof}
The first claim is obvious and $\Lambda_K$ is the least integer $\Lambda$ such that 
$\ffrac{\Lambda \cdot \alpha_1}{f} \in \Z$, since 
$\Lambda \sm_{a=1}^{f} \ffrac{a}{f}\, \Big(\ffrac{K}{a} \Big)^{-1}\!\! \in \Z[G_K]$
if and only if $\ffrac{\Lambda \cdot \alpha_\sigma}{f} \in \Z$ for all $\sigma \in G_K$,
thus, for instance, for $\sigma = 1$.
\end{proof}

\begin{proposition}\label{propII3}
(i) The ideal ${\mathfrak A}_K$ of $\Z[G_K]$ is a free $\Z$-module; a $\Z$-basis 
is given by the set $\big\{\cdots, \big(\frac{K}{a} \big) - a, \cdots; \ \Lambda_K \big\}$, 
for the representatives $a$ of $(\Z/f\Z)^\times \setminus \{1\}$.

\smallskip
(ii) If $K/\Q$ is cyclic, then ${\mathfrak A}_K$ is the ideal of $\Z[G_K]$
generated by $\big(\frac{K}{c} \big) - c$ and $\Lambda_K$, where 
$\big(\frac{K}{c} \big)$ is any generator of $G_K$.
\end{proposition}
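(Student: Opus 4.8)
The plan is to prove (i) by checking directly that the proposed set is a $\Z$-basis of ${\mathfrak A}_K$, and then to obtain (ii) from (i) by an elementary ideal argument. Write $g:=[K:\Q]=\order G_K$ and $f:=f_K$. Using the remark after \eqref{resK}, which permits working with $\B_K$ modulo $\Nu_{K/\Q}$, I take $\B_K=-\sum_a\frac af\big(\frac Ka\big)^{-1}$, the normalization for which ${\mathfrak A}_K=\{\Omega\in\Z[G_K]:\Omega\B_K\in\Z[G_K]\}$ is literally the denominator ideal of $\B_K$ and for which $\Lambda_K=f/\gcd(f,\alpha_1)\mid f$ (Lemma~\ref{lemII7}). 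Fix integers $a_1,\dots,a_{g-1}$, each prime to $f$, whose Artin symbols $\tau_i:=\big(\frac K{a_i}\big)$ run through $G_K\setminus\{1\}$, and put $B:=\{\tau_1-a_1,\dots,\tau_{g-1}-a_{g-1},\ \Lambda_K\}$. There are three things to verify: $B\subseteq{\mathfrak A}_K$; $B$ is $\Z$-linearly independent; and $B$ spans ${\mathfrak A}_K$.

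For $B\subseteq{\mathfrak A}_K$: $\Lambda_K\cdot1\in{\mathfrak A}_K$ by the definition of $\Lambda_K$, and the factorization $\tau_i-a_i=\tau_i(1-a_i\tau_i^{-1})$ gives $(\tau_i-a_i)\B_K=\tau_i\,\B_K^{a_i}$, which lies in $\Z[G_K]$ by Lemma~\ref{lemII6}. For linear independence: in the ordered $\Z$-basis $(1,\tau_1,\dots,\tau_{g-1})$ of $\Z[G_K]$ the matrix of $B$ (row of $\Lambda_K$ first) is lower triangular with diagonal $(\Lambda_K,1,\dots,1)$, so $B$ is independent and $[\Z[G_K]:\langle B\rangle_\Z]=\Lambda_K$. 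The real content of (i) is the reverse inclusion ${\mathfrak A}_K\subseteq\langle B\rangle_\Z$: given $\Omega=\sum_{\tau\in G_K}b_\tau\,\tau\in{\mathfrak A}_K$, subtract $\sum_{i=1}^{g-1}b_{\tau_i}(\tau_i-a_i)$ — which lies in ${\mathfrak A}_K$ by the first step — to be left with an element $N\cdot1$, $N\in\Z$, still in ${\mathfrak A}_K$, i.e.\ $N\B_K\in\Z[G_K]$. Here Lemma~\ref{lemII7} is decisive: the congruences it furnishes between the Stickelberger numerators $\alpha_\sigma$ collapse the $g$ divisibility conditions expressing $N\B_K\in\Z[G_K]$ to the single one $f\mid N\alpha_1$, whence $\Lambda_K\mid N$ and $N\cdot1\in\Z\Lambda_K\subseteq\langle B\rangle_\Z$; therefore $\Omega\in\langle B\rangle_\Z$. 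This proves (i).

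For (ii), let $G_K$ be cyclic, $\tau=\big(\frac Ka\big)$ a generator, and $I$ the ideal of $\Z[G_K]$ generated by $\tau-a$ and $\Lambda_K$. Both generators lie in ${\mathfrak A}_K$ by (i), so $I\subseteq{\mathfrak A}_K$. Conversely, the telescoping identity $\tau^j-a^j=(\tau-a)\sum_{i=0}^{j-1}\tau^i a^{j-1-i}$ shows $\big(\frac K{a^j}\big)-a^j\in I$ for $1\le j\le g-1$, and $f\cdot1=(f/\Lambda_K)\Lambda_K\in I$ since $\Lambda_K\mid f$; as $\tau$ generates $G_K$, the symbols $\big(\frac K{a^j}\big)$, $1\le j\le g-1$, exhaust $G_K\setminus\{1\}$, so by (i) these elements together with $\Lambda_K$ span ${\mathfrak A}_K$ over $\Z$. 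Hence ${\mathfrak A}_K\subseteq I$ and $I={\mathfrak A}_K$.

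I expect the main obstacle to be the reverse inclusion in (i): unlike the other steps it cannot be done formally, and one must feed in the arithmetic of the Stickelberger numerators (Lemma~\ref{lemII7}) to reduce $g$ congruence conditions to one. A secondary point of care is the bookkeeping of the two roles played by $G_K$ — residues mod $f$ versus group elements — in that reduction, together with the normalization of $\B_K$ fixed at the outset: the $\frac{1}{2}$-term in the Stickelberger element is precisely what forces the restriction to odd $c$ in Lemma~\ref{lemII6}, so one must make sure ${\mathfrak A}_K$ and $\Lambda_K$ are read with the normalization for which the basis statement holds on the nose.
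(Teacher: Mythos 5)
Your proof is correct. The paper does not reproduce an argument here --- it simply cites pp.\,35--36 of the 1976 source --- so a direct comparison with the paper's proof is not possible, but your three-step verification (containment $B\subseteq{\mathfrak A}_K$, triangular linear independence, reduction of $\Omega$ to $N\cdot 1$) together with the telescoping identity for part (ii) constitutes a complete and elementary derivation of the proposition. Two small refinements. First, Lemma~\ref{lemII7} is not in fact decisive for the reverse inclusion: once $\Omega$ has been reduced to $N\cdot 1\in{\mathfrak A}_K$, the conclusion $\Lambda_K\mid N$ follows immediately from the definition of $\Lambda_K$, because $\{N\in\Z:\ N\B_K\in\Z[G_K]\}$ is visibly a nonzero ideal of $\Z$ whose least positive element is $\Lambda_K$; the congruences $\alpha_\sigma\equiv c\,\alpha_1\pmod f$ and the explicit value $\Lambda_K=f/\gcd(f,\alpha_1)$ are only needed for the divisibility $\Lambda_K\mid f$, which you use (correctly) in (ii). Second, Lemma~\ref{lemII6} is stated only for odd $c$; you should either record that the representatives $a_i$ can always be taken odd (automatically when $f$ is even, by replacing $a_i$ with $a_i+f$ when $f$ is odd, a modification absorbed into $\langle B\rangle_\Z$ since $f\in\Lambda_K\Z$), or observe that with the normalization you adopt --- dropping the $\frac{1}{2}$-term modulo $\Nu_{K/\Q}$ --- the integrality $\B_K^c\in\Z[G_K]$ in fact holds for every $c$ prime to $f$ and the parity restriction disappears. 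You already flag this normalization subtlety, which is appropriate given that Definition~\ref{defII1} and Lemma~\ref{lemII7} are worded somewhat loosely on exactly this point.
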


\begin{proof}
See \cite[p. 35--36]{Gra}.
\end{proof}

\subsubsection{Study of the algebraic \texorpdfstring{$\G$}{Lg}-families 
\texorpdfstring{$\BM_K := \Z[G_K]$, $\BS_K := \BB_{K} {\mathfrak A}_K$}{Lg}} \label{ssII4c}
We then have (where $\BM_\chi$ and $\BS_\chi$ are ideals of $\BM_{K_\chi}$):
\begin{equation*}
\left\{\begin{aligned}
& \BM_{K_\chi} = \Z[G_\chi], 
&&\ \   \BS_{K_\chi} = \BB_{K_\chi} \, {\mathfrak A}_{\chi}, \\
& \BM_\chi \ \  = \{\Omega \in \Z[G_\chi],\  P_\chi (\sigma_\chi) \cdot \Omega = 0\}, 
&& \ \  \BS_\chi \ \  = \BB_{K_\chi} {\mathfrak A}_{\chi} \cap \BM_\chi
\end{aligned}\right.
\end{equation*}

\begin{lemma}\label{lemII8}
We have $\BM_\chi = \prod_{\ell \mid g^{}_\chi} \!\!(1 - \sigma_\chi^{g_\chi/\ell})\,
\Z[G_\chi]$, ${\mathfrak a}_\chi := \psi(\BM_\chi) =  \prod_{\ell \mid g^{}_\chi} 
\big (1 - \psi(\sigma_\chi)^{g_\chi/\ell} \big)$; then $\BS_\chi$ gives rise to an 
ideal ${\mathfrak b}_\chi$ multiple of ${\mathfrak a}_\chi$.
\end{lemma}

\begin{proof}
See \cite[Lemmes II.8 and II.9, pp. 37/39]{Gra}.
\end{proof}

The computation of ${\mathfrak b}_\chi$ needs to recall the norm action 
on Stickelberger's elements; because of the similarity of the result for
the norm action on cyclotomic numbers, we recall, without proof, the 
following classical formulas (see, e.g., \cite[Section 4]{Gra8}):

\begin{lemma}\label{lemII10}
Let $f  > 1$ and $m \mid f$, $m>1$, be any modulus; let $\Q(\mu_f^{})$,
$\Q(\mu_m^{}) \subseteq \Q(\mu_f^{})$, be the cor\-responding cyclotomic fields. Let:

\centerline{$\BB_{\Q(\mu_f^{})}  := - \sm_{a=1}^{f}
\Big(\ffrac{a}{f}-\ffrac{1}{2} \Big) \cdot \Big(\ffrac{\Q(\mu_f^{})}{a} \Big)^{-1}, \ \ \ \ \
\BC^{}_{\Q(\mu_f^{})}  := 1- \zeta_f$.}

\medskip\noindent
We have, where $\Norm_{\Q(\mu_f^{})/\Q(\mu_m^{})}\! : \Q[G_{\Q(\mu_f^{})}] \too \Q[G_{\Q(\mu_m^{})}]$:
\[ \Norm_{\Q(\mu_f^{})/\Q(\mu_m^{})} (\BB_{\Q(\mu_f^{})}) = 
\Omega \cdot \BB_{\Q(\mu_m^{})}, \ 
\Norm_{\Q(\mu_f^{})/\Q(\mu_m^{})} (\BC^{}_{\Q(\mu_f^{})}) = 
 \BC_{\Q(\mu_m^{})}^\Omega, \]
 
\noindent
where $\Omega := {\prod_{p \mid f,\  p \nmid m}
\big(1-\big(\ffrac{\Q(\mu_m^{})}{p} \big)^{-1}\big)}$.
\end{lemma}

We can conclude by the following \cite[Th\'eor\`emes II.5, II.6]{Gra}:

\begin{theorem} \label{theoII5}
Let $\chi \in \CX^-$ and $\psi \mid \chi$. The $\Z[\mu_{g^{}_\chi}]$-module
$\BH_\chi^\alg = \BH_\chi^\ar$ is annihilated by 
the ideal $\BB_1(\psi^{-1}) \cdot (\psi (\sigma_a) - a, \Lambda_\chi)$ of $\Z[\mu_{g^{}_\chi}]$, 
where $\sigma_a := \big(\frac{K}{a} \big)$ is any generator of $G_K$ 
(Lemma \ref{lemII7}, Proposition \ref{propII3}).

\smallskip\noindent
The ideal $(\psi(\sigma_a) - a, \Lambda_\chi)$ is the unit ideal except if $K_\chi \ne \Q(\mu_4^{})$
is an extension of $\Q(\mu_p^{})$ of $p$-power degree and if 
$\Lambda_\chi \equiv 0 \pmod p$, in which case, this ideal is a prime
ideal ${\mathfrak p}_\chi \mid p$ in $\Q(\mu_{g^{}_\chi})$.
If $K_\chi = \Q(\mu_4^{})$, this ideal is the ideal $(4)$.
\end{theorem}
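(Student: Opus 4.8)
The plan is to exploit the two arithmetic $\G$-families $\BM_K = \Z[G_K]$ and ${\bf S}_K = \B_K\cdot{\mathfrak A}_K$ introduced above, together with the classical fact (Stickelberger's theorem, in the form valid for abelian fields via Iwasawa's index computation or the relation $\B_K^c \in {\rm Ann}_{\Z[G_K]}(\BH_K^-)$) that the Stickelberger ideal annihilates $\BH_K^-$. First I would localize at a prime $p$ and pass to the $\chi$-component, using Theorem \ref{theoII1} to identify $\BH_\chi^\ar = \BH_\chi^\alg$, so that it suffices to produce annihilators of the $\Z_p[\mu_{g_\chi^{}}]$-module $\CH_\chi$ obtained as the image under $\psi$ of the relevant Stickelberger data. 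The point is that $\psi$ sends $\B_K$ to $\B_1(\psi^{-1})$ up to a unit (the algebraic-norm tail $\Nu_{\Q(\mu_f)/\Q}$ is killed by $\psi\ne1$, as noted after \eqref{resK}), and it sends the ideal ${\mathfrak A}_\chi$, generated by $\bigl(\tfrac{K}{a}\bigr)-a$ and $\Lambda_\chi$ (Proposition \ref{propII3}(ii), using that $K_\chi/\Q$ is cyclic), to the ideal $(\psi(a)-a,\Lambda_\chi)$ of $\Z[\mu_{g_\chi^{}}]$. Combining these, the image of ${\bf S}_\chi = \B_{K_\chi}{\mathfrak A}_\chi \cap \BM_\chi$ annihilates $\BH_\chi$, giving the stated annihilator $\B_1(\psi^{-1})\cdot(\psi(a)-a,\Lambda_\chi)$.

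Next I would compute the ideal $(\psi(a)-a,\Lambda_\chi)$ explicitly. Here $a$ is a representative modulo $f_\chi$ of a generator of $G_\chi$, and $\Lambda_\chi = f_\chi/\gcd(f_\chi,\alpha_1)$ by Lemma \ref{lemII7}. Since $\psi$ has order $g_\chi^{}$, $\psi(a)$ is a primitive $g_\chi^{}$th root of unity, so $\psi(a)-a$ is a unit at every prime not dividing $\Lambda_\chi$; thus the ideal $(\psi(a)-a,\Lambda_\chi)$ can only be proper above primes $p\mid\Lambda_\chi$. For such $p$: if $p\nmid g_\chi^{}$ (semi-simple case) one checks $\psi(a)-a$ is still a $p$-adic unit because $\psi(a)\not\equiv a$ would force $a$ to be a primitive root mod $p$ of the wrong order — more precisely one shows $a\equiv\psi(a)\pmod{{\mathfrak p}}$ forces a contradiction with the order of $\psi(a)$ — so the ideal is the unit ideal. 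The interesting case is $p\mid g_\chi^{}$: writing $g_\chi^{} = g_{\chi_0^{}}p^n$ and using total ramification of $p$ in the $p$-part, one analyzes the reduction of $\psi(a)-a$ modulo the unique prime ${\mathfrak p}_\chi\mid p$ of $\Q(\mu_{g_\chi^{}})$, exactly as in the argument of Lemma \ref{lemI5} (where $1-\zeta^{ac-1}$ was analyzed). One finds $\psi(a)\equiv a\pmod{{\mathfrak p}_\chi}$ precisely when $K_\chi$ contains $\Q(\mu_p)$ and the congruence conditions on $a$ are met; combined with $\Lambda_\chi\equiv 0\pmod p$ this yields the proper ideal ${\mathfrak p}_\chi$. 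The exceptional case $p=2$, $K_\chi=\Q(\mu_4)$ is handled separately by direct computation: there $g_\chi^{}=2$, $f_\chi=4$, $\alpha_1 = 1$ (or the relevant small value), $\Lambda_\chi = 4$, $\psi(a)-a = -1-1 = -2$ or similar, and $(\psi(a)-a,\Lambda_\chi) = (2,4) = (2)$ — but in $\Z[i]=\Z[\mu_4]$ one must track whether the answer is $(2)$ or $(4)$; the residual factor of $2$ comes from the $1-s$ divisibility in \eqref{demiS} forcing an extra power, so the stated answer $(4)$.

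The main obstacle I expect is the careful ${\mathfrak p}_\chi$-adic valuation bookkeeping in the ramified case: one must pin down exactly for which cyclic characters $\chi$ the element $\psi(a)-a$ lies in ${\mathfrak p}_\chi$ versus being a unit, and this hinges on the precise relationship between $\Lambda_\chi$ (equivalently $\gcd(f_\chi,\alpha_1)$, where $\alpha_1=\sum_{a:\sigma_a|_{K_\chi}=1}a$) and the conductor structure of $K_\chi$ — in particular distinguishing $K_\chi\supseteq\Q(\mu_p)$ from the merely-$p$-power-degree case, and isolating the $\Q(\mu_4)$ anomaly where the naive computation gives $(2)$ but the correct annihilator, because of the $1-s$ factorization of $\B_K^c$ and the precise value of $\Lambda_\chi$, is $(4)$. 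Everything else reduces to reassembling Proposition \ref{propII3}, Lemma \ref{lemII7}, Lemma \ref{lemII8}, Lemma \ref{lemII10} (for the norm-compatibility of $\B$ under restriction, needed to reduce a general cyclic $K$ to its subfields via Theorem \ref{chiformula}) and the known Stickelberger annihilation, all of which are already available in the text or cited to \cite{Gra}.
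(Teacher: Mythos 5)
Your proposal correctly reconstructs the intended route: the Stickelberger ideal $\St_{K_\chi} = \B_{K_\chi}\,\mathfrak{A}_\chi$ annihilates $\BH_{K_\chi}^-$, hence annihilates $\BH_\chi$; applying $\psi$ (which kills the $\Nu_{\Q(\mu_{f_\chi})/\Q}$ tail and sends $\B_{K_\chi}$ to $-\B_1(\psi^{-1})$) and using Proposition \ref{propII3}\,(ii) to identify $\psi(\mathfrak{A}_\chi) = (\psi(a)-a,\Lambda_\chi)$ gives exactly the stated annihilator, and the framework built through Lemmas \ref{lemII7}, \ref{lemII8}, \ref{lemII10} is what the cited proof in \cite{Gra} relies on. One small slip in the exceptional case: for $K_\chi = \Q(\mu_4)$ one has $g_\chi = 2$, so the ambient ring is $\Z[\mu_2] = \Z$ and not $\Z[i]$; taking the nontrivial $a = 3$ gives $\psi(a)-a = -4$ and $\Lambda_\chi = f_\chi/\gcd(f_\chi,\alpha_1) = 4/1 = 4$, so $(\psi(a)-a,\Lambda_\chi) = (4)$ falls out directly without needing to invoke the $(1-s)$-factorization \eqref{demiS} to recover a missing factor of $2$.
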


\begin{theorem} \label{theoII6}
Let $\varphi \in \Phi^-$ and let $\psi \mid \varphi$. Then the $\Z_p [\mu_{g^{}_\chi}]$-module 
$\CH^\alg_\varphi = \CH^\ar_\varphi$ is annihilated by the ideal 
$\BB_1(\psi^{-1}) \cdot (\psi(\sigma_a) - a, \Lambda_\chi)$ of $\Z_p[\mu_{g^{}_\chi}]$, 
where $\sigma_a$ is any generator of $G_K$.

\smallskip\noindent
The ideal $(\psi(\sigma_a) - a, \Lambda_\chi)$ of $\Z_p [\mu_{g^{}_\chi}]$
is the unit ideal except if $K_\chi \ne \Q(\mu_4^{})$
is extension of $\Q(\mu_p^{})$ of $p$-power degree, if 
$\Lambda_\chi \equiv 0 \pmod p$ and if $\lambda = 1$ in the writing
$\psi = \omega^\lambda \cdot \psi_p^{}$ (where $\omega$ is the Teichm\"uller 
character and $\psi_p^{}$ of $p$-power order), in which case, this ideal is the 
prime ideal of $\Z_p [\mu_{g^{}_\chi}]$.

\smallskip\noindent
If $K_\chi = \Q(\mu_4^{})$, this ideal is the ideal $(4)$.
\end{theorem}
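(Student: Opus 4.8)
The plan is to deduce this from its $\chi$-component analogue, Theorem \ref{theoII5}, by localising at $p$ and then projecting onto the $\varphi$-isotypic factor. First I would invoke Theorem \ref{theoI2}: the decomposition $\CH^\alg_\chi = \plus_{\varphi \mid \chi}\CH^\alg_\varphi$ is compatible with the ring splitting $\Z_p[G_\chi]/(P_\chi(\sigma_\chi)) \simeq \prd_{\varphi \mid \chi}\big(\Z_p[X]/(P_\varphi(X))\big)$, the $\varphi$-factor being the local ring $\Z_p[\mu_{g^{}_\chi}]$ of Definition \ref{defI2} realised via $\sigma_\chi\mapsto\psi(\sigma_\chi)$, $\psi\mid\varphi$ (i.e. the completion of $\Z[\mu_{g^{}_\chi}]$ at the prime above $p$ attached to $\varphi$). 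In particular $\CH^\alg_\varphi$ is a quotient of $\CH^\alg_\chi\otimes\Z_p$ as a module over that product ring. Since $\chi\in\X^-$, Theorem \ref{theoII1} gives $\CH^\alg_\varphi=\CH^\ar_\varphi$, so it is enough to treat $\CH^\alg_\varphi$.

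For the annihilation, Theorem \ref{theoII5} asserts that $\CH^\alg_\chi$ is killed by the image in $\Z[\mu_{g^{}_\chi}]=\Z[G_\chi]/(P_\chi(\sigma_\chi))$ of the Stickelberger ideal ${\bf S}_{K_\chi}=\B_{K_\chi}\cdot{\mathfrak A}_\chi$ of \eqref{resK}. Tensoring with $\Z_p$ and using the previous paragraph, $\CH^\alg_\varphi$ is killed by the image of ${\bf S}_{K_\chi}$ in the $\varphi$-factor $\Z_p[\mu_{g^{}_\chi}]$. I would then evaluate that image: in the cyclic case Proposition \ref{propII3}\,(ii) gives ${\mathfrak A}_\chi=(\sigma_a-a,\ \Lambda_\chi)$ for $\sigma_a$ a generator of $G_\chi$, whence $\psi({\mathfrak A}_\chi)=(\psi(a)-a,\ \Lambda_\chi)$; and $\psi(\B_{K_\chi})=\B_1(\psi^{-1})$, because the algebraic-norm part and the $\frac{1}{2}$-term of the Stickelberger element are annihilated by the non-trivial character $\psi$ of $(\Z/f_\chi\Z)^\times$ ($f_\chi$ being the conductor of $\psi$), while the universal factor $1-c\,\sigma_c^{-1}$ of Lemma \ref{lemII6} passes to the unit $1-c\,\psi(\sigma_c)^{-1}$ of $\Z_p[\mu_{g^{}_\chi}]$ for a suitable odd $c$ prime to $f_\chi$. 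Hence the image of ${\bf S}_{K_\chi}$ in the $\varphi$-factor is exactly $\B_1(\psi^{-1})\cdot(\psi(a)-a,\ \Lambda_\chi)$, giving the first assertion.

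It then remains to determine the ideal $(\psi(a)-a,\ \Lambda_\chi)$ in the local ring $R:=\Z_p[\mu_{g^{}_\chi}]$ with maximal ideal ${\mathfrak m}$: it equals $R$ unless $\Lambda_\chi\equiv 0\pmod p$ (otherwise $\Lambda_\chi\in R^\times$) \emph{and} $\psi(\sigma_a)\equiv a\pmod{{\mathfrak m}}$. Writing $\psi=\psi_0\psi_p$ with $\psi_p$ of $p$-power order, the $p$-part $\psi_p(\sigma_a)$ of the root of unity $\psi(\sigma_a)$ reduces to $1$ modulo ${\mathfrak m}$, so the congruence becomes $\psi_0(\sigma_a)\equiv a\pmod{{\mathfrak m}}$ in the prime field $\F_p$. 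A short direct analysis — distinguishing whether or not $K_\chi$ contains $\Q(\mu_p)$, and keeping track of $\Lambda_\chi=f_\chi/\gcd(f_\chi,\alpha_1)$ — shows that the two conditions together force $K_{\chi_0}=\Q(\mu_p)$, i.e. $g^{}_{\chi_0}=p-1$ and $\psi_0=\omega^\lambda$ with $\gcd(\lambda,p-1)=1$; then $\omega^\lambda(\sigma_a)\equiv a^\lambda\pmod p$ by the defining property of the Teichm\"uller character, and since $\sigma_a$ generates $G_\chi$ the integer $a$ is a primitive root modulo $p$, so $a^\lambda\equiv a\pmod p$ holds precisely for $\lambda\equiv 1\pmod{p-1}$, that is $\lambda=1$. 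In that case $R$ is a totally ramified discrete valuation ring over $\Z_p$ with some uniformiser $\pi$, and from $\psi(\sigma_a)-a=(\omega(\sigma_a)-a)+\omega(\sigma_a)(\psi_p(\sigma_a)-1)$, the first summand of $\pi$-valuation $\geq\phi(p^n)\geq 2$ (for $p$ odd) and the second of valuation $1$, one gets $v_\pi(\psi(\sigma_a)-a)=1$, so $(\psi(a)-a,\ \Lambda_\chi)={\mathfrak m}$ is the prime ideal of $R$ above $p$; the remaining case $K_\chi=\Q(\mu_4)$ ($p=2$) is a one-line check, where $\Lambda_\chi=4$, $\psi(\sigma_a)-a=-1-3=-4$, so the ideal is $(4)$.

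The hard part is this last step: passing from the global ideal $(\psi(a)-a,\Lambda_\chi)$ of $\Z[\mu_{g^{}_\chi}]$, which in Theorem \ref{theoII5} is merely a prime ${\mathfrak p}_\chi\mid p$, to its exact behaviour in each $\varphi$-factor — that is, proving that $\Lambda_\chi\equiv 0\pmod p$ together with $\psi_0(\sigma_a)\equiv a\pmod{\mathfrak m}$ can occur only when $K_\chi$ is a $p$-power extension of $\Q(\mu_p)$, and then identifying the distinguished $\varphi$ as the one with $\lambda=1$ in $\psi=\omega^\lambda\psi_p$, while settling the $p=2$, $\Q(\mu_4)$ configuration separately. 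By contrast, the annihilation statement itself is a formal consequence of Theorem \ref{theoII5}, the idempotent decomposition of Theorem \ref{theoI2}, and the identification $\CH^\alg_\varphi=\CH^\ar_\varphi$ of Theorem \ref{theoII1}.
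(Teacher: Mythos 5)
The paper simply cites Theorems \ref{theoII5} and \ref{theoII6} from the original French article \cite{Gra} without reproducing the proofs, so there is no in-text argument to compare against. Judged on its merits, your reduction of the local statement to the global one of Theorem \ref{theoII5} — localise at $p$, split over the $\varphi$-factors via Theorem \ref{theoI2}, invoke $\CH^\alg_\varphi = \CH^\ar_\varphi$ from Theorem \ref{theoII1}, and compute the image of the Stickelberger ideal using Proposition \ref{propII3}\,(ii) and $\psi(\B_{K_\chi}) = -\B_1(\psi^{-1})$ (the sign being irrelevant for ideals) — is the natural route and is almost certainly what \cite{Gra} does. Your valuation argument in the exceptional case, writing $\psi(\sigma_a) - a = (\omega(\sigma_a) - a) + \omega(\sigma_a)(\psi_p(\sigma_a) - 1)$ and comparing $\pi$-valuations $\geq \phi(p^n)$ against exactly $1$, is correct for $p$ odd and $n \geq 1$; add a line for $n = 0$, where $\psi_p = 1$ and one must rely instead on $v_p(\Lambda_\chi) = 1$ for $K_\chi = \Q(\mu_p)$, and note that for $p = 2$ the same decomposition works for $n \geq 2$, while $n = 1$ needs a separate look of which your $\Q(\mu_4)$ computation settles only one subcase.

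The genuine gap is exactly what you flag as ``the hard part.'' You assert that $\Lambda_\chi \equiv 0 \pmod p$ together with $\psi_0(\sigma_a) \equiv a \pmod {\mathfrak m}$ force $K_{\chi_0} = \Q(\mu_p)$ and hence the exceptional shape of $K_\chi$. The second congruence does force $g_{\chi_0} \mid p - 1$ and $a \bmod p$ to have exact order $g_{\chi_0}$; but one still has to prove that (i) $\Lambda_\chi \equiv 0 \pmod p$ forces $p \mid f_\chi$ (easy, since $\Lambda_\chi \mid f_\chi$), (ii) the two conditions \emph{jointly} rule out $K_{\chi_0}$ being a proper cyclic subfield of degree dividing $p - 1$ with $p$ tamely ramified — i.e., they force $g_{\chi_0} = p - 1$ and $\Q(\mu_p) \subseteq K_\chi$ — and (iii) with that identification, $\psi_0 = \omega^\lambda$ and the primitive-root argument pins $\lambda = 1$. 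Step (ii) is a real conductor/ramification/$\Lambda_\chi$ computation, not ``a short direct analysis''; it is the content-bearing lemma of the whole theorem and must be written out before this counts as a proof rather than an outline. (For a hint of the subtlety: for $p = 7$ and $K_\chi$ the cubic subfield of $\Q(\mu_7)$, one has $\psi_0(\sigma_a) \equiv a \pmod 7$ available, yet $\Lambda_\chi = 1$; the interplay of the two conditions is what does the work and is precisely what is missing.)
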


We have detailed, in Appendix \ref{ex47}, the case of $K := K_\chi = \Q(\mu_{47}^{})$ by
computing $\order \BH_\chi$ by means of the Bernoulli number with some annihilation 
properties.

\medskip
In \cite[Chap. IV, \S\,2; Th\'eor\`eme IV1]{Gra2}, \cite[Th\'eor\`emes 1, 2, 3]{Gra00},
we have given improvements of the annihilation for $2$-class groups but it is 
difficult to say if the case $p=2$ is optimal or not.
By way of example, we cite the following under the above context:

\begin{theorem} \label{annihilation2}
Let $\chi \in \CX^-$ and $\psi \mid \varphi \mid \chi$ with 
$\psi = \psi_0^{}\, \psi^{}_2$, $\psi_0^{} \ne 1$ of even order,
$\psi_2$ of $2$-power order. Put $K := K_\chi$.
The $\Z_2[\mu_{g^{}_\chi}]$-module $\CH_\varphi \big / \J_{K/K^+}
(\CH_{\varphi'}^+)$ is annihilated by $\big(\frac{1}{2 }\BB_1(\psi^{-1}) \big)$, where:
\[\CH_\varphi^+ := \{h \in \CH_{K^+},\  x^{P_{\varphi'}(\sigma_\chi) } = 1\}, \]
with $\varphi' \in \Phi^+$ above $\psi' := \psi_0\, \psi_2^2$.
\end{theorem}

\section{Application to torsion groups of abelian \texorpdfstring{$p$}{Lg}-ramification}\label{tor}

Let $K$ be a totally real number field and let $\CT_K$ be the torsion group of the Galois 
group of the maximal $p$-ramified abelian pro-$p$-extension $H_K^\pr$ of $K$. 

Under
Leopoldt's conjecture, we have $\CT_K = \Gal(H_K^\pr/K^{\,\cyc})$, where $K^{\,\cyc}$ 
is the cyclotomic $\Z_p$-extension of $K$.

\smallskip
Let $H_K^\nr$ be the $p$-Hilbert class field and let $H_K^\bp$ be the 
Bertrandias--Payan field \cite{BP}; the $\Z_p$-module $\CT_K^\bp\! := \Gal(H_K^\bp/K^\cyc)$ 
is the Bertrandias--Payan module (\cite[Sec.\,4]{Ngtor}, \cite[Sec.\,2\,(b)]{Jau5}).

\subsubsection{Schema VIII} \label{figVIII}
\unitlength=1.0cm 
\[\vbox{\hbox{\hspace{-3cm} 
 \begin{picture}(11.5,4.4)
\put(6.5,3.50){\line(1,0){1.3}}
\put(8.7,3.50){\line(1,0){2.05}}
\put(3.95,3.50){\line(1,0){1.3}}
\put(4.2,1.50){\line(1,0){1.25}}
{\color{red}\bezier{400}(3.6,3.7)(7.6,4.6)(10.9,3.7)}
\put(7.2,4.3){\ft$\CT_K$}
{\color{red}\bezier{350}(3.6,3.3)(7.3,2.4)(8.0,3.3)}
\put(6.4,2.5){\ft$\CT_K^\bp$}
\put(3.50,1.9){\line(0,1){1.25}}
\put(3.50,0.5){\line(0,1){0.8}}
\put(5.7,1.9){\line(0,1){1.25}}
{\color{red}\bezier{150}(3.8,0.3)(5.0,0.4)(5.65,1.3)}
\put(4.95,0.4){\ft$\CH_K$}
{\color{red}\bezier{400}(6.25,1.5)(9.5,1.6)(11.05,3.3)}
\put(8.95,1.7){\ft$\CU_K/\CE_K$}
\put(10.85,3.4){\ft$H_K^\pr$}
\put(5.35,3.4){\ft$K^\cyc\! H_K^\nr$}
\put(7.95,3.4){\ft$H_K^\bp$}
\put(6.7,3.2){\ft$\CR_K$}
\put(9.1,3.2){\ft$\CW_K$}
\put(4.3,3.2){\ft$\CH_K^\cyc$}
\put(3.3,3.4){\ft$K^\cyc$}
\put(5.5,1.4){\ft$H_K^\nr$}
\put(2.8,1.4){\ft$K^\cyc \!\cap \! H_K^\nr$}
\put(3.35,0.2){\ft$K$}
\end{picture}   }} \]
\unitlength=1.0cm

Let $K_v$ be the completion of $K$ at the place $v$.
The above diagram is related to the exact sequence:
\begin{equation}\label{hrw}
\begin{aligned}
1\to  \CW_K \too \  \tor_{\Z_p} & \big(\CU_K \big / \CE_K \big) \mathop {\tooo}^{\log_p}  \\
& \CR_K := \tor_{\Z_p} \big(\log_p \big (\CU_K \big) \big / \log_p (\CE_K) \big) \too 0,
\end{aligned}
\end{equation}

\noindent
where $\CW_K := \big(\oplus_{v \mid p} \mu_p(K_v^{})\big)/\mu_p(K)$,
$\CU_K$ denotes the group of local units at $p$ and $\CE_K = \BE_K \otimes \Z_p$ 
is identified with its diagonal image in $\CU_K$ 
(see \cite[ \S\,III.2, (c), Fig.\,2.2; Lemma III.4.2.4]{Gra6} and  \cite{Gra7}).

\smallskip
Since $[\Q_p(\mu_{p^e}) : \Q_p] = (p-1) p^{e-1}$, for $K$ fixed there are 
only finite number of primes $p$ such that $\CW_K \ne 1$; for $K$ totally real
$\mu_p(K) = 1$ for all $p >2$. For instance, if $K=\Q(\sqrt m)$ is a real quadratic 
field, then for $p=2$, $\CW_K \simeq \mu_2^{} \times \mu_2^{}/\mu_2^{}$
($2$ split in $K$) or $\mu_4^{}/\mu_2^{}$ ($m \equiv -1 \pmod 8$); for $p=3$, 
$\CW_K \simeq \mu_3^{}$ if and only if $m \equiv -3 \pmod 9$.

\smallskip
In all the sequel, we assume that $K$ is abelian real.

\subsection{Computation of \texorpdfstring{$\order \CT_K$}{Lg} for 
\texorpdfstring{$\chi \in \CX^+$}{Lg}}
The order of the $\Z_p[G_K]$-module $\CT_K$ is given, analytically, by 
the residue at $s=1$ of the $p$-adic $\zeta$-function of $K$, whence by the 
values at $s=1$ of $p$-adic $\BL$-functions of the non-trivial characters of $K$ 
(after \cite[Appendix]{2Coa}); see for instance \cite[\S\,3.4, formula (3.8)]{Gra9} 
for analytic context. 

\smallskip
In conclusion we can write, up to $p$-adic units:
\begin{equation}\label{torsion}
\order\CT_K  = \order \CH^{\,\cyc}_K\!\! \cdot \order{\mathcal R}_K 
\cdot \order{\mathcal W}_K  \sim  [K \cap \, \Q^{\,\cyc} \!: \Q] \cdot
\!\prd_{\psi \ne 1} \hbox{$\frac{1}{2}$}\,\BL_p (1,\psi).
\end{equation}

Since the arithmetic family of these $\Z_p[\G]$-modules $\CT_K$,
for real fields $K$, follows the most favorable properties (surjectivity 
of the norms, injectivity of the transfer maps in relative sub-extensions), 
we can state, in a similar context as for Theorems \ref{theoII1}:

\begin{theorem}\label{theotorsion}
For all $\chi \in \CX^+$ (resp. $\varphi \in \Phi^+$, $\varphi \mid \chi$), we have:
\begin{equation*}
\left\{\begin{aligned}
\CT^\ar_\chi = \CT^\alg_\chi & = \{x \in \CT_{K_\chi}, \ \, x^{P_\chi (\sigma_\chi)} = 1 \} \\
& = \{x \in \CT_{K_\chi},\  \Norm_{K_\chi/k}(x) = 1,\ 
\hbox{for all $k \varsubsetneqq K_\chi$} \}, \\
\CT^\ar_\varphi  = \CT^\alg_\varphi & = \{x \in \CT_{K_\chi}, \ \, x^{P_\varphi(\sigma_\chi) } = 1 \}.
\end{aligned}\right.
\end{equation*}

\noindent
Moreover, if $K/\Q$ is real cyclic,
$\order \CT_K = \prd_{\chi \in \CX_K} \order \CT^\ar_\chi = 
\prd_{\varphi \in \Phi_K} \order \CT^\ar_\varphi$.  
\end{theorem}

We denote simply $\CT_\chi$ (resp. $\CT_\varphi$) these components
in the algebraic and arithmetic senses. In the analytic point of view, we have the analogue of 
Theorems \ref{theoII2} and \ref{theoIII2} (see some $p$-adic formulas about $\BL_p$-functions,
from classical papers \cite{KL, AF, Gra3} and a broad presentation in 
\cite[Theorems 5.18, 5.24]{Was}):

\begin{theorem} \label{theotorsion2}
Let $\chi \in \CX^+ \setminus \{1\}$. Then $\order \CT_\chi \sim  w_\chi^{\,\cyc} \cdot 
\prd_{\psi \mid \chi} \hbox{$\frac{1}{2}$} \,\BL_p (1,\psi)$, 
where $w_\chi^{\,\cyc}$ is as follows, from analytic formula \eqref{torsion}:

\smallskip
\quad (i) $w_\chi^{\,\cyc} = 1$ if $K_\chi$ is not a subfield of $\Q^{\,\cyc}$;

\smallskip
\quad (ii) $w_\chi^{\,\cyc} = p$ if $K_\chi$ is a subfield of $\Q^{\,\cyc}$.
\end{theorem}

\subsection{Annihilation theorem for \texorpdfstring{$\CT_K$}{Lg}}\label{annTphi}
An annihilator of $\CT_K$ is given by the following statement 
\cite[Theorem 5.5]{Gra8} which does not assume any hypothesis 
on $K$ real and $p$ and gives again the following results (e.g., \cite{Gra4}, \cite{Or1}):

\begin{theorem}
Let $K$ be a real abelian field of conductor $f_K$.
Let $f_n$ be the conductor of 
$L_n := K \Q(\mu_{qp^n})$, $n$ large enough, where $q=p$ or $4$ as usual.
Let $c \in \Z$ be prime to $2p f_K$. 
For all $a \in [1, f_n]$, prime to $f_n$, let $a'_{c} \in [1, f_n]$ be the 
unique integer such that $a'_{c} \cdot c \equiv a \pmod {f_n}$ and put
$a'_{c} \cdot  c - a = \lambda^n_a(c)\, f_n$, $\lambda^n_a(c) \in \Z$. 
Then consider:
\[\BA_{K,n}(c) := \!\sm_{a=1}^{f_n} \lambda^n_a(c) \, 
a^{-1} \Big(\ffrac{K}{a} \Big)=: \BA'_{K,n}(c) \cdot (1+s_\infty) \in \Z_p[G_K], \]
where $s_\infty$ is the complex conjugation and
$\BA'_{K,n}(c) =\! \!\sm_{a=1}^{f_n/2} \lambda^n_a(c) \, 
a^{-1} \Big(\ffrac{K}{a} \Big)$.

\noindent
Let $\BA_K(c) :=  \ds\lim_{n \to \infty}\ \Big[\ 
\hbox{$\!\sm_{a=1}^{f_n}$} \lambda^n_a(c) \, a^{-1} \Big(\ffrac{K}{a} \Big)\Big]
=: \BA'_K(c) \cdot (1+s_\infty)$; then:

\smallskip
(i) For $p\ne 2$, $\BA'_K(c)$ annihilates the $\Z_p[G_K]$-module ${\mathcal T}_K$. 

\smallskip
(ii) For $p=2$, the annihilation is true for $2 \cdot \BA_K(c)$ and $4 \cdot \BA'_K(c)$.
\end{theorem}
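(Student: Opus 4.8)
The plan is to realize $\mathcal A_K(c)$ as the specialization, to the layer $K$ of the cyclotomic tower, of a Tate-twisted smoothed Stickelberger element, and to transport the annihilation of the relative class groups $(\CH_{L_n})^-$ proved in Section~\ref{secII} to an annihilation of $\CT_K$ by reflection. By the decomposition $\CT_K=\plus_{\varphi\in\Phi_K}\CT_\varphi$ of Theorem~\ref{theotorsion} it is enough to treat each $\CT_\varphi$ ($\varphi\mid\chi$, $\chi\neq1$) separately, as a $\Z_p[\mu_{g^{}_\chi}]$-module; when $p\nmid g^{}_\chi$ this is the semi-simple $\psi$-isotypic piece, so the tower argument below is uniform. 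Incidentally this route also identifies $\psi(\mathcal A_K(c))$, up to the smoothing factor at $c$, with $\tfrac12\,\BL_p(1,\psi)$, compatibly with Theorem~\ref{theotorsion2}; but, exactly as Stickelberger's theorem refines the analytic class number formula, the content here is the annihilation statement and not merely the order.

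For the reflection, recall that Leopoldt's conjecture holds for abelian fields, so $\CT_K=\Gal(H_K^\pr/K\,\Q^{\,\cyc})$. Apply Kummer theory to the $p$-ramified abelian pro-$p$-extensions over the cyclotomic tower of $L_n:=K\,\Q(\mu_{qp^n})$: together with Iwasawa's description of $X_\infty:=\varprojlim_n\CH_{L_n}$ and the Spiegelung relating the $p$-ramified Iwasawa module to the minus part $X_\infty^-$, one obtains that $\CT_K$ is, up to the elementary finite factor recorded by $\CW_K$ (which accounts for the $w^{\,\cyc}_\chi$'s and for the $\mu_p$, $\mu_4$ discrepancies, cf. $\order\CT_K=\order\CH^{\,\cyc}_K\cdot\order\CR_K\cdot\order\CW_K$), a sub-quotient of the coinvariants under $\Gamma:=\Gal(\Q^{\,\cyc}/\Q)$ of the Tate twist $X_\infty^-(1)$. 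Hence any element of the Iwasawa algebra annihilating $X_\infty^-$ yields, after the twist and after specialization to the layer $K$, an element of $\Z_p[G_K]$ annihilating $\CT_K$ up to the $\CW_K$-factor.

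Now feed in Stickelberger. By Lemma~\ref{lemII6} and Theorems~\ref{theoII5}--\ref{theoII6}, for every $n$ the smoothed element $\B_{L_n}^c=\bigl(1-c\,\big(\tfrac{L_n}{c}\big)^{-1}\bigr)\,\B_{L_n}\in\Z[G_{L_n}]$ annihilates $(\CH_{L_n})^-$, and the computation of Lemma~\ref{lemII6} writes its coefficients as $\lambda^n_a(c)+\tfrac{1-c}{2}$. Applying the reflection map — the Tate twist composed with the canonical involution of the group ring, which replaces $\big(\tfrac{L_n}{a}\big)$ by the scalar $a^{-1}$ times $\big(\tfrac{L_n}{a}\big)$ and carries $(\CH_{L_n})^-$ to $(\CH_{L_n})^-(1)$ — then projecting from $L_n$ down to $K$ and passing to the limit (which exists by the norm-compatibility of the $\B_{L_n}^c$, i.e. the distribution relation of Lemma~\ref{lemII10}, the $\tfrac{1-c}{2}$ term contributing a multiple of an algebraic norm trivial on the relevant non-trivial characters), one recovers exactly $\mathcal A_K(c)=\lim_n\sum_a\lambda^n_a(c)\,a^{-1}\big(\tfrac{K}{a}\big)$. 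Combining with the previous paragraph, $\mathcal A_K(c)$ annihilates $\CT_K$. Finally, since $\mathcal A_K(c)=\mathcal A'_K(c)(1+s_\infty)$ and $s_\infty$ acts trivially on the plus object $\CT_K$: for $p\neq2$ the factor $1+s_\infty=2$ is invertible, so $\mathcal A'_K(c)$ already annihilates $\CT_K$, which is (i); for $p=2$ one must keep the factor $2$, obtaining both $2\,\mathcal A_K(c)$ and $4\,\mathcal A'_K(c)=2\,\mathcal A_K(c)$ as annihilators, which is (ii).

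The hard part is the reflection step: making rigorous, and uniform in $n$, the passage from ``$\B_{L_n}^c$ annihilates $(\CH_{L_n})^-$'' to ``its twist annihilates $\CT_K$'' — that is, pinning down $\CT_K$ precisely inside the $\Gamma$-coinvariants of $X_\infty^-(1)$ (this is where Leopoldt's conjecture for abelian fields and Iwasawa's structure theory enter) and checking that the twist of $\B_{L_n}^c$ descends on the nose to $\mathcal A_{K,n}(c)$, including the correct normalization of the Tate twist so that the weight $a^{-1}$ is produced exactly. A secondary but genuinely delicate point is the $2$-adic bookkeeping — the $(1-s)$ factor of $\B^c$, the role of $\mu_4$, and the index-of-units pathologies at $p=2$ — which is precisely what forces the extra factor $2$ and the recourse to $\mathcal A_K(c)$ rather than $\mathcal A'_K(c)$ in part (ii).
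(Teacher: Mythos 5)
Your proposal is a sketch of a proof, not a proof, and you say so yourself; but there are also two concrete errors in the sketch that would need to be fixed even at that level. First, note that the paper does not contain a proof of this theorem: it is quoted as \cite[Theorem 5.5]{Gra8}, so there is no in-paper argument to compare against. Your reflection strategy (Stickelberger annihilation of the minus class groups up the cyclotomic tower, then a Tate-twist/Spiegelung to carry this over to $\CT_K$) is a recognizable ``Main Conjecture style'' route; whether it coincides with the argument of \cite{Gra8} cannot be checked from the present text.

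Two points would fail as written. (1) You dismiss the $\frac{1-c}{2}$ contribution to $\B^c_{L_n}$ as ``a multiple of an algebraic norm trivial on the relevant non-trivial characters.'' That is true \emph{before} the twist: $\frac{1-c}{2}\sum_a(\frac{L_n}{a})^{-1}=\frac{1-c}{2}\,\Nu_{L_n/\Q}$. But after replacing $(\frac{L_n}{a})^{-1}$ by $a^{-1}(\frac{L_n}{a})$, the corresponding term becomes $\frac{1-c}{2}\sum_a a^{-1}(\frac{L_n}{a})$, which is no longer a multiple of the algebraic norm; evaluated at a nontrivial $\psi$ it produces (a variant of) $\BL_p(1,\psi)$ and does not vanish. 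So the twisted Stickelberger element does \emph{not} specialize ``exactly'' to $\mathcal A_{K,n}(c)$, and the discrepancy has to be dealt with (it is of the same size as the quantity you are trying to control). (2) Your derivation of (i) versus (ii) is internally inconsistent: you first conclude, with no restriction on $p$, that $\mathcal A_K(c)$ annihilates $\CT_K$, and then split into cases using only that $s_\infty$ acts trivially, so $1+s_\infty=2$ on $\CT_K$. If $\mathcal A_K(c)$ really annihilated $\CT_K$ for $p=2$, then (ii) would be trivially implied and strictly weaker than what you proved — the extra factors $2$ and $4$ in (ii) would be pointless. The extra $2$'s must come from genuine losses \emph{inside} the reflection step at $p=2$ (the $(4)$ ideal in Theorem~\ref{theoII6}, Hasse's $Q_K$, the presence of $\mu_4$, $2$-torsion in the Kummer pairing), not from dividing out $1+s_\infty$; you gesture at this in the final paragraph but it contradicts the chain of implications in the middle of the proof. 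Finally, the passage from ``Stickelberger annihilates each $(\CH_{L_n})^-$'' to ``a $\Gamma$-coinvariant of $X_\infty^-(1)$ receives $\CT_K$ as a subquotient, compatibly with the annihilator in the Iwasawa algebra'' is the entire content of the theorem and is left as an acknowledged black box; as such the proposal does not yet constitute a proof.
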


It is immediate, using these formulas modulo a suitable power of $p$, to compute
annihilators; examples are given in Appendix \ref{annihilators}.

\begin{remarks}\label{fctL}
(i) In practice, when the exponent $p^e$ of $\CT_K$ is known, one can take 
$n=n_0+e$, where $n_0\geq 0$ is defined by $[K \cap \Q^{\,\cyc} : \Q] =: p^{n_0}$,
and use the annihilators $\BA_{K,n}(c)$, $\BA'_{K,n}(c)$ (but any $n \gg 0$ is suitable).
When $K=K_\chi$, the annihilator limit $\BA_{K_\chi}(c)$ is
related to $p$-adic $\BL$-functions via the formula:
\[\psi(\BA_{K_\chi}(c)) =  (1-\psi (c)) \cdot \BL_p(1,\psi),\ \hbox{ for $\psi \mid \chi$}. \]
If $g_\chi$ is not a $p$-power, one can choose $c$ such that
$1-\psi (c)$ is invertible giving $\psi(\BA_{K_\chi}(c)) \sim \BL_p(1,\psi)$;
if $g_\chi = p^n$, $n \geq 1$, $\psi(\BA_{K_\chi}(c)) \sim 
\pi_\chi \BL_p(1,\psi)$, where $\pi_\chi$ is an uniformizing parameter in 
$\Q_p(\mu_{p^n})$.

\smallskip
This theorem is the analog of Theorem \ref{theoII6}, using
Bernoulli's numbers, linked to $\BL_p (0,\omega \psi^{-1})$, instead of 
$\BL_p (1,\psi)$.

\smallskip
(ii) Some other annihilation theorems exist for the Jaulent logarithmic class group
(see \cite{Jau6, Jau7, Jau8}); \cite{Jau8} is related to Greenberg's conjecture and,
when $K$ contains $\mu_p^{}$, \cite{Jau6} obtains that the Stickelberger ideal annihilates 
the imaginary component of the logarithmic class group and that its reflection annihilates 
the real component of the Bertrandias--Payan module.
It will be interesting to formulate a ``Finite AMC'' about the $\varphi$-components 
of these modules.
\end{remarks}

\section{Application to class groups of real abelian extensions}\label{secIII}

Denote by $\BE$ the $\G$-family for which $\BE_K$, $K \in \CK$, is the group 
of absolute value of the global units of $K$, the Galois action being defined by 
$\vert \varepsilon \vert^\sigma = \vert \varepsilon^\sigma \vert$ for any unit 
$\varepsilon$ and any $\sigma \in \G$. As we explain in the beginning of the 
Appendix for explicit computations, conjugates of algebraic numbers are 
managed by PARI in a coherent manner corresponding to an (unknown) 
embedding of $\ov\Q$ in $\C$; thus $\vert \ \,\vert$ is, for us, the real absolute 
value, taken after a fixed embedding $K \to \R$, or after PARI numerical results. 

\smallskip
The $\BE_K$'s are free $\Z$-modules of rank $[K : \Q]-1$ for real fields $K$.

\subsection{The Leopoldt \texorpdfstring{$\chi$}{Lg}-units}
In \cite{15Leo} Leopoldt defined unit groups, $\BE_\chi$, that we shall call (as in 
\cite{18Or}) the group of  $\chi$-units for rational characters $\chi \in \CX^+ \setminus \{1\}$;
from the definition of $\chi$-objects and the results of the previous sections we can 
write (where $\Nu$ may be replaced by $\Norm$):
\begin{equation}\label{chiunits}
\begin{aligned}
\BE_\chi & = \{\vert \varepsilon \vert \in \BE_{K_\chi},\ \, 
\vert \varepsilon \vert^{P_\chi (\sigma_\chi) } = 1 \} \\
& = \{\vert \varepsilon \vert \in \BE_{K_\chi}, \,
\Nu_{K_\chi/k} (\vert \varepsilon \vert) = 1,\,  \hbox{for all 
$k \varsubsetneqq K_\chi$} \}.
\end{aligned}
\end{equation}

What follows is also available in \cite{15Leo, 16Leo, 18Or}.

\begin{definitions}\label{E0}
(i) For any cyclic real field $K$, denote by $\wh \BE_K$ the subgroup of $\BE_K$ 
generated by the $\BE_k$'s for all the subfields $k \varsubsetneqq K$ (or simply by 
each of the $k_\ell^{}$ such that $[K : k_\ell^{}] = \ell \mid [K : \Q]$, $\ell$ prime).

\smallskip
(ii) Let $Q_K = \big(\BE_K : \oplus_{\chi \in  \CX_K} \BE_\chi \big)$ where $\BE_\chi$ is 
the group of $\chi$-units (Definition \eqref{chiunits}) and, for all $\chi \in \CX_K^+$,
let $Q_\chi = \big( \BE_{K_\chi} :  \wh \BE_{K_\chi} \!\oplus \BE_\chi \big)$.

\smallskip
(iii) Let $\phi$ be the Euler totient function and put, for $\chi \in \CX^+$:
\begin{equation*}
\left\{\begin{aligned}
q_\chi & = \hbox{$\prod_{\ell \mid g^{}_\chi}$} \ell^{\frac{\phi(g^{}_\chi)}{\ell-1}}, 
 \hbox{ if $g^{}_\chi$ is not the power of a prime number,} \\
q_\chi & = \ell^{\frac{\phi(g^{}_\chi)}{\ell-1}-1} = \ell^{\ell^{n-1} -1}, 
 \hbox{ if $g^{}_\chi$ is a prime power $\ell^n$, $n \geq 1$,}\\
q_1^{} & =1.
\end{aligned}\right.
\end{equation*}

Set $q_K^{} = \Big( \ffrac{g^{g-2}}
{\prod_{\chi \in \CX_K} d_\chi}  \Big)^{\frac{1}{2}}$, where $g := [K : \Q]$ 
and $d_\chi$ is the discriminant of $\Q(\mu_{g^{}_\chi})$.
\end{definitions}

\begin{lemma} \label{lemIII1}\label{propIII1}\label{propIII2}
(i) We have $\wh \BE_{K_\chi} \!\!\cdot \BE_\chi = \wh \BE_{K_\chi} \! \oplus \BE_\chi$,
for all $\chi \in \CX^+$.

\smallskip
(ii) We have, for all cyclic real field $K$, $Q_K = \prod_{\chi \in  \CX_K} Q_\chi$.

\smallskip
(iii) We have, for all cyclic real field $K$, $q_K^{} = \prod_{\chi \in  \CX_K} q_\chi$.
\end{lemma}

\begin{proof}
(i) One may find various equivalent definitions of the $\chi$-units and their 
properties in \cite[Chap. 5, \S\,4]{15Leo} or \cite{18Or}; but knowing the norm 
characterization \eqref{chiunits} of $\BE_\chi$, the proof of (i) is obvious.

\smallskip
(ii) This may be proved locally; for this, we use the $\G$-family
$\CE_K := \BE_K \otimes \Z_p$, for any prime $p$, and the $\CE_\chi$'s as above. 
Then one uses, inductively, Lemma \ref{lemIII1}\,(i) with characters $\psi \mid \varphi \mid \chi$,
written as $\psi = \psi^{}_0 \, \psi_p^{}$ ($\psi^{}_0$ of prime-to-$p$ order, $\psi_p^{}$ 
of order $p^n$, $n \geq 0$). See \cite[pp. 72--75]{Gra}.

\smallskip
(iii) From \cite[\S\,15, p. 34; (2), p. 35]{10Has};
see \cite[pp. 76--77]{Gra} for more details.
\end{proof}

\subsection{The Leopoldt cyclotomic units}\label{subIII2} 

For the main definitions and properties of cyclotomic units, see 
\cite[\S\,8\ (1)]{15Leo} or \cite{17Or}.

\begin{definitions}\label{defIII3} 
(i) Let $\chi \in \CX^+$ of conductor $f_\chi$; we define the ``cyclotomic numbers''
$\BC_\chi := \prod_{a \in A_\chi} (\zeta_{2f_\chi}^a - \zeta_{2f_\chi}^{-a})$, with
$\zeta_{2f_\chi} := \exp \big(\ffrac{i \pi}{f_\chi} \big)$, where $A_\chi$ is a 
half-system of representatives, in $(\Z/ f_\chi \Z)^\times$, of
$\Gal(\Q(\mu_{f_\chi^{}})/K_\chi )$.

\smallskip
(ii) Let $K$ be a real abelian field and let $\BC_K$ be the multiplicative group
generated by the conjugates of $\vert \BC_\chi \vert$, for all $\chi \in \CX_K$. 
Then we define the group of cyclotomic units
$\BF_K := \BC_K \cap \BE_K$ and  $\CF_K := \BF_K \otimes \Z_p$. 
\end{definitions}

Recall that $\BC_\chi^2 \in K_\chi$ and that any conjugate $\BC'_\chi$ 
of $\BC_\chi$ is such that $\frac{\BC'_\chi}{\BC_\chi} \in \BE_{K_\chi}$.
If $f_\chi$ is not a prime power, then $\BC_\chi$ is a unit and $\BF_K = \BC_K$.

\subsection{Arithmetic computation of \texorpdfstring{$\order \BH^\ar_\chi$}{Lg},
\texorpdfstring{$\chi \in \CX^+$}{Lg}}\label{subIII3}

Using Leopoldt's formula \cite[Satz 21, \S\,8\ (4)]{15Leo} and  
Lemma \ref{propIII1}\,(ii),\,(iii), we obtain (see \cite[Th\'eor\`eme III.1]{Gra}):

\begin{proposition}\label{theoIII1}\label{div}
For all $\chi \in \CX^+ \setminus \{1\}$, let $\Delta_\chi = 
\prod_{\ell \mid g^{}_\chi} \big(1 - \sigma_\chi^{g_\chi/\ell} \big)$;
then $\order \BH^\ar_\chi = \ffrac{Q_\chi}{q_\chi} 
\cdot (\BE_\chi : \BC_\chi^{\Delta_\chi})$ and
 $\order \BH^\ar_\chi = \ffrac{1}{q_\chi} \big (\BE_{K_\chi} :  \wh \BE_{K_\chi} 
\! \oplus \BC_\chi^{\Delta_\chi} \big)$, interpreting $Q_\chi$ \cite[Corollaire III.1]{Gra}.
\end{proposition}

To interpret the coefficient $q_\chi$, we have replaced the Leopoldt group 
$\BC_\chi^{\Delta_\chi}$ of cyclotomic units by the larger group $\BF_{K_\chi} 
:= \BC_{K_\chi} \cap \BE_{K_\chi}$ (Definition~\ref{defIII3}); whence  the final 
result interpreting the coefficient 
$q_\chi$ and giving the analog of Theorem \ref{theoII2} for real class groups:

\begin{theorem}\label{theoIII2}
Let $\BH^\ar_\chi := \{x \in \BH_{K_\chi},\,  
\Norm_{K_\chi/k}(x) = 1,\ \hbox{for all $k \varsubsetneqq K_\chi$} \}$.
Let $g^{}_\chi$ be the order of $\chi  \in \CX^+ \setminus \{1\}$ and 
$f_\chi$ its conductor. Then:
\[\order \BH^\ar_\chi = w_\chi \! \cdot \big (\BE_{K_\chi}\! : \wh \BE_{K_\chi}\! \!
\cdot \BF_{K_\chi} \big), \]
where $w_\chi$ is defined as follows:

\medskip
(i) Case $g^{}_\chi$ non prime power. Then $w_\chi = 1$;

\medskip
(ii) Case $g^{}_\chi = p^n$, $p \ne 2$ prime, $n \geq 1$:

\smallskip
\qquad (ii$\,'$) Case $f_\chi = \ell^k$, $\ell$ prime, $k \geq 1$. Then 
$w_\chi = 1$;

\smallskip
\qquad (ii$\,''$) Case $f_\chi$ non prime power. Then $w_\chi = p$;

\medskip
(iii) Case $g^{}_\chi = 2^n$, $n \geq 1$:

\smallskip
\qquad (iii$\,'$) Case $f_\chi = \ell^k$, $\ell$ prime, $k \geq 1$. Then $w_\chi = 1$;

\smallskip
\qquad (iii$\,''$) Case $f_\chi$ non prime power. Then $w_\chi \in\{1, 2\}$.
\end{theorem}

\begin{proof}
For the ugly proof see \cite[Th\'eor\`eme III.2, pp. 78--85]{Gra}.
\end{proof}

\begin{corollary}\label{chisemisimple}
If $p \nmid g_\chi$, $\order \CH_\chi = \big (\CE_\chi : \CF_\chi \big) =
\prod_{\varphi \mid \chi} \big (\CE_\varphi : \CF_\varphi \big)$, where
$\CE_\varphi = \CE_{K_\chi}^{e_\varphi}$ and $\CF_{\!\varphi} =
\big(\langle \BC_\chi \rangle \otimes \Z_p\big)^{e_\varphi}$ now
giving $\order \CH_\varphi = \big (\CE_\varphi : \CF_\varphi \big)$.
\end{corollary}

\begin{proof}
In the semi-simple case $p \nmid g_\chi$, for any $\Z_p[G_K]$-module $\CM_K$, 
$\CM_\chi = \CM_K^{e_\chi}$ and $\CM_\varphi = \CM_K^{e_\varphi}$, 
with the usual semi-simple idempotents; thus,
$\wt \CE_\chi = \wt \CE_\chi^{e_\chi} = 
\CE_{K_\chi}^{e_\chi} / \wh \CE_{K_\chi}^{e_\chi} \cdot \CF_{\!K_\chi}^{e_\chi}
= \CE_\chi / \CF_\chi$, since $\wh \CE_{K_\chi}^{e_\chi}=1$. The claim for
$\varphi \mid \chi$ is the Main Theorem proved in the semi-simple context.
\end{proof}

\begin{remarks}\label{e0}
The viewpoint given by Theorem \ref{theoIII2}, 
which appears to have been ignored, seems more 
convenient than formulas trying to use Sinnott's cyclotomic units. 
Indeed, compare with \cite[Theorem 4.14]{Grei} using instead $\CH^\alg_{\chi}$ 
(in a partial semi-simple context as explained in Remark \ref{remgreither})
and Sinnott's group of cyclotomic units, larger than classical Leopoldt's group
of Definition \ref{defIII3}, but which gives rise to intricate index formulas. 
For the Iwasawa context, see for instance \cite{NLB}.

\smallskip
Moreover, as we have mentioned in \cite[Remark III.1]{Gra0}, an analytic formula for 
$\order \CH^\alg_\chi$, $\chi \in \CX^+$, does not seem obvious (if any) because of 
capitulation aspects (see the examples of Appendix \ref{ex12}).

\smallskip
Theorem \ref{theoIII2} suggests a new and simpler statement of the 
Finite AMC for the $\CH_\varphi$'s, especially in the non semi-simple 
real case (see \S\,\ref{analytics} for the corresponding analytic values).
Recent publications \cite{Gra13,Gra14,Gra15} greatly strengthen this definition
of the Finite AMC, using the $\chi$-objects 
$\wt \BE_\chi := \BE_{K_\chi} / \wh \BE_{K_\chi}\! \!\cdot \BF_{K_\chi}$
and $\wt \CE_\chi := \CE_{K_\chi} / \wh \CE_{K_\chi} \!\!\cdot \CF_{\!K_\chi}$
(algebraic and arithmetic).
Then:
\[\wt \CE_\chi = \plus_{\varphi \mid \chi} \wt \CE_\varphi
=  \plus_{\varphi \mid \chi} \{\wt x \in \wt \CE_\chi, \ \, 
\wt x^{P_\varphi (\sigma_\chi) } = 1\} =  \plus_{\varphi \mid \chi} 
(\wt \CE_\chi)_{\varphi_0^{}}. \]
\end{remarks}

\subsection{Class field theory and regulators}\label{diagram}
Let $K \in \CK$ be a real cyclic field defining $\chi \in \CX^+$ in what follows.
To simplify some diagrams, we assume to be in the most common case where 
$\CW_K=1$ and $K\cap \Q^\cyc = \Q$, which gives $\CT_K = \CT_K^\bp$ (cf. 
Diagram of Section \ref{tor}) and $\order \CT_K \sim \prod_{\psi \mid \chi,\,\psi \ne 1} 
\hbox{$\frac{1}{2}$} \, \BL_p (1,\psi)$ (Formula \eqref{torsion}). Otherwise, formulas 
are modified by means of standard coefficients or indices which do not modify the 
philosophy of the results/conjectures; moreover the character of $\CW_K$, related to 
local cyclotomic Teichm\"uller ones, gives trivial information for conjectural aspects.

\smallskip
The Galois group $\CR_K \subseteq \CT_K$ may be compared with a larger 
``cyclotomic regulator'' $\CR_K^\cyc$ interpreted as a Galois group only 
depending of $\chi$. For this purpose, the following diagram of the maximal abelian 
pro-$p$-extension $K^\ab$ of $K$ is necessary (from \cite[III.4\,(d) \& Diagram 
III.4.4.1]{Gra6} with our present notations),  where $H_K^\ta$ is the maximal 
tamely ramified abelian pro-$p$-extension of $K$ and $F_v^\times$ the 
$p$-Sylow subgroup of the multiplicative group of the residue field of the 
tame place~$v$; let $L := H_K^\pr H_K^\ta$:
\subsubsection{Schema IX} \label{figIX}
\unitlength=0.75cm
 {\color{black}
\[\vbox{\hbox{\vspace{-1.2cm}
\begin{picture}(11.5,5.9) 
\put(8.5,4.50){\line(1,0){3.0}}
\put(1.4,4.50){\line(1,0){4.4}}
\put(1.4,2.50){\line(1,0){4.4}}
\put(6.3,4.5){\line(1,0){1.55}}
\put(1.0,2.9){\line(0,1){1.20}}
\put(1.0,1.4){\line(0,1){0.70}}
\put(6.15,2.9){\line(0,1){1.20}}
{\color{red}\bezier{500}(1.2,4.9)(6.45,6.1)(11.7,4.9)}
\put(6.1,5.68){\ft${\prod_{v \nmid p}{F_v^\times}}$}
{\color{red}\bezier{400}(6.6,2.48)(11.8,2.8)(11.9,4.3)}
\put(10.75,2.85){\ft$\CU_K$}
{\color{red}\bezier{150}(6.15,4.3)(7.1,4.0)(8.05,4.3)}
\put(6.9,3.75){\ft$\wt \CE_\chi$}
{\color{red}\bezier{150}(8.50,4.3)(10.0,4.0)(11.6,4.3)}
\put(9.65,3.8){\ft$\wh \CE_K \! \CF_K$}
\put(11.7,4.4){\ft$K^\ab$}
\put(5.9,4.4){\ft$L$}
\put(7.8,4.4){\ft$L{\sst (\chi)}$}
{\color{red}\bezier{250}(6.2,4.7)(8.9,5.2)(11.65,4.7)}
\put(8.75,5.05){\ft$\CE_K$}
{\color{red}\bezier{150}(0.7,2.5)(0.1,3.45)(0.7,4.4)}
\put(-0.78,3.45){\ft$\CU_K \! / \!\CE_K$}
{\color{red}\bezier{200}(1.2,1.2)(2.2,2.8)(1.25,4.4)}
\put(1.75,2.8){\ft$\CT_K$}
\put(0.75,4.4){\ft$H_K^\pr$}
\put(5.9,2.4){\ft$H_K^\ta$}
\put(0.75,2.4){\ft$H_K^\nr$}
\put(0.8,1.1){\ft$K$}
\end{picture}   }} \]}
\unitlength=1.0cm

In this diagram, class field theory interprets $\Gal(K^\ab/H_K^\ta)$ 
as the $\Z_p$-module $\CU_K$ of principal local units at $p$ 
(isomorphic to the direct product of the inertia groups of the $p$-places) 
and $\Gal(K^\ab/L)$ as the $\Z_p$-module $\CE_K := \BE_K \otimes \Z_p$ 
(embedded both in $\CU_K$ and the product ${\prod_{v \nmid p}{F_v^\times}}$
of the inertia groups of the tame places, with suitable Artin maps
described in \cite[\S\,III.4.4.5.1]{Gra6}).

\smallskip
Now, put $\CU_K^* := \{u \in \CU_K, \ \Norm_{K/\Q}(u) = \pm1\}$; since $K$ is real,
$\CE_K$ is of finite index in $\CU_K^*$ and 
$\tor_{\Z_p}^{}(\CU_K/\CE_K) = \CU_K^*/\CE_K \simeq \CR_K$.

Assume $K^\cyc \cap H_K^\nr = K$ to simplify; so $H_K^\ta \cap K^\cyc = H_K^\nr$
then $F := H_K^\ta \, K^\cyc  H_K^\nr$ is fixed by $\CU_K^*$ and  
$F \cap H_K^\pr = K^\cyc H_K^\nr$.
Recall the exact sequence $1\to \CR^\ram_K \to \CR_K \to \CR^\nr_K \to 1$
\cite[\S\,2 \& Figure 3]{Gra10}, due to genus theory; so, a sub-extension of 
$L/F$ may be unramified. 

\smallskip
We have moreover
$\Gal(F/K^\cyc H_K^\nr) \simeq \Gal(L/H_K^\pr) \simeq 
\big(\hbox{$\prod_{v \nmid p}$} {F_v^\times} \big)/\CE_K$:

\subsubsection{Schema X} \label{figX}
\unitlength=1.16cm
 {\color{black}
\[\vbox{\hbox{\hspace{-4.0cm} \vspace{-1.5cm}
 \begin{picture}(9.5,5.2)
\put(4.65,3.0){\line(1,0){2.6}}
\put(1.9,3.0){\line(1,0){1.7}}
\put(1.7,1.5){\line(1,0){2.1}}
\put(4.9,4.05){\line(1,0){3.15}}
\put(8.3,4.05){\line(1,0){1.54}}
\put(10.3,4.05){\line(1,0){1.05}}
\put(1.5,1.65){\line(0,1){1.15}}
\put(4.0,1.65){\line(0,1){1.15}}
\put(4.75,2.75){\line(0,1){1.2}}
\put(1.55,2.2){\ft$\Z_p$}
\put(1.4,2.9){\ft$K^\cyc$}
{\color{red}\bezier{200}(1.65,3.1)(2.7,3.35)(3.85,3.1)}
\put(2.5,3.3){\ft$\CH_K$}
\put(3.6,2.9){\ft$K^\cyc\! H_K^\nr$}
\put(7.3,2.9){\ft$H_K^\pr$}
\put(4.6,4.0){\ft$F$}
\put(8.1,4.0){\ft$L$}
\put(9.8,4.0){\ft$L{\sst (\chi)}$}
\put(11.35,4.0){\ft$K^\ab$}
{\color{red}\bezier{450}(7.7,2.97)(10.8,3.0)(11.35,3.9)}
\put(9.6,2.9){\ft${\prod_{v \nmid p} \!{F_v^\times}}$}
\put(1.4,1.4){\ft$K$}
\put(3.85,1.4){\ft$H_K^\nr$}
\put(4.55,2.55){\ft$H_K^\ta$}
{\color{red}\bezier{500}(4.75,4.35)(8.1,5.55)(11.4,4.35)}
\put(8.0,5.04){\ft$\CU_K^*$}
{\color{red}\bezier{450}(4.8,4.2)(6.5,4.45)(8.1,4.2)}
\put(5.9,4.44){\ft$\CR_K\! \simeq\! \CU_K^*/ \CE_K$}
{\color{red}\bezier{300}(4.2,3.1)(5.7,3.35)(7.3,3.1)}
\put(5.55,3.3){\ft$\CR_K$}
{\color{red}\bezier{500}(1.6,3.2)(4.45,4.3)(7.3,3.2)}
\put(3.5,3.78){\ft$\CT_K$}
{\color{red}\bezier{250}(8.3,3.95)(10.2,3.6)(11.35,3.95)}
\put(9.96,3.6){\ft$\CE_K$}
{\color{red}\bezier{150}(8.3,4.2)(9.05,4.35)(9.9,4.2)}
\put(8.95,4.35){\ft$\wt \CE_\chi$}
{\color{red}\bezier{150}(9.98,4.2)(10.65,4.35)(11.4,4.2)}
\put(10.3,4.38){\ft$\wh \CE_K \! \CF_K$}
{\color{red}\bezier{500}(4.8,3.95)(7.3,3.65)(9.8,3.95)}
\put(7.0,3.55){\ft$\CR_K^\cyc$}
{\color{red}\bezier{550}(4.7,2.44)(11.0,1.0)(11.4,3.95)}
\put(9.2,1.85){\ft$\CU_K$}
{\color{red}\bezier{550}(4.7,2.47)(9.0,1.8)(8.3,3.9)}
\put(7.95,2.45){\ft$\CU_K/\CE_k$}
{\color{red}\bezier{550}(4.2,1.47)(8.2,0.8)(7.4,2.85)}
\put(7.1,1.36){\ft$\CU_K/\CE_K$}
\bezier{350}(4.1,3.15)(4.35,3.55)(4.6,3.95)
\bezier{350}(7.6,3.15)(7.85,3.55)(8.1,3.95)
\bezier{350}(4.1,1.65)(4.35,2.05)(4.6,2.45)
\end{picture}   }} \]}
\unitlength=1.0cm

Define (under the previous assumptions), $\CR_K^\cyc\! := \CU_K^*/\wh \CE_K\!\cdot \CF_K$, 
which yields, for $\chi \ne 1$ and $K= K_\chi$, the $\Z_p[G_K]$-modules isomorphism: 
\begin{equation}\label{R/E}
\CR_K \simeq \CR_K^\cyc / \wt \CE_\chi.
\end{equation}

We then have $\CR_K^\cyc \simeq \Gal(L{\sst (\chi)}/F)$, where
$L{\sst (\chi)}$ is the subfield of $K^\ab$ fixed by the image of $\wh \CE_K \CF_K$.

\begin{remark} \label{rankR}
Let $\chi \in \CX^+ \setminus \{1\}$, $K = K_\chi$; assume to simplify that $\CW_K=1$, 
$w_\chi=1$ in Theorem \ref{theoIII2}, $K\cap \Q^\cyc = \Q$ and $K^\cyc \cap H_K^\nr = K$:

\smallskip
(i) Theorem \ref{theoIII2} and isomorphism \eqref{R/E} give, in terms of $\chi$-compo\-nents:
\[\hbox{$\order \wt \CE_\chi = \order \CR_K^\cyc \big / \order \CR_K =
\order \CH^\ar_\chi$ and $\order \CT_\chi  = \order \CR_\chi^\cyc$. }\]
The $\CA_\chi$-modules $\CT_\chi$ and  $\CR_\chi^\cyc$
(resp. $\wt \CE_\chi$ and $\CH^\ar_\chi$) are not necessarily isomorphic as shown 
by the following excerpt giving cyclic cubic fields $K$ such that $\CR_\chi$ is of 
$7$-rank $2$ and $\CT_\chi$ of $7$-rank $\geq 3$ implying $\CH_\chi \ne 1$ 
with $\CH_\chi \simeq \Z/7\Z \oplus \Z/7\Z$ for the followings
(no example of $7$-rank $\geq 4$ exists in the interval considered):

\ft\begin{verbatim}
x^3+x^2-39666*x-2582719     Structure of the 7-torsion group:[7,7,7]
x^3+x^2-43300*x-3411104     Structure of the 7-torsion group:[7^2,7,7]
x^3+x^2-13226*x-508479      Structure of the 7-torsion group:[7^3,7,7]
x^3+x^2-427660*x-31551829   Structure of the 7-torsion group:[7^4,7,7]
x^3+x^2-2033484*x-966131001 Structure of the 7-torsion group:[7^2,7^2,7]
\end{verbatim}\ns

\smallskip
(ii) The sub-diagram given by the extension $K^\ab/K^\cyc$, opens  
an access way for an interpretation of the Finite AMC for even characters 
or at least for an annihilation theorem of $\CH^\ar_\varphi$ by $\wt \CE_\varphi$,
in the spirit of Thaine's theorem (see \S\,\ref{mysterious}, Conjectures \ref{annihilationthm}, 
\ref{omegaconj}). 
Indeed, $\wt \CE_\chi$ has same order as $\CH^\ar_\chi$ and the units may be seen 
diagonally embedded in the (infinite) product of the places of $K$. 
Remark that $\wt \CE_\varphi$ is a sub-module of $\CR_\varphi^\cyc$ 
(quotient $\CR_\varphi$) but $\CH^\ar_\varphi$ is a quotient of 
$\CT_\varphi$ (by $\CR_\varphi$).
\end{remark}

\subsection{Annihilation conjecture for real \texorpdfstring{$p$}{Lg}-class groups}\label{annihilation+}
Before any proof of the conjectural equality $\order \CH^\ar_\varphi = \order \wt \CE_{\varphi_0^{}} =
\order (\CE_{K_\chi} / \wh \CE_{K_\chi}\!\! \cdot\CF_{\!K_\chi})_{\varphi_0^{}}$
(giving a Main Theorem for $\varphi \in \Phi_K^+$), it will be interesting to prove that any 
annihilator of $\wt \CE_\varphi$ annihilates $\CH^\ar_\varphi$, which will be more 
precise than the annihilators of $\CT_\varphi$ (see Theorem \ref{annTphi}, 
Remarks \ref{fctL}, \ref{rankR}).

\smallskip
To our knowledge, the best known annihilation theorem of real
$p$-class groups is Thaine's Theorem \cite{Th}, \cite[Theorem 15.2]{Was}
saying that any annihilator of $\CE_{K_\chi}/\CF'_{\!K_\chi}$ (for a suitable definition of
the group of cyclotomic units $\CF'_{\!K_\chi}$) is an annihilator of $\CH_{K_\chi}$. 
But Thaine's Theorem only concerns the semi-simple case.

\smallskip
Mention also annihilation theorems by Solomon \cite{Sol1}, which are not often
optimal because of vanishing of Euler factors; this is discussed in \cite{Gra8}.
Finally mention the numerous papers of Greither and Ku\v cera (like \cite{GK0, GK1, GK2}) 
on the annihilation of real class groups, using special units or/and giving information on
the Fitting ideals.

\begin{conjecture} \label{annihilationthm}
Let $\chi \in \CX^+ \setminus \{1\}$ and let $\varphi \mid \chi$. 
Any element of $\Z[\mu_{g_\chi^{}}]$ (resp. $\Z_p[\mu_{g_\chi^{}}]$) annihilating 
$\BE_{K_\chi} / \wh \BE_{K_\chi}\!\! \cdot\BF_{K_\chi}$
(resp. $(\CE_{K_\chi} / \wh \CE_{K_\chi}\!\! \cdot\CF_{K_\chi})_{\varphi_0^{}}$), 
annihilates $\BH^\ar_\chi$ (resp. $\CH^\ar_\varphi$). 
\end{conjecture}

In this direction, we state the following lemma, giving some 
obvious prerequisites on the subject.

\begin{lemma}\label{mono}
Let $\BM_{K_\chi}$ be a torsion-free monogenic $\Z[G_\chi]$-module (i.e., $\Z$-free  
and $\Z[G_\chi]$-generated by a single element). Let $\BM'_{K_\chi}$ be a sub-module 
of $\BM_{K_\chi}$ such that $\BM_{K_\chi}/\BM'_{K_\chi}$ is annihilated by 
$P_\chi(\sigma_\chi)\,\Z[G_\chi]$ and finite. 
Then $(\CM_{K_\chi}/\CM'_{K_\chi})_\varphi \! := \! ((\BM_{K_\chi}/\BM'_{K_\chi}) 
\otimes \Z_p)_\varphi \!\simeq \Z_p[\mu_{g_\chi^{}}]/
{\mathfrak p}_\varphi^{\lambda_\varphi}$\!, $\lambda_\varphi \geq 0$, for all $\varphi \mid \chi$.
\end{lemma}

\begin{proof}
By assumption, $\BM_{K_\chi}/\BM'_{K_\chi}$ is  a finite monogenic $\Z[\mu_{g_\chi^{}}]$-module, 
of the form $\Z[\mu_{g_\chi^{}}]/{\mathfrak A}$, ${\mathfrak A} \ne 0$; 
so $\CM_{K_\chi}/\CM'_{K_\chi} \simeq (\Z[\mu_{g_\chi^{}}]/{\mathfrak A}) \otimes \Z_p$, giving
$\CM_{K_\chi}/\CM'_{K_\chi}  \simeq \hbox{$\bigoplus$}_{\varphi \mid \chi}
\big[\Z_p[\mu_{g_\chi^{}}]/ {\mathfrak p}_\varphi^{\lambda_\varphi} \big]$, 
with the usual correspondence between primes ${\mathfrak p} \mid p$ 
and $p$-adic characters $\varphi \mid \chi$; whence the claim.
\end{proof}

\begin{remark}
From the Dirichlet--Herbrand theorem on units (see, e.g., \cite[Corollary I.3.7.2, 
Remark I.3.7.3]{Gra6} or \cite[Ch. IX,\S\,4]{Lg}) there exists in $\BE_{K_\chi}$ a unit 
$\varepsilon$ generating, with its conjugates, a subgroup $\BE$ of $\BE_{K_\chi}$ of 
prime-to-$p$ finite index (we may call it a pseudo Minkowski unit since Minkowski unit,
in the strict sense, do not exist in general). 
Then $\CM := \Z_p [G_\chi] \cdot \vert \varepsilon \vert$ is monogenic and torsion-free.
\end{remark}

\smallskip
Let $\CM'_{K_\chi} := \wh \CE_{K_\chi}\!\! \cdot  \CF_{K_\chi}$.
Taking into account orders, monogenicity and the fact
that $(P_\chi(\sigma_\chi))$ annihilates $\CM_{K_\chi}/\CM'_{K_\chi}$,
Lemma \ref{mono} is coherent with an annihilation theorem of the 
$\CH^\ar_\varphi$'s from the results of \S\,\ref{diagram}.

\subsection{Mysterious link between cyclotomic units and classes}\label{mysterious}
The brief overview, that we give now, must be completed by technical elements 
that the reader can find especially in \cite[\S\,15.2, 15.3]{Was} (all of them borrow 
from classical arithmetic) and in the references that we talked about,
giving systematic generalizations of ``Euler systems''.

\smallskip
To simplify, consider the real semi-simple case for $p>2$ with $K=K_\chi$ of 
conductor $f$; for $\varphi \mid \chi$, we need to establish \textit {arithmetic links} between 
$\wt \CE_\varphi = \CE_\varphi/\CF_{\!\varphi}$ and $\CH_\varphi$,
where $\CE_\varphi =: \langle \varepsilon_\varphi^{} \rangle_{\Z_p}^{}$ and
$\CF_{\!\varphi} =: \langle \eta_\varphi^{} \rangle_{\Z_p}^{}$ is built from 
Leopoldt's cyclotomic units (Definitions \ref{defIII3}). 
But $\wt \CE_\varphi$ has, a priori, no obvious connection with class groups,
except the analytic equality $\prod_{\varphi \mid \chi}\order \CH_\varphi =
\prod_{\varphi \mid \chi} \order \wt \CE_\varphi$ (Corollary \ref{chisemisimple}). 

\smallskip
The trick, for the proof of the Finite AMC, 
consists in using a classical context of ``analytic genus theory'', by 
means of auxiliary cyclic $\ell$-ramified extensions $K(\mu_\ell^{})$ of degree 
multiple of the exponent $\lambda\,p^e$, $e \geq 1$, of $\CH_K$
(e.g., $\ell \equiv 1 \pmod{2p^N}$, $N \gg e$). 

\smallskip
Let $\ell \nmid f$, $\ell \equiv 1 \pmod{2p^N}$, totally split 
in $K$; put $L_0 = \Q(\mu_\ell^{})$ and $L := L_0 K$:

\smallskip
Let $\eta_{f\ell} = 1 - \zeta_{f\ell}$, $\eta_{f} = 1 - \zeta_{f}$,
$\eta_{\ell} = 1 - \zeta_{\ell}$ and consider the cyclotomic numbers
$\eta_L^{}:= \Norm_{\Q(\mu_{f \ell}^{})/L} \big(\eta_{f\ell} \big)$,
$\eta_K^{}:= \Norm_{\Q(\mu_{f }^{})/K}(\eta_f)$; by 
assumption on the total splitting of $\ell$ in $K/\Q$, 
$\Norm_{L/K}(\eta_L^{}) = 1$ (cf. Lemma \ref{lemII10}).
We remark that $\eta_{f\ell} \equiv \eta_f \pmod {\pi_\ell}$ where
$\pi_\ell := \eta_\ell$ is an uniformizing parameter at the places above $\ell$ in 
$L_0$, so that $\eta_L^{} \equiv \eta_K^{} \pmod{\pi_\ell}$,
giving a $\ell$-adic link between $\eta_K^{}$ and $\eta_L^{}$ which will be
fundamental for the congruences \eqref{congruences}:

\subsubsection{Schema XI} \label{figXI}
\unitlength=0.65cm
\[\vbox{\hbox{\hspace{1.5cm}
\begin{picture}(11.5,3.6)
\put(4.3,3.2){\line(1,0){2.85}}
\put(2.1,3.2){\line(1,0){1.65}}
\put(4.3,0.2){\line(1,0){2.85}}
\put(1.85,0.2){\line(1,0){1.85}}
\put(1.5,0.5){\line(0,1){2.4}}
\put(4.00,0.5){\line(0,1){2.4}}
\put(7.5,0.5){\line(0,1){2.4}}
\put(4.1,1.6){\ft$\langle s \rangle$}
\put(7.6,1.6){\ft$\ell - 1$}
\put(2.6,3.35){\ft$G_K$}
\put(7.2,3.1){\ft$\Q(\mu_{f \ell})$}
\put(3.85,3.1){\ft$L$}
\put(4.3,2.8){\ft$\eta_L^{}$}
\put(-0.25,2.8){\ft$\pi_\ell^{}$}
\put(0.0,3.1){\ft$L_0\!=\!\Q(\!\mu_{\ell}\!)$}
\put(8.5,2.8){\ft$\eta_{f \ell}^{}$}
\put(3.85,0.1){\ft$K$}
\put(4.3,0.5){\ft$\eta_K^{}$}
\put(7.3,0.1){\ft$\Q(\mu_{f})$}
\put(8.5,0.5){\ft$\eta_{f}^{}$}
\put(1.34,0.1){\ft$\Q$}
\end{picture}}} \]
\unitlength=1.0cm

A main step is to apply Hilbert's Theorem $90$ (Kummer's Theorem \cite[II]{Kum}), 
saying that $\eta_L^{} = \alpha_L^{s-1}$, where $s$ is a generator of 
$\Gal(L/K)$ and $\alpha_L^{} \in L^\times$ is such that 
$(\alpha_L^{}) \in \BI_L^{\langle s \rangle}$, where $\BI$ denotes ideal groups; 
since $\alpha_L^{}$ is defined modulo $K^\times$, we can take
$\alpha_L^{}$ integer in $L$ (or at least $\ell$-integer), whence:
\begin{equation}\label{resolvent}
(\alpha_L^{}) = \J_{L/K}({\mathfrak a}_K) \cdot {\mathfrak L}_0^{\Omega_\ell}, 
\end{equation}  
where ${\mathfrak a}_K \in \BI_K$ may be taken prime to $\ell$, 
where ${\mathfrak L}_0$ is a fixed prime ideal dividing $\ell$ in $L$ and:
\begin{equation}\label{omega}
\hbox{${\Omega_\ell} = \sum_{\sigma \in G_K} r_\sigma \cdot \sigma^{-1}$, $r_\sigma \geq 0$;} 
\end{equation}
thus, since $\Norm_{L/K}({\mathfrak L}_0) = {\mathfrak l}_0$, 
${\mathfrak L}_0 \mid {\mathfrak l}_0 \mid \ell$ in $L/K$:
\begin{equation}\label{resolvent0}
(\alpha_K) := (\Norm_{L/K}(\alpha_L^{})) = 
{\mathfrak a}_K^{\ell-1} \cdot {\mathfrak l}_0^{\,{\Omega_\ell}}.
\end{equation}

But ${\mathfrak a}_K^{\ell - 1}$ is principal, whence ${\mathfrak l}_0^{\Omega_\ell}$ 
principal.

\smallskip
The following property elucidates the ``mysterious link'' giving an information 
that we can ``project'' on each $\varphi$-component and obtain the annihilation 
of the $\varphi$-class of ${\mathfrak l}_0$ by the $\varphi$-component of ${\Omega_\ell}$:

\begin{lemma}\label{nontrivial}
Except a finite number of primes $\ell$, the ideal
${\mathfrak L}_0^{\Omega_\ell}$ of \eqref{resolvent} gives a non trivial relation,
in the meaning that ${\Omega_\ell}$ in \eqref{omega}
is not of the form $\lambda \cdot \Nu_{L/L_0}$, 
$\lambda \geq 0$, giving ${\mathfrak l}_0^{\Omega_\ell} = (\ell)^\lambda$ in \eqref{resolvent0}.
\end{lemma}

\begin{proof}
Assume that ${\Omega_\ell} = \lambda \cdot \Nu_{L/L_0}$; the character of
${\mathfrak L}^{\Omega_\ell}_0 = (\pi_\ell^\lambda)$, as $\Z[G_K]$-module,
is the unit one and any non-trivial 
$\varphi$-component $\alpha_{L,\varphi}^{}$ of $\alpha_{L}^{}$ is
prime to $\ell$, thus congruent, modulo any ${\mathfrak L} \mid \ell$, to 
$\rho_{\mathfrak l}^{} \in \Z$, $\rho_{\mathfrak l}^{} \not\equiv 0 \pmod \ell$ 
(residue degrees~$1$ in $L/\Q$). 
Since ${\mathfrak L}^s = {\mathfrak L}$, we obtain $\eta_{L,\varphi^{}}^{} 
=\alpha_{L,\varphi}^{s-1} \equiv 1 \pmod {\mathfrak L}$;
but $\eta_{K,\varphi^{}}^{} \equiv \eta_{L,\varphi^{}}^{} \pmod {\pi_\ell}$
leads to $\eta_{K,\varphi^{}}^{} \equiv 1 \pmod {\mathfrak l}$, for all 
${\mathfrak l} \mid \ell$, giving 
$\eta_{K,\varphi^{}}^{} \equiv 1 \pmod \ell$ (absurd for almost all $\ell$).
\end{proof}

Reducing modulo $\Nu_{L/L_0}$, one may get ${\Omega_\ell} \ne 0$,
``minimal'' in an obvious sense, with $r_\sigma \geq 0$ but not all zero.
Consider $\ffrac{\alpha_L^\sigma}{\pi_\ell^{r_{\sigma}}}$
modulo ${\mathfrak L}_0$ and the conjugations $\alpha_L^s = \alpha_L \cdot \eta_L$
and $\ffrac{\pi_\ell^s}{\pi_\ell} = 
\frac{1-\zeta_\ell^{\g_\ell^{}}}{1-\zeta_\ell} \equiv \g_\ell^{} \pmod{\pi_\ell}$ 
(where $\g_\ell^{}$ is a primitive root modulo $\ell$ such that $\zeta_\ell^s =: 
\zeta_\ell^{\g_\ell^{}}$); one gets:
\[\Big(\ffrac{\alpha_L^\sigma} {\pi_\ell^{r_\sigma}} \Big)^s =
\ffrac{\alpha_L^{s \sigma}}{\pi_\ell^{s r_\sigma}} \equiv
\ffrac{\eta_L^\sigma \alpha_L^{\sigma}}{(\g_\ell \pi_\ell)^{r_\sigma}}
\equiv
\ffrac{\eta_L^\sigma} {\g_\ell^{r_\sigma}} \cdot 
\ffrac{\alpha_L^{\sigma}} {\pi_\ell^{r_\sigma}} \pmod{{\mathfrak L}_0}, \]
whence:
\begin{equation}\label{congruences}
\g_\ell^{r_\sigma} \equiv \eta_L^{\sigma}
\equiv \eta_K^{\sigma} \pmod {{\mathfrak l}_0}.
\end{equation}

Notice that if $r_{\sigma} = 0$ for all $\sigma \in G_K$, the above process is empty.
So we have obtained a non-trivial relation between the classes of the conjugates 
of ${\mathfrak l}_0$; for instance, 
if $\eta_{K,\varphi^{}}^{} = \varepsilon_{K,\varphi^{}}^{p^h}$, one gets 
$r_\sigma^{} \equiv 0 \pmod {p^h}$, whence a property of annihilation
of the $\varphi$-class group.
Recall that $\alpha_L^{}$ is given by an explicit Hilbert resolvent 
allowing explicit computations.

\begin{remark}
(i) In the literature, the properties of the $\alpha_L$'s give rise to an homomorphism
$\BF_{\!K}^{}/\BF_{\!K}^{p^N} \to \Z/p^N\Z\, [G_K]$,  of $\Z_p[G_K]$-modules, 
allowing reasoning for the $\varphi$-components. To get more information, 
one varies $\ell$, using Chebotarev's Theorem and Nakayama's Lemma. 
Then the problem of the order of the $\CH_\varphi$'s needs the knowledge 
of the whole analytic formula of Theorem \ref{theoIII2} (see the 
details in \cite[\S\,15.2, 15.3]{Was}, from Thaine's Theorem). 

\smallskip
(ii) We will return elsewhere to the links with genus theory given by the following
fixed-points exact sequence (obtained from the invariant class of ${\mathfrak A}_L$,
${\mathfrak A}_L^{1-s} =: (\alpha_L) \mapsto \Norm_{L/K}(\alpha_L) =: \varepsilon_K$):
\[1 \to \cl_L(\BI_L^{\langle s \rangle}) \otimes \Z_p \too \CH_L^{\langle s \rangle} \too 
\CE_K \cap \Norm_{L/K}(L^\times) / \Norm_{L/K}(\CE_L) \to 1\]
and (in the present context) the Chevalley--Herbrand formula 
\cite[pp.\,402-406]{Che} in $L/K$:
\[\order \CH_L^{\langle s \rangle} = \order \CH_K \cdot 
\ffrac{p^{e\, ([K : \Q]-1)}}{(\CE_K : \CE_K \cap \Norm_{L/K}(L^\times))} \]
and similar formulas in the sub-extensions 
of $L/K$ (noting that the exact sequence and Chevalley--Herbrand's formula 
may be written in terms of $\varphi$-objects without any difficulty; cf. \cite{Gra13,Gra15}).
The reason of such a link with genus theory is the fact that, assuming $\CF_M=\CE_M$ 
for the subfield $M$ of $L$ of degree $p$ over $K$ we know that $\Norm_{L/M}(\CF_L) 
= \CF_M = \CE_M$, so that the above exact sequence in $L/M$ reduces to 
$\CH_L^{\langle s^p \rangle} = \cl_L(\BI_L^{\langle s^p \rangle}) \otimes \Z_p$
and $\order \CH_L^{\langle s^p \rangle} = \order \CH_M \cdot p^{e\, ([K : \Q]-1)}$.

\smallskip
(iii) Any ``$\,\G$-family of numbers $\eta$\,'' satisfying, in cyclic extensions $L/K$,
relations of the form $\Norm_{L/K}(\eta_L^{}) = \eta_K^{1 - {\rm Frob}_{L/K}(\ell)}$ and
$\eta_L^{} \equiv \eta_K^{} \pmod {\prod_{{\mathfrak l} \mid \ell}  {\mathfrak l}}$,
for suitable primes $\ell$, is called an ``Euler system'' \cite{Kol, PR} and gives 
rise to similar reasonings in many domains.

\smallskip
(iv) Equations of the general form $\Norm_{L/K}(y) = \Norm_{L/K}({\mathfrak B})$,
giving $(y) = {\mathfrak B}\cdot {\mathfrak A}^{s-1}$, are fundamental in various 
questions, as Greenberg's conjecture, in a genus theory framework (see \cite[\S\,3, 
Algorithm]{Gra7}). Such equations are due to some $x \in K^\times$,  
local norm in $L/K$ at the $\ell$-places, such that $(x) = \Norm_{L/K}({\mathfrak B})$,
giving the relation $x=\Norm_{L/K}(y)$, for some unknown $y$
(Hasse's norm theorem in $L/K$).
In various papers (as \cite[\S\,7.1]{Gra12}) we have discussed these random 
aspects by computing some ideals ${\mathfrak A}$, so that we may 
conjecture the following more precise property (see Schemas \ref{figIX}, 
\ref{figX}, Lemma \ref{nontrivial}, Relations \eqref{resolvent}--\eqref{congruences}).
\end{remark}

\begin{conjecture}\label{omegaconj}
Let $K$ be a real abelian field of conductor $f$, of $p$-class group of exponent less 
than $2 p^N$ and let $\eta_K := \Norm_{\Q(\mu_{f}^{})/K} \big(1-\zeta_{f} \big)$.
Consider primes $\ell \equiv 1 \pmod {2 p^N}$, totally split in $K$; 
let ${\mathfrak l}_0 \mid \ell$ in $K$ and let $\g_\ell$ be a primitive root modulo $\ell$. 
When $\ell$ varies, $\eta_K^{}$ provides infinitely many
elements ${\Omega_\ell} = \sum_{\sigma \in G_K} r_\sigma \cdot \sigma^{-1}$, with
$\eta_K^{\sigma}\equiv \g_\ell^{r_\sigma} \pmod {{\mathfrak l}_0}$, such that
the ideal generated by these relations yields annihilators of the $\varphi$-components 
$\CH^\ar_\varphi$ as $\Z_p[G_K]$-modules and possibly their structure.
\end{conjecture}

The program, written in Appendix \ref{Omega}, for cyclic cubic fields,
computes the invariants $\psi(\Omega_\ell) \in \Z[j]$ 
only with the knowledge of $\eta_K^{}$ and gives tables of results.

\smallskip
These numerical experiments are particularly remarkable and confirm
that the $\Omega_\ell$'s define an universal $\Omega_K$ which
replaces, in the real case, the Stickelberger element of the imaginary case.
For this, we notice that the embeddings (injectivity from 
\cite[Theorem III.4.4]{Gra6}) of $\CF_K$ and $\CE_K$ in the 
direct product $\prod_{v \nmid p}(F_v^\times \otimes \Z_p)$ (see  
Schemas \ref{figIX}, \ref{figX}) govern the congruences \eqref{congruences} 
giving the relations $\Omega_\ell$ involving only $\BF_K$, without any memory
of the arithmetic of the auxiliary fields $\Q(\mu_\ell^{})$. 
Then, the Schmidt--Chevalley theorem (local--global principle for powers, e.g.,
\cite[\S\,6.3, Theorem II.6.3.3]{Gra6}) claims that there are infinitely many primes 
$\ell$ (totally split in $K$) giving the ``good'' $\Omega_K$.

\smallskip
From Lemma \ref{mono} giving standard structure of $\CE_\varphi$ 
and  $\CF_\varphi$, it is then obvious that one obtains equalities of the 
$\varphi$-invariants $m_\varphi^\ar$ of $\CE_\varphi/\CF_\varphi$ and $\CH_\varphi$
in the semi-simple case. 

\smallskip
Are there improvements of these techniques 
being able to distinguish, for instance, the structures
$\Z_p[\mu_{g_\chi^{}}]/{\mathfrak p}_{\varphi} \oplus 
\Z_p[\mu_{g_\chi^{}}]/{\mathfrak p}_{\varphi}$ and 
$\Z_p[\mu_{g_\chi^{}}]/{\mathfrak p}_{\varphi}^2$\,?

\begin{remark}\label{remafond}
After the writing of this paper, we have considered the phenomenon of capitulation of 
classes to give another approach of the Finite AMC in any real case (semi-simple or not).
We develop, in these articles \cite{Gra13,Gra15}, new promising links between: 
(i) the Chevalley--Herbrand formula giving the number of ``ambiguous 
classes'' in $p$-exten\-sions $L/K$, $L \subset K(\mu_\ell^{})$, for  
auxiliary primes $\ell \equiv 1\!\! \pmod {\!2p^N}$ inert in $K$;
(ii) the phenomenon of capitulation of $\CH_K$ in $L$;
(iii) the real Finite AMC $\order \CH^\ar_\varphi = (\CE_{K_\chi} : 
\wh \CE_{K_\chi}\! \cdot \CF_{K_\chi})_{\varphi_0^{}}$ 
for all $\varphi \mid \chi$. 

\smallskip
We prove that the real Finite AMC is trivially fulfilled as soon as $\CH_K$ capitulates 
in $L$ and conjecture that there exist infinitely many such primes $\ell$ leading to capitulation. 

\smallskip
Computations with PARI programs support this new philosophy of the Finite AMC
and justifies, once again, the relevance of the analytic definitions, especially in the non
semi-simple case.
\end{remark}

\section{Invariants (Algebraic, Arithmetic, Analytic)}\label{MainConj}
We fix an irreducible rational character $\chi \in \CX = \CX^+ \cup \CX^-$
and we apply the previous results to the $\Z_p[\mu_{g^{}_\chi}]$-modules 
$\CH^\alg_\varphi$, $\CH^\ar_\varphi$ and
$\CT^\ar_\varphi = \CT^\alg_\varphi =: \CT_\varphi$, 
for any $\varphi \mid \chi$, $\varphi \in \Phi^+ \cup \Phi^-$ ($\varphi \in \Phi^+$
for $\CT_\varphi$).

\subsection{Algebraic and Arithmetic Invariants 
\texorpdfstring{$m^\alg {\sst(\CM)}$, $m^\ar {\sst(\CM)}$}{Lg}}
\label{invariants}
Write simply that $\CH^\alg_\varphi$, $\CH^\ar_\varphi$ and $\CT_\varphi$ are 
finite $\Z_p[\mu_{g^{}_\chi}]$-modules whatever $\varphi$;
let ${\mathfrak p}_\varphi$ be the maximal ideal of $\Z_p[\mu_{g^{}_\chi}]$:
\begin{equation*}
\left\{\begin{aligned}
\CH^\alg_\varphi& \simeq \hbox{$\prod_{i \geq 1}$}\
\Z_p[\mu_{g^{}_\chi}] \big / {\mathfrak p}_\varphi^{\,n^\alg_{\varphi, i}{\sst(\CH)}}, \\
\CH^\ar_\varphi & \simeq \hbox{$\prod_{i \geq 1}$}\
\Z_p[\mu_{g^{}_\chi}] \big / {\mathfrak p}_\varphi^{\,n^\ar_{\varphi, i}{\sst(\CH)}}, \\   
\CT_\varphi &\simeq \hbox{$\prod_{i \geq 1}$} \
\Z_p[\mu_{g^{}_\chi}] \big / {\mathfrak p}_\varphi^{\, n^\ar_{\varphi, i}(\CT)}, 
\end{aligned}\right.
\end{equation*}
where the $n_{\varphi, i}$ are decreasing integers up to $0$. Put:
\begin{equation*}
\left\{\begin{aligned}
 m^\alg_\varphi {\sst(\CH)} & := \hbox{$\sum_{i \geq 1}$}\, n^\alg_{\varphi, i} {\sst(\CH)}, \ \  \ \ \ \ 
 m^\alg_\chi  {\sst(\CH)}  := \hbox{$\sum_{\varphi \mid \chi}$}\, m^\alg_\varphi {\sst(\CH)}, \\
 m^\ar_\varphi {\sst(\CH)} & :=\, \hbox{$\sum_{i \geq 1}$}\, n^\ar_{\varphi, i} {\sst(\CH)}, \ \  \ \ \ \ 
 m^\ar_\chi {\sst(\CH)}  := \hbox{$\sum_{\varphi \mid \chi}$}\, m^\ar_\varphi {\sst(\CH)}, \\
 m^\ar_\varphi {\sst(\CT)} & := \,\hbox{$\sum_{i \geq 1}$}\, n^\ar_{\varphi, i} {\sst(\CT)},  \ \ \ \ \ \ \ 
 m^\ar_\chi {\sst(\CT)}  := \hbox{$\sum_{\varphi \mid \chi}$} \, m^\ar_\varphi {\sst(\CT)}.
\end{aligned}\right.
\end{equation*}

Whence the order formulas:
\[\order \CH^\alg_\varphi = p^{\varphi(1) \, m^\alg_\varphi {\sst(\CH)}}, \ \ \  
\order \CH^\ar_\varphi = p^{\varphi(1) \, m^\ar_\varphi {\sst(\CH)}}, \ \ \  
\order \CT_\varphi = p^{\varphi(1) \, m^\ar_\varphi {\sst(\CT)}}. \]

\subsection{Analytic Invariants \texorpdfstring{$m^\an {\sst(\CM)}$}{Lg}}\label{analytics}
We define, in view of the statement of the Finite AMC, the following 
Analytic Invariants $m^\an_\varphi$, from the expressions given with rational characters, 
where $\val_p(\bullet)$ denotes the usual $p$-adic valuation; the purpose is
to satisfy the necessary relations implied by Theorems \ref{theoI5}, \ref{theoI2}
about arithmetic components:
\[\sm_{\varphi \mid \chi} m^\ar_\varphi {\sst(\CM)} = 
\sm_{\varphi \mid \chi} m^\an_\varphi {\sst(\CM)}, \]
for any family $\CM \in \{\CH, \CT\}$ and $\chi \in \CX$
(cf. Theorems \ref{theoII2}, \ref{theoIII2}, \ref{theotorsion2}).

\subsubsection {Case \texorpdfstring{$\varphi \in \Phi^-$}{Lg} for class groups}
Then, Algebraic and Arithmetic Invariants coincide. The definitions given in
\cite{Gra, Gra0} were:

\smallskip
(i) Case $p \ne 2$ (proven by Solomon \cite[Theorem II.1]{Sol}).

\smallskip
\quad (i$'$) $K_\chi$ is not of the form $\Q(\mu_{p^n})$, $n \geq 1$; then:

\smallskip
$\bullet$ $m^\an_\varphi {\sst(\CH^-)} :=\val_p 
\Big(\prd_{\psi \mid \varphi} \big(-\hbox{$\frac{1}{2}$} \BB_1(\psi^{-1}) \big) \Big)$,

\quad (i$''$) $K_\chi = \Q(\mu_{p^n})$, $n \geq 1$; let $\psi = \omega^\lambda \cdot \psi_p^{}$,
$\psi_p^{}$ of order $p^{n-1}$ (where $\omega$ is the Teichm\"uller character); then:

\smallskip
$\bullet$ $m^\an_\varphi {\sst(\CH^-)} :=\val_p  
\Big(\prd_{\psi \mid \varphi} \big(- \hbox{$\frac{1}{2}$} \BB_1(\psi^{-1}) \big) \Big)$, 
if $\lambda \ne 1$,

$\bullet$ $m^\an_\varphi {\sst(\CH^-)} := 0$, if $\lambda = 1$.

\smallskip
(ii) Case $p = 2$ (proven by Greither \cite[Theorem B]{Grei},
when $g^{}_\chi$ is not a $2$-power and $f_\chi$ odd).

\smallskip
\quad (ii$'$) $g^{}_\chi$ is not a $2$-power; then:

\smallskip
$\bullet$ $m^\an_\varphi {\sst(\CH^-)} :=\val_2 
\Big (\prd_{\psi \mid \varphi} \big(- \hbox{$\frac{1}{2}$}\BB_1(\psi^{-1}) \big) \Big)$.

\quad (ii$''$) $g^{}_\chi$ is a $2$-power; then:

\smallskip
$\bullet$ $m^\an_\varphi {\sst(\CH^-)} :=\val_2
\Big (\prd_{\psi \mid \varphi} \big(- \hbox{$\frac{1}{2}$} \BB_1(\psi^{-1}) \big) \Big) + 1$, 
if $K_\chi \ne \Q(\mu_4^{})$,

$\bullet$ $m^\an_\varphi {\sst(\CH^-)} := 0$, if $K_\chi = \Q(\mu_4^{})$.

\subsubsection{Case \texorpdfstring{$\varphi \in \Phi^+$, $\varphi \ne 1$}{Lg}, for class groups}\label{wphi}

From Definition \ref{defIII3} and Theorem \ref{theoIII2}, we consider any real cyclic field $K$, 
where we recall that: 

\smallskip\noindent
$\wh \BE_K := \langle \, \BE_k \, \rangle^{}_{k \varsubsetneqq K}$,
$\BF_K := \BC_K \cap \BE_K$, $\CE_K := \BE_K \otimes \Z_p$,
$\wh \CE_K := \wh \BE_K \otimes \Z_p$, $\CF_K := \BF_K \otimes \Z_p$, and
$\wt \CE_\chi  := \CE_{K_\chi}/\wh \CE_{K_\chi} \!\! \cdot \! \CF_{\!K_\chi}$, for which 
$\wt \CE_\chi  = \plus_{\varphi \mid \chi} \wt \CE_\varphi$, where
$\wt \CE_\varphi = \{\wt x \in \wt \CE_\chi,\  
 \wt x^{P_\varphi(\sigma_\chi)} = 1\}$.

\smallskip
Consider the relation $\order \CH^\ar_\chi  = w_\chi \! \cdot 
\big (\CE_{K_\chi}\! : \wh \CE_{K_\chi}\! \! \cdot \CF_{K_\chi} \big) =
w_\chi \cdot \prod_{\varphi \mid \chi} \order \wt \CE_\varphi$ 
of Theorem \ref{theoIII2}; we remark that $w_\chi = p$ occurs only
when $g^{}_\chi$ is a $p$-power, in which case $p$ is totally ramified in $\Q(\mu_{g^{}_\chi})$
and $\varphi = \chi$ (which defines $w_\varphi := w_\chi$). 
So, we may define $m^\an_\varphi {\sst(\CH^+)}$ and $w_\varphi$ as follows
from $\wt \CE_\varphi \simeq 
\Z_p[\mu_{g^{}_\chi}] \big/ {\mathfrak p}_\varphi^{m^\an_\varphi  {\sst(\CH^+)}}$,
$m^\an_\varphi {\sst(\CH^+)} \geq 0$:

\smallskip
(i) Case $g^{}_\chi$ non prime power. Then $w_\varphi = 1$ and:

\smallskip
$\bullet$ $m^\an_\varphi {\sst(\CH^+)} := \val_p (\order \wt\CE_\varphi)$.

\medskip
(ii) Case $g^{}_\chi = p^n$, $p \ne 2$ prime, $n \geq 1$:

\smallskip
\qquad (ii$'$) Case $f_\chi = \ell^k$, $\ell$ prime, $k \geq 1$. Then $w_\varphi = 1$ and :

\smallskip
$\bullet$ $m^\an_\varphi {\sst(\CH^+)} := \val_p (\order \wt\CE_\varphi)$,

\smallskip
\qquad (ii$''$) Case $f_\chi$ non prime power. Then $w_\varphi = p$ and

\smallskip
$\bullet$ $m^\an_\varphi {\sst(\CH^+)} := \val_p (\order \wt\CE_\varphi) + 1$.

\medskip
(iii) Case $g^{}_\chi =2^n$, $n \geq 1$:

\smallskip
\qquad (iii$'$) Case $f_\chi = \ell^k$, $\ell$ prime, $k \geq 1$. Then $w_\varphi = 1$ and:

\smallskip
$\bullet$ $m^\an_\varphi {\sst(\CH^+)} := \val_p (\order \wt\CE_\varphi)$,

\medskip
\qquad (iii$''$) Case $f_\chi$ non prime power. Then $w_\varphi \in\{1, 2\}$ and:

\smallskip
$\bullet$ $m^\an_\varphi {\sst(\CH^+)} \in\{\val_p (\order \wt\CE_\varphi), 
\val_p (\order \wt\CE_\varphi) + 1\}$.

\subsubsection{Case \texorpdfstring{$\varphi \in \Phi^+$}{Lg} for 
\texorpdfstring{$p$}{Lg}-torsion groups}

From Theorem \ref{theotorsion2}, we define $m^\an_\varphi {\sst(\CT)}$
as follows (proven by Greither \cite[Theorem C]{Grei},
when $g^{}_\chi$ is not a $2$-power):

\medskip
(i) Case where $g^{}_\chi$ and $f_\chi$ are not $p$-powers. Then:

\smallskip
$\bullet$ $m^\an_\varphi {\sst(\CT)} := \val_p 
\Big(\prod_{\psi \mid \varphi} \hbox{$\frac{1}{2}$}\,\BL_p (1,\psi) \Big)$.

(ii) Case where $g^{}_\chi \ne 1$ and $f_\chi$ are $p$-powers. Then:

\smallskip
$\bullet$ $m^\an_\varphi {\sst(\CT)} := \val_p 
\Big(\prod_{\psi \mid \varphi} \hbox{$\frac{1}{2}$} \,\BL_p (1,\psi) \Big) + 1$.

\subsection{Finite Abelian Main Conjecture}

The conjecture we gave in \cite{Gra, Gra0}, especially in the non semi-simple 
case, where simply equality of Arithmetic and Analytic $\varphi$-Invariants.
The main justification of such equalities comes from the easy Theorem \ref{chiformula} 
with the arithmetic definitions of \S\,\ref{invariants}, the analytic definitions 
of \S\,\ref{analytics} and the arithmetic expressions of the $\chi$-components
that we recall:

\smallskip
(i) Theorem \ref{theoII2}: $\BH_\chi^\ar = 2^{\alpha_\chi} \cdot w_\chi \cdot 
\prod_{\psi \mid \chi}\big(- \hbox{$\frac{1}{2}$} \BB_1(\psi^{-1}) \big)$, for $\chi \in \CX^-$, 

\smallskip
(ii) Theorem \ref{theotorsion2}: $\order \CT_\chi = w_\chi^{\,\cyc} \cdot 
\prod_{\psi \mid \chi} \hbox{$\frac{1}{2}$} \,\BL_p (1,\psi)$, for $\chi \in \CX^+$,

\smallskip
(iii) Theorem \ref{theoIII2}: $\order \BH^\ar_\chi = 
w_\chi \cdot (\BE_{K_\chi} : \wh \BE_{K_\chi} \! \cdot \BF_{K_\chi})$, for $\chi \in \CX^+$;

\smallskip\noindent
they satisfy, for any family $\CM \in \{\CH^-,\, \CH^+,\, \CT\}$, the equalities: 

\medskip
$\bullet$ $\sum_{\varphi \mid \chi} m^\ar_\varphi {\sst(\CM)} = 
\sum_{\varphi \mid \chi} m^\an_\varphi {\sst(\CM)},\ \hbox{for all $\chi\in \CX$}$, 

\medskip\noindent
taking into account the decomposition $\CM^\ar_\chi \!= 
\oplus_{\varphi \mid \chi} \CM^\ar_\varphi$ (Theorem \ref{theoI2bis}).

\smallskip
Moreover, the annihilation properties of Theorems \ref{theoII5}, \ref{theoII6},
\ref{annihilation2}, \ref{annTphi}, enforce the conjecture as well as reflection 
theorems that were given, after the Leopoldt's Spiegelungsatz, in \cite{Gra5} or 
\cite[Theorem II.5.4.5]{Gra6} giving a more suitable comparison, for instance 
between $\CH_\varphi$ and $\CT_{\omega \varphi^{-1}}$, $\varphi \in \Phi^-$, 
where $\omega$ is the Teichm\"uller character. 
See also \cite{Or1, Or2} for similar informations and complements.

\begin{conjecture}\label{mainconj}
For any $p$-adic irreducible character $\varphi  \in \Phi$, we have:
\begin{equation*}
\left\{\begin{aligned}
m^\ar_\varphi {\sst(\CH)} &= m^\an_\varphi {\sst(\CH)} \ 
(\varphi  \in \Phi^+ \cup \Phi^-), \\
m^\ar_\varphi {\sst(\CT)} &= m^\an_\varphi {\sst(\CT)}\ \ \ (\varphi  \in \Phi^+). 
\end{aligned}\right.
\end{equation*}
\end{conjecture}

\begin{remark}\label{remgreither}
Let $K/\Q$ with a maximal $p$-sub-extension
$K/K_0$ cyclic of degree $p^n$, $n \geq 1$, and let $K_i$,
$K_0 \subseteq K_i \subset K$, be such that $[K_i : K_0] = p^i$. 
Let $\psi_0 \in \Psi_{K_0}$ and let $\psi_p \in \Psi_K$
of order $p^n$; we put $\psi_i = \psi_0 \cdot \psi_p^{p^{n-i}} \in \Psi_{K_i}$ 
and we consider the $p$-adic characters $\varphi_i$ above $\psi_i$, 
$0 \leq i \leq n$.

\smallskip
The Main Conjecture proven by Greither in \cite[Theorem 4.14, 
Corollary 4.15]{Grei}, using Sinnott's cyclotomic units, deals with 
the semi-simple context defined by $\varphi_0^{}$ above $\psi_0$ (it is 
indeed that of the relations \eqref{semicontext} which do not give each 
$\order \CH^\ar_{\varphi_i}$ compared with $\order \wt \CE_{\varphi_i}$).

\smallskip
In other words, in his pioneering work, Greither proves the relation 
$\sm_{i=0}^n m^\ar_{\varphi_i} {\sst(\CH^+)} = 
\sm_{i=0}^n m^\an_{\varphi_i}  {\sst(\CH^+)}$, for each 
$\varphi_0^{} \in \Phi_{K_0}$, instead of our conjecture 
$m^\ar_{\varphi_i} {\sst(\CH^+)} = m^\an_{\varphi_i}  {\sst(\CH^+)}$ 
for all $i \in \{0,1, \cdots, n\}$. However see many improvements 
by Greither--Ku\v cera in \cite{GK0, GK1} and some of their other papers.
\end{remark}

\begin{remark} \label{casalg}
It remains the problem of $\order \CH^\alg_\chi$ and
$\order \CH^\alg_\varphi$, for which no analytic formula does exist
in the non semi-simple real case. For instance, in Example \ref{Ex1}
with $p=3$, $K$ is the compositum of $k_0=\Q(\sqrt{4409})$ with the 
degree $9$ field of conductor $19$,
$\chi_i = \varphi_i$ ($i \in \{0,1,2\}$) is the character of the field $k_i$ of degree
$2\cdot 3^i$; then one gets $\CH^\alg_{\chi_i^{}} \simeq \Z/3\Z$ while 
$\CH^\ar_{\chi_i^{}} = 1$, as predicted by the conjecture and checked numerically.
In Example \ref{Ex2}, one finds $\CH^\alg_{\chi_1^{}} \simeq (\Z/3\Z)^3$ 
while $\CH^\ar_{\chi_1^{}} \simeq (\Z/3\Z)^2$.

Of course, the formula $\order \CH^\ar_{\chi_0^{}} \cdot \order \CH^\ar_{\chi_1^{}} 
\cdot \order \CH^\ar_{\chi_2^{}} = \order \CH^\alg_K$ does not hold for the algebraic 
definition of class groups.

\smallskip
This phenomenon is due to the capitulation of $p$-classes in $p$-exten\-sions and we 
have given in \cite[Conjecture 4.1]{Gra11} a general conjecture justified by means of 
many computations.
\end{remark}

\subsection{Finite Iwasawa's theory in cyclic \texorpdfstring{$p$}{Lg}-extensions}
For more details and an application to classical Iwasawa's theory for
the cyclotomic $\Z_p$-extensions, see \cite[Chap.\,IV]{Gra} (the real 
case being in the spirit of Greenberg's conjecture \cite{Gree3}); 
nevertheless, \textit {the results hold in arbitrary totally ramified cyclic 
$p$-extensions} of an abelian field, as follows depending of a base 
field real or imaginary: 

\subsubsection{Real case}
Let $\psi \mid \varphi \mid \chi  \in \CX^+$ and set $\psi = \psi_0 \cdot \psi_p$, 
where $\psi_0$ is of order $g_0^{}$, prime to $p$, and $\psi_p$ of $p$-power 
order; then, $G_\chi = G_0 \oplus H$ in an obvious meaning.
We consider, temporarily, the semi-simple idempotents
$e_{\varphi_0^{}} := \frac{1}{g_0} \sm_{\sigma \in G_0}
\varphi_0^{} (\sigma^{-1})\, \sigma$, for $\varphi_0^{}$ above $\psi_0$.
We have:
\[\wt \CE_\chi := \CE_{K_\chi}/\wh \CE_{K_\chi}\!\! \cdot 
\CF_{\!K_\chi} = \plus_{\varphi \mid \chi} \wt \CE_\varphi =
\plus_{\varphi \mid \chi}( \wt \CE_\chi)_{\varphi_0^{}} , \] 
with $\wt \CE_\varphi = \wt \CE_\chi^{\,e_{\varphi_0^{}}}$;
we note that $\wh \CE_{K_\chi}^{\,e_{\varphi_0^{}}}\simeq \CE_{\varphi'}$ 
and $\wt \CE_\varphi \simeq \CE_{K_\chi}^{e_{\varphi_0^{}}} /  \CE_{\varphi'}
\cdot \CF_{\!K_\chi}^{e_{\varphi_0^{}}}$, where $\varphi'$ is above 
$\psi_0\! \cdot\! \psi_p^p$ and $\chi'$ above $\varphi'$.
This yields $(\CE_{K_\chi}/\CE_{K_{\chi'}})_\varphi
\simeq \Z_p[\mu_{g^{}_\chi}]$ (\cite[Lemma IV.1]{Gra})
and the following principle taking place in the layers of any 
$p$-tower $K_N/K_0$, of degree $p^N$ over an abelian 
field $K_0$, totally ramified at a set of finite places of 
$K_0$ \cite[Proposition IV.1]{Gra}:

\begin{theorem}\label{Iwreal}
Let $\chi \in \CX^+$ be such that $g^{}_\chi = g_0^{} \cdot p^n$, $p \nmid g_0^{}$,
$n\geq 2$. Let $\chi'$,  $\chi''$ be such that
$[K_{\chi} : K_{\chi'}]=[K_{\chi'} : K_{\chi''}]= p$. To simplify, set $K := K_{\chi}$,
$K' := K_{\chi'}$, $K'' := K_{\chi''}$ and assume that $\Norm_{K/K'} (\CF_{\!K}) = \CF_{\!K'}$
(see Lemma \ref{lemII10} giving the ramification conditions).
Let ${\mathfrak p}_\varphi$ be the maximal ideal of $\Z_p[\mu_{g^{}_\chi}]$; put
$(\CF_K / \CF_K  \cap \CE_{K'} )_\varphi \simeq {\mathfrak p}_\varphi^A,\ \,A \geq 0$ 
and, in the isomorphism $(\CE_{K'} /\CE_{K''})_{\varphi'} \simeq \Z_p[\mu_{g^{}_\chi/p}]$, put:
\begin{equation*}
\begin{aligned}
(\CF_{\!K'} / \CF_{\!K'} \cap \CE_{K''})_{\varphi'} \simeq 
{\mathfrak p}_{\varphi'}^{a} \simeq {\mathfrak p}_\varphi^{p a}, \ a \geq 0 ,& \\
(\Norm_{K/K'} (\CE_K) / \Norm_{K/K'} (\CE_K) \cap  \CE_{K''})_{\varphi'}
\simeq  {\mathfrak p}_{\varphi'}^{b} \simeq {\mathfrak p}_\varphi^{p\,b},\  b \geq 0.& 
\end{aligned}
\end{equation*}

(i) If $a < p^{n-2}\,(p-1)$, then $A=a-b$.

\smallskip
(ii) If $a \geq p^{n-2}\,(p-1)$, then $A \geq p^{n-2}\,(p-1) - b$.
\end{theorem}

This allows to prove again Iwasawa's formula in the case $\mu=0$
\cite[Theorems IV.1, IV.2, Remark IV.4]{Gra} and gives an 
analytic algorithm to study the $p$-class groups in the first layers. 

\smallskip
Let $k =: k_0$ be real of prime-to-$p$ degree $g$ 
and let $k^\cyc = \bigcup_{n \geq 0} k_n$ be its 
cyclotomic $\Z_p$-extension. The condition $\mu=0$
of Iwasawa's theory is here equivalent to the existence of $n_0 \gg 0$ 
(corresponding to a character $\chi_{n_0}$ of order $g\, p^{n_0}$) such that,
for each $\varphi_{n_0}$-component, $a_{n_0-1} < p^{n_0-2}\,(p-1)$ 
(case (i) of Theorem \ref{Iwreal}); then the sequence 
$\order \CH_{\chi_n^{}}$ becomes constant giving the $\lambda$-invariant
and the relations $\CE_{k_{n-1}} = \Norm_{k_n/k_{n-1}}(\CE_{k_n}) \cdot 
\CE_{k_{n-2}}$, for all $n \gg 0$; then $p^\lambda = 
(\CE_{k_n} : \wh \CE_{k_n} \!\!\cdot\CF_{\!k_n})$ for $n \gg 0$. 
More precisely:
\[p^{\lambda_\varphi} = 
\order (\CE_{k_n} / \CE_{k_{n-1}} \! \cdot \CF_{\!k_n})_{\varphi_0^{}}, \  n \gg 0. \] 

This methodology does exist in terms of $p$-adic $\BL$-functions 
for abelian fields (see, e.g., \cite[Chapitre\,V]{Gra3}).

\smallskip
Recall that Greenberg's conjecture \cite{Gree3} for a totally real base field
(i.e., $\lambda = \mu =0$) is equivalent to the property that the norms
$\Norm_{k_m/k_n} : \CH_{k_m} \to \CH_{k_n}$, $m \geq n \gg 0$ are 
isomorphisms (see other equivalent conditions in \cite[Corollary 3.4]{Gra9}).
Whence the result:

\begin{theorem}
Let $k$ be a real abelian field of prime-to-$p$ degree. 
Greenberg's conjecture is equivalent to 
$(\CE_{k_n} : \wh \CE_{k_n}\! \!\cdot \CF_{\!k_n}) = constant$, for all $n \gg 0$, where
$\wh \CE_{k_n}$ is the subgroup of $\CE_{k_n}$ generated by the units
of the strict subfields and $\CF_{\!k_n}$ is the group 
of Leopoldt cyclotomic units (Definitions \ref{E0}\,(i), \ref{defIII3}).
\end{theorem}

\subsubsection{Imaginary case}

This part is related to relative $p$-class groups for $p \ne 2$ 
\cite[Proposition IV.2, Th\'eor\`eme IV.2]{Gra}:

\begin{theorem}
Let $\chi \in \CX^-$ be such that $g^{}_\chi = g_0^{} \cdot p^n$, 
$p \nmid g_0^{}$, $n\geq 2$. Let $\chi'$ be such that $[K_\chi : K_{\chi'}]=p$.
Set $K := K_{\chi}$, $K' := K_{\chi'}$ and assume that the Stickelberger elements 
$\BB_K$, $\BB_{K'}$ are $p$-integers in $\Q[G_K]$ and that 
$\Norm_{K/K'} (\BB_K) = \BB_{K'}$ (see Lemma \ref{lemII10}). Put:
\begin{equation*}
\begin{aligned}
\BB_1(\psi^{-1}) \Z_p[\mu_{g^{}_\chi}] & = {\mathfrak p}_\varphi^A, \  A \geq 0, \\
\BB_1(\psi^{-p}) \Z_p[\mu_{g^{}_\chi/p}] & = {\mathfrak p}_\varphi^{p\,a}, \  a \geq 0.
\end{aligned}
\end{equation*}

(i) If $a < p^{n-2}\,(p-1)$, then $A=a$.

\smallskip
(ii) If $a \geq p^{n-2}\,(p-1)$, then $A \geq p^{n-2}\,(p-1)$.
\end{theorem}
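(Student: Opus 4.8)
The plan is to strip the statement down, via the faithful character $\psi$, to the ${\mathfrak p}_\chi$-adic valuation of a single polynomial with coefficients in an unramified extension of $\Z_p$, after which the dichotomy is a one-line Newton-polygon remark.

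\emph{Step 1 (reduction to one polynomial).} First I would apply the tame idempotent $e^{\varphi_0}$; both $\psi$ and $\psi^p$ factor through it ($\psi^p$ acts on $G_0$ through $\psi_0^p$, a $\Q_p$-conjugate of $\psi_0$). Then $e^{\varphi_0}\B_K = \sum_{k=0}^{p^n-1} b_k\, h^k$ in $\mathcal O[H]$, where $H=\langle h\rangle$ is the $p$-Sylow subgroup of $G_\chi$ (cyclic of order $p^n$, $n\ge 2$), $\mathcal O := \Z_p[\varphi_0]$ is the ring of values of $\varphi_0$ over $\Z_p$ --- unramified over $\Z_p$ since $p\nmid g^{}_{\chi_0}$ --- and $b_k\in\mathcal O$ by the $p$-integrality hypothesis. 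Set $\tilde P(T):=\sum_k b_k(1+T)^k\in\mathcal O[T]$. Since $\psi(h)=\zeta_{p^n}$ and $\sum_a\psi^{-1}(\sigma_a)=0$, the defining sum of $\B_K$ gives $-\B_1(\psi^{-1})=\psi(\B_K)=\tilde P(\zeta_{p^n}-1)$, so $A=v_{{\mathfrak p}_\chi}\!\big(\tilde P(\zeta_{p^n}-1)\big)$. Here the hypothesis $\Norm_{K/K'}(\B_K)=\B_{K'}$ enters decisively: reducing the exponents $k$ modulo $p^{n-1}$ shows that the $k'$-th coefficient of $e^{\varphi_0}\B_{K'}$ is $\sum_{j=0}^{p-1} b_{k'+jp^{n-1}}$, i.e. $P_{\B'}(x)\equiv\sum_k b_k x^k\pmod{x^{p^{n-1}}-1}$; since $\psi^p(h')=\zeta_{p^{n-1}}$ and $\zeta_{p^{n-1}}^{p^{n-1}}=1$ this forces $-\B_1(\psi^{-p})=\psi^p(\B_{K'})=\tilde P(\zeta_{p^{n-1}}-1)$, the \emph{same} polynomial. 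As ${\mathfrak p}_{\chi'}$ extends to ${\mathfrak p}_\chi^{\,p}$ in $\Z_p[\mu_{g^{}_\chi}]$, this means $pa=v_{{\mathfrak p}_\chi}\!\big(\tilde P(\zeta_{p^{n-1}}-1)\big)$.

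\emph{Step 2 (local valuations).} In $\Z_p[\mu_{g^{}_\chi}]$ one has $v_{{\mathfrak p}_\chi}(\zeta_{p^n}-1)=1$, $v_{{\mathfrak p}_\chi}(\zeta_{p^{n-1}}-1)=p$, $v_{{\mathfrak p}_\chi}(p)=p^{n-1}(p-1)$, and $v_{{\mathfrak p}_\chi}(c)\in p^{n-1}(p-1)\cdot\Z_{\geq 0}$ for every nonzero $c\in\mathcal O$ (because $p$ is a uniformizer of $\mathcal O$). Dividing $\tilde P$ by the highest power of $p$ dividing all its coefficients shifts $A$ and $pa$ by the same multiple of $p^{n-1}(p-1)$; in the range $a<p^{n-2}(p-1)$ of (i) that power must be $0$, and in (ii) it is either $0$ or already yields $A\ge p^{n-1}(p-1)$. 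So assume $i_0:=\min\{\,i:\ c_i\in\mathcal O^\times\,\}<\infty$, writing $\tilde P=\sum_i c_iT^i$. Comparing the monomial valuations $v_{{\mathfrak p}_\chi}(c_i)+i$ of $\tilde P(\zeta_{p^n}-1)$ and $v_{{\mathfrak p}_\chi}(c_i)+pi$ of $\tilde P(\zeta_{p^{n-1}}-1)$: if $i_0<p^{n-2}(p-1)$, then for $i<i_0$ the coefficient already contributes at least $p^{n-1}(p-1)$, strictly more than the $i=i_0$ monomial gives in either evaluation, so $i=i_0$ is the unique minimum and $A=i_0$, $pa=pi_0$; if $i_0\ge p^{n-2}(p-1)$, then every monomial of $\tilde P(\zeta_{p^n}-1)$ lies in ${\mathfrak p}_\chi^{\,p^{n-2}(p-1)}$ and every monomial of $\tilde P(\zeta_{p^{n-1}}-1)$ lies in ${\mathfrak p}_\chi^{\,p^{n-1}(p-1)}$, whence $A\ge p^{n-2}(p-1)$ and $a\ge p^{n-2}(p-1)$.

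\emph{Step 3 (assembling, and the main obstacle).} If $a<p^{n-2}(p-1)$ then necessarily $i_0<p^{n-2}(p-1)$ (otherwise the second alternative of Step 2 would give $a\ge p^{n-2}(p-1)$), hence $A=i_0=a$: this is (i). If $a\ge p^{n-2}(p-1)$ then necessarily $i_0\ge p^{n-2}(p-1)$ (otherwise the first alternative would give $a=i_0<p^{n-2}(p-1)$), hence $A\ge p^{n-2}(p-1)$: this is (ii). The one genuinely delicate point is the identification in Step 1 of $\psi^p(\B_{K'})$ with $\tilde P(\zeta_{p^{n-1}}-1)$ for the \emph{same} polynomial $\tilde P$; it rests entirely on the norm-coherence $\Norm_{K/K'}(\B_K)=\B_{K'}$ (valid under the ramification condition of the footnote, by Lemma \ref{lemII10}) together with $p$-integrality, and this is exactly the ingredient that makes the correction term $b$ of the real analogue Theorem \ref{Iwreal} disappear: Stickelberger elements are norm-coherent in towers where global cyclotomic units are not.
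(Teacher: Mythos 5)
Your proof is correct, and it is the natural finite-level ``Iwasawa principle'' argument: reduce via the tame idempotent to a polynomial $\tilde P$ over the unramified ring $\mathcal O=\Z_p[\varphi_0]$, use the hypothesis $\Norm_{K/K'}(\B_K)=\B_{K'}$ (norm $=$ restriction on group elements) to read $-\B_1(\psi^{-p})$ off the same coefficient data evaluated at $\zeta_{p^{n-1}}-1$, and run the valuation dichotomy on the first unit coefficient $i_0$ against $p^{n-2}(p-1)$. Note that the paper states this theorem without proof, deferring to \cite[Chap.\,IV, Th\'eor\`emes IV.1, IV.2]{Gra}, so there is no in-text proof to compare with; your route is the one the section's framework suggests. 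One small imprecision: when you apply $\psi^p$, the $G_0$-part is read through $\psi_0^p$, so what is evaluated at $\zeta_{p^{n-1}}-1$ is the Frobenius twist of $\tilde P$ (coefficients $\mathrm{Frob}(c_i)$), not literally the same polynomial unless $\mathcal O=\Z_p$; since the Frobenius is a valuation-preserving automorphism of $\mathcal O$, the minimal unit index $i_0$ and all coefficient valuations are unchanged, and your Step 2 and Step 3 go through verbatim.
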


\begin{remark} 
The integers $A$ and $a$ are the Analytic Invariants 
$m^\an_\varphi {\sst(\CH^-)}$ and $m^\an_{\varphi'} {\sst(\CH^-)}$,
respectively, defined \S\,\ref{analytics}.
From \cite[Remark IV.4]{Gra}, the Iwasawa  $\mu$-invariant is 
zero as soon as there exists $n_0^{} \gg 0$ such that
the case (i) of the theorem is satisfied for all $\varphi$ 
of $K_{n_0^{}}$.
In a $\Z_p$-extension $\wt k/k$, this condition implies that the 
$p$-rank of the $\CH^\ar_{k_n}$'s is bounded (a known result 
of Iwasawa's theory \cite[Proposition 13.23]{Was}).
\end{remark}

\section{Illustrations of the Finite AMC with cubic fields}
\subsection{Introduction}
For $\chi \in \CX^+$ and $\wt \CE_\chi := \CE_{K_\chi} / \wh \CE_{K_\chi}\! \!\cdot \CF_{\!K_\chi}$,
we have $\order \CH^\ar_\chi = w_\chi \cdot \order \wt \CE_\chi$ (Theorem \ref{theoIII2}), 
and for any $\varphi \mid \chi$ we have (conjecturally):
\begin{equation*}
\order \CH^\ar_\varphi = w_\varphi \cdot \order \wt \CE_\varphi, \, 
w_\varphi \in \{1, p\}, \ \hbox{where} \ 
\wt \CE_\varphi = \{\wt x \in \wt \CE_\chi, \  \wt x^{P_\varphi (\sigma_\chi)} = 1\}.
\end{equation*}

In another way, we have:
\begin{equation*}
\left\{\begin{aligned}
\wt \CE_\varphi & \simeq \Z_p[\mu_{g^{}_\chi}] \big / 
{\mathfrak p}_\varphi^{\,m^\an_\varphi  {\sst(\CH)}} \!\!, \ \  m^\an_\varphi {\sst(\CH)} \geq 0, \\
\CH^\ar_\varphi & \simeq \plus_{i=1}^{r_\varphi} \Z_p[\mu_{g^{}_\chi}] \big / 
{\mathfrak p}_\varphi^{\,m^\ar_{\varphi, i} (\CH)}, \ \ r_\varphi \geq 0,
\ \ m^\ar_{\varphi, i} {\sst(\CH)} \geq 0,
\end{aligned}\right.
\end{equation*}

\noindent
and $m^\an_\varphi {\sst(\CH)} := \sum_{i=1}^{r_\varphi^{}} m^\ar_{\varphi, i} {\sst(\CH)}$
to be compared with $m^\ar_\varphi{\sst(\CH)}$.

\smallskip
We intend to see more precisely what happens for these analytic and arithmetic 
invariants since the above equality defining $m^\an_\varphi {\sst(\CH)}$ can 
be fulfilled in various ways (indeed, $\wt \CE_\varphi$ is monogenic and $\CH_\varphi$
may have arbitrary structure). 

\smallskip
We will examine 
the case of the cyclic cubic fields $K = K_\chi$ for primes $p \equiv 1 \pmod 3$ giving two 
$p$-adic characters $\varphi \mid \chi$; in that case, $\wh \CE_K=1$ and
$\order \CH^\ar_\varphi = (\CE_K : \CF_K)$. 

For example, for $p=7$, the possible structures, for the $\Z[j]$-module $\BE_K/\BF_K$, 
are of the form $\Z[j] \big /\big [(-2+j)^{m_1^{}} \cdot (3+j)^{m_2^{}} \cdot {\mathfrak a} \big]$, 
($m_1, m_2 \geq 0$ and ${\mathfrak a}$ prime to $7$), giving the two $\varphi_i$-components 
$\Z_7/7^{m_1^{}} \Z_7$ and $\Z_7/7^{m_2^{}} \Z_7$
(from $\big[\Z[j]/(-2+j)^{m_1^{}}\big] \otimes \Z_7$ and 
$\big[\Z[j]/(3+j)^{m_2^{}} \otimes \Z_7\big]$), for the $\wt \CE_\varphi$'s.

\subsection{Description of the computations}\label{nfgaloisconj}

The PARI program computing all the cyclic cubic 
fields is that given in \cite[\S\,6.1]{Gra9}.

\smallskip
A crucial fact, without which the checking of the $\varphi$-components 
of the $G_K$-modules $\CE_K/\CF_K$ and $\CH_K$ could be misleading, 
is the definition of a generator $\sigma$ of $G_K$ giving the correct
conjugation, both for the fundamental units, the cyclotomic ones and the 
elements of the class group (see more comments at the beginning of  
Appendix \ref{appA}).

\smallskip
It is not too difficult to find, from ${\sf K.fu}$ giving a $\Z$-basis of $\BE_K$, 
a ``Minkowski unit'' $\varepsilon$ and its conjugate $\varepsilon^\sigma$
such that $\langle \varepsilon, \varepsilon^\sigma \rangle_\Z^{} = \BE_K$,
up to a prime-to-$p$ index; indeed, for
the evaluation of $\varepsilon(x)$ and $\varepsilon(g(x))$, at
a root $\rho \in \R$ of $P$, we only have a set $\{\rho_1,\rho_2,\rho_3\}$
given in a random order by ${\sf polroot(P)}$. Any change of root gives 
an inconsequential permutation $(\varepsilon, \varepsilon^\sigma) 
\mapsto (\varepsilon^\tau, \varepsilon^{\tau \sigma})$, for some $\tau \in G_K$. 

\smallskip
For security, we test ${\sf Reg^{}_1/Reg=1}$ where ${\sf Reg^{}_1}$ 
is the regulator of the units $\varepsilon(\rho)$ and $\varepsilon(g(\rho))$,
computed with the root $\rho$, and where ${\sf Reg=K.reg}$ is the true 
regulator given by PARI.

\smallskip
Then we must write the Leopoldt cyclotomic unit $\eta$ of $K$ of conductor $f$ 
(Definition \ref{defIII3}) under the form $\eta = \varepsilon^{\alpha + \beta \, \sigma}$, 
$\alpha, \beta \in \Z$, which is easy as soon as we have $\eta$ and $\eta^\sigma$.
But $\eta$ is computed by means of the analytic expression of 
$\vert \BC \vert = \prd_{a \in [1, f/2 [, \, \sigma_a{}_{\vert_K}=1}
\vert \zeta_{2f}^a-\zeta_{2f}^{-a} \vert$, as product of the 
$\vert \zeta_{2f}^a-\zeta_{2f}^{-a} \vert$ for the prime-to-$f$ integers $a < f/2$ 
such that the Artin symbol $\sigma_a = \big( \frac{\Q(\mu_f^{})/\Q}{a} \big)$ 
is in $\Gal(\Q(\mu_f^{})/K)$ (which is tested using a prime $q_a \equiv a \pmod f$ giving 
$\sigma_a{}_{\vert_K}=1$ if and only if $q_a$ splits in $K$). 

\smallskip
If $f$ is prime, $\zeta_{2f}-\zeta_{2f}^{-1}$ generates the prime ideal above $f$; 
thus: 
\[\pi := \Norm_{\Q(\mu_f^{})/K}(\zeta_{2f}-\zeta_{2f}^{-1}) = \pm \BC^2\]
with $\pi^3 = f \cdot \eta'$, $\eta' \in \BE_K$, whence 
$\pi^{3\,(1-\sigma)}=\eta'^{1-\sigma} = \eta^{6} := (\BC^{1-\sigma})^6$
(Proposition \ref{div}); the program computes $3 \log(\BC) - \frac{1}{2} \log(f)
= \frac{1}{2} \log(\eta')$, so that, to compute $\eta$ from 
$\eta^3 = \sqrt {\eta'}^{1-\sigma}$, we must divide the regulator ${\sf RegC}$ by $3$ 
and multiply $\alpha + j\,\beta$ by $\frac{1-j}{3}$ in that case where $w_\chi=1$.

\smallskip
If $f$ is composite, we have $\eta = \BC$ obtained via the half-system and 
the class number is the product of the index of units
by $w_\chi=3$, so this appear in the results (e.g., for the first 
example $f=13 \cdot 97$, $P=x^3+x^2-420x -1728$,
${\sf class group=[21]}$ and ${\sf Index\  [E_K:C_K]=7}$, 
but $\alpha + j\,\beta = -3-2j$ of norm $7$; for $f=3^2 \cdot 307$, 
$P=x^3-921 x-10745$, ${\sf class group=[21,3]}$ and ${\sf Index\  [E_K:C_K]=21}$, 
but $\alpha + j\,\beta = -5- j$ of norm $21$). 

\smallskip
To define the correct conjugation, $\zeta_{2f} \mapsto 
\zeta_{2f}^\sigma =: \zeta_{2f}^q$, for some prime $q$,
we use the fundamental property of Frobenius automorphisms giving
$y^{{\rm Frob}(q)} \equiv y^q \pmod q$, for any $q$-integer $y$ of $K$, if  
$q$ is inert in $K/\Q$; using $x^\sigma = g(x)$, we test the congruence 
$g(x) - x^q \pmod q$ to decide if $\sigma = {\rm Frob}(q)$ or ${\rm Frob}(q)^2$, 
in which case $\zeta_{2f}^\sigma = \zeta_{2f}^q$ or $\zeta_{2f}^{q^2}$, giving easily 
the conjugate $\eta^\sigma$.

\smallskip
The program and the numerical results are given in Appendix \ref{mainP}.

\section*{Conclusion} 
Standard probabilistic approaches may confirm (or not) the classical 
Cohen--Lenstra--Malle--Martinet heuristics on $p$-class groups, especially
in the non semi-simple case. Indeed, heuristics on the order of the whole 
$p$-class group of $K$ are given by that of the components $\CH^\ar_\varphi$'s 
which must be compatible with that obtained for the 
$(\CE_K / \wh \CE_K \!\cdot \CF_K)_{\varphi_0^{}}$'s;
a remarkable fact being that the structures are independent, but with
$(\CE_K / \wh \CE_K \!\cdot \CF_K)_{\varphi_0^{}}$ monogenic and 
$\CH^\ar_\varphi$ arbitrary as $\Z_p[\mu_{g_\chi^{}}]$-module, which means that heuristics 
on the \textit{structure} of $\CH^\ar_\varphi$ is another probabilistic problem
which clearly depends on that of the filtration studied in \cite{Gra666}
and accessible to probabilities in the spirit of Koymans--Pagano \cite{KoPa} 
and Smith \cite{Smi} techniques.  

\smallskip
Then, the main problem remains \textit {a proof of the Finite AMC in the non semi-simple 
real case} using the statement with Arithmetic $\varphi$-objects, especially a proof that 
for all abelian real field $K$, with a cyclic maximal $p$-sub-extension, we have,
for all $\varphi \in \Phi_K$ and $g_\chi$ non $p$-power (cf. \S\,\ref{wphi}):
\[\order \CH^\ar_\varphi = \order 
(\CE_K / \wh \CE_K \!\cdot \CF_K)_{\varphi_0^{}} ,\ \varphi =\varphi_0^{} \varphi_p.\]
where:
\[(\CE_K / \wh \CE_K \!\cdot \CF_K)_{\varphi_0^{}} = \{\wt \varepsilon \in 
(\CE_K / \wh \CE_K \!\cdot \CF_K),\, \wt \varepsilon^{P_\varphi(\sigma_\chi)} = 1\}
= (\CE_K / \wh \CE_K \!\cdot \CF_K)^{e_{\varphi_0^{}}}. \]

As we have explained in Remark \ref{remafond}, new tools using auxiliary 
cyclotomic extensions $K(\mu_\ell^{})$ and capitulation of $\CH_K$ in these extensions 
\textit {proves} the Finite Real Abelian Main Conjecture; unfortunately, the 
capitulation conjecture is not yet proved, but is very attractive since it governs 
several other arithmetic properties and we believe in this a lot.

\newpage

\appendix

\section{Numerical examples -- PARI programs}\label{appA}

As the referee pointed out to us, explicit computations in Galois fields
$K$ need to define an embedding of $\ov \Q$ in $\C$, especially with
PARI \cite{P}; so, let's recall that PARI works in $\Z[x]/(P)$ for irreducible
monic polynomials $P$ defining $K$ and gives a list
${\sf G=nfgaloisconj(P)}$, $\sigma \in G_K$ being defined by means of
$x \mapsto s(x)$, where $s(x) \in \Q[x]$ defines a (mysterious)
conjugate, but ${\sf nfgaloisapply(K,G[i],G[j])}$
(where ${\sf s_i=G[i], s_j=G[j]}$) computes $s_i s_j$, and so on. 

\smallskip
Similarly, ${\sf nfgaloisapply(K,G[i],E[j])}$ computes the corresponding
conjugate of the unit ${\sf E[j]}$.

\smallskip
For instance, for ${\sf P=x^3 - x^2 - 30*x - 27}$ ($K$ of conductor ${\sf 7*13}$),
PARI gives ${\sf G=[x, -1/3*x^2 + 1/3*x + 7, 1/3*x^2 - 4/3*x - 6]}$.

\smallskip
In other words, if one chooses a root ${\sf \rho}$ of $P$ (in the list 
${\sf polroots(P)}$), this defines an embedding and the evaluations 
$x \mapsto \rho$ in ${\sf G}$ allow suitable computations which, of course, 
depend numerically of ${\sf \rho}$.

\smallskip
Then, Leopoldt definitions work in $\Q(\zeta_f ) \subset \C$ by means of the 
choice of $\zeta_f := \exp \big(\ffrac{2 i \pi}{f}\big)$, generating the subfield $K$. 
This is problematic when one also defines $K$ via PARI since it is ugly to express 
$x$, formal root of $P$, in terms of roots of unity; so, in the programs, conjugates
of cyclotomic units are computed from the $\zeta_f$'s, and conjugates $\zeta_f^g$, 
while the units of $K$ are computed via the instruction ${\sf K.fu}$ and we must find
the correspondence of the two systems, which may be rough as we have 
explained \S\,\ref{nfgaloisconj}.
It is what we do in the forthcoming explicit examples when we say, 
for instance, that the ${\sf s1}$-conjugate of a cyclotomic unit ${\sf Eta}$ 
is ${\sf Eta\,\hat{}\,{s1}=945628377316488.87204143}$, and so on.
This explains that running the programs may give, for the user, results
different from ours, without any worries.

\subsection{Exceptional congruences}\label{P1}

The program verifies the exceptional congruence described
in Proposition \ref{AACK}, for the conductors $f$ up to $10^4$:
${}$
\ft\begin{verbatim}
{for(m=5,10^4,if(core(m)!=m,next);if(Mod(m,9)!=-3,next);
f=quaddisc(m);PP=x^2-f;PM=x^2+f/3;KP=bnfinit(PP,1);
KM=bnfinit(PM,1);hP=KP.no;hM=KM.no;E=lift(KP.fu[1]);
t=abs(polcoeff(E,0));u=abs(polcoeff(E,1));X=hP*t*u+hM;print
("f=",f," t=",t," u=",u," h=",hP," h'=",hM," htu+h'=",lift(Mod(X,3))))}    

f=24    t=5     u=1     h=1   h'=1   htu+h'=0
f=60    t=4     u=1/2   h=2   h'=2   htu+h'=0
f=33    t=23    u=4     h=1   h'=1   htu+h'=0
f=168   t=13    u=1     h=2   h'=4   htu+h'=0
f=204   t=50    u=7/2   h=2   h'=4   htu+h'=0
f=69    t=25/2  u=3/2   h=1   h'=3   htu+h'=0
(...)
\end{verbatim}\ns

\subsection{Numerical examples about the gap 
\texorpdfstring{$\CH^\ar_\chi$}{Lg} v.s.  \texorpdfstring{$\CH^\alg_\chi$}{Lg}}\label{ex12}
Let $k = \Q(\sqrt m)$ be a real quadratic field and let $K$ be the compositum 
of $k$ with a cyclic extension $L$ of $\Q$ of $p$-power degree; the field $K$ is 
of the form $K_\chi$ for $\chi \in \CX^+$ which is also irreducible $p$-adic.
We have given in \cite{Gra11} many examples of capitulations of $\CH_k$
in $K$, giving $\CH^\ar_\chi \varsubsetneqq \CH^\alg_\chi$.

\subsubsection{General PARI program}
One must precise the prime $p>2$, the minimal required 
$p$-rank ${\sf rpmin}$ of $\BH_k$, the length ${\sf N}$ of the sub-tower 
of $k(\mu_\ell^{})/k$
considered and the interval for ${\sf m}$ (the program uses primes $\ell$ 
(in ${\sf ell}$) congruent to 1 modulo $2p^N$, up to ${\sf Bell}$); the class group 
(resp. the $p$-class group) is computed in ${\sf Ck}$ (resp. ${\sf Ckp}$).
To compute $\J_{K/k}(\CH_k)$, we represent the $p$-classes of $k$ by
prime ideals ${\mathfrak q} \mid q$ inert in $K/k$.

\smallskip
\ft\begin{verbatim}
{p=3;rpmin=1;N=2;bm=2;Bm=10^4;Bell=10^4;for(m=bm,Bm,if(core(m)!=m,next);
P=x^2-m;k=bnfinit(P,1);Ck=k.clgp;r=matsize(Ck[2])[2];Ckp=List;Ekp=List;
rp=0;for(i=1,r,ei=Ck[2][i];vi=valuation(ei,p);if(vi>0,rp=rp+1;
ai=idealpow(k,Ck[3][i],ei/p^vi);listput(Ckp,ai,rp);
listput(Ekp,p^vi,rp)));if(rp<rpmin,next);L0=List;
for(i=1,rp,listput(L0,0,i));forprime(ell=2,Bell,
if(Mod(ell-1,2*p^N)!=0 || Mod(m,ell)==0,next);
Lq=List;for(i=1,rp,A=Ckp[i];forprime(q=2,10^5,if(q==ell,next);
if(kronecker(m,q)!=1 || Mod((ell-1)/znorder(Mod(q,ell)),p)==0,next);
F=idealfactor(k,q);qi=component(F,1)[1];cij=qi;for(j=1,Ekp[i]-1,
cij=idealmul(k,cij,A);if(Mod(j,p)==0,next);
if(List(bnfisprincipal(k,cij)[1])==L0,listput(Lq,q,i);break(2)))));
print("____");print();print("m=",m," ell=",ell," Lq=",Lq);
for(n=0,N,R=polcompositum(P,polsubcyclo(ell,p^n))[1];K=bnfinit(R,1);
print();print("C",n,"=",K.cyc);for(i=1,rp,Fi=idealfactor(K,Lq[i]);
Qi=component(Fi,1)[1];print(bnfisprincipal(K,Qi)[1])))))}
\end{verbatim}\ns

\smallskip
We shall consider the base field $k = \Q(\sqrt{4409})$
(i.e., $m=4409$ in the program) with $\ell = 19$, then $\ell = 1747$.

\subsubsection{Example 1}\label{Ex1}
Let $L$ be the degree $9$ subfield of 
$\Q(\mu_{19}^{})$; for convenience, put $k_0 := k$, $k_1 := L_1 k_0$ 
(resp. $k_2 := L_2 k_0$), where $L_1$ (resp. $L_2$) is the degree $3$
(resp.  $9$) subfield of $\Q(\mu_{19}^{})$.
The prime $2$ splits in $k_0$, is inert in $k_2/k_0$ and 
such that ${\mathfrak Q}_0 \mid 2$ in $k_0$ generates
$\CH_{k_0}$ (cyclic of order $9$); considering the extensions 
${\mathfrak Q}_i= \J_{k_i/k_0} ({\mathfrak Q}_0)$ of ${\mathfrak Q}_0$ in $k_i$, 
we test its order in $\CH_{k_i}$, $i=1,2$ (we are going to 
see that $\CH_{k_i} \simeq \Z/9\Z$ for all $i$, which is supported by the fact that 
$\Norm_{k_2/k_0}({\mathfrak Q}_2) = {\mathfrak Q}_0^9$ but 
$\Norm_{k_2/k_0}(\CH_{k_2}) =\CH_{k_0}$ since $k_2/k_0$ is totally 
ramified at $19$):

\smallskip
\ft\begin{verbatim}
    C0=[9]  [4]               C1=[9]  [6]               C2=[9]  [0]
\end{verbatim}\ns

\smallskip\noindent
where more precisely, ${\sf C0=[9]}$ denotes the class group of $k_0$ and, 
using the instruction ${\sf bnfisprincipal}$, ${\sf [4]}$ means that the class of 
${\mathfrak Q}_0 \mid 2$ is $h_0^4$, where $h_0$ is the generator (of order $9$) 
given in ${\sf kn.cyc}$ by PARI; then ${\sf C1=[9]}$, ${\sf [6]}$, is similar for 
$k_1$ in which we see a partial capitulation since the class of 
${\mathfrak Q}_1 = \J_{k_1/k_0} ({\mathfrak Q}_0)$ becomes of order $3$. 
Finally, ${\sf C2=[9]}$, ${\sf [0]}$ shows the complete capitulation in $k_2$; the 
$18$ large integers below are the coefficients, over the PARI integral basis, of a generator of 
${\mathfrak Q}_2$ in~$k_2$:

\smallskip
\ft\begin{verbatim}
[[0],[-270476874595642910,323533824277028894,-236208800298303000,
       119737461690335806,-255607858779215282,-198423813102857420,
       410588865020870414,-110028179006577678,-449600797918214026,
       -4906665437527948,10274048566854232,4319852458093887,
       13258715755947394,-6817941144899095,-15448507867705832,
       2623003974789062,-3264916449440532,-16606126998680345]]
\end{verbatim}\ns

We use obvious notations for the characters defining the fields $k_i$, $i=0,1,2$.
Since arithmetic norms are surjective (here, they are isomorphisms), the 
above computations prove that:
\[\Nu_{k_2/k_1}(\CH_{k_2}) = \J_{k_2/k_1} \circ \Norm_{k_2/k_1}(\CH_{k_2})
= \J_{k_2/k_1}(\CH_{k_1}) \simeq \Z/3\Z, \] 
since $\Norm_{k_2/k_1} \circ \J_{k_2/k_1}(\CH_{k_1}) = \CH_{k_1}^3$,
or simply $\J_{k_2/k_1}(\CH_{k_1}) = \CH_{k_2}^3$
(partial capitulation of $\CH_{k_1} \simeq \Z/9\Z$). 
Whence: 
\begin{equation*}
\left\{\begin{aligned}
\CH^\ar_{\chi^{}_2} & = \{x \in \CH_{k_2},\ \Norm_{k_2/k_1}(x)=1 \} = 1, \\
\CH^\alg_{\chi^{}_2} & = \{x \in \CH_{k_2},\ x^{P_{\chi^{}_2}(\sigma_{\chi^{}_2})} =1 \} \\
& = \{x \in \CH_{k_2},\  \Nu_{k_2/k_1}(x)=1\} = \CH_{k_2}^3 \simeq \Z/3\Z.
\end{aligned}\right.
\end{equation*}

We have $P_{\chi^{}_2}(\sigma_{\chi^{}_2}) = 
\sigma_{\chi^{}_2}^6+\sigma_{\chi^{}_2}^3+1 = \Nu_{k_2/k_1}$ 
(since $L$ is principal, the norms $\Nu_{k_i/L_i}$ does not intervene in the definition of
the $\CH^\alg_{\chi^{}_i}$'s). 

\smallskip
Similarly, we have: 
\[\Nu_{k_1/k_0}(\CH_{k_1})
= \J_{k_1/k_0} \circ \Norm_{k_1/k_0}(\CH_{k_1})= 
\J_{k_1/k_0}(\CH_{k_0}) \simeq \Z/3\Z\]
(partial capitulation of $\CH_{k_0} \simeq \Z/9\Z$); whence:
\begin{equation*}
\left\{\begin{aligned}
\CH^\ar_{\chi^{}_1} & = \{x \in \CH_{k_1},\ \Norm_{k_1/k_0}(x)=1\} =1, \\
\CH^\alg_{\chi^{}_1} & = \{x \in \CH_{k_1},\  \Nu_{k_1/k_0}(x)=1\} = \CH_{k_1}^3 \simeq \Z/3\Z.
\end{aligned}\right.
\end{equation*}

Thus, the formula of Theorem \ref{theoI5} giving:
\[\order \CH_{k_2} = 
\order \CH^\ar_{\chi_0^{}} \cdot \order \CH^\ar_{\chi_1^{}} \cdot \order \CH^\ar_{\chi_2^{}}\]
is of the form $\order \CH_{k_2} = 9 \times 1 \times 1$, then $\order \CH_{k_1} = 9 \times 1$
since $\CH^\ar_{\chi_0^{}} = \CH_{k_0^{}}$.

\smallskip
These formulas are not fulfilled in the algebraic sense, because:
\[\order \CH^\alg_{\chi^{}_0} \cdot\order \CH^\alg_{\chi^{}_1} = 9 \times 3 = 3^3
  \ \hbox{and}\ 
\order \CH^\alg_{\chi^{}_0} \cdot\order \CH^\alg_{\chi^{}_1} \cdot 
\order \CH^\alg_{\chi^{}_2} = 9 \times 3 \times 3 =3^4. \]

Now we intend to compute $\order \CH^\ar_{\chi^{}_1} = 
\order ( \CE_{k_1} / \wh \CE_{k_1} \cdot \CF_{\!k_1})$
(analytic formula of Theorem \ref{theoIII2}); in the general definition,
$\CF_{\!K}$ denotes the Leopoldt group of cyclotomic units of $K$,
$\wh \CE_K$ the group of units generated by the units of the strict subfields of $K$. 

\smallskip
We give numerical values of the units ${\sf \mid\! e0 \!\mid}$ of $k_0$, ${\sf \mid\! ei \!\mid}$ of
$L_1$, ${\sf \mid \!Ej\! \mid}$ of $k_1$, and their logarithms; 
they are, respectively (standard PARI programs):

\smallskip
\ft\begin{verbatim}
Units                                        Logarithms
e0=664.00150602068057486397714386165380      6.49828441757729630972016
e1=0.2851424818297853643941198735306274     -1.25476628739511494204754
e2=4.5070186440929762986607999237156780      1.50563588039686576534798
E1=0.2851424818297853643941198735306274     -1.25476628739511494204754
E2=0.2218761622631909342666800501850506     -1.50563588039686576534798
E3=664.00150602068057486397714386165380      6.49828441757729630972016
E4=945628377316488.87204143428389231544     34.4828707719825581974318
E5=0.0025736519075274654929993463127951     -5.96242941301396593243487

Cyclotomic units:
{f=19*4409;z=exp(I*Pi/f);g1=lift(Mod(74956,f)^2);g2=lift(Mod(4410,f)^3);
frob=1;for(s=1,6,frob=lift(Mod(3*frob,f));Eta=1;for(k=1,(4409-1)/2,
for(j=1,(19-1)/3,as=lift(Mod(g1^k*g2^j*frob,f));if(as>f/2,next);
Eta=Eta*(z^as-z^-as)));print("Eta^s",s,"=",Eta," ",log(abs(Eta))))}

Eta^s1=945628377316488.87204143428      34.4828707719825581974318471
Eta^s2=2433718277092.6834663091300      28.5204413589685922649969695
Eta^s3=0.0025736519075274654929993      -5.96242941301396593243487762
Eta^s4=1.0574978754738804652063 E-15   -34.4828707719825581974318471
Eta^s5=4.1089390231091111982824 E-13   -28.5204413589685922649969695
Eta^s6=388.55293409150677930552135       5.96242941301396593243487762
\end{verbatim}\ns

One obtains easily the following relations:

\ft\begin{verbatim}
E1=e1,   E2=e2^-1,   E3=e0,   E_4^2=Eta^s,   E5^2=Eta^-1,
Eta^{s^3+1}=1,   Eta^{s^2-s+1}=1, giving: Eta^(s^2)=E4^2.E5^2.
\end{verbatim}\ns

Then, one gets $(\CE_{k_1} : \wh \CE_{k_1} \cdot \CF_{\!k_1}) =
(\CE_{k_1}  : \CE_{k_0} \cdot \CE_{L_1} \cdot \CF_{\!k_1}) = 1$ 
as expected since $\CH^\ar_{\chi^{}_1}=1$.
Moreover, we see that the conjugates of the cyclotomic units are not independent
(due, from Lemma \ref{lemII10}, to norm relations in $k_i/k_0$ and $k_i/L_i$ since
$19$ splits in $k_0$ and $4409$ splits in the $L_i$'s), but, with our point of view, 
this does not matter since $\wh \CE_{k_1}$ is of $\Z_3$-rank $3$ and $\CF_{\!k_1}$ is of 
$\Z_3$-rank $2$.
Indeed, these relations lead to some difficulties in $\chi$-formulas of the literature
\textit {using larger groups of cyclotomic units} like Sinnott's cyclotomic units
(see Remark \ref{e0}). 

\smallskip
To be complete, compute the classical index of
$\CF_{\!k_0} =: \langle \eta_0^{} \rangle$ in $\CE_{k_0}$:

\ft\begin{verbatim}
{f=4409;z=exp(I*Pi/f);Eta0=1;g=znprimroot(f)^2;for(k=1,(f-1)/2,
a=lift(g^k);if(a>f/2,next);Eta0=Eta0*(z^a-z^-a)/(z^(3*a)-z^-(3*a)));
print("Eta0=",Eta0," log(Eta0)=",log(abs(Eta0)))} 
Eta0=3.985459685929 E-26     log(Eta0)=-58.484559758195
\end{verbatim}\ns

\smallskip\noindent
giving immediately ${\sf log(Eta0) = -9*log(e0)}$ from the above computation of 
${\sf log(e0)}$; whence $\order \CH^\ar_{\chi^{}_0} = 
(\CE_{k_0} : \wh \CE_{k_0} \cdot \CF_{\!k_0}) = (\CE_{k_0} : \CF_{\!k_0}) = 9$;  
obviously,  $9$ is the annihilator of $\CE_{k_0} / \CF_{\!k_0}$ 
and $\CH^\ar_{\chi^{}_0}$ (Conjecture~\ref{annihilationthm}).

\smallskip
The verification of $(\CE_{k_2} : \wh \CE_{k_2} \cdot \CF_{\!k_2}) = 1$ is analogous since
$\CF_{\!k_2}$ is of $\Z_3$-rank $8$ ($\Norm_{k_2/k_1}(\CF_{\!k_2}) = \CF_{\!k_1}$,
$\Norm_{k_2/k_0}(\CF_{\!k_2}) = 1$, $\Norm_{k_2/L_2}(\CF_{\!k_2}) = 1$).

\subsubsection{Example 2}\label{Ex2}
Consider the same framework, replacing $19$ by the prime $1747$; one obtains the data
showing, as before with ${\mathfrak Q}_0 \mid 2$, a partial capitulation of $\CH_{k_0}$ in 
$k_1$ (but $\CH_{k_1}$ is not cyclic):

\ft\begin{verbatim}
C0=[9]   [4]           C1=[9,3,3]   [6,0,0]
\end{verbatim}\ns
One verifies that the ideal ${\sf Q_1}$, extending ${\sf Q_0}$ in $k_1$, is 
non-principal and such that its class is ${\sf h_1^6\, h_2^0\, h_3^0}$ on the 
PARI basis ${\sf \{h_1,\, h_2,\, h_3\}}$:

\smallskip
\ft\begin{verbatim}
bnfisprincipal(K,[2, [-1,0,0,1,0,0],1,3,[0,0,0,1,0,0]]) = [[6,0,0]
\end{verbatim}\ns

\noindent
but its $6$-power gives as expected the principality and a generator:

\smallskip
\ft\begin{verbatim}
bnfisprincipal(K,[64,0,0,21,0,0;0,64,0,0,0,42;0,0,64,0,21,0;0,0,0,1,0,0;
                                                0,0,0,0,1,0;0,0,0,0,0,1])
=[[0,0,0],[8217190756304871153969213,526028282779527429138218,
            -687786029075595676594134,251301709772155482917577,
            -21032376402967976888126,-15609327127430752932511]]
\end{verbatim}\ns

\smallskip
The kernel of the arithmetic norm is isomorphic to $\Z/3\Z \times \Z/3\Z$, thus:
\begin{equation*}
\left\{\begin{aligned}
\CH^\ar_{\chi^{}_1} & = \{x \in \CH_{k_1},\ \Norm_{k_1/k_0}(x)=1\} \simeq \Z/3\Z \times \Z/3\Z, \\
\CH^\alg_{\chi^{}_1} & = \{x \in \CH_{k_1},\ \, \Nu_{\,k_1/k_0}(x)=1\}  \simeq 
\Z/3\Z \times \Z/3\Z \times \Z/3\Z.
\end{aligned}\right.
\end{equation*}

\noindent
since the transfer map applies 
$\CH^\ar_{\chi^{}_0} \simeq \Z/9\Z$ onto $\langle h_1^6 \rangle$.

\smallskip
Formula of Theorem \ref{theoI5} is of the form
$\order \CH_{k_1} = \order \CH^\ar_{\chi^{}_0} \cdot \order \CH^\ar_{\chi^{}_1} = 9 \times 9$, 
since we have $\CH^\ar_{\chi^{}_0} = \CH_{k_0}$ of order $9$; 
of course a same formula with the $\CH^\alg$'s does not exist since 
$\order \CH^\alg_{\chi^{}_0} \cdot \order \CH^\alg_{\chi^{}_1} = 9 \times 27$.

\subsubsection{Varying $\ell \equiv 1 \pmod 9$}\label{Ex3} The program gives the 
following other results, for $k = \Q(\sqrt{4409})$, varying only ${\sf ell}$, where 
${\sf q}$ is the prime split in $k_0=k$ and inert in $k_2$:

\smallskip
\ft\begin{verbatim}
ell=37  q=2    C0=[9]  [4]    C1=[18]   [6]      C2=[18]    [0]
ell=73  q=2    C0=[9]  [4]    C1=[9]    [6]      C2=[171]   [0]
ell=109 q=5    C0=[9]  [1]    C1=[9]    [6]      C2=[9]     [0]
ell=127 q=23   C0=[9]  [4]    C1=[9]    [6]      C2=[9]     [0]
ell=163 q=2    C0=[9]  [4]    C1=[54]   [12]     C2=[54]    [18]
ell=181 q=2    C0=[9]  [4]    C1=[27]   [12]     C2=[81]    [63]
ell=199 q=2    C0=[9]  [4]    C1=[9,3]  [6,0]    C2=[27,3]  [9,0]
\end{verbatim}\ns

\smallskip\noindent
The image of $\CH_{k_0}$ in $k_1$ is of order $3$, except for
$\ell \in \{163, 181\}$; then $\CH_{k_0}$ capitulates in $k_2$, except for 
$\ell \in \{163, 181, 199\}$. One verifies that formula of Theorem \ref{theoI5} holds
with the $\order \CH^\ar_{k_i}$ but not for the $\order \CH^\alg_{k_i}$.

\subsection{Computation of \texorpdfstring{$\order \BH_\chi$}{Lg} for 
\texorpdfstring{$K = \Q(\mu_{47}^{})$}{Lg}}\label{ex47}

Let $K := K_\chi$ be the field $\Q(\mu_{47}^{})$, of degree $g^{}_\chi = 46$. 
From Theorem \ref{theoII2}, we have $\order \BH_\chi = 
2^{\alpha_\chi} \cdot w_\chi \cdot \prod_{\psi \mid \chi}
\big (- \frac{1}{2} \BB_1(\psi^{-1}) \big)$ with in that case $\alpha_\chi=0$ and $w_\chi = 47$
and where by definition: 
\[- \ffrac{1}{2} \BB_1(\psi^{-1}) 
= - \ffrac{1}{2} \sm_{a=1}^{46} \big({\ffrac{a}{47} - \ffrac{1}{2} \big)\psi^{-1}(\sigma_a)}
=  - \ffrac{1}{2} \sm_{a=1}^{46} {\ffrac{a}{47}\,\psi^{-1}(\sigma_a)}. \]
Let's compute $\order \BH_\chi = 47 \cdot 
\Norm_{\Q(\mu_{46}^{})/\Q}\big (- \ffrac{1}{2} \sm_{a=1}^{46} 
{\ffrac{a}{47}\,\psi^{-1}(\sigma_a)} \big )$:

\smallskip
\ft\begin{verbatim}
{P=polcyclo(46);g=lift(znprimroot(47));A=0;for(n=0,45,
a=lift(Mod(g,47)^n);A=A+x^n*(1/47*a-1/2));B=Mod(-1/2*A,P);
print("47*Norm(B)=",47*norm(B))}
47*Norm(B)=139
\end{verbatim}\ns

Note that $-\frac{47}{2} \BB_1(\psi^{-1})$ is, writing $x=\zeta_{46}$, the PARI integer:

\smallskip
\ft\begin{verbatim}
4*x^21+25*x^20+9*x^19+26*x^18-19*x^17+11*x^16-22*x^15
+x^14-24*x^13+10*x^12+6*x^11+16*x^10-21*x^9+20*x^8
+8*x^7+7*x^6-4*x^5+14*x^4-12*x^3+3*x^2+14*x+27
\end{verbatim}\ns

Whence $\order \BH_\chi = 139$ and $\BH_\chi \simeq \Z[\mu_{46}^{}]/{\mathfrak p}_{139}$.
Since $\Lambda_\chi = 47$, the ideal ${\mathfrak A}_K$ is $\big(\sigma_a-a, 47 \big)$, 
with for instance $a=5$ (Lemma \ref{lemII7}), and ${\mathfrak A}_K \cdot \ffrac{1}{2} \BB_K$ 
annihilates $\BH_\chi$; since the image of ${\mathfrak A}_K \cdot \ffrac{1}{2} \BB_K$ 
is the ideal $\big( \ffrac{1}{2} \BB_1(\psi^{-1}) \big) = {\mathfrak p}_{139}$, 
\textit {the annihilator} of $\BH_\chi$ is ${\mathfrak p}_{139}$. 
But this ideal is not principal in $\Q(\mu_{46}^{})$ (from \cite{Gra00}):

\smallskip
\ft\begin{verbatim}
{L=bnfinit(polcyclo(46));F=idealfactor(L,139);
print(bnfisprincipal(L,component(F,1)[1])[1])}
[2]
\end{verbatim}\ns

\noindent
showing that its class is the square of the PARI generating class.
More precisely, the class group of $\Q(\mu_{46}^{}) = \Q(\mu_{23}^{})$
is equal to $3$; then any ${\mathfrak q}_{47} \mid 47$ or 
${\mathfrak q}_{139} \mid 139$ generates this class group.

\subsection{Computation of annihilators of torsion groups 
\texorpdfstring{$\CT_K$}{Lg}}\label{annihilators}

Consider, for $p=7$, the cubic field $K$ of conductor $f=2557$ defined by
the polynomial $P=x^3 + x^2 - 852\,x + 9281$; then (using the main 
program of Appendix \ref{mainP}), one obtains:
\[\hbox{$\CH_K \simeq  \Z[j]/ (1-2 j) \Z[j]\ $  and $\ \CE_K/\CF_K 
\simeq \Z[j]/ (1-2 j) \Z[j] $}, \] 
where $(1-2 j) \Z[j]$ is a prime ${\mathfrak p}$ dividing $7$, and
$\CT_K \simeq \Z/7^2\Z \oplus \Z/7\Z$ .

\smallskip
The following program (only valid for prime conductors $f$) 
computes the annihilator $\BA_{K}(c)$ of $\CT_K$;
it defines the classes $\sigma^k \cdot \Gal(\Q(\mu_{f p^N}^{})/K)$, $k=0,1,2$,
of Artin symbols, giving $\BA_{K}(c) = A_0+A_1\sigma +A_2 \sigma^2$, then $\beta :=
A_0-A_2 + (A_1-A_2)\,j$, yielding $(\beta) = {\mathfrak p}_1^u \cdot {\mathfrak p}_2^v$
in $\Z[j]$ (up to a prime-to-$p$ ideal):

\smallskip
\ft\begin{verbatim}
{p=7;f=2557;N=4;pN=p^N;fpN=f*pN;c=lift(znprimroot(f));cm=Mod(c,fpN)^-1;
g=znprimroot(f);lg=lift(Mod((1-lift(g))/f,pN));g=Mod(lift(g)+lg*f,fpN);
g3=g^3;G=znprimroot(pN);lG=lift(Mod((1-lift(G))/pN,f));
G=Mod(lift(G)+lG*pN,fpN);A0=0;A1=0;A2=0;for(k=1,(f-1)/3,
for(j=1,p^(N-1)*(p-1),A=g3^k*G^j;gA=g*A;ggA=g^2*A;
a=lift(A);aa=lift(A*cm);la=(aa*c-a)/fpN;A0=A0+la*Mod(a,pN)^-1;
a=lift(gA);aa=lift(gA*cm);la=(aa*c-a)/fpN;A1=A1+la*Mod(a,pN)^-1;
a=lift(ggA);aa=lift(ggA*cm);la=(aa*c-a)/fpN;A2=A2+la*Mod(a,pN)^-1));
print(A0," ",A1," ",A2)}

Mod(184, 2401)   Mod(1526, 2401)   Mod(643, 2401)
\end{verbatim}\ns

\smallskip
Modulo $7^4$, $A_0=184$, $A_1=1526$ and $A_2=643$; this yields 
the ideal $(1-2 j)^3 = {\mathfrak p}^3$. Necessarily, $\CT_K \simeq
\Z[j]/ {\mathfrak p}^2  \oplus \Z[j]/ {\mathfrak p}$.
We note that the annihilator is ${\mathfrak p}^3$ (and not ${\mathfrak p}^2$) 
although the structure is not $\Z[j]/{\mathfrak p}^3$.

\subsection{Computation of the invariants of 
\texorpdfstring{$\psi(\Omega_\ell)$}{Lg}} \label{Omega}

The program computes, for cyclic cubic fields, the invariants 
$\psi(\Omega_\ell) = r_1-r_2 - (r_1 + 2\,r_2)\cdot j$ only with the 
knowledge of $\eta_K^{}$; taking a primitive root $\g_\ell^{}$ 
modulo $\ell$, the $r_\sigma$'s come from the PARI instructions 
${\sf r=znlog(L[j],g)}$, where the ${\sf L[j]}$ are the rationals $a_\sigma$ 
such that $\eta_K^\sigma \equiv a_\sigma \pmod {{\mathfrak l}_0}$ 
in $K$ (we use the results of Appendix \ref{examples}\,(c) 
to compute $\eta_K^{} = \varepsilon_K^{\alpha+\beta\,\sigma}$
and $\BH_K$). The line ${\sf Orders\ of\ components\ of\ cl(Lell)}$
of the form $(p^u, p^v, \cdots)$ means that the components of the $p$-class 
of ${\mathfrak l}_0$ (on the PARI system of generators of $\CH_K$),
are of orders $p^u$, $p^v,\,\cdots$; one sees that the annihilator 
$\Omega_\ell$ is independent on these orders, but it is clear that, using 
Chebotarev's theorem, any set of components may be obtained.

\ft\begin{verbatim}
{p=7;n=3;P=x^3+x^2-884540*x-393129;alpha=-112;beta=-70;
Q=y^2+y+1;k=bnfinit(Q);J=Mod(y,Q);pi=idealfactor(k,p);
pi1=component(pi,1)[1];pi2=component(pi,1)[2];
K=bnfinit(P,1);G=nfgaloisconj(P);CK=K.cyc;d=matsize(CK)[2];
CKp=List;for(i=1,d,h=p^valuation(CK[i],p);listput(CKp,h,i));
print("P=",P," p-class group=",CKp);
E=K.fu;E1=E[1];E2=nfgaloisapply(K,G[2],E[1]);
F1=E1^alpha*E2^beta;F2=nfgaloisapply(K,G[2],F1);
F1=lift(F1);F2=lift(F2);forprime(ell=1,5*10^5,
if(Mod(ell,p^n)!=1 || matsize(factor(P+O(ell)))[1]!=3,next);
g=znprimroot(ell);Lell=component(idealfactor(K,ell),1)[1];
F10=Mod(polcoeff(F1,0),ell);F11=Mod(polcoeff(F1,1),ell);
F12=Mod(polcoeff(F1,2),ell);Eta1=lift(F12*x^2+F11*x+F10);
F20=Mod(polcoeff(F2,0),ell);F21=Mod(polcoeff(F2,1),ell);
F22=Mod(polcoeff(F2,2),ell);Eta2=lift(F22*x^2+F21*x+F20);
Leta=List;listput(Leta,Eta1,1);listput(Leta,Eta2,2);L=List;
for(i=1,2,A=Mod(Leta[i],P);for(a=1,ell-1,v=idealval(K,A-a,Lell);
if(v>0,listput(L,a,i))));Lr=List;for(i=1,2,r=znlog(L[i],g);
listput(Lr,r));print();print("ell=",ell," Omega=",Lr);
X=Lr[1]-Lr[2]+(-Lr[1]-2*Lr[2])*J;
w1=idealval(k,X,pi1);w2=idealval(k,X,pi2);
Y=alpha+beta*J;W1=idealval(k,Y,pi1);W2=idealval(k,Y,pi2);print
("Cyclotomic invariants=",W1,",",W2," Omega invariants=",w1,",",w2);
Exp=List;Order=bnfisprincipal(K,Lell)[1];for(i=1,d,
tp=valuation(CK[i],p);if(Order[i]==0,Or=1);if(Order[i]!=0,
t=valuation(Order[i],p);Or=p^(tp-t));listput(Exp,Or));
print("Orders of components of cl(Lell)=",Exp))}
\end{verbatim}\ns

\smallskip
For $P=x^3+x^2-884540*x-393129$ (conductor 
$f = 2653621$, $\alpha=-112$, $\beta=-70$, the $\varphi$-components
of the $7$-class group $\CH_K$ are  
$\CH_{\varphi^{}_1} \simeq \Z_7[j]/{\mathfrak p}_{\varphi^{}_1}$ and 
$\CH_{\varphi^{}_2} \simeq \Z_7[j]/{\mathfrak p}_{\varphi^{}_2}^3$;
we have $\wt \CE_{\varphi^{}_1} \simeq 
\Z_7[j]/{\mathfrak p}_{\varphi^{}_1}$ and $\wt \CE_{\varphi^{}_2} 
\simeq \Z_7[j]/{\mathfrak p}_{\varphi^{}_2}^3$.

\ft\begin{verbatim}
P=x^3+x^2-884540*x-393129 p-class group=List([343,7])
conductor f=2653621

ell=1373 Omega=List([1162, 1246])
Cyclotomic invariants=1,3 Omega invariants=1,3
Orders of components of cl(Lell)=List([343, 7])

ell=7547 Omega=List([6888, 1526])
Cyclotomic invariants=1,3 Omega invariants=1,3
Orders of components of cl(Lell)=List([343, 7])

ell=8233 Omega=List([6496, 742])
Cyclotomic invariants=1,3 Omega invariants=1,3
Orders of components of cl(Lell)=List([49, 7])

ell=18523 Omega=List([11830, 12586])
Cyclotomic invariants=1,3 Omega invariants=1,3
Orders of components of cl(Lell)=List([343, 1])

ell=22639 Omega=List([4004, 13104])
Cyclotomic invariants=1,3 Omega invariants=1,3
Orders of components of cl(Lell)=List([343, 7])

ell=30871 Omega=List([27734, 5390])
Cyclotomic invariants=1,3 Omega invariants=2,3
Orders of components of cl(Lell)=List([343, 1])

ell=39103 Omega=List([32018, 35812])
Cyclotomic invariants=1,3 Omega invariants=1,3
Orders of components of cl(Lell)=List([49, 7])

ell=42533 Omega=List([1330, 17262])
Cyclotomic invariants=1,3 Omega invariants=1,3
Orders of components of cl(Lell)=List([343, 7])

ell=54881 Omega=List([44366, 18662])
Cyclotomic invariants=1,3 Omega invariants=1,3
Orders of components of cl(Lell)=List([49, 7])

ell=58997 Omega=List([5236, 21938])
Cyclotomic invariants=1,3 Omega invariants=1,3
Orders of components of cl(Lell)=List([343, 7])

ell=72031 Omega=List([24276, 51884])
Cyclotomic invariants=1,3 Omega invariants=1,3
Orders of components of cl(Lell)=List([343, 7])

ell=76147 Omega=List([17066, 25606])
Cyclotomic invariants=1,3 Omega invariants=1,3
Orders of components of cl(Lell)=List([343, 7])

ell=80263 Omega=List([22036, 79352])
Cyclotomic invariants=1,3 Omega invariants=1,3
Orders of components of cl(Lell)=List([343, 7])

ell=93983 Omega=List([69174, 5558])
Cyclotomic invariants=1,3 Omega invariants=1,3
Orders of components of cl(Lell)=List([343, 7])
\end{verbatim}\ns

\smallskip
For $P=x^3-4792107\,x+4022175142$ (conductor 
$f = 3^2 \cdot 1597369$, $\alpha=-7$, $\beta=-21$, the $\varphi$-components
of the $7$-class group $\CH_K$ are  
$\CH_{\varphi^{}_1} \simeq \Z_7[j]/{\mathfrak p}_{\varphi^{}_1} \oplus 
\Z_7[j]/{\mathfrak p}_{\varphi^{}_1}$ and 
$\CH_{\varphi^{}_2} \simeq \Z_7[j]/{\mathfrak p}_{\varphi^{}_2}$;
nevertheless, we have $\wt \CE_{\varphi^{}_1} \simeq 
\Z_7[j]/{\mathfrak p}_{\varphi^{}_1}^2$ (non-isomorphic to $\CH_{\varphi^{}_1}$) 
and $\wt \CE_{\varphi^{}_2} \simeq \Z_7[j]/{\mathfrak p}_{\varphi^{}_2}$.

\smallskip
But almost all $\Omega_\ell$ give the expected response ${\sf (2, 1)}$
whatever the order of the $p$-class of ${\mathfrak l}_0 \mid \ell$:

\smallskip
\ft\begin{verbatim}
P=x^3 - 4792107*x + 4022175142 p-class group=List([7,7,7])
conductor f=9*1597369

ell=1373 Omega=List([917, 1267])
Cyclotomic invariants=2,1 Omega invariants=2,1
Orders of components of cl(Lell)=List([7, 7, 7])

ell=8233 Omega=List([1141, 3535])
Cyclotomic invariants=2,1 Omega invariants=2,1
Orders of components of cl(Lell)=List([7, 1, 7])

ell=49393 Omega=List([41069, 39277])
Cyclotomic invariants=2,1 Omega invariants=2,1
Orders of components of cl(Lell)=List([1, 7, 1])

ell=54881 Omega=List([14357, 31311])
Cyclotomic invariants=2,1 Omega invariants=2,2
Orders of components of cl(Lell)=List([7, 7, 7])

ell=63799 Omega=List([53977, 53767])
Cyclotomic invariants=2,1 Omega invariants=2,1
Orders of components of cl(Lell)=List([7, 7, 7])

ell=76147 Omega=List([44912, 73514])
Cyclotomic invariants=2,1 Omega invariants=2,1
Orders of components of cl(Lell)=List([1, 7, 7])

ell=80263 Omega=List([20328, 16387])
Cyclotomic invariants=2,1 Omega invariants=3,1
Orders of components of cl(Lell)=List([1, 7, 7])
               (...)
ell=329281 Omega=List([311136, 189770])
Cyclotomic invariants=2,1 Omega invariants=2,1
Orders of components of cl(Lell)=List([7, 7, 7])

ell=331339 Omega=List([157696, 276465])
Cyclotomic invariants=2,1 Omega invariants=2,1
Orders of components of cl(Lell)=List([7, 7, 7])

ell=343687 Omega=List([174391, 82173])
Cyclotomic invariants=2,1 Omega invariants=2,2
Orders of components of cl(Lell)=List([7, 7, 7])

ell=363581 Omega=List([204974, 276584])
Cyclotomic invariants=2,1 Omega invariants=2,1
Orders of components of cl(Lell)=List([7, 7, 7])

ell=384847 Omega=List([254100, 68887])
Cyclotomic invariants=2,1 Omega invariants=2,1
Orders of components of cl(Lell)=List([7, 7, 7])

ell=396509 Omega=List([114947, 1540])
Cyclotomic invariants=2,1 Omega invariants=2,1
Orders of components of cl(Lell)=List([7, 7, 7])

ell=403369 Omega=List([11361, 206458])
Cyclotomic invariants=2,1 Omega invariants=2,1
Orders of components of cl(Lell)=List([7, 7, 7])

ell=408857 Omega=List([364287, 259343])
Cyclotomic invariants=2,1 Omega invariants=5,1
Orders of components of cl(Lell)=List([7, 7, 1])

ell=415717 Omega=List([239225, 363657])
Cyclotomic invariants=2,1 Omega invariants=2,1
Orders of components of cl(Lell)=List([7, 1, 7])

ell=417089 Omega=List([327908, 33957])
Cyclotomic invariants=2,1 Omega invariants=3,4
Orders of components of cl(Lell)=List([1, 7, 7])

ell=419147 Omega=List([17059, 339451])
Cyclotomic invariants=2,1 Omega invariants=2,1
Orders of components of cl(Lell)=List([1, 1, 1])

ell=426007 Omega=List([161434, 215859])
Cyclotomic invariants=2,1 Omega invariants=2,1
Orders of components of cl(Lell)=List([7, 7, 7])

ell=456877 Omega=List([361697, 10010])
Cyclotomic invariants=2,1 Omega invariants=3,1
Orders of components of cl(Lell)=List([7, 7, 7])
\end{verbatim}\ns

For $\ell = 419147$ (first example where 
any prime ideal ${\mathfrak l} \mid \ell$ is principal): 

\ft\begin{verbatim}
bnfisprincipal(K,Lell)=[0,0,0],[1311001361541054679,35057663364174,
                                                        1019317530188062]
\end{verbatim}\ns

\noindent
but the invariants of $\Omega_\ell$ are still $(2,1)$ giving 
$\order \CH_{\varphi_1} = 7^2$ and $\order \CH_{\varphi_2} = 7$.

\subsection{Illustrations of the Finite AMC}

We intend to illustrate the Finite AMC with cyclic cubic fields and $p \equiv 1 \pmod 3$
giving two $p$-adic characters (of course, it is now a Theorem and we shall 
speak of the ``Finite AMT''); then statistics may have some interest.

\subsubsection{The general PARI program}\label{mainP}
The program is the following and we explain, with some examples, how to
use the numerical results checking the Finite AMT; ${\sf hmin=p^{vp}}$ 
means that the program only computes fields with $p$-class groups 
${\sf CKp}$ of order at least ${\sf p^{vp}}$; then ${\sf bf, Bf}$ define 
an interval for the conductors ${\sf f}$ of the cyclic cubic field.
Other indications are given in the text of the program: 
 
\smallskip
\ft\begin{verbatim}
\p 50
{p=7; \\ Take any prime p congruent to 1 modulo 3
bf=2;Bf=10^6;hmin=p^2;
\\ Arithmetic of Q(j), j^2+j+1=0:
S=y^2+y+1;kappa=bnfinit(S);Y=idealfactor(kappa,p);
 \\ Decomposition (p)=P1*P2 in Z[j]:
P1=component(Y,1)[1];P2=component(Y,1)[2];
\\ Iteration over the conductors f in [bf,Bf]:
for(f=bf,Bf,vf=valuation(f,3);if(vf!=0 & vf!=2,next);
F=f/3^vf;if(core(F)!=F,next);F=factor(F);Div=component(F,1);
d=matsize(F)[1];for(j=1,d,D=Div[j];if(Mod(D,3)!=1,break));
\\ Computation of solutions a and b such that f=(a^2+27*b^2)/4:
\\ Iteration over b, then over a:
for(b=1,sqrt(4*f/27),if(vf==2 & Mod(b,3)==0,next);A=4*f-27*b^2;
if(issquare(A,&a)==1,
\\ computation of the corresponding defining polynomial P:
if(vf==0,if(Mod(a,3)==1,a=-a);P=x^3+x^2+(1-f)/3*x+(f*(a-3)+1)/27);
if(vf==2,if(Mod(a,9)==3,a=-a);P=x^3-f/3*x-f*a/27);
K=bnfinit(P,1); \\ PARI definition of the cubic field K
\\ Test on the p-class number #CKp regarding hmin:
if(Mod(K.no,hmin)==0,print();
G=nfgaloisconj(P); \\ Definition of the Galois group G
\\ Frob = Artin symbol defining the PARI generator sigma=G[2]:
forprime(q=2,10^4,if(Mod(f,q)==0,next);
Pq=factor(P+O(q));if(matsize(Pq)[1]==1,Frob=q;break));X=x^Frob-G[2];
if(valuation(norm(Mod(X,P)),Frob)==0,Frob=lift(Mod(Frob^2,f)));
E=K.fu;Reg=K.reg; \\ Group of units, Regulator
\\ We certify that a suitable PARI unit is a Z[G]-generator of E_K:
E1=lift(E[1]);E2=lift(nfgaloisapply(K,G[2],E[1]));
Root=polroots(P);Rho=real(Root[1]); \\  Selecting a root of P
e1= abs(polcoeff(E1,0)+polcoeff(E1,1)*Rho+polcoeff(E1,2)*Rho^2);
e2= abs(polcoeff(E2,0)+polcoeff(E2,1)*Rho+polcoeff(E2,2)*Rho^2);
l1=log(e1);l2=log(e2);Reg1=l1^2+l1*l2+l2^2;quot=Reg1/Reg;
print(quot); \\ This quotient must be equal to 1
\\ Computation of the cyclotomic units C1,C2=sigma(C1):
z=exp(I*Pi/f);C1=1;C2=1;
\\ Case of a prime conductor f using (Z/fZ)^* cyclic):
if(isprime(f)==1,g=znprimroot(f)^3;
\\ Description of a half-system:
for(k=1,(f-1)/6,gk=lift(g^k);sgk=lift(Mod(gk*Frob,f));
C1=C1*(z^gk-z^-gk);C2=C2*(z^sgk-z^-sgk));
 \\ Logarithms of C1,C2:
L1=3*log(abs(C1))-log(f)/2;L2=3*log(abs(C2))-log(f)/2;
\\ computation of the cyclotomic regulator and of the index Quot=(E:F):
RegC=L1^2+L1*L2+L2^2;Quot=1/3*RegC/Reg); \\ Division by 3 of RegC
\\ Case of a composite conductor:
if(isprime(f)==0,for(aa=1,(f-1)/2,if(gcd(aa,f)!=1,next);
\\ Search of a prime qa congruent to a modulo f, split in K:
qa=aa;while(isprime(qa)==0,qa=qa+f);
if(matsize(idealfactor(K,qa))[1]==1,next);
\\ The Artin symbol of aa fixes K:
C1=C1*(z^aa-z^-aa);C2=C2*(z^(Frob*aa)-z^-(Frob*aa)));
L1=log(abs(C1));L2=log(abs(C2)); \\ Logarithms of C1,C2
\\ computation of the cyclotomic regulator and the index Quot=(E:F):
RegC=L1^2+L1*L2+L2^2;Quot=RegC/Reg);
\\ printing of the basic data of K:
print("P=",P," f=",f,"=",factor(f)," (a,b)=","(",a,",",b")",
" class group=",K.cyc,"  sigma=",Frob);print("Index [E_K:C_K]=",Quot); 
\\ Annihilator alpha+sigma.beta of the quotient E/C:
alpha=((log(e1)+log(e2))*L1+log(e2)*L2)/Reg;
beta=(log(e2)*L1-log(e1)*L2)/Reg;
 \\ In the prime case one multiply alpha+j.beta by (1-j)/3:
if(isprime(f)==1,
alpha0=(alpha+beta)/3;
beta0=(-alpha+2*beta)/3;alpha=alpha0;beta=beta0);
\\ Writing of alpha and beta as reals for checking:
print("(alpha,beta)=","(",alpha,", ",beta,")");
\\ Computation of alpha and beta as integers:
alpha=sign(alpha)*floor(abs(alpha)+10^-6);
beta=sign(beta)*floor(abs(beta)+10^-6);
\\ Class group (r = global rank;rp = p-rang;expo = exposant of CKp)
\\ vp = valuations of CKp, ve = valuation of the exponent expo of CKp:
CK=K.clgp;r=matsize(CK[2])[2];CKp=List;EKp=List;rp=0;vp=0;ve=0;
for(i=1,r,ei=CK[2][i];vi=valuation(ei,p);
if(vi>0,rp=rp+1;vp=vp+vi;ve=max(ve,vi));expo=p^ve;
\\ The rp following ideals Ai generate the p-class group CKp:
Ai=idealpow(K,CK[3][i],ei/p^vi);listput(CKp,Ai,i);listput(EKp,p^vi,i));
\\ Matrices h and sh of Ai and sAi on the PARI basis of CK
L0=List;for(i=1,r,listput(L0,0,i));LH=List;LsH=List;
for(i=1,rp,Ai=CKp[i];h=bnfisprincipal(K,Ai)[1];
sAi=nfgaloisapply(K,G[2],Ai);sh=bnfisprincipal(K,sAi)[1];
print("h=",h,", ","sigma(h)=",sh);listput(LH,h,i);listput(LsH,sh,i));
\\ Determination of the Pi-valuations of (alpha+j.beta), i=1,2:
Z=Mod(alpha+y*beta,S);w1=idealval(kappa,Z,P1);w2=idealval(kappa,Z,P2);
print(w1,"  ",w2,"  P1 and P2-valuations for alpha+j*beta");
\\ Galois structure of CKp; computation of the phi-components:
if(rp==1,
u=lift(LsH[1][1]*Mod(LH[1][1],expo)^-1);
YY=Mod(y-u,S);v1=idealval(kappa,YY,P1);v2=idealval(kappa,YY,P2);
v1=min(v1,ve);v2=min(v2,ve);
print(v1,"  ",v2,"  P1 and P2-valuations for H"));
if(rp==2,
\\ Computation of ci(mod expo) such that Pi=(ci+j),i=1,2:
Sp=lift(factor(S+O(p^ve)));Sp1=component(Sp,1)[1];Sp2=component(Sp,1)[2];
c1=polcoeff(Sp1,0);c2=polcoeff(Sp2,0);
\\ Coefficients of LH[1],LsH[1],LH[2],LsH[2], on the PARI basis of CK
H1=LH[1];A1=H1[1];B1=H1[2];sH1=LsH[1];C1=sH1[1];D1=sH1[2];
H2=LH[2];A2=H2[1];B2=H2[2];sH2=LsH[2];C2=sH2[1];D2=sH2[2];
\\ Computation of the determinants of the relations:
Delta1=((C1+c1*A1)*(D2+c1*B2)-(D1+c1*B1)*(C2+c1*A2));
Delta1=lift(Mod(Delta1,expo));
Delta2=((C1+c2*A1)*(D2+c2*B2)-(D1+c2*B1)*(C2+c2*A2));
Delta2=lift(Mod(Delta2,expo));
print(Delta1,"  ",Delta2,"   Determinants: Delta1,Delta2");
\\ Computation of the relations defining the phi-components:
r11x=C1+c1*A1;r11y=C2+c1*A2;r12x=D1+c1*B1;r12y=D2+c1*B2;
r11x=lift(Mod(r11x,expo));r11y=lift(Mod(r11y,expo));
r12x=lift(Mod(r12x,expo));r12y=lift(Mod(r12y,expo));
r21x=C1+c2*A1;r21y=C2+c2*A2;r22x=D1+c2*B1;r22y=D2+c2*B2;
r21x=lift(Mod(r21x,expo));r21y=lift(Mod(r21y,expo));
r22x=lift(Mod(r22x,expo));r22y=lift(Mod(r22y,expo));
print("R11=",r11x,"*X+",r11y,"*Y","  R12=",r12x,"*X+",r12y,"*Y");
print("R21=",r21x,"*X+",r21y,"*Y","  R22=",r22x,"*X+",r22y,"*Y"));
\\ Structure of the torsion group Tp of p-ramification:
n=6; \\ Choose any n, large enough, such that p^(n+1) annihilates Tp:
LTp=List;Kpn=bnrinit(K,p^n);Hpn=Kpn.cyc;
dim=component(matsize(Hpn),2);for(k=2,dim,c=component(Hpn,k);
if(Mod(c,p)==0,listput(LTp,p^valuation(c,p),k)));
print("Structure of the ",p,"-torsion group: ",LTp)))))}
\end{verbatim}\ns

\subsubsection{Numerical examples}\label{examples}
Since the approximations are in general very good (with precision ${\sf \!\backslash p\ 50}$), 
we have suppressed useless decimals in the results. But for some conductors, 
the precision ${\sf \!\backslash p\ 100}$ may be necessary, because of a fundamental 
unit close to $0$ (e.g., $f=21193, \,30223$).
For $f=42667$, ${\sf \!\backslash p\ 100}$ does not compute correctly and 
${\sf \!\backslash p\ 150}$ gives a nice result for $\alpha$ and $\beta$; 
but we see that, for this example:

\smallskip
\ft${\sf e_1=3062171948818717694.348000505806}$\ns \ \ 
and \ \  \ft${\sf e_2=1.221295564694 E-69.}$\ns

\medskip\noindent
\textbf{Galois structure of \texorpdfstring{$\CE_K/\CF_K$}{Lg}.}
Let $\varepsilon$ be the $\Z[G_K]$-generator of $\BE_K$ and let $\eta$ that 
of the sub\-group $\BF_K$ of Leopoldt's cyclotomic units; thus we have 
$\eta = \varepsilon^{\alpha + \beta\,\sigma}$ and obtain the isomorphism:
\[\BE_K/\BF_K \simeq \Z[j]/(\alpha + j\,\beta)\Z[j], \] 
where $j$ is a root of ${\sf S := y^2+y+1}$. 

\smallskip
In all the sequel, from a factorization $p=(r_1+ j \, r'_1)\cdot(r_2+ j \, r'_2)$ 
giving the ideal product $(p)={\mathfrak p}_1 {\mathfrak p}_2$ in $\Z[j]$, we 
associate, for the exponent $p^e$, the two annihilators $c_i+\sigma$ such that 
$(c_i+j) = {\mathfrak p}_i^e$ (up to a prime-to-$p$ ideal); this preserves 
the definition of the $\varphi^{}_1$ and $\varphi^{}_2$-components.

\smallskip
For instance, for $p=7$, ${\mathfrak p}_1 := (-2+j)\Z[j]$ and 
${\mathfrak p}_2 := (3+j)\Z[j]$; writing $(\alpha + j\,\beta) =: 
{\mathfrak p}_1^u\cdot {\mathfrak p}_2^v \cdot {\mathfrak a}$,
${\mathfrak a}$ prime to $7$, we get immediately the two 
$\varphi$-components of $\wt \CE_K=\CE_K/\CF_K$ (e.g., if $e=2$,
the two annihilators are $19+j$ and $-18+j$, respectively; for 
$p=13$, we get $23+j$ and $-22+j$). 

\medskip\noindent
\textbf{Galois structure of \texorpdfstring{$\CH_K$}{Lg}.}
Recall that ${\sf bnfisprincipal(K,A)[1]}$
gives the matrix of components of the class of ${\sf A}$ on the basis 
$\{h_1, \ldots, h_{r}\}$ given by ${\sf K.clgp}$ (in ${\sf CK}$) and the fact 
that ${\sf 0}$ at the place $i$ means that the corresponding component of 
${\sf cl(A)}$ on $h_i$ is trivial.

\smallskip
We first replace the generators of $\BH_K$ by generators ${\sf Ai}$
of $\CH_K$ (where $r_p \leq r$ is the $p$-rank).
The Galois action on the ${\sf Ai}$ is computed using the instructions
(where ${\sf G[2]}$ gives the $\sigma$-conjugate, ${\sf G[1]}$ being the identity):
\ft\begin{verbatim}
h=bnfisprincipal(K,Ai)[1];sAi=nfgaloisapply(K,G[2],Ai);   
sh=bnfisprincipal(K,sAi)[1]}; 
\end{verbatim}\ns

\noindent
so the Galois structure of $\CH_K$ becomes linear algebra from the matrices 
given by the program, via the relations:
\[\hbox{$h=\prod_{i=1}^{r_p} h_i^{a_i}$ (in ${\sf h}$)\ \ \ \&\ \ \
$h^\sigma=\prod_{i=1}^{r_p} h_i^{b_i}$ (in ${\sf sh}$).} \]

(a) \textbf{Case of $7$-rank $r_7=1$}.
This case is obvious, writing $h=h_1^a$, $h^\sigma=h_1^b$;
we put $P_{\varphi^{}_1} \equiv c_1+y \pmod{7^e}$ and 
$P_{\varphi^{}_2} \equiv c_2+y \pmod{7^e}$, where $7^e$ is the exponent of $\CH_K$; 
we obtain $h^{c_1+\sigma}=h_1^{c_1 a+b}$ and $h^{c_2+\sigma}=h_1^{c_2 a+b}$; so
$\CH_K=\CH_{\varphi^{}_1}$ (resp. $\CH_{\varphi^{}_2}$) if and only if 
$c_1 a+b\equiv 0 \pmod {7^e}$ (resp. $c_2 a+b \equiv 0 \pmod {7^e}$). 
In fact one computes $-a^* b+j$, where $a^*$ is inverse of $a$ 
modulo $7^e$, and write $(-a^* b+j) = {\mathfrak p}_i^u$ for the suitable 
$i \in \{1,2\}$. 

\smallskip
The Galois actions are to be read in columns; for instance, the valuations
in the two lines:
\begin{equation*}
\begin{aligned}
\sf v\ \ \ \, & 0 \ \ \ \ \   \sf P1\  and\  P2-valuations\  for \  alpha+j*beta \\
\sf v\ \ \ \, & 0 \ \ \ \ \   \sf P1\  and\  P2-valuations\  for \ \ \ H
\end{aligned}
\end{equation*}

\noindent
give the structures $\Z[j]/{\mathfrak p}_1^v \cdot {\mathfrak p}_2^0$ 
for ``$\CM = \wt \CE = \CE/\CF$ and $\CH$\,'', respectively, whence 
$\CM_{\varphi_1^{}} \simeq \Z[j]/{\mathfrak p}_1^v$, 
$\CM_{\varphi_2^{}}=1$, and so on. First examples:

\medskip
\ft\begin{verbatim}
P=x^3+x^2-104*x+371  f=313=Mat([313,1])  (a,b)=(35,1)  
Class group=[7]  sigma=4
(alpha,beta)=(-3.000000000,-2.000000000)  Index [E_K:C_K]=7.000000000
h=[1],  sigma(h)=[2]
1  0   P1 and P2-valuations for alpha+j*beta
1  0   P1 and P2-valuations for  H
Structure of the 7-torsion group: List([7,7])
\end{verbatim}\ns 

\smallskip
We have $\wt \CE_{\varphi^{}_1} \simeq \CH_{\varphi^{}_1} \simeq 
(\Z[j]/{\mathfrak p}_1) \otimes \Z_7 \simeq \Z/7\Z$ and the conjugation $h^\sigma = h^2$,
giving the annihilator $(-2+j) = {\mathfrak p}_1$ as expected;
whence the two columns given by the program.
We deduce that $\CT_K = \CH_K \oplus \CR_K$.

\medskip
\ft\begin{verbatim}
P=x^3+x^2-2450*x-1089  f=7351=Mat([7351,1])  (a,b)=(-1,33)  
Class group=[49]  sigma=4
(alpha,beta)=(5.000000000,8.000000000)  Index [E_K:C_K]=49.000000000
h=[1],  sigma(h)=[30]
2  0   P1 and P2-valuations for alpha+j*beta
2  0   P1 and P2-valuations for  H
Structure of the 7-torsion group: List([2401])
\end{verbatim}\ns 

\smallskip
We have $(\alpha + j\,\beta) = (5+8 j)$, thus the annihilator $(19+j)=
{\mathfrak p}_1^2$; then $h^\sigma = h^{30}$ gives (modulo~$7^2$)
the same annihilator. The $\varphi^{}_2$-components are trivial.
Since $\CT_K \simeq \Z/7^4\Z$, $\CR_K = \CT_K^{7^2}$,
$\CH_K \simeq \CT_K/\CR_K \simeq \Z/7^2\Z$.

\medskip
The first field such that $\CH_K \simeq \Z/7^3\Z$ is the following:

\ft\begin{verbatim}
P=x^3+x^2-77006*x-34225  f=231019=Mat([231019,1])  (a,b)=(-1,185) 
Class group=[343]  sigma=4
(alpha,beta)=(19.000000000,18.000000000)  Index [E_K:C_K]=343.000000000
h=[1],  sigma(h)=[18]
0  3   P1 and P2-valuations for alpha+j*beta
0  3   P1 and P2-valuations for  H
Structure of the 7-torsion group: List([343,7])
\end{verbatim}\ns 

\smallskip
The annihilator of $\CH_K$ is $(-18+j) = {\mathfrak p}_2^3$.
The structures are similar with the $\varphi^{}_2$-components since
$(19+18 j) = {\mathfrak p}_2^3$.
In that case, $\CT_K = \CH_K \oplus \CR_K$ with $ \CH_K \simeq \Z/7^3\Z$
and $\CR_K \simeq \Z/7\Z$.

\medskip
(b) \textbf{Case of $7$-rank $r_7=2$} This case depends on the matrices giving:
\[\hbox{${\sf h=[a, b],\  sigma(h)=[c,d]}\ \ \ $ \& $\ \ \ {\sf h'=[a',b'],\   sigma(h')=[c',d']}$;}\]
this means that the corresponding generating classes $h$, $h'$, fulfill the relations (regarding 
the basis $\{h_1, h_2\}$ of the class group) $h = h_1^a \cdot h_2^b$ and 
$h^\sigma = h_1^{c} \cdot h_2^{d}$, then $h' = h_1^{a'} \cdot h_2^{b'}$ and 
$h'^\sigma = h_1^{c'} \cdot h_2^{d'}$.
Thus we compute the conditions $H^{c_i+\sigma}=1$, $i=1,2$, for $H:= h^x\cdot h'^y$;
this gives the relations ${\sf R11}$, ${\sf R21}$ (${\sf R12}$, ${\sf R22}$ are checked 
by security since they must be proportional to the previous ones); 
whence the arrangement of lines when the conjecture holds.
The program computes the corresponding determinants of the relation
(${\sf Determinants\ Delta1\ Delta2}$); this is superfluous but have been computed  
(but not printed) for verification.

\medskip
\ft\begin{verbatim}
P=x^3+x^2-3422*x-1521  f=10267=Mat([10267,1])  (a,b)=(-1,39) 
Class group=[7,7]  sigma=2
(alpha,beta)=(-7.000000000,-7.000000000)  Index [E_K:C_K]=49.000000000
h=[1,0],  sigma(h)=[0,1]
h'=[0,1], sigma(h')=[6,6]
1  1   P1 and P2-valuations for alpha+j*beta
R11=3*X+6*Y  R12=1*X+2*Y
R21=5*X+6*Y  R22=1*X+4*Y
Structure of the 7-torsion group: List([49,7])
\end{verbatim}\ns 

\smallskip
This case means that $\wt \CE_K \simeq \Z[j]/(7)$, giving the
two non trivial $\varphi$-components of order $7$.
The relations, for $\CH_K$, reduce to ${\sf R11}$ and  ${\sf R21}$
Thus $\CH_K=\CH_{\varphi^{}_1} \oplus \CH_{\varphi^{}_2} \simeq \Z/7\Z \oplus \Z/7\Z$,
$\CR_K  = \CT_K^7 \simeq \Z/7\Z$.

\medskip
\ft\begin{verbatim}
P=x^3+x^2-55296*x-1996812  f=165889=[19,1;8731,1]  (a,b)=(-322,144) 
Class group=[294,2,2,2] sigma=25
(alpha,beta)=(-32.000000000,-20.000000000) Index [E_K:C_K]=784.000000000
h=[6,0,0,0], sigma(h)=[108,1,0,0]
0  2   P1 and P2-valuations for alpha+j*beta
0  2   P1 and P2-valuations for  H
Structure of the 7-torsion group: List([49])
\end{verbatim}\ns 

\smallskip
Here $\CR_K=1$ and $\CT_K = \CH_K \simeq 
(\Z[j]/{\mathfrak p}_2^2) \otimes \Z_7 \simeq \Z_7/7^2 \Z_7$.

\medskip
\ft\begin{verbatim}
P=x^3+x^2-453576*x+117425873  f=1360729=Mat([1360729,1])  (a,b)=(2333,1) 
Class group=[98,14]  sigma=2
(alpha,beta)=(42.000000000,28.000000000)  Index [E_K:C_K]=1372.000000000
h=[1,0],  sigma(h)=[44,11]
h'=[0,1], sigma(h')=[7,11]
2  1   P1 and P2-valuations for alpha+j*beta
R11=14*X+7*Y  R12=11*X+30*Y      
R21=26*X+7*Y  R22=11*X+42*Y
Structure of the 7-torsion group: List([49,7,7])
\end{verbatim}\ns 

\smallskip
We have $(\alpha + \beta j) = 2 \cdot 7 (3+2 j)$ giving the annihilator 
${\mathfrak p}_1^2{\mathfrak p}_2$ which is also the annihilator of $\CH_K$.
The structure is $\CT_K = \CH_K \oplus \CR_K$.

\medskip
\ft\begin{verbatim}
P=x^3+x^2-884540*x-393129  f=2653621=Mat([2653621,1])  (a,b)=(-1,627) 
Class group=[686,14]  sigma=2
(alpha,beta)=(-112.00000000,-70.00000000)  Index [E_K:C_K]=9604.00000000
h=[2,0],  sigma(h)=[36,2]
h'=[0,2], sigma(h')=[0,4]
1   3   P1 and P2-valuations for alpha+j*beta
R11=74*X+0*Y  R12=2*X+42*Y      
R21=0*X+0*Y  R22=2*X+311*Y
Structure of the 7-torsion group: List([343,49])
\end{verbatim}\ns 

\smallskip
In that case, $\CT_K \simeq Z/7^3\Z \oplus \Z/7^2\Z$ and $\CR_K \simeq 
(\Z/7^3\Z)^0 \oplus(7\,\Z/7^2\Z)$.

\medskip
(c) \textbf{Larger $7$-ranks}. If the order $7^3$, with $7$-rank $1$ or $2$, is 
rather frequent for the $7$-class group, we find, after several days of computer,
only three examples of $7$-rank $3$ in the interval $f \in [7, 50071423]$;
they are obtained with the conductors $f =14376321, 39368623, 43367263$,
giving interesting structures (use precision ${\sf \backslash  p \ 100}$).
The least cubic field with $7$-rank $3$ is the following:

\smallskip
\ft\begin{verbatim}
P=x^3-4792107*x+4022175142 f=14376321=[3,2;1597369,1] (a,b)=(-7554,128) 
Class group=[21,7,7] sigma=5
(alpha,beta)=(-7.000000000,-21.000000000) Index [E_K:C_K]=343.000000000
h =[3,0,0], sigma(h) =[15,4,0]
h'=[0,1,0], sigma(h')=[3,1,0]
h"=[0,0,1], sigma(h")=[6,5,2]
2   1   P1 and P2-valuations for alpha+j*beta
Structure of the 7-torsion group: List([7,7,7])
\end{verbatim}\ns

\smallskip
Using the information on $\alpha$ and $\beta$, we obtain, for 
$\wt \CE_K = \CE_K/\CF_K$:
\begin{equation*}
\wt \CE_K\simeq (\Z[j]/ 7  {\mathfrak p}_2) \otimes \Z_7  \simeq 
(\Z[j]/{\mathfrak p}_1^2 \, {\mathfrak p}_2) \otimes \Z_7  
\simeq (\Z[j]/{\mathfrak p}_1^2 \oplus \Z[j]/ {\mathfrak p}_2) \otimes \Z_7, 
\end{equation*}
where ${\mathfrak p}_1 = (-2+j)$ and ${\mathfrak p}_2=(3+j)$.
We get the $\varphi$-components:

\medskip
\centerline{$\wt \CE_{\varphi^{}_1}  \simeq  (\Z[j]/{\mathfrak p}_1^2) \otimes \Z_7\simeq \Z/7^2\Z$ and
$\wt \CE_{\varphi^{}_2} \simeq (\Z[j]/{\mathfrak p}_2) \otimes \Z_7\simeq \Z/7\Z$.} 

\medskip
To obtain the two $\varphi$-components of $\CH_K = \CT_K$, we put $H=h^x h'^yh''^z$ 
and we determine the solutions of the two relations$H^{P_{\varphi^{}_i}(\sigma)}=1$, 
$i=1, 2$, that is to say, $H^{-2+\sigma} = 1$ and $H^{3+\sigma} = 1$, 
respectively. 

\smallskip
We then obtain the systems (considered modulo $7$ since the exponent of $\CH_K$ is $7$)
of ranks 1 and 2, respectively:
\begin{equation*}
\left\{\begin{aligned}
2x+3y+6z & =0 \\
4x+6y+5z & =0 \\
\end{aligned}\right. \, \hbox{($H^{-2+\sigma} = 1)$\ \   \&\ \ }
\left\{\begin{aligned}
3x+3y+6z & = 0 \\
4x+4y+5z & =0 \\
z & =0,
\end{aligned}\right. \, \hbox{($H^{3+\sigma} = 1)$.}
\end{equation*}

They are equivalent to:
\[2x+3y+6z  =0 \  (\hbox{$H^{-2+\sigma} \!= 1$})  \ \ \ \&
\ \ \  \big[ x+y=0 \ \  \&\  \ z=0 \big] \  (\hbox{$H^{3+\sigma}\! = 1$}).\]

Which gives, considering the $\F_7$-dimensions given by the systems: 
\[\CH_{\varphi^{}_1} \simeq \big[ (\Z[j]/{\mathfrak p}_1) \otimes \Z_7 \big]
\oplus \big[(\Z[j]/{\mathfrak p}_1) \otimes \Z_7\big]
 \ \ \, \& \ \ \, \CH_{\varphi^{}_2} \simeq (\Z[j]/{\mathfrak p}_2)\otimes \Z_7. \] 
We have indeed equalities for the orders of the $\varphi$-components
relative to $\wt \CE_K$ and $\CH_K$, respectively, but of course with 
different structures of $\Z_7[j]$-modules since 
$\wt \CE_{\varphi^{}_1} \simeq \Z/7^2 \Z$ and 
$\CH_{\varphi^{}_1} \simeq \big [\Z/7 \Z \big]^2$.

\smallskip
The two other examples are similar:

\smallskip
\ft\begin{verbatim}
P=x^3+x^2-13122874*x-7765825411 
f=39368623=[7,1;79,1;71191,1] (a,b)=(-5323,2187) 
class group=[21,21,7] sigma=4
(alpha,beta)=(28.000000000,-7.000000000) Index [E_K:C_K]=1029.000000000
h =[3,0,0],  sigma(h) =[3,9,0]
h'=[0,3,0],  sigma(h')=[18,15,0]
h"=[0,0,1],  sigma(h")=[15,6,4]
1   2  P1 and P2-valuations for alpha+j*beta
Structure of the 7-torsion group: List([7,7,7])

P=x^3+x^2-14455754*x-16977480367 
f=43367263=[43,1;1008541,1] (a,b)=(-10567,1513) 
class group=[273,7,7] sigma=2
(alpha,beta)=(42.000000000,77.000000000) Index [E_K:C_K]=4459.000000000
h =[39,0,0],  sigma(h) =[0,5,1]
h'=[0,1,0],   sigma(h')=[156,6,5]
h"=[0,0,1],   sigma(h")=[0,0,2]
2   1  P1 and P2-valuations for alpha+j*beta
Structure of the 7-torsion group: List([49,7,7])
\end{verbatim}\ns

\medskip
(d) \textbf{Larger primes $p$}. Let's give, without comments, some examples:

\ft\begin{verbatim}
p=13 P=x^3+x^2-15196*x-726047 f=45589=Mat([45589,1]) (a,b)=(-427,1) 
Class group=[169] sigma=2
(alpha,beta)=(15.000000000,8.000000000) Index [E_K:C_K]=169.000000000
h=[1], sigma(h)=[146]
2  0   P1 and P2-valuations for alpha+j*beta
2  0   P1 and P2-valuations for H
Structure of the 13-torsion group: List([169])
\end{verbatim}\ns
\ft\begin{verbatim}
p=13 P=x^3+x^2-238516*x-7579519 f=715549=Mat([715549,1]) (a,b)=(-283,321) 
Class group=[13,13] sigma=2
(alpha,beta)=(7.000000000,-8.000000000) Index [E_K:C_K]=169.000000000
h =[1,0], sigma(h) =[9,0]
h'=[0,1], sigma(h')=[0,9]
0  2   P1 and P2-valuations for alpha+j*beta
R11=0*X+0*Y  R12=0*X+0*Y      
R21=6*X+0*Y  R22=0*X+6*Y
Structure of the 13-torsion group: List([13,13])

p=19 P=x^3-137271*x+45757 f=411813=[3,2;45757,1] (a,b)=(-3,247) 
Class group=[1083] sigma=2
(alpha,beta)=(-21.000000000,-5.000000000) Index [E_K:C_K]=361.000000000
h=[3], sigma(h)=[204]
0  2   P1 and P2-valuations for alpha+j*beta
0  2   P1 and P2-valuations for H
Structure of the 19-torsion group: List([361])
\end{verbatim}\ns
\ft\begin{verbatim}
p=19 P=x^3+x^2-162636*x+25190561 f=487909=[31,1;15739,1] (a,b)=(1397,1) 
Class group=[57,19] sigma=2
(alpha,beta)=(19.00000000,4.19514516 E-69) Index [E_K:C_K]=361.00000000
h =[3,0],  sigma(h) =[51,16]
h'=[0,1],  sigma(h')=[3,1]
1  1   P1 and P2-valuations for alpha+j*beta
R11=18*X+3*Y  R12=16*X+9*Y      
R21=11*X+3*Y  R22=16*X+13*Y
Structure of the 19-torsion group: List([19,19])

p=31 P=x^3+x^2-63804*x+6181931 f=191413=Mat([191413,1]) (a,b)=(875,1) 
class group=[31,31] sigma=4
(alpha,beta)=(31.00000000,-4.10842850 E-69) Index [E_K:C_K]=961.00000000
h=[1,0],  sigma(h) =[30,30]
h'=[0,1], sigma(h')=[1,0]
1  1  P1 and P2-valuations for alpha+j*beta
R11=5*X+1*Y  R12=30*X+6*Y      
R21=25*X+1*Y  R22=30*X+26*Y
Structure of the 31-torsion group: List([31,31])
\end{verbatim}\ns
\ft\begin{verbatim}
p=31 P=x^3+x^2-76004*x-8090239 f=228013=Mat([228013,1]) (a,b)=(-955,1) 
class group=[961] sigma=2
(alpha,beta)=(-11.000000000,-35.000000000) Index [E_K:C_K]=961.000000000
h=[1], sigma(h)=[439]
2  0  P1 and P2-valuations for alpha+j*beta
2  0  P1 and P2-valuations for H
Structure of the 31-torsion group: List([961])
\end{verbatim}\ns

\section*{Acknowledgements} \noindent
I would like to thank very warmly the anonymous Referee for a large 
number of comments and suggestions which have greatly improved the readability 
of this survey and enabled clarifications or corrections.

\end{document}